\renewcommand{\labelenumi}{\roman{enumi})}
\DeclareFontFamily{OT1}{pzc}{}
\DeclareFontShape{OT1}{pzc}{m}{it}{<-> s * [1.10] pzcmi7t}{}
\DeclareMathAlphabet{\mathpzc}{OT1}{pzc}{m}{it}
\begin{document}

\def\cF{\mathcal{F}}
\def\cG{\mathcal{G}}
\def\cN{\mathcal{N}}
\def\co{\mathpzc{o} \hspace{-.5mm}}
\def\cO{\mathcal{O}}
\def\cP{\mathcal{P}}
\def\cS{\mathcal{S}}
\def\cT{\mathcal{T}}
\def\cU{\mathcal{U}}
\def\cW{\mathcal{W}}
\def\cX{\mathcal{X}}
\def\cY{\mathcal{Y}}
\def\defeq{\vcentcolon=}
\def\ee{\varepsilon}

\newcommand{\removableFootnote}[1]{}

\newtheorem{theorem}{Theorem}[section]
\newtheorem{corollary}[theorem]{Corollary}
\newtheorem{lemma}[theorem]{Lemma}
\newtheorem{proposition}[theorem]{Proposition}

\theoremstyle{definition}
\newtheorem{definition}{Definition}[section]
\newtheorem{example}[definition]{Example}

\theoremstyle{remark}
\newtheorem{remark}{Remark}[section]





\title{The structure of mode-locking regions of piecewise-linear continuous maps}
\author{
D.J.W.~Simpson\\\\
Institute of Fundamental Sciences\\
Massey University\\
Palmerston North\\
New Zealand
}
\maketitle

\begin{abstract}

The mode-locking regions of a dynamical system are the subsets of the parameter space of the system
within which there exists an attracting periodic solution.
For piecewise-linear continuous maps, these regions have a curious
chain structure with points of zero width called shrinking points.
In this paper we perform a local analysis about an arbitrary shrinking point.
This is achieved by studying the symbolic itineraries
of periodic solutions in nearby mode-locking regions
and performing an asymptotic analysis on one-dimensional slow manifolds
in order to build a comprehensive theoretical framework for the local dynamics.
We obtain leading-order quantitative descriptions for the shape of nearby mode-locking regions,
the location of nearby shrinking points, and the key properties of these shrinking points.
We apply the results to the three-dimensional border-collision normal form,
nonsmooth Neimark-Sacker-like bifurcations,
and grazing-sliding bifurcations in a model of a dry friction oscillator.

\end{abstract}

\section{Introduction}
\label{sec:intro}
\setcounter{equation}{0}

This paper concerns piecewise-linear continuous maps of the form\removableFootnote{
The dependence of $\xi$ on $B$ can usually be ignored for the purposes of classifying dynamics
by transforming to the observer canonical form.
}
\begin{equation}
x_{i+1} = f(x_i;\mu,\xi) \defeq
\begin{cases}
A_L(\xi) x_i + B(\xi) \mu \;, & s_i \le 0 \\
A_R(\xi) x_i + B(\xi) \mu \;, & s_i \ge 0
\end{cases} \;,
\label{eq:f}
\end{equation}
where $x_i \in \mathbb{R}^N$ ($N \ge 2$) 
and $s_i$ denotes the first component of $x_i$, i.e.,
\begin{equation}
s_i \defeq e_1^{\sf T} x_i \;.
\label{eq:s}
\end{equation}
In (\ref{eq:f}), $A_L$ and $A_R$ are real-valued $N \times N$ matrices and $B \in \mathbb{R}^N$.
$A_L$, $A_R$ and $B$ are $C^K$ functions of a parameter $\xi \in \mathbb{R}^M$,
and $\mu \in \mathbb{R}$ is another parameter.

The assumption that (\ref{eq:f}) is continuous on the switching manifold $s=0$ implies that
$A_L$ and $A_R$ differ in only their first columns, i.e.,\removableFootnote{
I'm not sure that I ever actually use the symbol $C$,
but I certainly use (\ref{eq:continuityCondition}) in various places below.
}
\begin{equation}
A_R = A_L + C e_1^{\sf T} \;,
\label{eq:continuityCondition}
\end{equation}
for some $C \in \mathbb{R}^N$.
Furthermore, (\ref{eq:f}) satisfies the linear scaling property
\begin{equation}
f(\gamma x;\gamma \mu,\xi) \equiv \gamma f(x;\mu,\xi) \;,
\label{eq:fscaling}
\end{equation}
for any $\gamma > 0$.
For this reason, the structure of the dynamics of (\ref{eq:f})
is independent of the magnitude of $\mu$,
and the size of any bounded invariant set of (\ref{eq:f}) is proportional to $|\mu|$.

Maps of the form (\ref{eq:f}) arise in diverse contexts.
The tent map and the Lozi map, one and two-dimensional examples
of (\ref{eq:f}), are instructive prototypical maps exhibiting chaos \cite{Lo78,Mi80}.
Maps that can be put in the form (\ref{eq:f}) through a change of variables
have been used to model phenomena involving a switch or abrupt event,
particularly in social sciences \cite{PuSu06}.
Most importantly,
maps of the form (\ref{eq:f}) describe the dynamics near border-collision bifurcations.

A border-collision bifurcation occurs when a fixed point of a piecewise-smooth map
collides with a switching manifold under certain regularity conditions \cite{NuYo92,DiBu08,Si15}.
Except in special cases, (\ref{eq:f}) has a border-collision bifurcation at $\mu = 0$.
The dynamics of (\ref{eq:f}) for $\mu < 0$ and $\mu > 0$ represent the dynamics on either side of the bifurcation.
In this context, $\mu$ is the primary bifurcation parameter. 
By varying $\xi$ in a continuous fashion,
we can investigate how the dynamics created in the border-collision bifurcation
changes with respect to other parameters.

This paper concerns mode-locking regions of (\ref{eq:f}).
A mode-locking region of a map is a region of parameter space within which
the map has an attracting periodic solution of a given period.
We consider two-dimensional cross-sections of parameter space
as these are simple to visualise and informative.
For (\ref{eq:f}) we always consider cross-sections with fixed $\mu \ne 0$,
so that we avoid the border-collision bifurcation at $\mu = 0$ and 
degeneracies due to the scaling property (\ref{eq:fscaling}).

Fig.~\ref{fig:modeLockEx20} shows an example using
\begin{equation}
A_L = \begin{bmatrix}
\tau_L & 1 & 0 \\
-\sigma_L & 0 & 1 \\
\delta_L & 0 & 0
\end{bmatrix} \;, \qquad
A_R = \begin{bmatrix}
\tau_R & 1 & 0 \\
-\sigma_R & 0 & 1 \\
\delta_R & 0 & 0
\end{bmatrix} \;, \qquad
B = \begin{bmatrix}
1 \\ 0 \\ 0
\end{bmatrix} \;,
\label{eq:ALAREx20}
\end{equation}
where $\tau_L,\sigma_L,\delta_L,\tau_R,\sigma_R,\delta_R \in \mathbb{R}$.
The map (\ref{eq:f}) with (\ref{eq:ALAREx20}) is the
border-collision normal form in three dimensions \cite{DiBu08,Si15}.
Here parameter space (not including $\mu$) is six-dimensional
(we could write $\xi = (\tau_L,\sigma_L,\delta_L,\tau_R,\sigma_R,\delta_R)$).
The two-dimensional cross-section of parameter space used in Fig.~\ref{fig:modeLockEx20}
is defined by the restriction
\begin{equation}
\begin{gathered}
\tau_L = 0 \;, \qquad
\sigma_L = -1 \;, \qquad
\sigma_R = 0 \;, \qquad
\delta_R = 2 \;.
\end{gathered}
\label{eq:paramEx20}
\end{equation}
The mode-locking regions are coloured by the period, $n$,
of the corresponding stable periodic solution.
Only mode-locking regions up to $n = 50$ are shown.
The periodic solutions are ``rotational'', in a symbolic sense defined in \S\ref{sub:rss},
and can be assigned a rotation number, $\frac{m}{n}$.
As we move from left to right across Fig.~\ref{fig:modeLockEx20},
the rotation number decreases roughly monotonically.
For the parameters of Fig.~\ref{fig:modeLockEx20}
there also exist mode-locking regions corresponding to non-rotational periodic solutions,
but these regions are small, relative to the rotational ones,
and not shown in Fig.~\ref{fig:modeLockEx20} or studied in this paper.

\begin{figure}[t!]
\begin{center}
\setlength{\unitlength}{1cm}
\begin{picture}(15,5.2)
\put(0,0){\includegraphics[height=5cm]{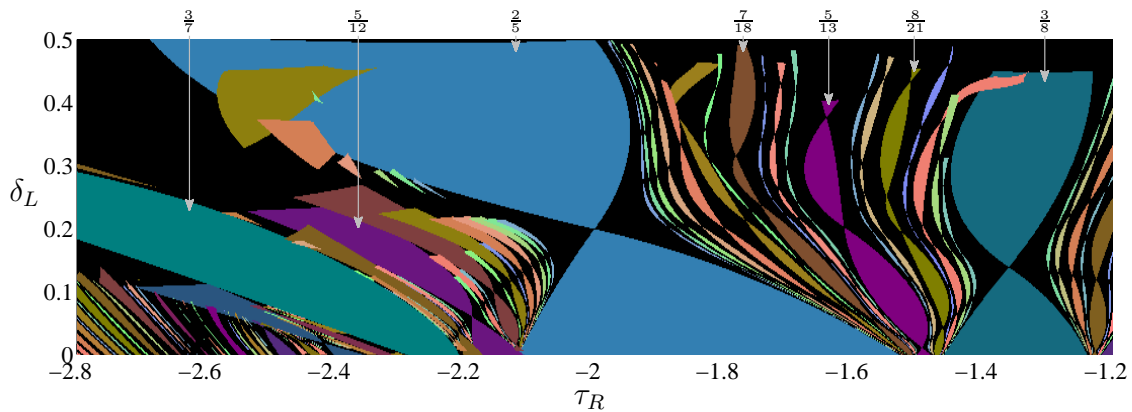}}
\put(7.5,0){$\tau_R$}
\put(0,2.7){$\delta_L$}
\put(2.28,5.04){\tiny $\frac{3}{7}$}
\put(4.47,5.04){\tiny $\frac{5}{12}$}
\put(6.63,5.04){\tiny $\frac{2}{5}$}
\put(9.59,5.04){\tiny $\frac{7}{18}$}
\put(10.72,5.04){\tiny $\frac{5}{13}$}
\put(11.88,5.04){\tiny $\frac{8}{21}$}
\put(13.66,5.04){\tiny $\frac{3}{8}$}
\end{picture}
\caption{
Mode-locking regions of (\ref{eq:f}) with (\ref{eq:ALAREx20})-(\ref{eq:paramEx20}) and $\mu > 0$
corresponding to rotational periodic solutions (defined in \S\ref{sub:definitions}) of period $n \le 50$.
Selected mode-locking regions are labelled by their rotation number, $\frac{m}{n}$.
This figure was computed by numerically checking the admissibility and stability of
rotational periodic solutions on a $1024 \times 256$ grid of $\tau_R$ and $\delta_L$ values.
At grid points where multiple stable periodic solutions exist,
the periodic solution with the highest period (less than $50$) is indicated.
\label{fig:modeLockEx20}
}
\end{center}
\end{figure}

The majority of the boundaries of the mode-locking regions shown in Fig.~\ref{fig:modeLockEx20}
are where one point of the corresponding periodic solution lies on the switching manifold ($s=0$).
These boundaries may be viewed as curves of border-collision bifurcations for
the $n^{\rm th}$-iterate of (\ref{eq:f}) \cite{Si15}.
The boundaries intersect at so-called shrinking points
where (\ref{eq:f}) has a periodic solution with two points on the switching manifold.

Many of the mode-locking regions of Fig.~\ref{fig:modeLockEx20}
have several shrinking points and an overall structure
that loosely resembles a string of sausages.
This structure was first identified in piecewise-linear circle maps \cite{YaHa87,CaGa96},
and subsequently described in piecewise-smooth models of a DC-DC power converter \cite{ZhSo02}
and a trade cycle \cite{GaGa03}.
The structure has also been identified in an integrate-and-fire neuron model \cite{Ti02},
a model of an oscillator subject to dry friction \cite{SzOs09},
and a model of the synchronisation of breathing to heart rate \cite{McHo04}.

A rigorous study of shrinking points of (\ref{eq:f}) was performed in \cite{SiMe09}.
Here it was shown that the four curves of border-collision bifurcations
that bound the mode-locking region near a shrinking point
correspond to where four different points of a periodic solution lie on the switching manifold.
These four points admit a simple characterisation
in terms of the symbolic itinerary of the periodic solution.
Furthermore, a local analysis reveals that
as the boundaries emanate from the shrinking point,
they curve in a manner that favours the boundaries reintersecting at other shrinking points.

However, the results of \cite{SiMe09}
do not provide us with any information about the dynamics of (\ref{eq:f})
outside the associated mode-locking region.
In any neighbourhood of a typical shrinking point
there are infinitely many mode-locking regions corresponding to higher periods.
These are not readily apparent in Fig.~\ref{fig:modeLockEx20}, as only periods up to $50$ are shown,
but about shrinking points corresponding to relatively low periods,
we can see the beginnings of sequences of mode-locking regions converging to the shrinking points.

The purpose of this paper is to describe nearby mode-locking regions and their associated shrinking points.
This is achieved by combining Farey addition, used to identify the symbolic itineraries of periodic solutions
in nearby mode-locking regions, with calculations on one-dimensional slow manifolds
in order to develop a comprehensive theoretical framework that explains the dynamics.
To benefit the reader we begin in \S\ref{sec:results} by stating the main results.
This requires introducing some notation and definitions.
This is done as quickly as possible,
skipping over derivations and explanations which are provided in later sections.
In \S\ref{sec:results} we also demonstrate the scope of the results with a variety of examples.

In the subsequent three sections we provide an array of small mathematical results, many of which are new,
that can be viewed as building blocks used to obtain the main results.
Section \ref{sec:symbol} concerns the symbolic itineraries.
Each shrinking point is associated with a rotational symbol sequence.
By partitioning this sequence into blocks, and repeating the blocks appropriately,
we can construct the symbol sequences of periodic solutions in nearby mode-locking regions.
These sequences are also obtained via Farey addition.
In \S\ref{sec:periodicSolns} we combine symbolic representations with matrix algebra
in order to compute periodic solutions.
In \S\ref{sec:shrPoints} we describe fundamental properties of shrinking points
and review the basic unfolding of shrinking points that was derived in \cite{SiMe09}.

In \S\ref{sec:locations} we identify the locations of nearby shrinking points,
and in \S\ref{sec:properties} determine properties of these shrinking points.
The key geometric tool used to obtain the results is that of a slow manifold.
At a shrinking point, periodic dynamics are associated with $N-1$ stable directions
and one neutral direction (corresponding to a unit eigenvalue).
Near a shrinking point there are consequently
one-dimensional slow manifolds (each point of the periodic solution
has an associated slow manifold).
By working on these slow manifolds our calculations reduce from $N$ dimensions to one dimension.

Finally, in \S\ref{sec:conc} we provide concluding comments.
Some proofs are given in Appendix \ref{app:proofs}.

\section{Main results}
\label{sec:results}
\setcounter{equation}{0}

In this section we present the main results.
We begin in \S\ref{sub:definitions} by briefly introducing the essential symbolic concepts,
notation used for shrinking points,
and the shrinking point unfolding theorem of \cite{SiMe09}.
In \S\ref{sub:theorems} we state the main results as four theorems,
and in \S\ref{sub:examples} we illustrate the theorems with a variety of examples.

\subsection{Basic definitions and properties of shrinking points}
\label{sub:definitions}

Let
\begin{equation}
f^L(x;\mu,\xi) \defeq A_L(\xi) x + B(\xi) \mu \;, \qquad
f^R(x;\mu,\xi) \defeq A_R(\xi) x + B(\xi) \mu \;,
\label{eq:fJ}
\end{equation}
denote the two affine half-maps of (\ref{eq:f}).
As in \cite{Si15,Si10,SiMe10}, we work with symbol sequences, $\cS : \mathbb{Z} \to \{ L,R \}$,
and match periodic solutions of (\ref{eq:f}) to periodic symbol sequences.
This is made precise by the following definition.
(In this definition, and throughout this paper, $\cS_i$ denotes the $i^{\rm th}$ element of $\cS$.)

\begin{definition}
Let $\cS$ be a periodic symbol sequence of period $n$.
We refer to an $n$-tuple, $\{ x^{\cS}_i \}_{i=0}^{n-1}$,
satisfying $x^{\cS}_{(i+1) {\rm \,mod\,} n} = f^{\cS_i} \left( x^{\cS}_i \right)$,
for all $i = 0,\ldots,n-1$,
as an {\em $\cS$-cycle}.
\label{df:Scycle}
\end{definition}

As explained in \S\ref{sub:periodicSolns},
if each $x^{\cS}_i$ lies on the ``correct'' side of the switching manifold (or on the switching manifold),
then the $\cS$-cycle is a periodic solution of (\ref{eq:f}) and said to be admissible.
Given a periodic symbol sequence $\cS$ of period $n$, let
\begin{equation}
f^{\cS} \defeq f^{\cS_{n-1}} \circ \cdots \circ f^{\cS_0} \;.
\label{eq:fS}
\end{equation}
A straight-forward expansion leads to
\begin{equation}
f^{\cS}(x) = M_{\cS} x + P_{\cS} B \mu \;,
\label{eq:fS2}
\end{equation}
where
\begin{align}
M_{\cS} &\defeq A_{\cS_{n-1}} \cdots A_{\cS_0} \;, \label{eq:MS} \\
P_{\cS} &\defeq I + A_{\cS_{n-1}} + A_{\cS_{n-1}} A_{\cS_{n-2}} + \cdots +
A_{\cS_{n-1}} \cdots A_{\cS_1} \;. \label{eq:PS}
\end{align}
Each $x^{\cS}_i$ is a fixed point of $f^{\cS^{(i)}}$,
where we use $\cS^{(i)}$ to denote the $i^{\rm th}$ left shift permutation of $\cS$.
If $\det \left( I - M_{\cS} \right) \ne 0$,
then the $\cS$-cycle is unique and
\begin{equation}
s^{\cS}_i = \frac{\det \left( P_{\cS^{(i)}} \right) \varrho^{\sf T} B \mu}
{\det \left( I - M_{\cS} \right)} \;,
\label{eq:sFormulaEarly}
\end{equation}
where $\varrho^{\sf T} \defeq e_1^{\sf T} {\rm adj} \left( I - A_L \right)$.
In view of (\ref{eq:sFormulaEarly}),
it is useful to treat a mode-locking region boundary on which $s^{\cS}_i = 0$
as a curve on which $\det \left( P_{\cS^{(i)}} \right) = 0$ \cite{Si15,Si10}.

\begin{definition}
Given $\ell,m,n \in \mathbb{Z}^+$,
with $\ell < n$, $m < n$ and ${\rm gcd}(m,n) = 1$,
we define a symbol sequence $\cF[\ell,m,n] : \mathbb{Z} \to \{ L,R \}$ by
\begin{equation}
\cF[\ell,m,n]_i \defeq
\begin{cases}
L \;, & i m {\rm ~mod~} n < \ell \\
R \;, & i m {\rm ~mod~} n \ge \ell
\end{cases} \;.
\label{eq:rss}
\end{equation}
We say that $\cF[\ell,m,n]$, and any shift permutation of $\cF[\ell,m,n]$,
is {\em rotational}.
\label{df:rss}
\end{definition}

We refer to $\frac{m}{n}$ as the rotation number of $\cF[\ell,m,n]$.
The requirement ${\rm gcd}(m,n) = 1$ ensures that $\frac{m}{n}$ is
an irreducible fraction and that each $\cF[\ell,m,n]$ is of period $n$ \cite{Si10}.
Throughout this paper we let $d \in \{ 1,\ldots,n-1 \}$
denote the multiplicative inverse of $m$ modulo $n$
(i.e.~$m d {\rm ~mod~} n = 1$). 
For brevity we omit ``${\rm mod~} n$'' in subscripts
where it is clear that modulo arithmetic is being used.

Next we provide a definition of a shrinking point of (\ref{eq:f}).
Each shrinking point corresponds to a particular rotational symbol sequence, $\cS = \cF[\ell,m,n]$,
and is referred to as an $\cS$-shrinking point.
As explained in \S\ref{sub:shrPoints},
at an $\cS$-shrinking point there are infinitely many $\cS$-cycles. 
For this reason our definition of an $\cS$-shrinking point refers to an $\cS^{\overline{0}}$-cycle
(which is unique),
where we use $\cS^{\overline{i}}$ to denote the symbol sequence
that differs from $\cS$ in only the indices $i + j n$, for all $j \in \mathbb{Z}$.
It is important to note that $\cS^{\overline{0}}$ is a shift permutation of $\cF[\ell-1,m,n]$,
and $\cS^{\overline{\ell d}} = \cF[\ell+1,m,n]$, see \S\ref{sub:rss}.

\begin{definition}
Consider (\ref{eq:f}) for some $\xi \in \mathbb{R}^M$ and $\mu \ne 0$,
and suppose $\varrho^{\sf T} B \ne 0$.
Let $\cS = \cF[\ell,m,n]$ be a rotational symbol sequence with $2 \le \ell \le n-2$.
Suppose $\det \left( I-M_{\cS^{\overline{0}}} \right) \ne 0$
and $\det \left( I-M_{\cS^{\overline{\ell d}}} \right) \ne 0$\removableFootnote{
We do need this assumption,
as there exist codimension-three points where we also have
$\det \left( I-M_{\cS^{\overline{\ell d}}} \right) = 0$.
}.
If the $\cS^{\overline{0}}$-cycle is admissible,
and $s^{\cS^{\overline{0}}}_i = 0$ only for $i = 0$ and $i = \ell d$\removableFootnote{
In \cite{SiMe09,SiMe10} we do not require $s^{\cS^{\overline{0}}}_i \ne 0$
for other values of $i$ in the definition.
We include this requirement here because it is needed
for most of the results in this paper, as well as the basic unfolding theorem.
I have not mentioned this in the text as it is a bit technical.
An alternative is to call this a ``non-degenerate shrinking point''.
},
then we say that $\xi$ is an {\em $\cS$-shrinking point}.
\label{df:shrPoint}
\end{definition}

In Definition \ref{df:shrPoint},
$s^{\cS^{\overline{0}}}_0 = 0$ and $s^{\cS^{\overline{0}}}_{\ell d} = 0$
are the two codimension-$1$ conditions that specify an $\cS$-shrinking point.
The remaining conditions of the definition ensure genericity.
At an $\cS$-shrinking point, we let
\begin{align}
y_i &\defeq x^{\cS^{\overline{0}}}_i \;, &
t_i &\defeq s^{\cS^{\overline{0}}}_i \;, \label{eq:yitiDef} \\
a &\defeq \det \left( I-M_{\cS^{\overline{0}}} \right) \;, &
b &\defeq \det \left( I-M_{\cS^{\overline{\ell d}}} \right) \;. \label{eq:ab}
\end{align}
As shown in \S\ref{sub:furtherProperties}, we always have $a b < 0$.

At an $\cS$-shrinking point, $M_{\cS}$ has a unit eigenvalue of algebraic multiplicity $1$
(see \S\ref{sub:shrPoints}).
It follows that the same is true for each $M_{\cS^{(i)}}$.
For each of the four indices $j = 0$, $(\ell-1)d$, $\ell d$ and $-d$ (taken modulo $n$),
we let $u_j^{\sf T}$ and $v_j$ denote the left and right eigenvectors of $M_{\cS^{(j)}}$
corresponding to the unit eigenvalue and
normalised by $u_j^{\sf T} v_j = 1$ and $e_1^{\sf T} v_j = 1$.
The restriction to the four given indices ensures this normalisation
can always be achieved (see \S\ref{sub:eigenvectors}).
As shown in \S\ref{sub:furtherProperties}, the eigenvectors satisfy
\begin{equation}
\frac{u_0^{\sf T} v_{-d}}{a} +
\frac{u_{\ell d}^{\sf T} v_{(\ell-1)d}}{b} = 
\frac{u_{(\ell-1)d}^{\sf T} v_{\ell d}}{a} +
\frac{u_{-d}^{\sf T} v_0}{b} = \frac{1}{c} \;,
\label{eq:uvIdentity12early}
\end{equation}
where $c$ denotes the product of the nonzero eigenvalues of $I-M_{\cS}$, that is
\begin{equation}
c \defeq \prod_{i=2}^N \lambda_i \;,
\label{eq:c}
\end{equation}
where $\lambda_i$ are the eigenvalues of $I-M_{\cS}$, counting multiplicity, and $\lambda_1 = 0$.

We now consider the properties of (\ref{eq:f}) near an $\cS$-shrinking point.
For simplicity we assume $\xi \in \mathbb{R}^2$ and write $\xi = (\xi_1,\xi_2)$.
Suppose that (\ref{eq:f}) has an $\cS$-shrinking point at some $\xi = \xi^*$.
It follows that there exists a neighbourhood of $\xi^*$ within which
the $\cS^{\overline{0}}$-cycle exists and is unique.
By Definition \ref{df:shrPoint},
$s^{\cS^{\overline{0}}}_0 = s^{\cS^{\overline{0}}}_{\ell d} = 0$ at $\xi = \xi^*$.
We let
\begin{equation}
\eta \defeq s^{\cS^{\overline{0}}}_0(\xi_1,\xi_2) \;, \qquad
\nu \defeq s^{\cS^{\overline{0}}}_{\ell d}(\xi_1,\xi_2) \;,
\label{eq:etanu}
\end{equation}
and assume that the coordinate change $(\xi_1,\xi_2) \to (\eta,\nu)$ is locally invertible,
i.e.~$\det(J) \ne 0$, where
\begin{equation}
J \defeq \left. \begin{bmatrix}
\frac{\partial \eta}{\partial \xi_1} &
\frac{\partial \eta}{\partial \xi_2} \\
\frac{\partial \nu}{\partial \xi_1} &
\frac{\partial \nu}{\partial \xi_2}
\end{bmatrix} \right|_{\xi = \xi^*} \;.
\label{eq:J}
\end{equation}
In $(\eta,\nu)$-coordinates the $\cS$-shrinking point $\xi = \xi^*$
is located at $(\eta,\nu) = (0,0)$.

\begin{figure}[t!]
\begin{center}
\setlength{\unitlength}{1cm}
\begin{picture}(10.33,8)
\put(0,0){\includegraphics[height=8cm]{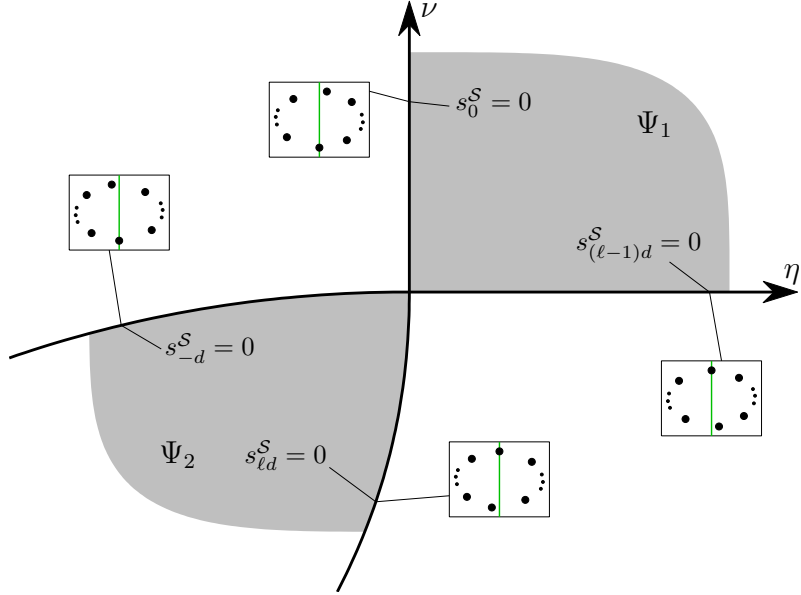}}
\put(10.3,4.2){$\eta$}
\put(5.47,7.67){$\nu$}
\put(8.33,6.13){$\Psi_1$}
\put(2,1.73){$\Psi_2$}
\put(5.93,6.43){\footnotesize $s^{\cS}_0 = 0$}
\put(7.53,4.57){\footnotesize $s^{\cS}_{(\ell-1)d} = 0$}
\put(3.13,1.74){\footnotesize $s^{\cS}_{\ell d} = 0$}
\put(2.07,3.17){\footnotesize $s^{\cS}_{-d} = 0$}
\end{picture}
\caption{
The basic unfolding of a generic $\cS$-shrinking point of (\ref{eq:f}),
as explained in the surrounding text and in more detail in \S\ref{sub:unfolding}.
The $(\eta,\nu)$-coordinates (\ref{eq:etanu})
provide a two-dimensional cross-section of parameter space for which
two of the border-collision bifurcation curves coincide with the coordinate axes.
The insets are schematic phase portraits indicating the location of
the points of the $\cS$-cycle relative to the switching manifold.
\label{fig:shrPointUnfoldingBasic}
}
\end{center}
\end{figure}

As shown in \cite{SiMe09},
four border-collision bifurcation curves emanate from $(\eta,\nu) = (0,0)$.
Each curve corresponds to the existence of an $\cS$-cycle with $s^{\cS}_j = 0$,
for $j = 0$, $(\ell-1)d$, $\ell d$ and $-d$.
The curves are orientated as in Fig.~\ref{fig:shrPointUnfoldingBasic}
and locally define two regions, $\Psi_1$ and $\Psi_2$.
In $\Psi_1$ there exist unique $\cF[\ell,m,n]$ and $\cF[\ell-1,m,n]$-cycles,
and in $\Psi_2$ there exist unique $\cF[\ell,m,n]$ and $\cF[\ell+1,m,n]$-cycles.
If, in both $\Psi_1$ and $\Psi_2$, one of the two periodic solutions is stable,
then (\ref{eq:f}) has a mode-locking region with zero width at $\xi = \xi^*$.
In this case, the $\cF[\ell,m,n]$-cycle is stable on exactly one side of the shrinking point,
as determined by the sign of $a$, Table \ref{tb:stability}.

Finally let $\sigma \ge 0$ denote the maximum of the moduli of the eigenvalues of $M_{\cS}$,
excluding the unit eigenvalue\removableFootnote{
Note: $\sigma$ is a constant defined at the shrinking point
(not a function of $\eta$ and $\nu$).
},
at the $\cS$-shrinking point.
That is,
\begin{equation}
\sigma \defeq \max_{i = 2,\ldots,N} | \rho_i | \;,
\label{eq:sigma}
\end{equation}
where $\rho_i$ are the eigenvalues of $M_{\cS}$, counting multiplicity, and $\rho_1 = 1$.
In general the stability multipliers of an $\cS$-cycle are the eigenvalues of $M_{\cS}$,
\S\ref{sub:periodicSolns}.
Consequently we must have $\sigma < 1$ in order for there to exist a stable $\cS$-cycle
for some parameter values near the $\cS$-shrinking point\removableFootnote{
The assumption $\sigma < 1$ is included in the theorems
not so that periodic solutions are stable,
but rather so that the slow manifolds are attracting
and thus we can expect orbits to regular return to the slow manifolds
and thus we can prove something about the dynamics by looking near the slow manifolds.
}.

\begin{table}[t!]
\begin{center}
\begin{tabular}{c|c|c|}
& $a<0$ & $a>0$ \\
\cline{1-3}
$\cF[\ell,m,n]$-cycle in $\Psi_1$ & stable & unstable \\
$\cF[\ell-1,m,n]$-cycle in $\Psi_1$ & unstable & stable \\
\cline{1-3}
$\cF[\ell,m,n]$-cycle in $\Psi_2$ & unstable & stable \\
$\cF[\ell+1,m,n]$-cycle in $\Psi_2$ & stable & unstable \\
\cline{1-3}
\end{tabular}
\caption{
Cases for the stability of periodic solutions near an $\cS$-shrinking point,
where $\cS = \cF[\ell,m,n]$,
in the scenario that stable periodic solutions exist on both sides of the shrinking point.
\label{tb:stability}
}
\end{center}
\end{table}

\subsection{Theorems for nearby mode-locking regions}
\label{sub:theorems}

Each mode-locking region of Fig.~\ref{fig:modeLockEx20}
corresponds to stable $\cF[\ell,m,n]$-cycles with fixed values of $m$ and $n$,
and values of $\ell$ that change by one each time we cross a shrinking point.
Nearby mode-locking regions have rotation numbers close to $\frac{m}{n}$. 
This motivates the following definition.

\begin{definition}
Given $k \in \mathbb{Z}^+$, $\chi \in \mathbb{Z}$ with $| \chi | < k$,
and a rotational symbol sequence $\cF[\ell,m,n]$, we let
\begin{equation}
\cG^\pm[k,\chi] \defeq
\cF \left[ \ell_k^\pm + \chi, m_k^\pm, n_k^\pm \right] \;,
\label{eq:Gplusminusearly}
\end{equation}
where
\begin{equation}
\ell_k^\pm \defeq k \ell + \ell^\pm \;, \qquad
m_k^\pm \defeq k m + m^\pm \;, \qquad
n_k^\pm \defeq k n + n^\pm \;,
\label{eq:lmnkpm}
\end{equation}
and $\frac{m^-}{n^-}$ and $\frac{m^+}{n^+}$ are the left and right Farey roots of $\frac{m}{n}$,
$\ell^+ \defeq \left\lceil \frac{\ell n^+}{n} \right\rceil$, and
$\ell^- \defeq \left\lfloor \frac{\ell n^-}{n} \right\rfloor$.
We also let $\tilde{\ell} \defeq \ell_k^\pm + \chi$,
and let $d_k^\pm$ denote the multiplicative inverse of $m_k^\pm$ modulo $n_k^\pm$.
\label{df:cG}
\end{definition}

Each $\cG^\pm[k,\chi]$ is a rotational symbol sequence
with rotation number $\frac{m_k^\pm}{n_k^\pm} = \frac{k m + m^\pm}{k n + n^\pm}$.
These rotation numbers limit to $\frac{m}{n}$, as $k \to \infty$,
and are in the first level of complexity relative to $\frac{m}{n}$ \cite{ZhMo06b}.
Other rotational symbol sequences with rotation numbers near $\frac{m}{n}$
have rotation numbers of higher levels of complexity
and are beyond the scope of this paper.

\begin{figure}[t!]
\begin{center}
\setlength{\unitlength}{1cm}
\begin{picture}(12,6)
\put(0,0){\includegraphics[height=6cm]{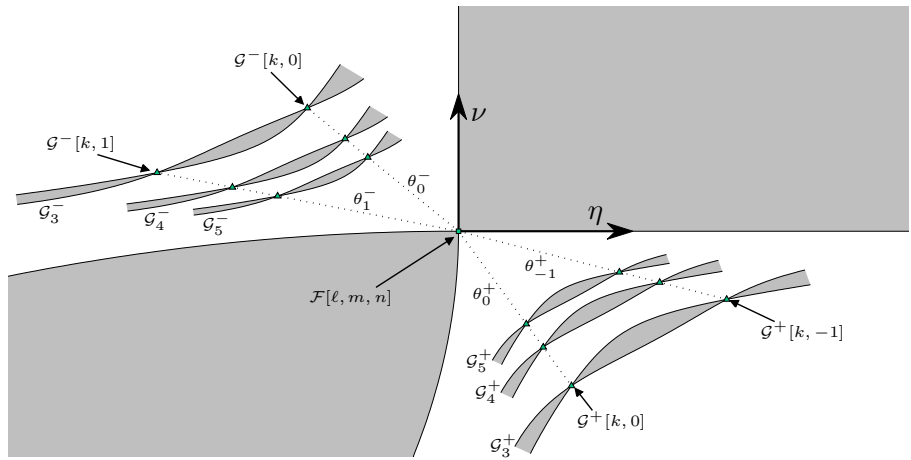}}
\put(7.7,3.17){\small $\eta$}
\put(6.15,4.47){\small $\nu$}
\put(6.17,2.1){\tiny $\theta^+_0$}
\put(6.86,2.47){\tiny $\theta^+_{-1}$}
\put(5.3,3.6){\tiny $\theta^-_0$}
\put(4.58,3.37){\tiny $\theta^-_1$}
\put(7.56,.41){\tiny $\cG^+[k,0]$}
\put(10,1.6){\tiny $\cG^+[k,-1]$}
\put(3,5.22){\tiny $\cG^-[k,0]$}
\put(.5,4.11){\tiny $\cG^-[k,1]$}
\put(6.08,1.22){\tiny $\cG^+_5$}
\put(6.2,.8){\tiny $\cG^+_4$}
\put(6.4,.09){\tiny $\cG^+_3$}
\put(2.58,3.03){\tiny $\cG^-_5$}
\put(1.8,3.1){\tiny $\cG^-_4$}
\put(.4,3.2){\tiny $\cG^-_3$}
\put(4,2.06){\tiny $\cF[\ell,m,n]$}
\end{picture}
\caption{
A sketch of typical $\cG^\pm_k$-mode-locking regions (for $k = 3,4,5$)
near an $\cF[\ell,m,n]$-shrinking point
in $(\eta,\nu)$-coordinates in the case $a < 0$.
Each shrinking point is labelled by its associated symbol sequence.
Formulas for the angles, denoted $\theta^\pm_{\chi}$, 
about which sequences of $\cG^\pm[k,\chi]$-shrinking points
emanate from the $\cF[\ell,m,n]$-shrinking point are given by (\ref{eq:thetaPlus})-(\ref{eq:thetaMinus}).
In the case $a > 0$, the relative location of the $\cG^+_k$ and $\cG^-_k$-mode-locking regions is reversed.
\label{fig:shrPointSchem}
}
\end{center}
\end{figure}

Near a typical shrinking point, 
there exist mode-locking regions corresponding
to $\cG^\pm[k,\chi]$-cycles for several consecutive values of $\chi$, Fig.~\ref{fig:shrPointSchem}.
We refer to these as $\cG^\pm_k$-mode-locking regions.

For any $\chi_{\rm max} \in \mathbb{Z}^+$, we define the collection
\begin{align}
\Xi_{\chi_{\rm max}} &\defeq
\left\{ \cG^+[k,\chi] ~\big|~ k \in \mathbb{Z}^+,
-\chi_{\rm max} \le \chi < \chi_{\rm max}, |\chi| < k \right\} \nonumber \\
&\quad\cup \left\{ \cG^-[k,\chi] ~\big|~ k \in \mathbb{Z}^+,
-\chi_{\rm max} < \chi \le \chi_{\rm max}, |\chi| < k \right\} \;.
\end{align}
We define polar coordinates $(r,\theta)$ by\removableFootnote{
It is interesting that this puts $\det \left( I - M_{\cS} \right)$ on the $135^\circ$ line.
}
\begin{equation}
\eta = \left| \frac{c t_d}{a} \right| r \cos(\theta) \;, \qquad
\nu = \left| \frac{c t_{(\ell-1)d}}{a} \right| r \sin(\theta) \;.
\label{eq:polarCoords}
\end{equation}
We also define a continuous function $\Gamma : \left( 0, \frac{\pi}{2} \right) \to \mathbb{R}$ by
\begin{equation}
\Gamma(\theta) \defeq \begin{cases}
\frac{\ln \left( \cos(\theta) \right) - \ln \left( \sin(\theta) \right)}
{\cos(\theta) - \sin(\theta)} \;, & \theta \in \left( 0, \frac{\pi}{2} \right)
\setminus \left\{ \frac{\pi}{4} \right\} \\
\sqrt{2} \;, & \theta = \frac{\pi}{4} 
\end{cases} \;,
\label{eq:Gamma}
\end{equation}
and extend this definition to all non-integer multiples of $\frac{\pi}{2}$ in a periodic fashion:
\begin{equation}
\Gamma(\theta) \defeq \Gamma \left( \theta {\rm ~mod~} \frac{\pi}{2} \right) \;, \quad
\theta \ne \frac{j \pi}{2} \;, \quad 
j \in \mathbb{Z} \;.
\label{eq:GammaExtended}
\end{equation}
The following theorem provides us with the location of the $\cG^\pm_k$-mode-locking regions
to leading order.

\begin{theorem}
Suppose (\ref{eq:f}) with $K \ge 2$ has an $\cS$-shrinking point
satisfying $\sigma < 1$ and $\det(J) \ne 0$,
and write $\cS = \cF[\ell,m,n]$.
Then for all $\chi_{\rm max} \in \mathbb{Z}^+$,
there exists $k_{\rm min} \in \mathbb{Z}^+$
and a neighbourhood $\cN$ of $(\eta,\nu) = (0,0)$,
such that for all $\cT = \cG^\pm[k,\chi] \in \Xi_{\chi_{\rm max}}$ with $k \ge k_{\rm min}$,
within $\cN$ there exists a unique $C^K$ curve on which $\det \left( P_{\cT} \right) = 0$
and a unique $C^K$ curve on which
$\det \left( P_{\cT^{\left( \left( \tilde{\ell}-1 \right) d_k^\pm \right)}} \right) = 0$,
and both curves lie within $\cO \left( \frac{1}{k^2} \right)$ of
\begin{equation}
r = \frac{1}{k} \,\Gamma(\theta) \;,
\label{eq:nearbyCurve}
\end{equation}
where
\begin{equation}
\begin{gathered}
{\rm if~} \cT = \cG^+[k,\chi] {\rm ~and~} a < 0, {\rm ~or~} \cT = \cG^-[k,\chi] {\rm ~and~} a > 0,
{\rm ~then~} \theta \in {\textstyle \left( \frac{3 \pi}{2}, 2 \pi \right)} \;, \\
{\rm and~if~} \cT = \cG^+[k,\chi] {\rm ~and~} a > 0, {\rm ~or~} \cT = \cG^-[k,\chi] {\rm ~and~} a < 0,
{\rm ~then~} \theta \in {\textstyle \left( \frac{\pi}{2}, \pi \right)} \;.
\end{gathered}
\label{eq:thetaSigns}
\end{equation}
\label{th:main1}
\end{theorem}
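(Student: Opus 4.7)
The plan is to reduce the infinite family of statements (one per admissible $k$) to a single asymptotic calculation on one-dimensional slow manifolds. Piecewise-linearity of $f$, together with the spectral hypothesis $\sigma<1$ and the simple unit eigenvalue of $M_{\cS}$ at the shrinking point, forces the leading-order dynamics along the $\cS$-cycle to be one-dimensional, and this is what drives the $1/k$ scaling in (\ref{eq:nearbyCurve}).

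First I would set up the slow manifolds. Near each $y_i$, the affine branch $f^{\cS^{(i)}}$ is fixed and locally linear, so the composition $f^{\cS}$ is affine with a simple leading eigenvalue $\alpha(\eta,\nu)$ satisfying $\alpha(0,0)=1$, and the corresponding eigenspace through the unique nearby $\cS$-cycle point gives an exact one-dimensional slow manifold $W_i$, smooth in $(\eta,\nu)$. The restriction of $f^{\cS}$ to $W_0$ is affine, $\Pi(z)=\alpha z+\beta$, and the key preliminary computation is the leading-order linearization of $\alpha$ and $\beta$ in $(\eta,\nu)$. Combining the eigenvector normalizations $u_j^{\sf T}v_j=1$ and $e_1^{\sf T}v_j=1$ with identity (\ref{eq:uvIdentity12early}), the coefficients that appear are exactly the scale factors $|c\,t_d/a|$ and $|c\,t_{(\ell-1)d}/a|$ built into the polar coordinates (\ref{eq:polarCoords})---these coordinates were chosen precisely so that the leading asymptotics take the clean form (\ref{eq:nearbyCurve}).

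Second I would represent a $\cG^{\pm}[k,\chi]$-cycle on this slow-manifold picture. By the Farey block structure established in Section~\ref{sec:symbol}, the symbol sequence $\cG^{\pm}[k,\chi]$ decomposes into $k$ copies of $\cS$ interleaved with a single copy of the Farey-partner sequence $\cF[\ell^{\pm},m^{\pm},n^{\pm}]$, so the corresponding cycle makes $k$ near-identity passes around the $\cS$-backbone plus one additional excursion that, to leading order, acts on $W_0$ as a second affine map $z\mapsto\gamma z+\delta$ whose coefficients depend smoothly on $(\eta,\nu)$ and on $\chi$. The fixed-point equation becomes $z^{*}=\gamma\alpha^{k}z^{*}+\gamma\beta(\alpha^{k}-1)/(\alpha-1)+\delta$, and the boundary condition $\det(P_{\cT})=0$ at index $0$ or $(\tilde{\ell}-1)d_k^{\pm}$ translates, via (\ref{eq:sFormulaEarly}) together with $e_1^{\sf T}v_j=1$, into a single scalar constraint between $z^{*}$ and $(\eta,\nu)$.

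Finally I would extract the asymptotics. Writing $\alpha=1+\epsilon$ with $\epsilon=O(r)$ and constraining $r\sim 1/k$ yields $\alpha^{k}=e^{k\epsilon}\bigl(1+O(1/k)\bigr)$. Substituting the linearized expressions for $\epsilon,\beta,\gamma,\delta$ and passing to polar coordinates (\ref{eq:polarCoords}), the constraint collapses to $\alpha^{k}=\cos\theta/\sin\theta+O(1/k)$, hence $k(\cos\theta-\sin\theta)\,r=\ln\cos\theta-\ln\sin\theta+O(1/k)$, which is exactly (\ref{eq:nearbyCurve}); continuity handles the removable singularity at $\theta=\pi/4$, and the quadrant split (\ref{eq:thetaSigns}) is dictated by the sign of $a$ via Table~\ref{tb:stability}, the $\cG^{+}_k$ and $\cG^{-}_k$ regions sitting on opposite sides of the shrinking point and swapping roles with the sign of $a$. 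The implicit function theorem then upgrades the leading-order approximation to the $\cO(1/k^{2})$ deviation bound and supplies existence and uniqueness of each boundary curve within a sufficiently small neighborhood $\cN$ once $k\ge k_{\rm min}$. The main obstacle I anticipate is verifying that the Farey-excursion coefficients $\gamma,\delta$ remain $O(1)$ uniformly in $\chi\in[-\chi_{\rm max},\chi_{\rm max}]$ and do not contaminate the dominant $\alpha^{k}$ factor; this requires careful symbolic and indexing bookkeeping, but is made feasible by the exact linearity of $f$ on each half.
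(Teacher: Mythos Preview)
Your proposal is correct and follows essentially the same route as the paper: reduce to one-dimensional affine dynamics on the slow manifold of $f^{\cS^{(j)}}$ (the paper's parametrization $\varphi_j+h\zeta_j$ with return map $h\mapsto\lambda h+\gamma_j$ is exactly your $\Pi(z)=\alpha z+\beta$), use the block decomposition of $\cG^\pm[k,\chi]$ (Proposition~\ref{pr:Gall} and Lemma~\ref{le:sGall2}) to write the condition $\det(P_{\cT})=0$ as a scalar equation dominated by $\lambda^k$, rescale by $1/k$, pass to polar coordinates to obtain $\Gamma(\theta)$, and close with the implicit function theorem. The only point to tighten is that for $\chi\ne0$ the decomposition is $(\cS^{\overline{\ell d}})^{|\chi|}\cS^{k-|\chi|}\hat{\cX}$ (or the analogous variants) rather than ``$k$ copies of $\cS$ plus one Farey-partner block,'' but this is precisely the bookkeeping you already flagged, and since $|\chi|\le\chi_{\rm max}$ the extra factors are absorbed into your $O(1)$ excursion map $(\gamma,\delta)$.
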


\begin{figure}[b!]
\begin{center}
\setlength{\unitlength}{1cm}
\begin{picture}(6,4.5)
\put(0,0){\includegraphics[height=4.5cm]{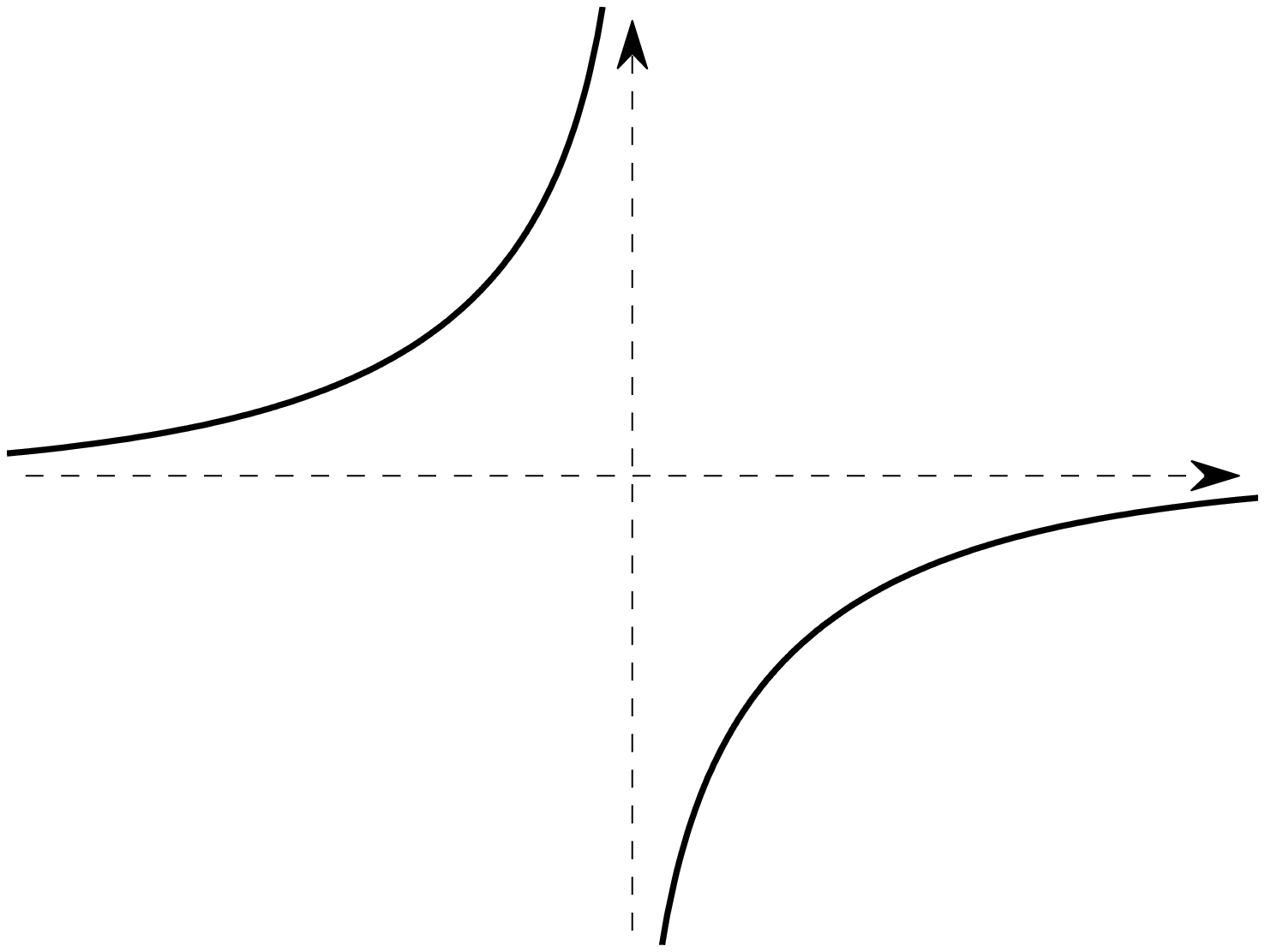}}
\put(5.4,2.4){\small $\eta$}
\put(3.1,4.1){\small $\nu$}
\put(4,2.33){\scriptsize $\theta = 0$}
\put(3.07,3.3){\scriptsize $\theta = \frac{\pi}{2}$}
\put(1.1,2){\scriptsize $\theta = \pi$}
\put(2.02,1){\scriptsize $\theta = \frac{3 \pi}{2}$}
\put(3.88,1.26){\small $r = \frac{1}{k} \,\Gamma(\theta)$}
\put(.48,3.2){\small $r = \frac{1}{k} \,\Gamma(\theta)$}
\end{picture}
\caption{
A sketch of (\ref{eq:nearbyCurve}).
\label{fig:Phi}
}
\end{center}
\end{figure}

Theorem \ref{th:main1} tells us that if there exists a $\cG^\pm_k$-mode-locking region,
it has a width of at most $\cO \left( \frac{1}{k^2} \right)$,
lies approximately on the curve (\ref{eq:nearbyCurve}) (sketched in Fig.~\ref{fig:Phi}),
and is located an $\cO \left( \frac{1}{k} \right)$ distance from the $\cS$-shrinking point.
By (\ref{eq:thetaSigns}), if $a < 0$
then the $\cG^+_k$-mode-locking regions lie in the fourth quadrant of the $(\eta,\nu)$-plane,
and the $\cG^-_k$-mode-locking regions lie in the second quadrant of the $(\eta,\nu)$-plane
(as in Fig.~\ref{fig:shrPointSchem}).
If $a > 0$, then the opposite is true.

Next we investigate shrinking points on the $\cG^\pm_k$-mode-locking regions.
We find that $\cG^\pm[k,\chi]$-shrinking points exist for arbitrarily large values of $k$
only for certain values of $\chi$,
and that these values of $\chi$ depend on the given $\cS$-shrinking point.
To determine the appropriate values of $\chi$,
we define scalar quantities $\kappa^{\pm}_\chi$ by
\begin{align}
\kappa^+_{\chi} &\defeq \begin{cases}
u_{\ell d}^{\sf T} M_{\cS^{\overline{0} (\ell d)}}^{-\chi-1} \big|_{(\eta,\nu)=(0,0)} v_{(\ell-1)d} \;, &
\chi \le -1 \\
u_0^{\sf T} M_{\cS^{\overline{\ell d}}}^{\chi} \big|_{(\eta,\nu)=(0,0)} v_{-d} \;, &
\chi \ge 0
\end{cases} \;, \label{eq:kappaPlus} \\
\kappa^-_{\chi} &\defeq \begin{cases}
u_{-d}^{\sf T} M_{\cS^{\overline{0}}}^{-\chi} \big|_{(\eta,\nu)=(0,0)} v_0 \;, &
\chi \le 0 \\
u_{(\ell-1)d}^{\sf T} M_{\cS^{\overline{\ell d} (\ell d)}}^{\chi - 1} \big|_{(\eta,\nu)=(0,0)} v_{\ell d} \;, &
\chi \ge 1
\end{cases} \;. \label{eq:kappaMinus}
\end{align}
To clarify these expressions, the matrix $M_{\cS^{\overline{0} (\ell d)}}^{-\chi-1}$, for example,
is the $(-\chi-1)^{\rm th}$ power of $M_{\cS^{\overline{0} (\ell d)}}$,
where $M_{\cS^{\overline{0} (\ell d)}}$ is given by (\ref{eq:MS})
using $\cS^{\overline{0} (\ell d)}$ -- the $(\ell d)^{\rm th}$
left shift permutation of $\cS^{\overline{0}}$.
In (\ref{eq:kappaPlus}) this matrix is evaluated at the $\cS$-shrinking point.


We also define\removableFootnote{
Note, the $\theta^\pm_{\chi}$ do not denote the exact values of the intersection points.
}
\begin{align}
\theta^+_{\chi} &\defeq \begin{cases}
\tan^{-1} \left( \frac{t_{(\ell+1)d}}
{t_{(\ell-1)d} \left| \kappa^+_{\chi} \right|} \right) \;, &
\chi \le -1 \\
\tan^{-1} \left( \frac{t_d}
{t_{-d} \left| \kappa^+_{\chi} \right|} \right) \;, &
\chi \ge 0
\end{cases} \;, \label{eq:thetaPlus} \\
\theta^-_{\chi} &\defeq \begin{cases}
\tan^{-1} \left( \frac{t_d \left| \kappa^-_{\chi} \right|}
{t_{-d}} \right) \;, &
\chi \le 0 \\
\tan^{-1} \left( \frac{t_{(\ell+1)d} \left| \kappa^-_{\chi} \right|}
{t_{(\ell-1)d}} \right) \;, &
\chi \ge 1
\end{cases} \;, \label{eq:thetaMinus}
\end{align}
assuming $\kappa^{\pm}_{\chi} \ne 0$,
where the ambiguity of each $\tan^{-1}(\cdot)$ is resolved by Table \ref{tb:thetaDomains}.

\begin{table}[t!]
\begin{center}
\begin{tabular}{|c|c|}
$a<0$ & $a>0$ \\
\cline{1-2} & \\[-3.5mm]
$\theta^+_{\chi} \in \left( \frac{3 \pi}{2}, 2 \pi \right)$ &
$\theta^+_{\chi} \in \left( \frac{\pi}{2}, \pi \right)$ \\[1mm]
$\theta^-_{\chi} \in \left( \frac{\pi}{2}, \pi \right)$ &
$\theta^-_{\chi} \in \left( \frac{3 \pi}{2}, 2 \pi \right)$ \\[1mm]
\cline{1-2}
\end{tabular}
\caption{
This table indicates the interval to which each $\theta^+_{\chi}$ (\ref{eq:thetaPlus})
and $\theta^-_{\chi}$ (\ref{eq:thetaMinus}) belong,
as determined by the sign of $a$.
\label{tb:thetaDomains}
}
\end{center}
\end{table}


\begin{theorem}
Suppose (\ref{eq:f}) with $K \ge 2$ has an $\cS$-shrinking point
satisfying $\sigma < 1$ and $\det(J) \ne 0$,
and write $\cS = \cF[\ell,m,n]$.
Then for all $\chi_{\rm max} \in \mathbb{Z}^+$,
there exists $k_{\rm min} \in \mathbb{Z}^+$,
and a neighbourhood $\cN$ of $(\eta,\nu) = (0,0)$,
such that for all $\cT = \cG^\pm[k,\chi] \in \Xi_{\chi_{\rm max}}$ with $k \ge k_{\rm min}$,
if $\kappa^{\pm}_{\chi} \ne 0$, then within $\cN$:
\begin{enumerate}
\item
there exists a unique point $\left( \eta_{\cT}, \nu_{\cT} \right)$
at which $\det \left( I - M_{\cT} \right) = \det \left( P_{\cT} \right) = 0$
if and only if $\kappa^{\pm}_{\chi} > 0$,
and $\left( \eta_{\cT}, \nu_{\cT} \right)$
lies within $\cO \left( \frac{1}{k^2} \right)$
of (\ref{eq:nearbyCurve}) with $\theta = \theta^{\pm}_{\chi}$;
\item
there exists a unique $C^K$ curve\removableFootnote{
With a bit more work I could probably approximate its slope.
}
on which (\ref{eq:f}) has a $\cT$-cycle with an associated stability multiplier of $-1$
if and only if $\kappa^{\pm}_{\chi} < 0$,
and this curve intersects $\det \left( P_{\cT} \right) = 0$
at a point within $\cO \left( \frac{1}{k^2} \right)$
of (\ref{eq:nearbyCurve}) with $\theta = \theta^{\pm}_{\chi}$.
\end{enumerate}
\label{th:main2}
\end{theorem}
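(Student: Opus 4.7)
The plan is to combine Theorem \ref{th:main1} with a spectral analysis of $M_\cT$ in order to locate, along the approximate boundary curve (\ref{eq:nearbyCurve}), the point at which the dominant (``slow'') eigenvalue of $M_\cT$ equals $+1$ (part (i)) or $-1$ (part (ii)).

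\textbf{Step 1: symbolic factorization.} Using the structure of $\cG^{\pm}[k,\chi]$ developed in \S\ref{sec:symbol}, I would express $\cT$, up to a cyclic shift, as a concatenation of $k-1$ blocks each realizing $M_\cS$ (or a cyclic permutation thereof) glued to a fixed ``defect'' block whose length and content depend only on $\chi$ and the sign $\pm$. This yields a matrix factorization
\begin{equation*}
M_{\cT^{(j)}} \;=\; M_{\cS^{(i)}}^{\,k-1}\,R_\chi^{\pm},
\end{equation*}
where $i \in \{0,(\ell-1)d,\ell d,-d\}$ and $j$ are shift indices depending on the case, and $R_\chi^{\pm}$ is a fixed $N\times N$ matrix depending $C^K$ on parameters. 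By construction, $u_i^{\sf T} R_\chi^{\pm} v_i = \kappa^{\pm}_{\chi}$ at the $\cS$-shrinking point, reproducing one of the four branches in (\ref{eq:kappaPlus})--(\ref{eq:kappaMinus}).

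\textbf{Step 2: spectral asymptotics.} At the $\cS$-shrinking point, $M_{\cS^{(i)}}$ has a simple unit eigenvalue with $u_i^{\sf T} v_i = 1$ and all other eigenvalues of modulus at most $\sigma<1$. Spectral projection gives $M_{\cS^{(i)}}^{\,k-1} = v_i u_i^{\sf T} + \cO(\sigma^{k-1})$, and this representation perturbs $C^K$-smoothly in $(\eta,\nu)$. Substituting into the factorization,
\begin{equation*}
M_{\cT^{(j)}} \;=\; v_i u_i^{\sf T} R_\chi^{\pm} \,+\, \cO(\sigma^k) \,+\, \cO\bigl(|(\eta,\nu)|\bigr),
\end{equation*}
so $M_{\cT^{(j)}}$ has a single ``slow'' eigenvalue
\begin{equation*}
\lambda_\cT \;=\; \kappa^{\pm}_{\chi} + \cO\bigl(|(\eta,\nu)|\bigr) + \cO(\sigma^k),
\end{equation*}
while the remaining $N-1$ eigenvalues are bounded in modulus by $\sigma+o(1)$, hence bounded away from $\pm 1$ for $k$ large.

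\textbf{Step 3: intersection with $\det(P_\cT)=0$.} By Theorem \ref{th:main1}, within a small neighbourhood $\cN$ the curve $\det(P_\cT)=0$ is a $C^K$ arc within $\cO(1/k^2)$ of $r=\Gamma(\theta)/k$ in the polar coordinates (\ref{eq:polarCoords}). Parametrising this arc by $\theta$ and expanding $\lambda_\cT$ to first order in $(\eta,\nu)$, the equations $\lambda_\cT = \pm 1$ reduce, at leading order in $1/k$, to a tangent equation whose unique solution in the relevant quadrant is precisely $\theta=\theta^{\pm}_{\chi}$ from (\ref{eq:thetaPlus})--(\ref{eq:thetaMinus}), the quadrant being fixed by the sign of $a$ via Table \ref{tb:thetaDomains} in accordance with (\ref{eq:thetaSigns}). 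The sign condition on $\kappa^{\pm}_{\chi}$ now appears directly: the level $\lambda_\cT=+1$ is attained on the boundary arc if and only if $\kappa^{\pm}_{\chi}>0$, giving part (i), and the level $\lambda_\cT=-1$ is attained if and only if $\kappa^{\pm}_{\chi}<0$, giving part (ii). The Implicit Function Theorem, applied to $\det(I-M_\cT)$ in part (i) and to $\det(I+M_\cT)$ in part (ii), produces the unique intersection point (resp.\ the unique $C^K$ curve) within $\cO(1/k^2)$ of the claimed location.

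\textbf{Main obstacle.} The principal difficulty lies in step 1: carrying out the symbolic bookkeeping so that the factorization of $M_{\cT^{(j)}}$ cleanly isolates a large power of $M_{\cS^{(i)}}$ with the correct residual matrix $R_\chi^{\pm}$, and verifying that the quadratic form $u_i^{\sf T} R_\chi^{\pm} v_i$ literally reproduces $\kappa^{\pm}_{\chi}$ across the four sub-cases in (\ref{eq:kappaPlus})--(\ref{eq:kappaMinus}). A secondary technical concern is making the $\cO$-estimates in step 2 uniform in $|\chi|\le \chi_{\rm max}$ and in both sign choices, which is what is needed so that the threshold $k_{\rm min}$ may be taken depending only on $\chi_{\rm max}$.
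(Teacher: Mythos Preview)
Your overall strategy is correct and matches the paper's, but Step~2 contains a genuine error that breaks Step~3. You write
\[
M_{\cT^{(j)}} \;=\; v_i u_i^{\sf T} R_\chi^{\pm} \,+\, \cO(\sigma^k) \,+\, \cO\bigl(|(\eta,\nu)|\bigr),
\qquad
\lambda_\cT \;=\; \kappa^{\pm}_{\chi} + \cO\bigl(|(\eta,\nu)|\bigr) + \cO(\sigma^k).
\]
The $\cO(|(\eta,\nu)|)$ bound is \emph{not uniform in $k$}: the derivative of $M_{\cS^{(i)}}^{\,k-1}$ with respect to $(\eta,\nu)$ is of order $k$, so on the boundary arc, where $|(\eta,\nu)|=\cO(1/k)$ by Theorem~\ref{th:main1}, the ``error'' you discard is $\cO(1)$. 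The correct expansion keeps the slow eigenvalue $\lambda=\lambda(\eta,\nu)$ of $M_{\cS}$ explicitly:
\[
M_{\cS^{(i)}}^{\,k-1} \;=\; \lambda^{\,k-1}\,\zeta_i\,\omega_i^{\sf T} + \cO(\sigma^k),
\qquad\text{so}\qquad
\lambda_\cT \;=\; \lambda^{\,k}\,C^{\pm}_\chi + \cO(\sigma^k),
\]
with $C^{\pm}_\chi$ a nonzero constant of the same sign as $\kappa^{\pm}_\chi$ (the paper obtains $C^{+}_\chi=-\tfrac{t_{-d}}{t_d}\kappa^{+}_\chi$ via (\ref{eq:vumd2}); in particular $u_i^{\sf T} R_\chi^{\pm} v_i$ does not literally equal $\kappa^{\pm}_\chi$). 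Your first-order Taylor expansion of $\lambda_\cT$ in $(\eta,\nu)$ cannot recover this: the second-order Taylor remainder of $\lambda^k$ is also $\cO(1)$ at distance $\cO(1/k)$.

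Once you carry the factor $\lambda^k$, the sign condition and the angle both drop out correctly. Along $\det(P_\cT)=0$ the paper shows (cf.~(\ref{eq:plusBoundaries})) that $\tan(\theta)=-\lambda^k+\cO(1/k)$, so as $\theta$ sweeps the relevant quadrant, $\lambda^k$ ranges over all of $(0,\infty)$ while $\lambda_\cT$ stays of fixed sign ${\rm sgn}(\kappa^{\pm}_\chi)$. Hence $\lambda_\cT=+1$ is attainable iff $\kappa^{\pm}_\chi>0$, and $\lambda_\cT=-1$ iff $\kappa^{\pm}_\chi<0$; solving $\lambda^k=\pm 1/C^{\pm}_\chi$ then gives $\tan(\theta)$ and hence $\theta^{\pm}_\chi$ as in (\ref{eq:thetaPlus})--(\ref{eq:thetaMinus}). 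Under your displayed formula $\lambda_\cT=\kappa^{\pm}_\chi+\cO(1/k)$, by contrast, $\lambda_\cT=\pm 1$ would have a solution only when $\kappa^{\pm}_\chi$ happens to be near $\pm 1$, which is not the claim. The fix is simply to replace the spectral projection $v_i u_i^{\sf T}$ in Step~2 by $\lambda^{k-1}\zeta_i\omega_i^{\sf T}$ (Lemma~\ref{le:ordersigmak}) and carry $\lambda^k$ through Step~3, exactly as the paper does in Lemmas~\ref{le:detAll}--\ref{le:eigenvalues} and the proof following them.
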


By Theorem \ref{th:main2},
each $\left( \eta_{\cG^\pm[k,\chi]}, \nu_{\cG^\pm[k,\chi]} \right)$ is a potential $\cG^\pm[k,\chi]$-shrinking point
and exists if $\kappa^{\pm}_{\chi} > 0$.
If $\kappa^{\pm}_{\chi} < 0$, then no such points exist (for sufficiently large values of $k$).
For a fixed value of $\chi$, $\left( \eta_{\cG^+[k,\chi]}, \nu_{\cG^+[k,\chi]} \right)$
and $\left( \eta_{\cG^-[k,\chi]}, \nu_{\cG^-[k,\chi]} \right)$
are sequences of points that limit to the $\cS$-shrinking point as $k \to \infty$.
Each $\left( \eta_{\cG^\pm[k,\chi]}, \nu_{\cG^\pm[k,\chi]} \right)$ is a $\cG^\pm[k,\chi]$-shrinking point
if all the statements in Definition \ref{df:shrPoint} (applied to $\cG^\pm[k,\chi]$) are satisfied.
Naturally we would like identify a practical set of testable conditions that ensure
$\left( \eta_{\cG^\pm[k,\chi]}, \nu_{\cG^\pm[k,\chi]} \right)$ is a $\cG^\pm[k,\chi]$-shrinking point.
However, this is difficult (and not achieved in this paper)
because in order to show that the $\cG^\pm[k,\chi]^{\overline{0}}$-cycle is admissible
we have to determine the sign of $s^{\cG^\pm[k,\chi]^{\overline{0}}}_i$ for all $k n + n^{\pm}$ values of $i$.
Numerical investigations reveal that
the $\cG^\pm[k,\chi]^{\overline{0}}$-cycle is often admissible when $\kappa^{\pm}_{\chi} > 0$,
but this is not always the case.


Already the identity (\ref{eq:uvIdentity12early})
provides some restrictions on the combinations of signs possible for the $\kappa^{\pm}_{\chi}$.
The next result tells us that, for large $k$,
if $\left( \eta_{\cG^\pm[k,\chi]}, \nu_{\cG^\pm[k,\chi]} \right)$
is a $\cG^\pm[k,\chi]$-shrinking point (this requires $\kappa^{\pm}_{\chi} > 0$),
then either $\kappa^{\pm}_{\chi-1} > 0$ or $\kappa^{\pm}_{\chi+1} > 0$ (as determined by the sign of $a$).
Essentially this says that the existence of a $\cG^\pm[k,\chi]$-shrinking point
implies the existence of a neighbouring shrinking point in the $\cG^\pm_k$-mode-locking region,
if admissibility is satisfied.

\begin{theorem}
Suppose (\ref{eq:f}) with $K \ge 2$ has an $\cS$-shrinking point
satisfying $\sigma < 1$ and $\det(J) \ne 0$,
and write $\cS = \cF[\ell,m,n]$.
For any $\chi \in \mathbb{Z}$,
if $\kappa^{\pm}_{\chi} > 0$ and the point $\left( \eta_{\cG^+[k,\chi]}, \nu_{\cG^+[k,\chi]} \right)$
(as specified by Theorem \ref{th:main2})
is a $\cG^+[k,\chi]$-shrinking point
for arbitrarily large values of $k$, then,
\begin{equation}
\begin{gathered}
{\rm if~} a < 0, {\rm ~then~} \kappa^{\pm}_{\chi-1} > 0 \;, \\
{\rm and~if~} a > 0, {\rm ~then~} \kappa^{\pm}_{\chi+1} > 0 \;.
\end{gathered}
\label{eq:shrPointImplication}
\end{equation}
\label{th:main3}
\end{theorem}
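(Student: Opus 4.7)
The plan is to apply the basic shrinking-point unfolding of \cite{SiMe09} (summarised in Fig.~\ref{fig:shrPointUnfoldingBasic} and Table~\ref{tb:stability}) at the hypothesised $\cG^+[k,\chi]$-shrinking point itself, and to combine this with the admissibility clause of Definition~\ref{df:shrPoint}. Since the candidate point is assumed to be a genuine shrinking point for arbitrarily large $k$, Definition~\ref{df:shrPoint} requires that the $(\cG^+[k,\chi])^{\overline{0}}$-cycle be admissible, i.e.\ every one of its $n_k^+$ points lies on the correct side of the switching manifold. This is a sign condition on $s^{(\cG^+[k,\chi])^{\overline{0}}}_j$ for every index $j$, and the strategy is to show that the subset of these conditions corresponding to indices adjacent to the two switching-manifold indices $j=0$ and $j=\widetilde\ell\, d_k^+$ forces $\kappa^{+}_{\chi-1}>0$ or $\kappa^{+}_{\chi+1}>0$ in the appropriate case.

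The first step would be to decompose the $(\cG^+[k,\chi])^{\overline{0}}$-cycle using the Farey-addition identities of Section~\ref{sec:symbol}: the cycle consists, roughly, of $k+\chi+O(1)$ iterates tracking a slow manifold near $y_0$ followed by $k-\chi+O(1)$ iterates tracking a slow manifold near $y_{\ell d}$. The one-dimensional slow manifolds through $y_0$ and $y_{\ell d}$ are the central objects of Sections~\ref{sec:locations}--\ref{sec:properties}, and on these manifolds the dynamics reduces at leading order to multiplication by the unit eigenvalue of $M_{\cS^{\overline{0}}}$ and its powers. The second step is then to write, via a telescoping expansion along these slow manifolds, each $s^{(\cG^+[k,\chi])^{\overline{0}}}_j$ adjacent to the switching indices as a sum over contributions from the unit-eigenvector expansions, and to identify the leading-order contributions with the inner products defining $\kappa^{+}_{\chi-1}$ and $\kappa^{+}_{\chi+1}$ in (\ref{eq:kappaPlus}).

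The direction in which admissibility forces the relevant $\kappa^{+}$ to be positive is then read off from the sign of $a$, together with the signs of the $t_i$ and the identity (\ref{eq:uvIdentity12early}): when $a<0$ the chain of mode-locking regions extends towards decreasing $\chi$ (cf.\ Fig.~\ref{fig:shrPointSchem}), and the critical sign requirement falls on the indices adjacent to $j=0$, whose $s$-values are at leading order $\kappa^{+}_{\chi-1}$ times a positive constant, forcing $\kappa^{+}_{\chi-1}>0$; when $a>0$ the roles of the two clusters are interchanged and the conclusion is $\kappa^{+}_{\chi+1}>0$. The main obstacle, and the bulk of the work, is in the second step: one must establish the asymptotic identification of $s^{(\cG^+[k,\chi])^{\overline{0}}}_j$ with $\kappa^{+}_{\chi\pm 1}$ uniformly in $k$, controlling the $\sigma^k$-exponentially-small corrections arising from the non-unit eigenvalues of $M_{\cS^{\overline{0}}}$ and $M_{\cS^{\overline{\ell d}}}$, so that the asserted signs persist in the limit $k\to\infty$ and cannot be destroyed by higher-order terms.
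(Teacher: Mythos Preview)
Your overall strategy is right and matches the paper: the argument does proceed by exploiting the admissibility clause of Definition~\ref{df:shrPoint} at the $\cG^+[k,\chi]$-shrinking point, computing certain $\tilde t_j$ via the slow-manifold decomposition, and reading off a sign constraint that forces the neighbouring $\kappa$ to be positive. However, two concrete details of your proposal do not match what is actually needed, and one of them is a genuine gap.

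First, the relevant indices are $(\tilde\ell\pm1)d_k^+$, adjacent to the switching index $\tilde\ell d_k^+$, not indices adjacent to $j=0$. The paper computes $\tilde y_0$ and $\tilde y_{d_k^+}$ on the slow manifold (so $\tilde y_0=\varphi_{-d}+\cO(\sigma^k)$ and $\tilde y_{d_k^+}=\varphi_{-d}+\gamma_{-d}\zeta_{-d}+\cO(\sigma^k)$), then pushes both forward by $f^{(\cS^{\overline{\ell d}})^{\chi}\cX}$ (respectively $f^{(\cS^{\overline{\ell d}})^{\chi-1}\cX}$) to land at $\tilde y_{(\tilde\ell+1)d_k^+}$ and $\tilde y_{\tilde\ell d_k^+}$. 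Subtracting and taking first components gives
\[
\tilde t_{(\tilde\ell+1)d_k^+}=\gamma_{-d}\,e_1^{\sf T}M_{\cX}(0,0)\,M_{\cS^{\overline{\ell d}}}^{\chi}(0,0)\,v_{-d}+\cO\!\left(\tfrac{1}{k^2}\right).
\]

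Second, and this is the real gap: the leading-order expression you obtain is \emph{not} ``$\kappa^+_{\chi-1}$ times a positive constant''. The quantity above involves $e_1^{\sf T}M_{\cX}$, not $u_0^{\sf T}$, and the bridge between the two requires the nontrivial identity
\[
u_0^{\sf T}\bigl(I-M_{\cS^{\overline{\ell d}}}(0,0)\bigr)=\frac{b\,t_d}{c\,t_{(\ell+1)d}}\,e_1^{\sf T}M_{\cX}(0,0),
\]
derived by combining (\ref{eq:uvj}), (\ref{eq:vu0}), and Lemma~\ref{le:adjIdentity2}. Only after substituting this (together with (\ref{eq:fourtIdentity}) and (\ref{eq:gammamd})) does one obtain the clean form
\[
\tilde t_{(\tilde\ell+1)d_k^+}=\nu_{\cG^+[k,\chi]}\bigl(\kappa^+_{\chi+1}-\kappa^+_{\chi}\bigr)+\cO\!\left(\tfrac{1}{k^2}\right),
\]
which is a \emph{difference} of consecutive $\kappa$'s, not a single $\kappa$. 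The conclusion then uses both $\kappa^+_{\chi}>0$ and the admissibility sign $\tilde t_{(\tilde\ell+1)d_k^+}>0$, together with ${\rm sgn}(\nu_{\cG^+[k,\chi]})={\rm sgn}(a)$ from Table~\ref{tb:thetaDomains}, to force $\kappa^+_{\chi+1}>0$ when $a>0$ (and the parallel computation with $\tilde t_{(\tilde\ell-1)d_k^+}$ handles $a<0$). Without the $e_1^{\sf T}\!\to u_0^{\sf T}$ identity you cannot make the connection to $\kappa^+_{\chi\pm1}$ at all, so this step needs to be added explicitly to your argument.
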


Finally we provide some properties of nearby $\cG^\pm[k,\chi]$-shrinking points.
Here we use tildes to denote quantities of a $\cG^\pm[k,\chi]$-shrinking point.
For any $\cT \in \Xi_{\chi_{\rm max}}$ with $\kappa^{\pm}_{\chi} > 0$,
we let $\tilde{a} = \det \left( I - M_{\cT^{\overline{0}}} \right)$
and $\tilde{b} = \det \left( I - M_{\cT^{\overline{\tilde{\ell} d_k}}} \right)$,
evaluated at $\left( \eta_{\cT}, \nu_{\cT} \right)$.
We denote the $\cT^{\overline{0}}$-cycle by
$\left\{ \tilde{y}_i \right\}$ and let $\tilde{t}_i = e_1^{\sf T} \tilde{y}_i$.
We also let
$\tilde{\eta} = s^{\cT^{\overline{0}}}_0$,
$\tilde{\nu} = s^{\cT^{\overline{0}}}_{\tilde{\ell} d_k^\pm}$, and
\begin{equation}
\tilde{J} \defeq \left. \begin{bmatrix}
\frac{\partial \tilde{\eta}}{\partial \eta} &
\frac{\partial \tilde{\eta}}{\partial \nu} \\
\frac{\partial \tilde{\nu}}{\partial \eta} &
\frac{\partial \tilde{\nu}}{\partial \nu}
\end{bmatrix} \right|_{\left( \eta_{\cT}, \nu_{\cT} \right)} \;.
\label{eq:JtildeEarly}
\end{equation}

\begin{theorem}
Suppose (\ref{eq:f}) with $K \ge 2$ has an $\cS$-shrinking point
satisfying $\sigma < 1$ and $\det(J) \ne 0$,
and write $\cS = \cF[\ell,m,n]$.
Then for all $\chi_{\rm max} \in \mathbb{Z}^+$,
there exists $k_{\rm min} \in \mathbb{Z}^+$,
such that for all $\cT \in \Xi_{\chi_{\rm max}}$ with $k \ge k_{\rm min}$,
if $\kappa^{\pm}_{\chi} > 0$ and $\left( \eta_{\cT}, \nu_{\cT} \right)$ is a $\cT$-shrinking point, then
\begin{enumerate}
\item
${\rm sgn}(\tilde{a}) = {\rm sgn}(a)$ and $\det \left( \tilde{J} \right) > 0$,
\item
at $\left( \eta_{\cT}, \nu_{\cT} \right)$,
all eigenvalues of $M_{\cT}$, excluding the unit eigenvalue,
have modulus $\cO \left( \sigma^k \right)$.
\end{enumerate}
\label{th:main4}
\end{theorem}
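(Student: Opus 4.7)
The plan is to exploit the slow-manifold structure developed in Sections \ref{sec:locations}--\ref{sec:properties}. At the $\cS$-shrinking point each matrix $M_{\cS^{(j)}}$ has a simple unit eigenvalue with right and left eigenvectors $v_j$ and $u_j^{\sf T}$ (normalised by $u_j^{\sf T} v_j = 1$), while all remaining eigenvalues have modulus at most $\sigma < 1$. By the Farey decomposition in Section \ref{sec:symbol}, the sequence $\cT = \cG^\pm[k,\chi]$ of length $n_k^\pm = kn + n^\pm$ is constructed from $k$ copies of the basic $\cS$-block of length $n$ together with a single short block of length $n^\pm$, and this carries over to a factorisation of $M_{\cT}$ (and of $M_{\cT^{\overline{0}}}$) in which $M_{\cS}^k$, or a product of $k$ slight perturbations of $M_{\cS^{(j)}}$, appears as the dominant factor.

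For part (ii), I would write $M_{\cS} = v_0 u_0^{\sf T} + R$, where the spectral decomposition guarantees $\|R^k\| = \cO(\sigma^k)$. Substituting into the factorisation of $M_{\cT}$ yields
\begin{equation*}
M_{\cT} = Q \, v_0 u_0^{\sf T} Q' + \cO \!\left( \sigma^k \right),
\end{equation*}
where $Q,Q'$ are bounded matrices built from the short-block transfer matrix. Thus $M_{\cT}$ is an $\cO(\sigma^k)$ perturbation of a rank-one matrix whose single nonzero eigenvalue equals $1$ at the $\cT$-shrinking point. A Bauer--Fike argument then forces the remaining $N-1$ eigenvalues of $M_{\cT}$ to be $\cO(\sigma^k)$, which also controls $M_{\cT^{\overline{0}}}$ since it differs from $M_{\cT}$ in only a single factor.

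For part (i), to determine $\text{sgn}(\tilde{a})$ I would split the characteristic polynomial of $M_{\cT^{\overline{0}}}$ using the rank-one-plus-small decomposition just described. The $N-1$ small eigenvalues contribute a factor close to $c \ne 0$ (the product (\ref{eq:c}) of non-unit eigenvalues of $I - M_{\cS}$), which is fixed in sign for large $k$. The sign of $\tilde{a}$ is therefore inherited from the single eigenvalue close to zero, which measures the shift of the unit eigenvalue caused by flipping the single symbol at index $0$. Computing this shift on the one-dimensional slow manifold of $f^{\cS^{\overline{0}}}$ near the $\cS$-shrinking point (with the contraction rate quantified by $\sigma$ and the orientation by $u_0^{\sf T} v_{-d}/a$ appearing in (\ref{eq:uvIdentity12early})) expresses this eigenvalue as $\frac{a}{c} \bigl( 1 + o(1) \bigr)$, giving $\text{sgn}(\tilde{a}) = \text{sgn}(a)$. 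For $\det(\tilde{J}) > 0$, I would use Theorems \ref{th:main1} and \ref{th:main2} together with (\ref{eq:thetaPlus})--(\ref{eq:thetaMinus}) to read off the leading-order shapes of the four border-collision curves passing through $\left( \eta_{\cT}, \nu_{\cT} \right)$ in the $(\eta,\nu)$-plane. Since the same four curves locally define the cross pattern of Fig.~\ref{fig:shrPointUnfoldingBasic} in $(\tilde{\eta},\tilde{\nu})$-coordinates, a direct comparison of the cyclic order of curves around the two shrinking points shows orientation is preserved, which is equivalent to $\det(\tilde{J}) > 0$.

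The main obstacle I expect is the sign tracking in part (i): keeping the leading $\cO(1/k)$ and subleading $\cO(\sigma^k)$ pieces separated cleanly in the rank-one-plus-remainder decomposition, and demonstrating that the perturbation of the unit eigenvalue of $M_{\cT^{\overline{0}}}$ is controlled by the same slow-manifold slope that defines $a$ at the $\cS$-shrinking point. The remaining ingredients --- the $\cO(\sigma^k)$ decay in part (ii), the determinant identity (\ref{eq:uvIdentity12early}), and the orientation argument for $\tilde{J}$ --- should then follow without further substantive difficulty.
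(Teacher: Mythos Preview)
Part~(ii) is fine: your rank-one-plus-remainder decomposition is exactly what the paper does in Lemma~\ref{le:eigenvalues}, and the conclusion follows immediately.

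Part~(i) has a genuine gap in the $\operatorname{sgn}(\tilde{a})$ argument. Your claim that the shift of the near-unit eigenvalue of $M_{\cT^{\overline{0}}}$ away from $1$ equals $\tfrac{a}{c}(1+o(1))$ is not correct. Carrying out the rank-one computation (Lemma~\ref{le:detAll}) at $(\eta_\cT,\nu_\cT)$ and using $\det(I-M_\cT)=0$ there gives, to leading order, $\tilde{a} \approx 1 - \kappa^\pm_{\chi-1}/\kappa^\pm_{\chi}$ (and $\tilde{b} \approx 1 - \kappa^\pm_{\chi+1}/\kappa^\pm_{\chi}$): the answer depends on the particular $\cT$-shrinking point through the $\kappa$'s and is not governed by $a/c$ alone. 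To pin down the sign of $1-\kappa^\pm_{\chi\pm1}/\kappa^\pm_\chi$ the paper crucially invokes the hypothesis that $(\eta_\cT,\nu_\cT)$ is an actual $\cT$-shrinking point --- specifically the admissibility inequalities $\tilde{t}_{(\tilde{\ell}\pm1)d_k^\pm}\gtrless0$ --- via the identity (\ref{eq:main3proof11}) obtained in the proof of Theorem~\ref{th:main3}, which ties $\kappa^\pm_{\chi+1}-\kappa^\pm_{\chi}$ to $\tilde{t}_{(\tilde{\ell}+1)d_k^+}/\nu_\cT$. The sign of $\nu_\cT$ then comes from Table~\ref{tb:thetaDomains}. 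Your sketch never uses admissibility, and without it the sign cannot be extracted from $\kappa^\pm_\chi>0$ alone.

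For $\det(\tilde{J})>0$, your cyclic-order argument is plausible in spirit but too coarse as stated. By Theorem~\ref{th:main1} the two $\cT$-boundary curves through $(\eta_\cT,\nu_\cT)$ lie on the \emph{same} leading-order curve $r=\tfrac{1}{k}\Gamma(\theta)$, differing only at $\cO(1/k^2)$; reading off a cyclic order therefore requires resolving these subleading terms, which you have not indicated how to do. The paper instead works algebraically: it expands $\tilde{\eta}$, $\tilde{\nu}$ and $\det(I-M_\cT)$ to order $k$ in $(\Delta\eta,\Delta\nu)$ (Lemmas~\ref{le:detImMTnearby}--\ref{le:detJtilde}), derives an explicit formula~(\ref{eq:p1}) for the leading coefficient $p_1$ of $\partial\tilde{\eta}/\partial\eta$, checks its sign via a trigonometric identity~(\ref{eq:tanQuotient}) in $\theta^\pm_\chi$, and feeds this into the determinant formula~(\ref{eq:detJtilde}). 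Something equivalent to this subleading computation is needed; the leading-order picture from Theorems~\ref{th:main1}--\ref{th:main2} is not enough.
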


By part (i) of Theorem \ref{th:main4}, $\cG^\pm[k,\chi]$-shrinking points
exhibit the unfolding depicted in Fig.~\ref{fig:shrPointUnfoldingBasic},
and have the same orientation as the $\cS$-shrinking point.
By part (ii) of Theorem \ref{th:main4}, it follows that there exists a stable $\cG^\pm[k,\chi]$-cycle
on one side of the $\cG^\pm[k,\chi]$-shrinking point.

\subsection{Examples illustrating the main results}
\label{sub:examples}

\subsubsection*{The three-dimensional border-collision normal form}

\begin{figure}[b!]
\begin{center}
\setlength{\unitlength}{1cm}
\begin{picture}(10,12.5)
\put(1,6.5){\includegraphics[height=6cm]{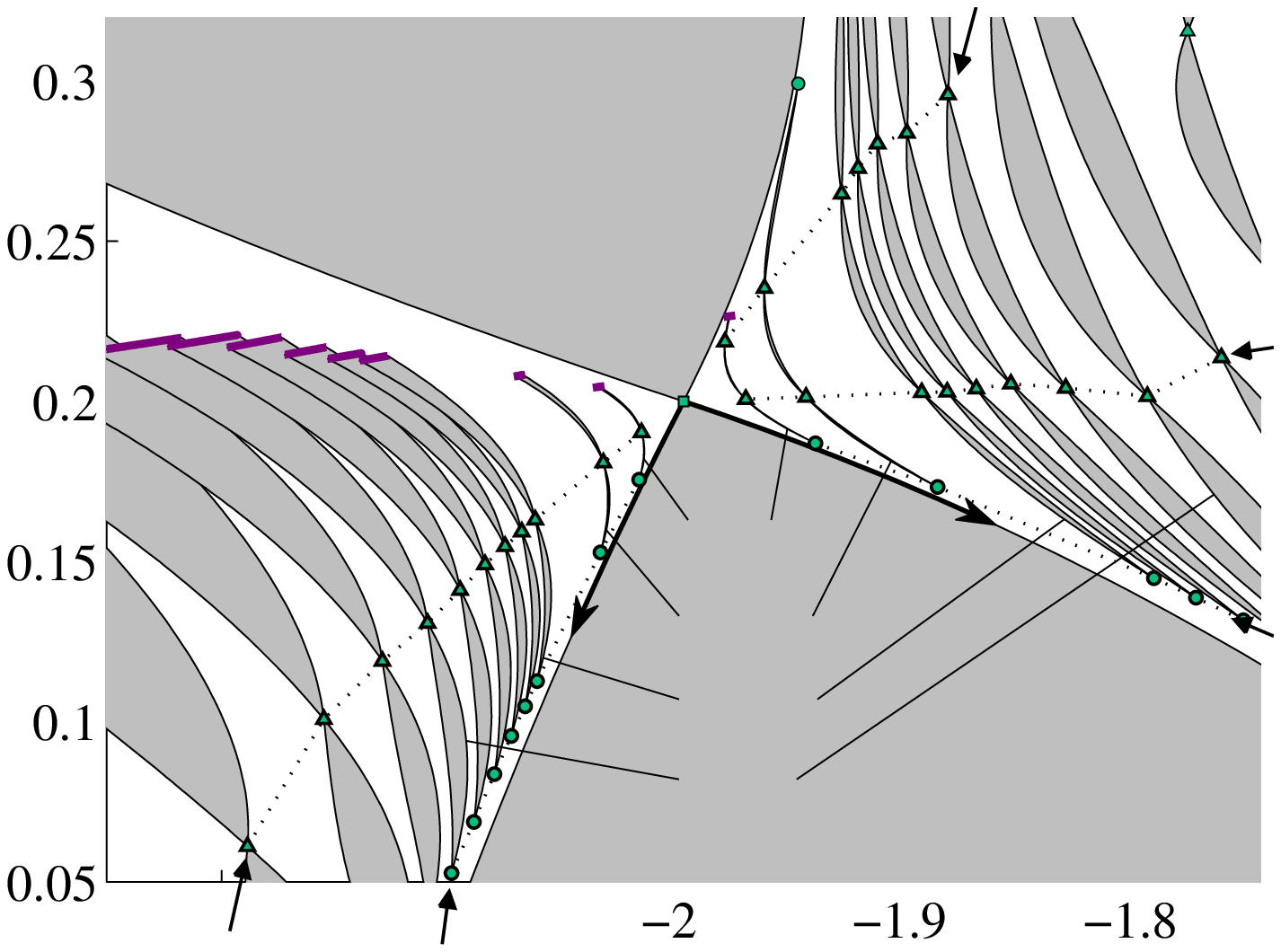}}
\put(1,0){\includegraphics[height=6cm]{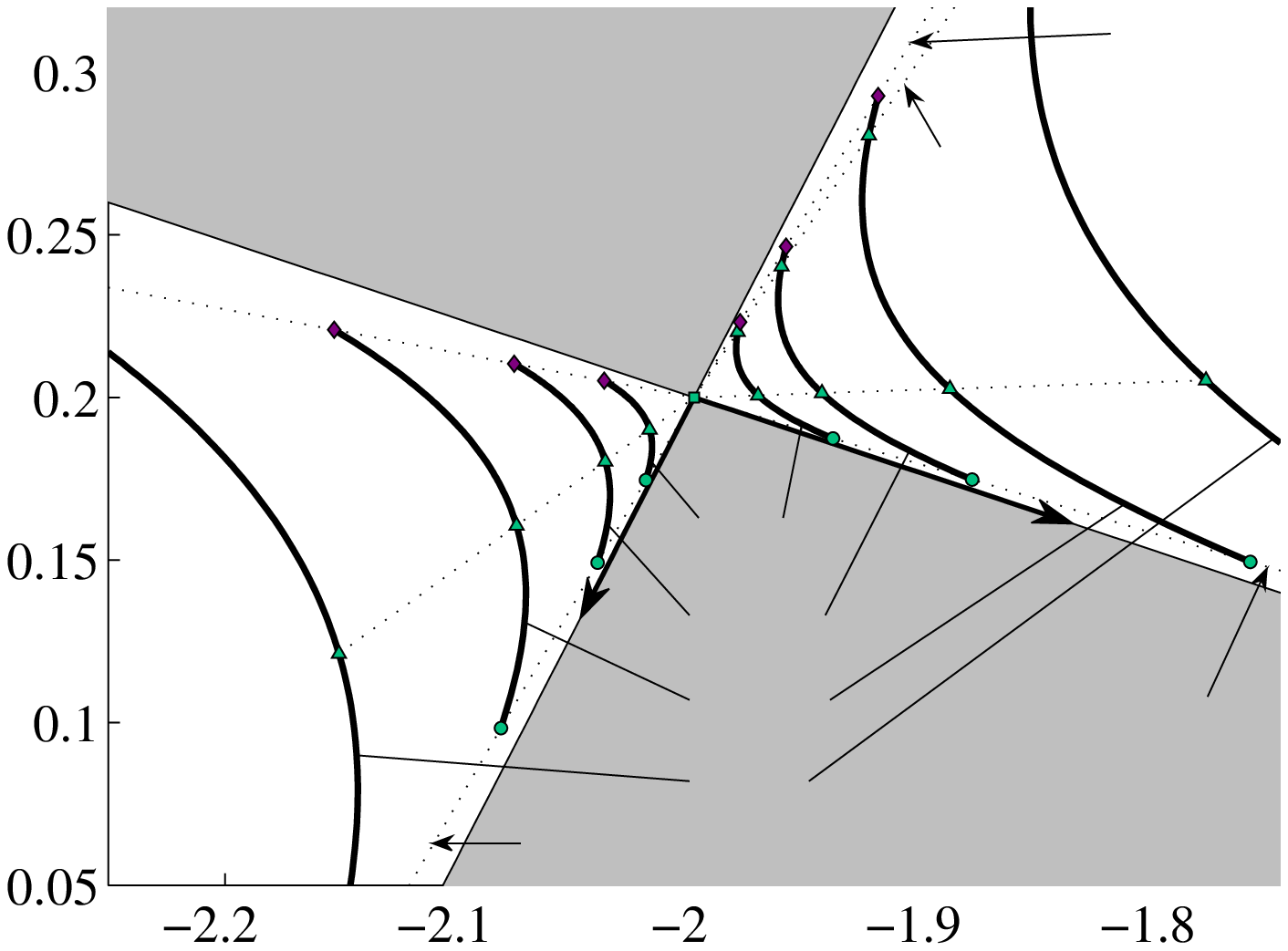}}
\put(.7,12.1){\large \sf \bfseries A}
\put(5.15,6.5){$\tau_R$}
\put(1,10.4){$\delta_L$}
\put(4.9,8.7){\tiny $\eta$}
\put(6.96,9.2){\tiny $\nu$}
\put(5.4,9.15){\tiny $k = 40$}
\put(5.4,8.67){\tiny $k = 20$}
\put(5.4,8.19){\tiny $k = 10$}
\put(5.4,7.71){\tiny $k = 5$}
\put(2.1,6.66){\tiny $\cG^+[k,-1]$}
\put(3.45,6.59){\tiny $\cG^+[k,-2]$}
\put(9.04,8.58){\tiny $\cG^-[k,-1]$}
\put(9.04,10.37){\tiny $\cG^-[k,0]$}
\put(6.8,12.6){\tiny $\cG^-[k,1]$}
\put(.7,5.6){\large \sf \bfseries B}
\put(5.15,0){$\tau_R$}
\put(1,3.9){$\delta_L$}
\put(4.82,2.23){\tiny $\eta$}
\put(7.35,2.67){\tiny $\nu$}
\put(5.4,2.65){\tiny $k = 40$}
\put(5.4,2.17){\tiny $k = 20$}
\put(5.4,1.69){\tiny $k = 10$}
\put(5.4,1.21){\tiny $k = 5$}
\put(2.48,3.84){\tiny $\theta^+_0$}
\put(3.6,2.15){\tiny $\theta^+_{-1}$}
\put(4.39,.86){\tiny $\theta^+_{-2}$}
\put(8.25,1.58){\tiny $\theta^-_{-1}$}
\put(7.62,3.46){\tiny $\theta^-_0$}
\put(6.86,4.93){\tiny $\theta^-_1$}
\put(7.92,5.71){\tiny $\theta^-_2$}
\end{picture}
\caption{
Panel A shows mode-locking regions of (\ref{eq:f}) with (\ref{eq:ALAREx20}),
$\mu > 0$, and with the remaining parameter values given by (\ref{eq:shrPointEx20})
(except $\tau_R$ and $\delta_L$ are variable).
This figure is centred about the $\cF[2,2,5]$-shrinking point shown in Fig.~\ref{fig:modeLockEx20}.
The other mode-locking regions correspond to stable $\cG^\pm[k,\chi]$-cycles.
Panel B illustrates the predictions of Theorems \ref{th:main1} and \ref{th:main2},
as discussed in the text.
\label{fig:tonguesEx20}
}
\end{center}
\end{figure}

The map (\ref{eq:f}) with (\ref{eq:ALAREx20}) has an $\cF[2,2,5]$-shrinking point at
\begin{equation}
\begin{gathered}
\tau_L = 0 \;, \qquad
\sigma_L = -1 \;, \qquad
\delta_L = 0.2 \;, \\
\tau_R = -2 \;, \qquad
\sigma_R = 0 \;, \qquad
\delta_R = 2 \;,
\end{gathered}
\label{eq:shrPointEx20}
\end{equation}
and $\mu > 0$.
This point is located in the middle of Fig.~\ref{fig:modeLockEx20}.
The corresponding mode-locking region 
has a stable $\cF[2,2,5]$-cycle in the lower section ($\delta_L < 0.2$),
and a stable $\cF[3,2,5]$-cycle in the upper section ($\delta_L > 0.2$).
The parameters $\tau_R$ and $\delta_L$ unfold the shrinking point generically
in the sense that $\det(J) \ne 0$, where $J$ is given by (\ref{eq:J})
using $\xi_1 = \tau_R$ and $\xi_2 = \delta_L$.
This implies that under the smooth transformation to $(\eta,\nu)$-coordinates (\ref{eq:etanu}),
the mode-locking region conforms to Fig.~\ref{fig:shrPointUnfoldingBasic} locally.

Fig.~\ref{fig:tonguesEx20}-A shows a magnified area of Fig.~\ref{fig:shrPointUnfoldingBasic}
and indicates the $\eta$ and $\nu$ axes.
Parts of $\cG^\pm_k$-mode-locking regions are also shown.
These were computed by numerically continuing the bifurcation boundaries.
The regions are shown for all $k \le 10$
in order to illustrate the proximity of the regions to one another
(indeed they overlap slightly) 
and also for $k = 20$ and $k = 40$
in order to illustrate the location and shape of the regions for relatively large values of $k$
without cluttering the figure.
Triangles indicate $\cT$-shrinking points for $\cT \in \Xi_1$.
Circles indicate $\cT$-shrinking points for $\cT \in \Xi_2 \setminus \Xi_1$.
For clarity the mode-locking regions are not shown beyond these shrinking points.
The additional (purple) curves are boundaries at which stable periodic solutions lose stability
via an associated stability multiplier attaining the value $-1$.

The results of \S\ref{sub:theorems} explain how the $\cG^\pm_k$-mode-locking regions
behave in the limit $k \to \infty$
based on various key quantities of the $\cF[2,2,5]$-shrinking point\removableFootnote{
For this shrinking point we have
\begin{equation}
a = -\frac{41}{5} \;, \qquad
b = \frac{123}{125} \;, \qquad
c = \frac{33}{25} \;.
\end{equation}
}.
To begin with, (\ref{eq:nearbyCurve}) provides an approximation to 
the location and shape of the mode-locking regions. 
Fig.~\ref{fig:tonguesEx20}-B illustrates this approximation for $k = 5,10,20,40$.
For simplicity we used a linear approximation to the coordinate change
$\left( \tau_R, \delta_L \right) \to (\eta,\nu)$ to produce Fig.~\ref{fig:tonguesEx20}-B\removableFootnote{
Consequently curves on which $\det \left( P_{\cS} \right) = 0$
and $\det \left( P_{\cS^{((\ell-1)d)}} \right) = 0$
appear as straight lines in Fig.~\ref{fig:tonguesEx20}-B.

In an earlier write-up I used Newton's root finding method
to perform the coordinate change more accurately.
}.

Theorem \ref{th:main2} provides approximations to the locations of nearby shrinking points
and stability loss bifurcation boundaries
based on the scalar quantities $\kappa^{\pm}_{\chi}$ (\ref{eq:kappaPlus})-(\ref{eq:kappaMinus})
and $\theta^{\pm}_{\chi}$ (\ref{eq:thetaPlus})-(\ref{eq:thetaMinus}).
For the shrinking point (\ref{eq:shrPointEx20}) we have
\begin{equation}
\begin{aligned}
\kappa^+_{-2} &= \frac{236}{33} \;, \hspace{30mm} &
\kappa^-_{-1} &= \frac{494}{55} \;, \\
\kappa^+_{-1} &= \frac{38}{55} \;, &
\kappa^-_0 &= \frac{43}{55} \;, \\
\kappa^+_0 &= -\frac{5}{11} \;, &
\kappa^-_1 &= \frac{10}{33} \;, \\
\kappa^+_1 &= \frac{26}{33} \;, &
\kappa^-_2 &= -\frac{32}{165} \;.
\end{aligned}
\label{eq:kappaValAll}
\end{equation}
We have used these values to generate Fig.~\ref{fig:tonguesEx20}-B.
Recall, Theorem \ref{th:main2} tells us that
$\kappa^{\pm}_{\chi} > 0$ implies there may exist $\cG^+[k,\chi]$-shrinking points, 
whereas $\kappa^{\pm}_{\chi} < 0$ implies stability loss curves (corresponding to a stability multiplier of $-1$).
In both cases these are located within
$\cO \left( \frac{1}{k^2} \right)$ of the curve (\ref{eq:nearbyCurve}) at $\theta = \theta^{\pm}_{\chi}$.
Only $\theta^+_1$ is not shown in Fig.~\ref{fig:tonguesEx20}-B
because at each $\left( \eta_{\cG^+[k,1]}, \nu_{\cG^+[k,1]} \right)$
the $\cG^+[k,1]^{\overline{0}}$-cycle is virtual.


In summary, Fig.~\ref{fig:tonguesEx20}-B illustrates
Theorems \ref{th:main1} and \ref{th:main2} with $\chi_{\rm max} = 2$.
We see that Fig.~\ref{fig:tonguesEx20}-B provides a good approximation to the mode-locking regions
shown in Fig.~\ref{fig:tonguesEx20}-A, including the shrinking points and stability loss boundaries,
and the accuracy of the approximation increases with $k$.
If we double the value of $k$, for example, then to leading order
the distance of the mode-locking region to the $\cF[2,2,5]$-shrinking point is halved.

\subsubsection*{Alternate cross-sections of parameter space}

\begin{figure}[b!]
\begin{center}
\setlength{\unitlength}{1cm}
\begin{picture}(10,12.5)
\put(1,6.5){\includegraphics[height=6cm]{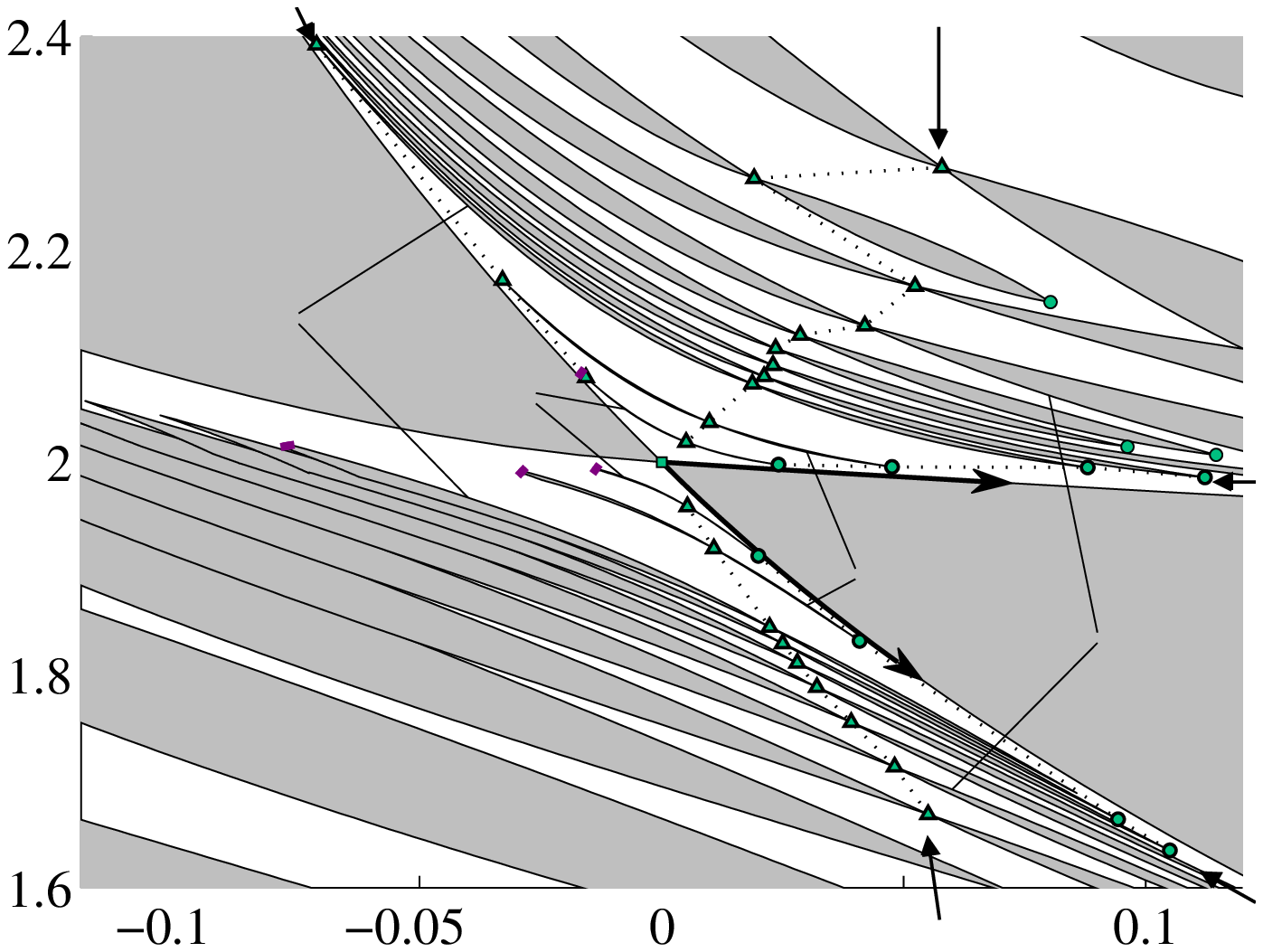}}
\put(1,0){\includegraphics[height=6cm]{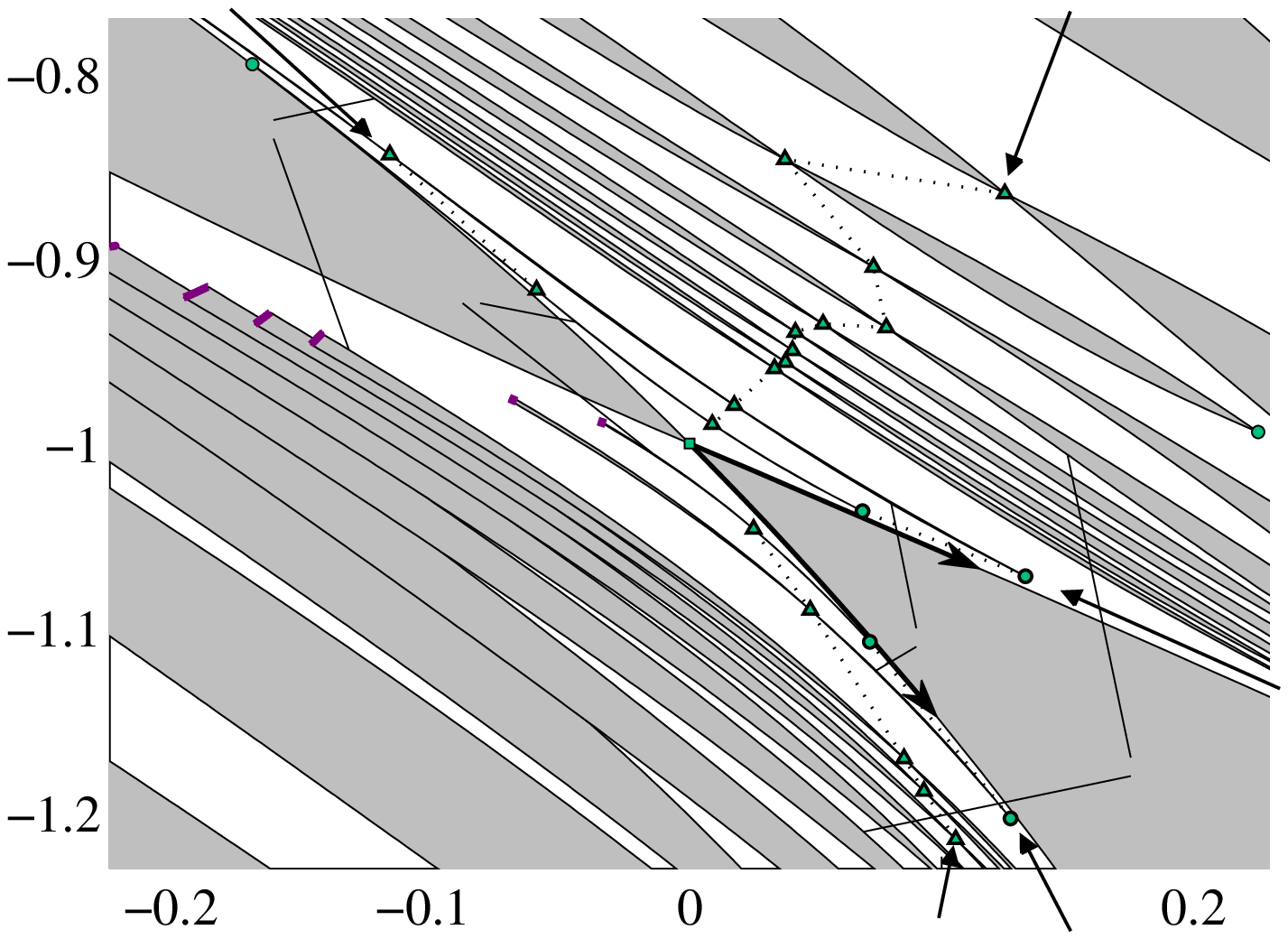}}
\put(.7,12.1){\large \sf \bfseries A}
\put(5.15,6.5){$\tau_L$}
\put(1,10.2){$\delta_R$}
\put(6.9,8.6){\tiny $\eta$}
\put(7.36,9.4){\tiny $\nu$}
\put(3.84,10.08){\tiny $k = 40$}
\put(6.6,9){\tiny $k = 20$}
\put(2.4,10.54){\tiny $k = 10$}
\put(8.08,8.62){\tiny $k = 5$}
\put(6.62,6.76){\tiny $\cG^+[k,-1]$}
\put(9.04,6.96){\tiny $\cG^+[k,-2]$}
\put(9.04,9.56){\tiny $\cG^-[k,-1]$}
\put(6.64,12.5){\tiny $\cG^-[k,0]$}
\put(2.66,12.58){\tiny $\cG^-[k,1]$}
\put(.7,5.6){\large \sf \bfseries B}
\put(5.15,0){$\sigma_R$}
\put(1,3.7){$\sigma_L$}
\put(6.87,1.7){\tiny $\eta$}
\put(6.98,2.35){\tiny $\nu$}
\put(3.47,4.16){\tiny $k = 40$}
\put(6.79,2.01){\tiny $k = 20$}
\put(2.09,5.09){\tiny $k = 10$}
\put(8.09,1.22){\tiny $k = 5$}
\put(6.14,.22){\tiny $\cG^+[k,-1]$}
\put(7.37,.07){\tiny $\cG^+[k,-2]$}
\put(9.01,1.68){\tiny $\cG^-[k,-1]$}
\put(7.23,5.98){\tiny $\cG^-[k,0]$}
\put(2.04,5.98){\tiny $\cG^-[k,1]$}
\end{picture}
\caption{
Panel A shows mode-locking regions of (\ref{eq:f}) with (\ref{eq:ALAREx20})
with $(\sigma_L,\delta_L,\tau_R,\sigma_R) = (-1,0.2,-2,0)$ and $\mu > 0$.
Panel B shows mode-locking regions of (\ref{eq:f}) with (\ref{eq:ALAREx20})
with $(\tau_L,\delta_L,\tau_R,\delta_R) = (0,0.2,-2,2)$ and $\mu > 0$.
\label{fig:tonguesEx2122}
}
\end{center}
\end{figure}

The signs of the $\kappa^{\pm}_{\chi}$ determine which $\cG^+[k,\chi]$ have shrinking points
and which $\cG^+[k,\chi]$ have stability loss boundaries.
The values of $\theta^{\pm}_{\chi}$ determine the relative location and spacing of these
features on the curves (\ref{eq:nearbyCurve}).
Yet each $\kappa^{\pm}_{\chi}$ and $\theta^{\pm}_{\chi}$ is a property of the $\cS$-shrinking point,
and so is independent to the parameters used to unfold the $\cS$-shrinking point.
Therefore with any non-degenerate two-dimensional slice of parameter space
through the $\cF[2,2,5]$-shrinking point (\ref{eq:shrPointEx20}),
the nearby mode-locking regions will appear as some smooth transformation of
Fig.~\ref{fig:tonguesEx20}-B\removableFootnote{
A shrinking point is a codimension-two phenomenon.
Suppose (\ref{eq:f}) has a shrinking point along some codimension-two curve $\Lambda$ in parameter space.
We could write
\begin{equation}
\Lambda = \left\{ \xi \in \mathbb{R}^M ~\big|~
e_1^{\sf T} x^{\cS^{\overline{0}}}_0(\xi)= 0 ,\,
e_1^{\sf T} x^{\cS^{\overline{0}}}_{ld}(\xi) = 0 \right\} \;.
\end{equation}
Suppose $F(\xi) : \mathbb{R}^M \to \mathbb{R}$ is a smooth function that is zero on $\Lambda$
(e.g.~$F(\xi) = \det(I-M_{\cS})$).
Let $\xi_0 \in \Lambda$.
Then we must have $F(\xi) = \alpha_1 \nabla
\left( e_1^{\sf T} x^{\cS^{\overline{0}}}_0 \right)^{\sf T} (\xi-\xi_0)
+ \alpha_2 \nabla \left( e_1^{\sf T} x^{\cS^{\overline{0}}}_{ld} \right)^{\sf T} (\xi-\xi_0) + \cO(2)$,
for some constants $\alpha_1, \alpha_2 \in \mathbb{R}$
because the derivative of $F(\xi)$ at $\xi = \xi_0$ is zero in other directions.
If we let $\eta = e_1^{\sf T} x^{\cS^{\overline{0}}}_0$
and $\nu = e_1^{\sf T} x^{\cS^{\overline{0}}}_{ld}$,
then $F(\xi) = \alpha_1 \eta + \alpha_2 \nu + \cO(2)$.
That is, once we convert to $(\eta,\nu)$ coordinates,
the linear approximation to $F(\xi)$ is independent of any non-degenerate two-dimensional
cross section of parameter space that we take.
}.

To illustrate this, Fig.~\ref{fig:tonguesEx2122} shows the nearby mode-locking regions
for two different cross-sections.
These indeed appear as distortions of Fig.~\ref{fig:tonguesEx20}-B.
For example, in both panels of Fig.~\ref{fig:tonguesEx2122},
there are $\cG^+[k,-1]$-shrinking points
and short curves along which a $\cG^+[k,0]$-cycle has a stability multiplier of $-1$,
and $k$ these features are relatively far apart.

\subsubsection*{Changes in the properties of an $\cS$-shrinking point}

A shrinking point is a codimension-two phenomenon,
therefore within three-dimensional regions of parameter space there exist curves of shrinking points.
As we follow a curve of $\cS$-shrinking points in a continuous manner,
the values of $\kappa^{\pm}_{\chi}$ and $\theta^{\pm}_{\chi}$ change continuously.
Therefore the structure of nearby mode-locking regions varies along the curve,
and there may be critical points at which the structure changes in a fundamental way
(e.g.~at a point where one of the $\kappa^{\pm}_{\chi}$ changes sign).
Here we show an example.

For all $\delta_R > 0.5865$, approximately,
the map (\ref{eq:f}) with (\ref{eq:ALAREx20}) has an $\cF[2,2,5]$-shrinking point at
\begin{equation}
\begin{gathered}
\tau_L = 0 \;, \qquad
\sigma_L = -1 \;, \qquad
\delta_L = \frac{\delta_R + 2}{\delta_R \left( \delta_R^2 + 2 \delta_R + 2 \right)} \;, \\
\tau_R = -\frac{\delta_R^2 + \delta_R + 2}{\delta_R + 2} \;, \qquad
\sigma_R = 0 \;, \qquad
\end{gathered}
\label{eq:shrPointEx2526}
\end{equation}
and $\mu > 0$, see \cite{Si15}.
The $\cF[2,2,5]$-shrinking point (\ref{eq:shrPointEx20}), considered above,
is given by (\ref{eq:shrPointEx2526}) with $\delta_R = 2$.

\begin{figure}[b!]
\begin{center}
\setlength{\unitlength}{1cm}
\begin{picture}(10,12.5)
\put(1,6.5){\includegraphics[height=6cm]{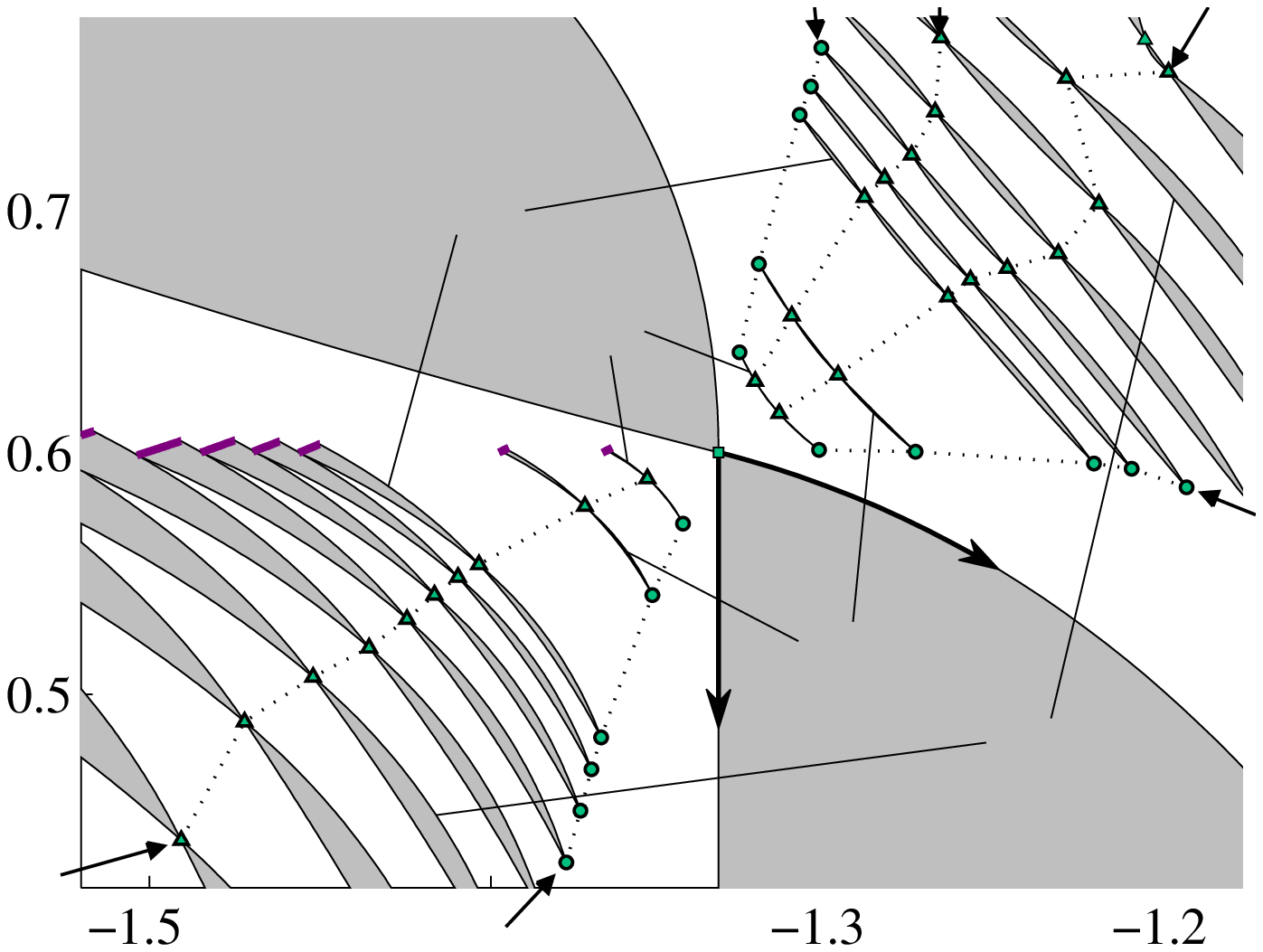}}
\put(1,0){\includegraphics[height=6cm]{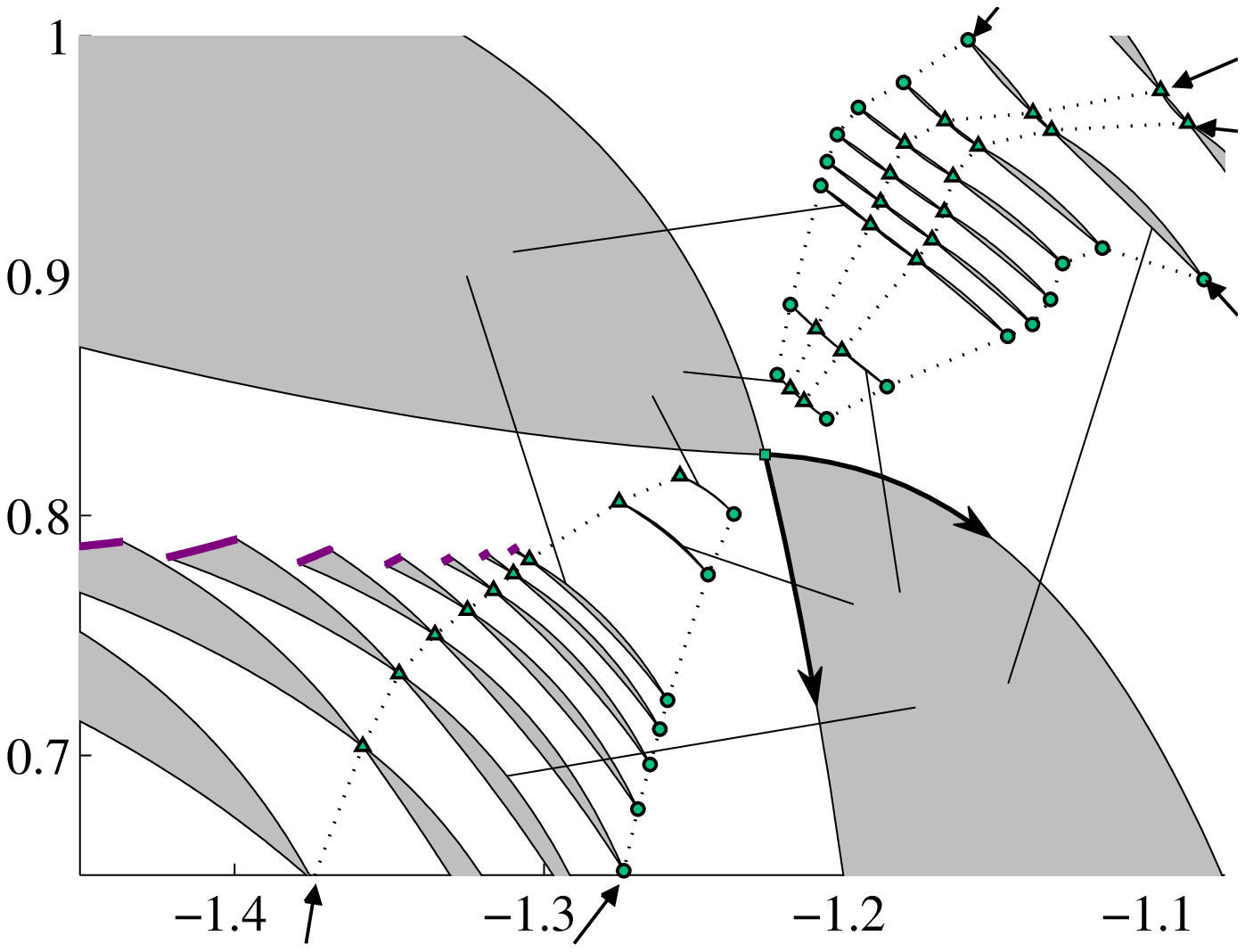}}
\put(.7,12.1){\large \sf \bfseries A}
\put(5.15,6.5){$\tau_R$}
\put(1,10){$\delta_L$}
\put(5.81,8.16){\tiny $\eta$}
\put(7.18,8.97){\tiny $\nu$}
\put(4.5,10.46){\tiny $k = 40$}
\put(6.25,8.6){\tiny $k = 20$}
\put(3.77,11.18){\tiny $k = 10$}
\put(7.4,8.01){\tiny $k = 5$}
\put(.6,7.2){\tiny $\cG^+[k,-1]$}
\put(3.84,6.72){\tiny $\cG^+[k,-2]$}
\put(9.04,9.32){\tiny $\cG^-[k,-1]$}
\put(8.5,12.58){\tiny $\cG^-[k,0]$}
\put(6.78,12.58){\tiny $\cG^-[k,1]$}
\put(5.66,12.58){\tiny $\cG^-[k,2]$}
\put(.7,5.6){\large \sf \bfseries B}
\put(5.15,0){$\tau_R$}
\put(1,3.5){$\delta_L$}
\put(6.45,1.76){\tiny $\eta$}
\put(7.11,2.66){\tiny $\nu$}
\put(4.79,3.68){\tiny $k = 40$}
\put(6.65,2.24){\tiny $k = 20$}
\put(3.73,4.41){\tiny $k = 10$}
\put(7.03,1.66){\tiny $k = 5$}
\put(2.55,.04){\tiny $\cG^+[k,-1]$}
\put(3.95,.04){\tiny $\cG^+[k,-2]$}
\put(9,3.97){\tiny $\cG^-[k,-1]$}
\put(9.04,5.17){\tiny $\cG^-[k,0]$}
\put(9.04,5.68){\tiny $\cG^-[k,1]$}
\put(7.28,6.07){\tiny $\cG^-[k,2]$}
\end{picture}
\caption{
Mode-locking regions of (\ref{eq:f}) with (\ref{eq:ALAREx20})
about the $\cF[2,2,5]$-shrinking point (\ref{eq:shrPointEx2526})
(i.e.~$\tau_L = 0$, $\sigma_L = -1$, $\sigma_R = 0$ and $\mu > 0$).
In panel A, $\delta_R = 1$; in panel B, $\delta_R = 0.8$.
These regions are shown for $\delta_R = 2$ in Fig.~\ref{fig:tonguesEx20}-A.
\label{fig:tonguesEx2526}
}
\end{center}
\end{figure}

As we decrease the value of $\delta_R$ from $\delta_R = 2$
(at which the $\kappa^{\pm}_{\chi}$ are given by (\ref{eq:kappaValAll})) to $\delta_R = 1$,
the sign of $\kappa^-_2$ changes from negative to positive (at $\delta_R \approx 1.4597$),
whereas the signs of the other seven values of $\kappa^{\pm}_{\chi}$ remains unchanged.
With $\delta_R = 1$ the nearby mode-locking regions, as shown in Fig.~\ref{fig:tonguesEx2526}-A,
have the same structure as those in Fig.~\ref{fig:tonguesEx20}-A except that
$\cG^-[k,2]$-shrinking points exist for arbitrarily large values of $k$.

Upon a further decrease in the value of $\delta_R$,
we have $\theta^+_0 = \theta^+_{-1}$\removableFootnote{
By using (\ref{eq:uvIdentity1}) and (\ref{eq:fourtIdentity}),
this occurs when $\kappa^+_0 = \frac{a}{2 c}$.
}
at $\delta_R \approx 0.8665$.
Fig.~\ref{fig:tonguesEx2526}-B shows nearby mode-locking regions using $\delta_R = 0.8$.
At this value of $\delta_R$
the mode-locking regions do not extend beyond the $\cG^+[k,-1]$ shrinking points,
for sufficiently large values of $k$.
This is because $\theta^+_0 > \theta^+_{-1}$ and so the bifurcation boundaries on which
an $\cG^+[k,0]$-cycle has a stability multiplier of $-1$ have become virtual.

\subsubsection*{Nonsmooth Neimark-Sacker-like bifurcations}

As another example, consider (\ref{eq:f}) with
\begin{equation}
A_L = \begin{bmatrix}
2 r_L \cos \left( 2 \pi \omega_L \right) & 1 \\
-r_L^2 & 0
\end{bmatrix} \;, \qquad
A_R = \begin{bmatrix}
\frac{2}{s_R} \cos \left( 2 \pi \omega_R \right) & 1 \\
-\frac{1}{s_R^2} & 0
\end{bmatrix} \;, \qquad
B = \begin{bmatrix}
1 \\ 0
\end{bmatrix} \;,
\label{eq:ALAREx30}
\end{equation}
where $r_L, s_R, \omega_L, \omega_R \in \mathbb{R}$ are parameters.
The map (\ref{eq:f}) with (\ref{eq:ALAREx30}) was studied in \cite{SiMe08b}
in order to investigate nonsmooth Neimark-Sacker-like bifurcations.

\begin{figure}[b!]
\begin{center}
\setlength{\unitlength}{1cm}
\begin{picture}(8,6)
\put(0,0){\includegraphics[height=6cm]{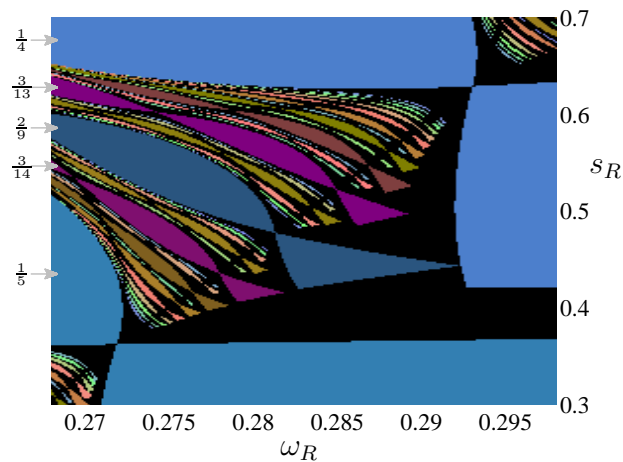}}
\put(3.6,0){$\omega_R$}
\put(7.7,3.75){$s_R$}
\put(.05,2.35){\tiny $\frac{1}{5}$}
\put(0,3.78){\tiny $\frac{3}{14}$}
\put(.05,4.295){\tiny $\frac{2}{9}$}
\put(0,4.835){\tiny $\frac{3}{13}$}
\put(.05,5.47){\tiny $\frac{1}{4}$}
\end{picture}
\caption{
Mode-locking regions of (\ref{eq:f}) with (\ref{eq:ALAREx30}),
$r_L = 0.3$, $\omega_L = 0.09$ and $\mu > 0$.
(as in Fig.~13 of \cite{SiMe08b}).
\label{fig:modeLockEx30}
}
\end{center}
\end{figure}

Fig.~\ref{fig:modeLockEx30} shows mode-locking regions of (\ref{eq:f}) with (\ref{eq:ALAREx30}).
This figure can be interpreted as showing the mode-locking
dynamics created in the border-collision bifurcation at $\mu = 0$,
where this bifurcation is akin to a Neimark-Sacker bifurcation in that an invariant circle is usually created
as the values of $\mu$ passes through $0$.
In Fig.~\ref{fig:modeLockEx30} there is a dominant curve of shrinking points
running diagonally from the bottom-left of the figure to the top-right.
Numerical computations of Lyapunov exponents reveal that this curve
appears to be a boundary for chaotic dynamics \cite{SiMe08b}.
The geometric mechanism responsible for this boundary of chaos is not fully understood.

\begin{figure}[b!]
\begin{center}
\setlength{\unitlength}{1cm}
\begin{picture}(10,12.5)
\put(1,6.5){\includegraphics[height=6cm]{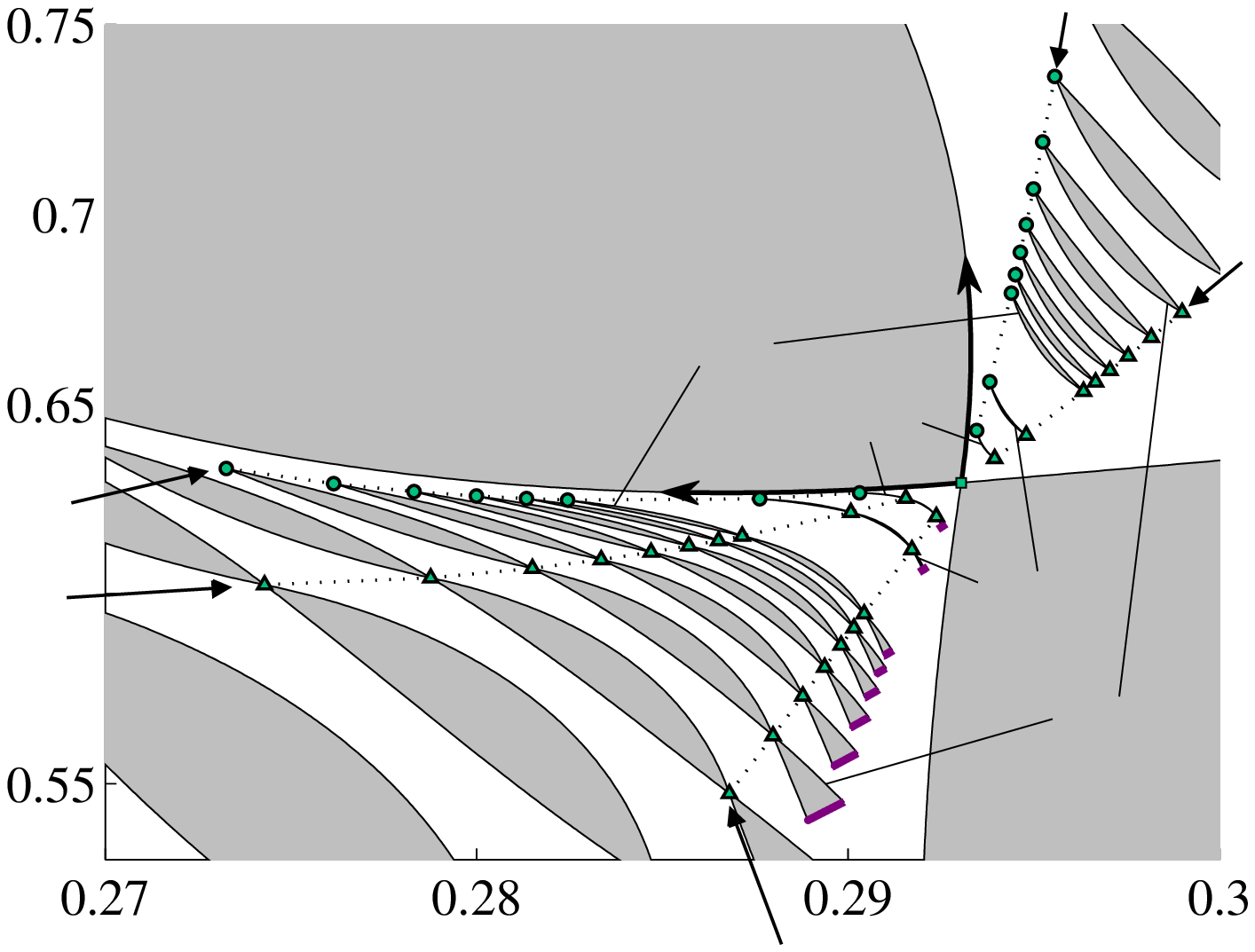}}
\put(1,0){\includegraphics[height=6cm]{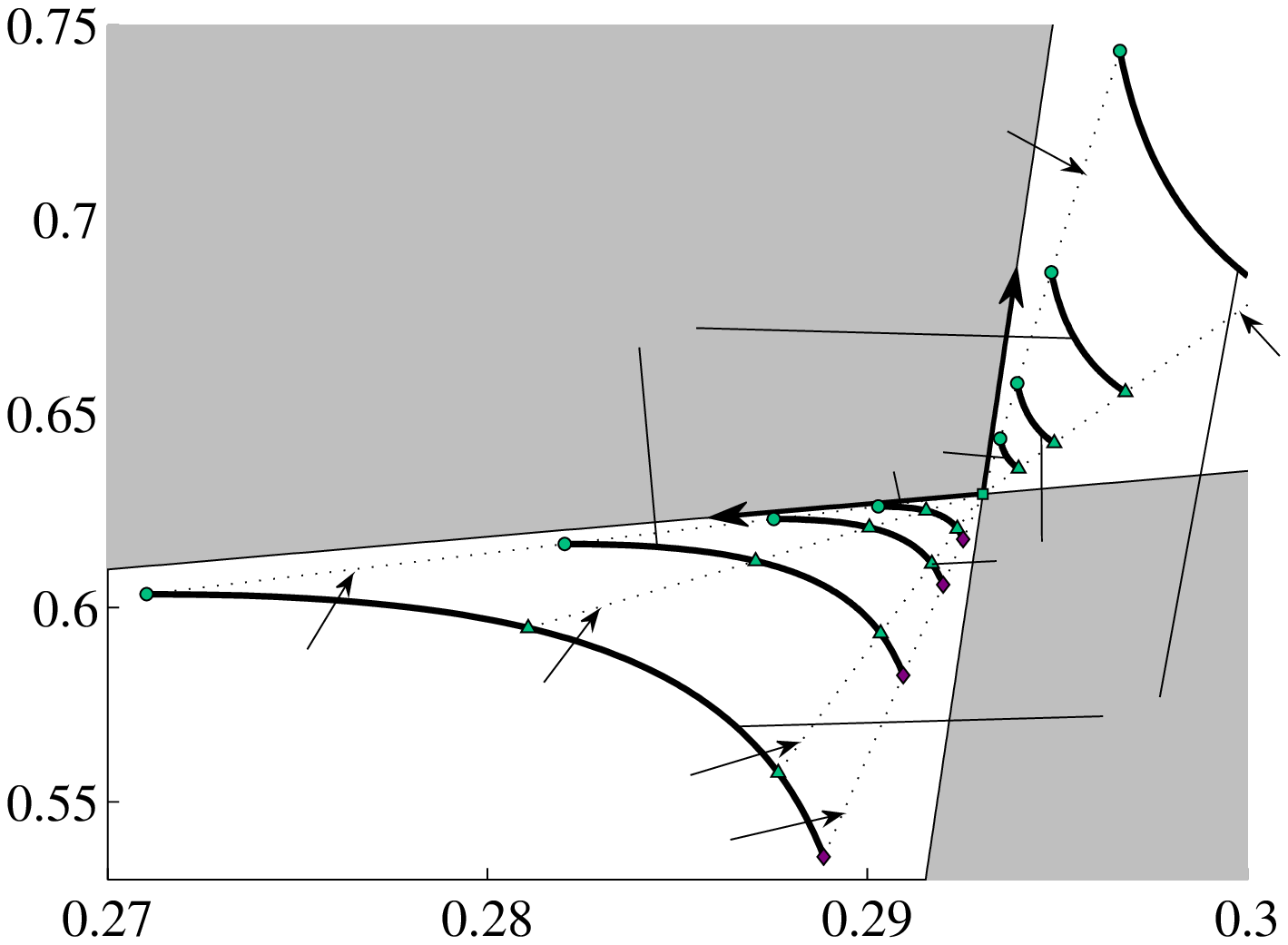}}
\put(.7,12.1){\large \sf \bfseries A}
\put(5.2,6.5){$\omega_R$}
\put(1,10.45){$s_R$}
\put(5.28,9.56){\tiny $\nu$}
\put(6.94,10.73){\tiny $\eta$}
\put(6.1,9.8){\tiny $k = 40$}
\put(7.28,8.78){\tiny $k = 20$}
\put(5.18,10.28){\tiny $k = 10$}
\put(7.74,8){\tiny $k = 5$}
\put(8.93,10.84){\tiny $\cG^+[k,-1]$}
\put(7.3,12.5){\tiny $\cG^+[k,-2]$}
\put(.5,9.32){\tiny	$\cG^-[k,-1]$}
\put(.67,8.74){\tiny $\cG^-[k,0]$}
\put(5.86,6.43){\tiny $\cG^-[k,1]$}
\put(.7,5.6){\large \sf \bfseries B}
\put(5.2,0){$\omega_R$}
\put(1,3.95){$s_R$}
\put(5.49,2.97){\tiny $\nu$}
\put(7.09,4.25){\tiny $\eta$}
\put(6.12,3.21){\tiny $k = 40$}
\put(7.27,2.55){\tiny $k = 20$}
\put(4.64,3.94){\tiny $k = 10$}
\put(7.91,1.58){\tiny $k = 5$}
\put(2.9,1.84){\tiny $\theta^-_{-1}$}
\put(4.34,1.64){\tiny $\theta^-_0$}
\put(5.06,1.2){\tiny $\theta^-_1$}
\put(5.29,.8){\tiny $\theta^-_2$}
\put(6.82,5.23){\tiny $\theta^+_{-2}$}
\put(8.92,3.68){\tiny $\theta^+_{-1}$}
\end{picture}
\caption{
Panel A shows mode-locking regions of (\ref{eq:f}) with (\ref{eq:ALAREx30}),
$r_L = 0.3$, $\omega_L = 0.09$ and $\mu > 0$,
obtained by numerically continuing bifurcation boundaries.
Panel B shows the leading-order approximation to the mode-locking regions,
as well as shrinking points and stability loss boundaries,
as given by Theorems \ref{th:main1} and \ref{th:main2}
(using a linear approximation to the coordinate change $(\omega_R,s_R) \leftrightarrow (\eta,\nu)$).
For a further explanation refer to the discussion surrounding Fig.~\ref{fig:tonguesEx20}.
\label{fig:tonguesEx30}
}
\end{center}
\end{figure}

The shrinking point in the large mode-locking region in the top-right of Fig.~\ref{fig:modeLockEx30}
corresponds to $\cS = \cF[2,1,4]$.
Fig.~\ref{fig:tonguesEx30}-A shows a magnification of Fig.~\ref{fig:modeLockEx30}
about this shrinking point, 
and Fig.~\ref{fig:tonguesEx30}-B illustrates the predictions of Theorems \ref{th:main1} and \ref{th:main2}.
As expected, the leading-order approximations to the nearby mode-locking regions,
and their shrinking points and stability loss boundaries, match well to their true locations
with the degree of accuracy increasing with the value of $k$.

\subsubsection*{Application to grazing-sliding bifurcations}

\begin{figure}[b!]
\begin{center}
\setlength{\unitlength}{1cm}
\begin{picture}(8,6)
\put(0,0){\includegraphics[height=6cm]{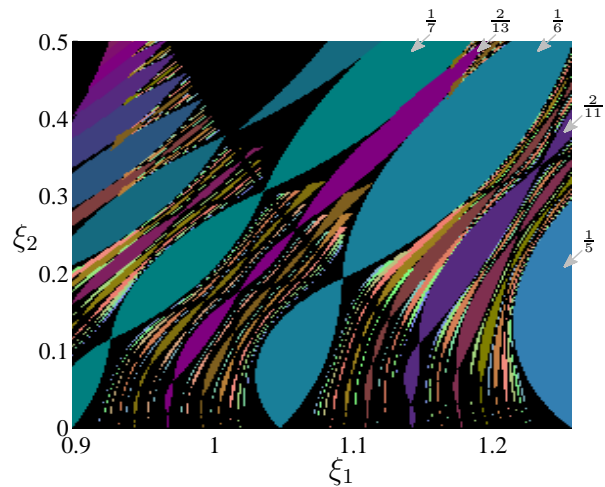}}
\put(4.2,0){$\xi_1$}
\put(0,3.1){$\xi_2$}
\put(5.46,6.04){\tiny $\frac{1}{7}$}
\put(6.32,6.04){\tiny $\frac{2}{13}$}
\put(7.14,6.04){\tiny $\frac{1}{6}$}
\put(7.55,4.9){\tiny $\frac{2}{11}$}
\put(7.555,3.11){\tiny $\frac{1}{5}$}
\end{picture}
\caption{
Mode-locking regions of (\ref{eq:f}) with (\ref{eq:ALAREx40}) and $\mu < 0$
(as in Fig.~4 of \cite{SzOs09}).
\label{fig:modeLockEx40}
}
\end{center}
\end{figure}

Dynamics near grazing-sliding bifurcations of $(N+1)$-dimensional
piecewise-smooth systems of ordinary differential equations
are captured by return maps of the form (\ref{eq:f})
for which either $\det \left( A_L \right) = 0$ or $\det \left( A_R \right) = 0$ \cite{DiKo02,JeHo11}.
In \cite{SzOs09}, the authors investigate mode-locking regions
near a grazing-sliding bifurcation in a model of a mechanical oscillator subject to dry friction.
The return map that they analyse can, through an affine change of variables,
be put in the form (\ref{eq:f}) with
\begin{equation}
A_L = \begin{bmatrix}
2 {\rm e}^{\xi_2} \cos \left( \xi_1 \right) & 1 \\
-{\rm e}^{2 \xi_2} & 0
\end{bmatrix} \;, \qquad
A_R = \begin{bmatrix}
{\rm e}^{\xi_2} \cos\left( \xi_1 \right) & 1 \\
0 & 0
\end{bmatrix} \;, \qquad
B = \begin{bmatrix}
1 \\ 0
\end{bmatrix} \;,
\label{eq:ALAREx40}
\end{equation}
where $\xi_1, \xi_2 \in \mathbb{R}$.
Mode-locking regions of (\ref{eq:f}) with (\ref{eq:ALAREx40}) are shown in Fig.~\ref{fig:modeLockEx40}.

Fig.~\ref{fig:tonguesEx40}-A shows a magnification of
Fig.~\ref{fig:modeLockEx40} about an $\cF[8,2,13]$-shrinking point.
It is interesting that since $\det \left( A_R \right) = 0$,
curves of shrinking points admit simple analytic expressions \cite{SzOs09}.
For example the dashed curve in Fig.~\ref{fig:tonguesEx40}-A is given by
\begin{equation}
{\rm e}^{5 \xi_2} \sin \left( 4 \xi_1 \right) -
{\rm e}^{4 \xi_2} \sin \left( 5 \xi_1 \right) +
\sin \left( \xi_1 \right) = 0 \;.
\label{eq:sideRemovalCurve}
\end{equation}

As with the previous two examples,
Fig.~\ref{fig:tonguesEx40}-B illustrates the predictions of Theorems \ref{th:main1} and \ref{th:main2}.
Here $a > 0$, so $\cG^+_k$-mode-locking regions lie on the same side as the $\nu$-axis
(the previous examples have $a < 0$).
Again, the leading order approximations of Fig.~\ref{fig:tonguesEx40}-B
match well to the numerical computations of Fig.~\ref{fig:tonguesEx40}-A.

\begin{figure}[t!]
\begin{center}
\setlength{\unitlength}{1cm}
\begin{picture}(10,12.5)
\put(1,6.5){\includegraphics[height=6cm]{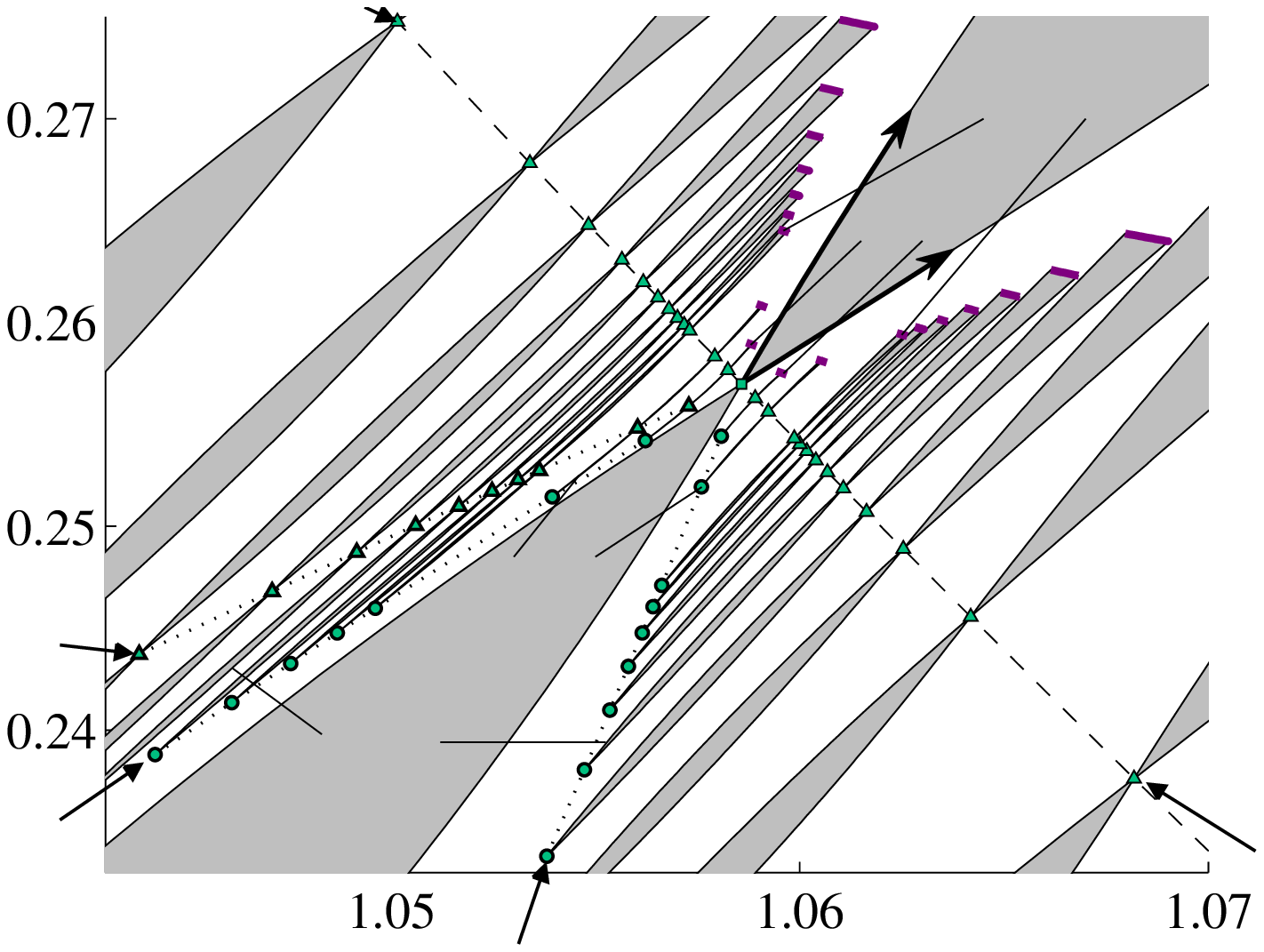}}
\put(1,0){\includegraphics[height=6cm]{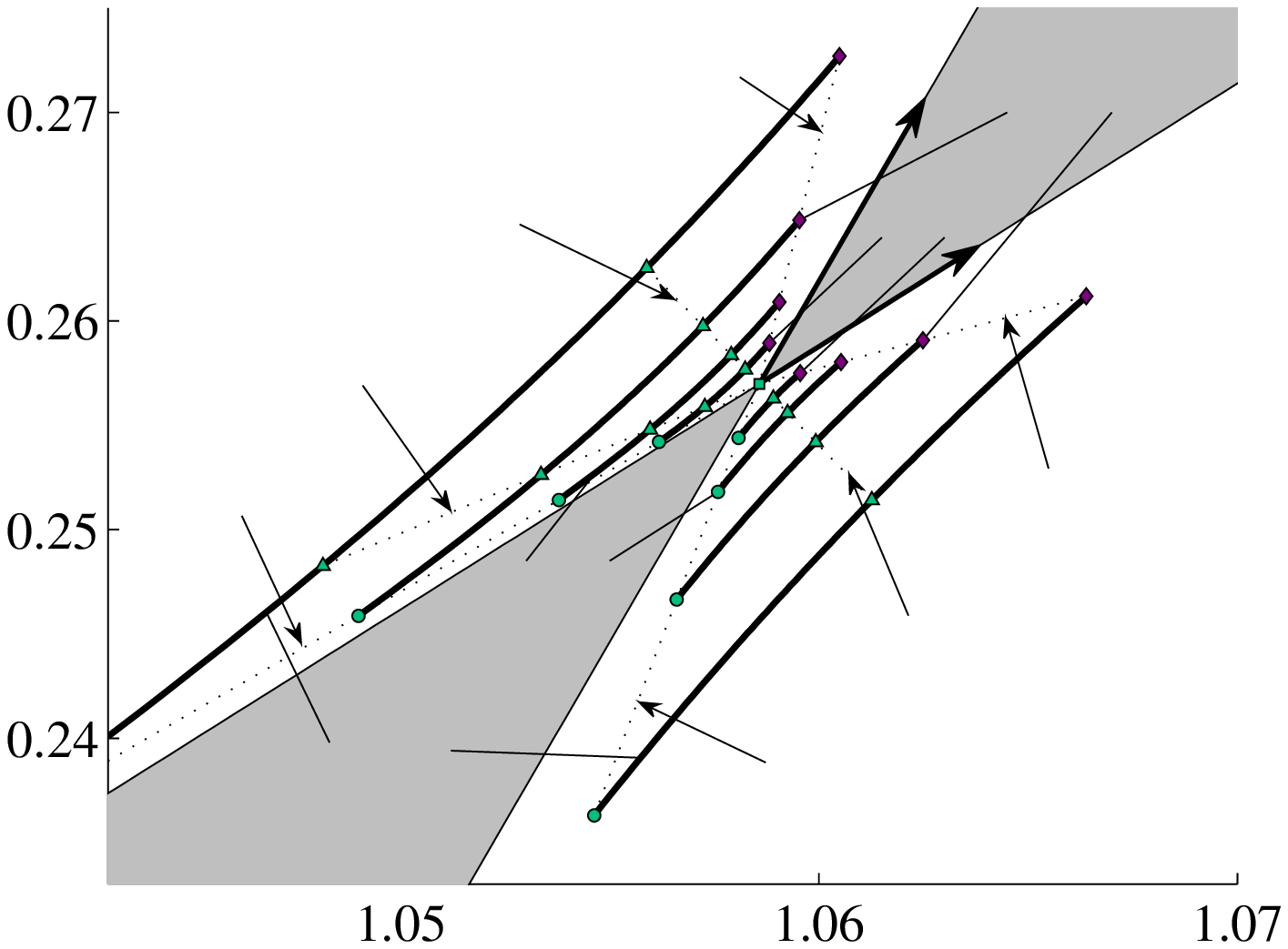}}
\put(.7,12.1){\large \sf \bfseries A}
\put(5.2,6.5){$\xi_1$}
\put(1,9.8){$\xi_2$}
\put(6.7,10.605){\tiny $\nu$}
\put(6.47,11.58){\tiny $\eta$}
\put(6.48,11.12){\tiny $k = 40$}
\put(4.18,8.91){\tiny $k = 20$}
\put(7.3,11.9){\tiny $k = 10$}
\put(3.25,7.9){\tiny $k = 5$}
\put(4,6.52){\tiny $\cG^+[k,1]$}
\put(9,7.2){\tiny $\cG^+[k,0]$}
\put(2.52,12.5){\tiny	$\cG^-[k,0]$}
\put(.63,8.52){\tiny $\cG^-[k,1]$}
\put(.63,7.44){\tiny $\cG^-[k,2]$}
\put(.7,5.6){\large \sf \bfseries B}
\put(5.2,0){$\xi_1$}
\put(1,3.3){$\xi_2$}
\put(6.78,4.15){\tiny $\nu$}
\put(6.43,5.1){\tiny $\eta$}
\put(6.48,4.62){\tiny $k = 40$}
\put(4.18,2.41){\tiny $k = 20$}
\put(7.3,5.4){\tiny $k = 10$}
\put(3.25,1.4){\tiny $k = 5$}
\put(2.48,3.02){\tiny $\theta^-_2$}
\put(3.14,3.72){\tiny $\theta^-_1$}
\put(4.06,4.69){\tiny $\theta^-_0$}
\put(5.21,5.57){\tiny $\theta^-_{-1}$}
\put(5.85,1.3){\tiny $\theta^+_1$}
\put(6.65,2.1){\tiny $\theta^+_0$}
\put(7.48,2.98){\tiny $\theta^+_{-1}$}
\end{picture}
\caption{
Panel A shows mode-locking regions of (\ref{eq:f}) with (\ref{eq:ALAREx40}) and $\mu < 0$
obtained by numerically continuing bifurcation boundaries.
Panel B shows the leading-order approximation to the mode-locking regions,
as well as shrinking points and stability loss boundaries,
as given by Theorems \ref{th:main1} and \ref{th:main2}
(using a linear approximation to the coordinate change $(\xi_1,\xi_2) \leftrightarrow (\eta,\nu)$).
For a further explanation refer to the discussion surrounding Fig.~\ref{fig:tonguesEx20}.
\label{fig:tonguesEx40}
}
\end{center}
\end{figure}

\section{Symbolic representation}
\label{sec:symbol}
\setcounter{equation}{0}

Here we develop symbolic notation on the alphabet $\{ L ,\, R \}$, following \cite{Si15,SiMe09,Si10}.
A {\em word} is a finite list of the symbols $L$ and $R$, e.g.~$\cS = LRR$.
We index the elements of a word from $i=0$ to $i=n-1$, where $n$ is the length of the word.
Thus for the previous example $n=3$ and $\cS_0 = L$, $\cS_1 = R$, $\cS_2 = R$.

Given two words $\cS$ and $\cT$,
the concatenation $\cS \cT$ is a word that has a length
equal to the sum of the lengths of $\cS$ and $\cT$.
The power $\cS^k$, where $k \in \mathbb{Z}^+$, is the concatenation of $k$ instances of $\cS$.
A word is said to be primitive if it cannot be written as a power with $k > 1$.

Given a word $\cS$, for any $j \in \mathbb{Z}$
we let $\cS^{(j)}$ denote the $j^{\rm th}$ left cyclic permutation of $\cS$.
That is, $\cS^{(j)}_i = \cS_{(i+j) {\rm \,mod\,} n}$ for all $i = 0,\ldots,n-1$.
Also we let $\cS^{\overline{j}}$ denote the word that differs from $\cS$
in only the symbol $\cS_{j {\rm \,mod\,} n}$.
For example, with $\cS = LRR$,
{\setlength{\jot}{0mm}
\begin{align*}
\cS^{(0)} = \cS &= LRR \;, & \cS^{\overline{0}} &= RRR \;, \\
\cS^{(1)} &= RRL \;, & \cS^{\overline{1}} &= LLR \;, \\
\cS^{(2)} &= RLR \;, & \cS^{\overline{2}} &= LRL \;.
\end{align*}
\vspace{-5mm}								
}

A {\em symbol sequence} $\cS$ is a bi-infinite list of the symbols $L$ and $R$.
$\cS$ is periodic with period $n \in \mathbb{Z}^+$,
if $\cS_{i + j n} = \cS_i$, for all $i = 0,\ldots,n-1$ and $j \in \mathbb{Z}$.
Given a periodic symbol sequence $\cS$ of minimal period $n$,
the word $\cS_0 \cdots \cS_{n-1}$ is primitive and completely determines $\cS$.
Conversely, given a primitive word $\cS$ of length $n$,
the infinite repetition of this word generates a symbol sequence $\cS$ of minimal period $n$.
For these reasons, in order to minimise the complexity of our notation,
we use periodic symbol sequences and primitive words interchangeably
and denote them with the same symbol, e.g.~$\cS$.


\subsection{Rotational symbol sequences}
\label{sub:rss}

In this paper we use periodic symbol sequences to describe periodic solutions to (\ref{eq:f}).
We restrict our attention to rotational symbol sequences $\cF[\ell,m,n]$, defined in Definition \ref{df:rss},
as these relate to shrinking points of (\ref{eq:f}), \cite{SiMe09,Si10,SiMe10}.
Rotational symbol sequences were originally studied independently of piecewise-linear maps
by Slater in \cite{Sl50,Sl67}.

\begin{figure}[b!]
\begin{center}
\setlength{\unitlength}{1cm}
\begin{picture}(8,6)
\put(0,0){\includegraphics[height=6cm]{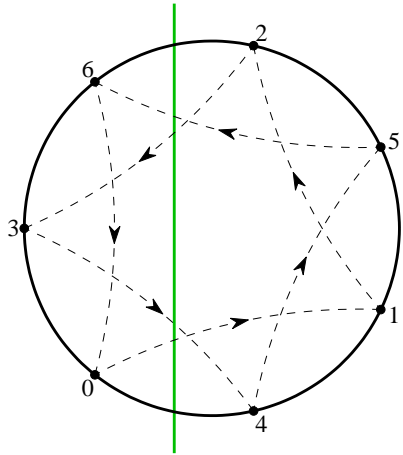}}
\end{picture}
\caption{
A pictorial interpretation of the rotational symbol sequence $\cF[3,5,7] = LRRLRRL$.
\label{fig:rotSymSeqSchem}
}
\end{center}
\end{figure}

Rotational symbol sequences can be interpreted as rigid rotation on a circle \cite{SiMe09,Si10}.
To see this, we treat the values $i m {\rm ~mod~} n$, that appear in the definition (\ref{eq:rss}),
as points on a circle.
This is shown in Fig.~\ref{fig:rotSymSeqSchem} for $\cF[3,5,7]$,
which was constructed by applying the following steps.	
\begin{enumerate}
\renewcommand{\labelenumi}{\arabic{enumi})}
\setlength{\itemsep}{0pt}
\item
Draw a circle intersecting a vertical line.
\item
Draw $n$ nodes on the circle, with $\ell$ of them to the left of the line.
\item
Find the first node that lies to the left of the lower intersection point of the circle and the line,
and call it $0$.
\item
For each $i = 1,\ldots,n-1$,
step $m$ nodes clockwise from node $i-1$, and call it node $i$.
\item
Then $\cF[\ell,m,n]_i$ is equal to $L$ if node $i$ is left of the line
and equal to $R$ otherwise.
\end{enumerate}
As we label the $n$ nodes in this fashion we revolve clockwise around the circle $m$ times.
For this reason we refer to $\frac{m}{n}$ as the rotation number of $\cF[\ell,m,n]$.

It is a simple combinatorial exercise to show that (\ref{eq:rss}) has the equivalent form,
\begin{equation}
\cF[\ell,m,n]_{j d {\rm \,mod\,} n} =
\begin{cases}
L \;, & j = 0,\ldots,\ell-1 \\
R \;, & j = \ell,\ldots,n-1
\end{cases} \;,
\label{eq:rssAlt}
\end{equation}
where $d$ is the multiplicative inverse of $m$ modulo $n$.
The next result is a simple consequence of (\ref{eq:rssAlt}) and we omit a proof.

\begin{proposition}
For any $\cF[\ell,m,n]$,
if $\ell \ne 1$ then
\begin{align}
\cF[\ell,m,n]^{\overline{(\ell-1)d}} &= \cF[\ell-1,m,n] \;, \label{eq:rssellm1d} \\
\cF[\ell,m,n]^{\overline{0}} &= \cF[\ell-1,m,n]^{(-d)} \;, \label{eq:rss0}
\end{align}
and if $\ell \ne n-1$ then
\begin{align}
\cF[\ell,m,n]^{\overline{\ell d}} &= \cF[\ell+1,m,n] \;, \label{eq:rsselld} \\
\cF[\ell,m,n]^{\overline{-d}} &= \cF[\ell+1,m,n]^{(d)} \;. \label{eq:rssmd}
\end{align}
\label{pr:rssIdentities}
\end{proposition}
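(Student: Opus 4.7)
The plan is to use the alternative form (\ref{eq:rssAlt}) of $\cF[\ell,m,n]$ throughout, since it indexes the symbols by $j d \bmod n$ in the natural order $j = 0,1,\ldots,n-1$, which makes the $L$-to-$R$ transition trivially visible. The whole proposition reduces to comparing, index by index, the symbol values of the two sides; the main obstacle is just keeping track of the bijection $j \leftrightarrow j d \bmod n$ and the way cyclic shifts in the index translate into shifts of $j$.

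For (\ref{eq:rssellm1d}) I would note that, by (\ref{eq:rssAlt}), $\cF[\ell,m,n]$ and $\cF[\ell-1,m,n]$ assign the same symbol at each index $j d \bmod n$ except when $j = \ell-1$: there $\cF[\ell,m,n]$ gives $L$ (since $\ell-1 \le \ell-1$) while $\cF[\ell-1,m,n]$ gives $R$ (since $\ell-1 \ge \ell-1$). Because $d$ is a unit modulo $n$, the index $j = \ell-1$ corresponds to position $(\ell-1)d \bmod n$, so the two sequences differ in exactly the single symbol that the overline operation flips, giving (\ref{eq:rssellm1d}). Identity (\ref{eq:rsselld}) is the mirror statement with $j = \ell$ and $L \leftrightarrow R$ exchanged; the proof is identical.

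For (\ref{eq:rss0}) I would observe that, by the definition of the left cyclic permutation, $\cF[\ell-1,m,n]^{(-d)}_{j d} = \cF[\ell-1,m,n]_{(j-1) d \bmod n}$, which by (\ref{eq:rssAlt}) equals $L$ precisely when $(j-1) \bmod n \in \{0,\ldots,\ell-2\}$, i.e.\ when $j \in \{1,\ldots,\ell-1\}$, and equals $R$ when $j \in \{0\} \cup \{\ell,\ldots,n-1\}$. On the other side, $\cF[\ell,m,n]^{\overline{0}}$ agrees with $\cF[\ell,m,n]$ except at position $0 = 0 \cdot d$, where the $L$ from $j=0$ is flipped to $R$; the resulting symbol pattern in terms of $j$ is exactly the one just computed. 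Identity (\ref{eq:rssmd}) is handled analogously: the permutation $(d)$ shifts $j$ to $j+1$, and the flip at position $-d$ corresponds to $j = n-1$, with the $R/L$ roles swapped just as in the relation between (\ref{eq:rssellm1d}) and (\ref{eq:rsselld}).

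In effect, all four statements reduce to the same observation: under the reparametrisation $i = j d \bmod n$, the rotational sequences are monotone (first $\ell$ letters $L$, then $R$), so an overline flip at a boundary index is the same as shifting the threshold by one, and a cyclic shift by $\pm d$ in $i$ is the same as a shift by $\pm 1$ in $j$. No genuine difficulty arises, and a one-line indexed comparison suffices for each case; a proof can reasonably be omitted, as the paper does.
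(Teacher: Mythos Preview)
Your proposal is correct and follows exactly the approach the paper indicates: the paper explicitly states that the result is a simple consequence of (\ref{eq:rssAlt}) and omits a proof, and your index-by-index comparison via the reparametrisation $i = j d \bmod n$ is precisely the intended argument.
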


By combining (\ref{eq:rss0}) and (\ref{eq:rsselld}) we obtain the following identity
which is central to our understanding of shrinking points.
\begin{corollary}
For any $\cF[\ell,m,n]$,
\begin{equation}
\cF[\ell,m,n]^{\overline{0} \, \overline{\ell d} (d)} = \cF[\ell,m,n] \;.
\label{eq:rssMainIdentity}
\end{equation}
\end{corollary}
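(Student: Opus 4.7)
The plan is to apply Proposition \ref{pr:rssIdentities} twice, using the fact that two flip operations at distinct indices commute. Since $2 \le \ell \le n-2$ at a shrinking point and $\gcd(d,n) = 1$, we have $\ell d \not\equiv 0 \pmod{n}$, so the operations $\overline{0}$ and $\overline{\ell d}$ act on different positions of the sequence. Flipping two different symbols is order-independent, so
\[
\cF[\ell,m,n]^{\overline{0}\,\overline{\ell d}(d)} = \cF[\ell,m,n]^{\overline{\ell d}\,\overline{0}(d)}.
\]

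First I would apply (\ref{eq:rsselld}) to rewrite the innermost operation: $\cF[\ell,m,n]^{\overline{\ell d}} = \cF[\ell+1,m,n]$. Next, I would apply (\ref{eq:rss0}) with $\ell$ replaced by $\ell+1$, giving $\cF[\ell+1,m,n]^{\overline{0}} = \cF[\ell,m,n]^{(-d)}$. Finally, the outer cyclic shift $(d)$ cancels the $(-d)$ (since $(\cT^{(-d)})^{(d)} = \cT$ for any word $\cT$), producing $\cF[\ell,m,n]$ as desired.

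There is no serious obstacle here; the only thing to watch is the applicability of Proposition \ref{pr:rssIdentities}, whose two halves require $\ell \ne 1$ (for (\ref{eq:rss0})) and $\ell \ne n-1$ (for (\ref{eq:rsselld})). In the shrinking-point regime $2 \le \ell \le n-2$ assumed by Definition \ref{df:shrPoint}, both apply with $\ell$ and with $\ell+1$ respectively, so the chain of substitutions above is valid. If one wished to state the corollary with the looser hypothesis $1 \le \ell \le n-1$, the boundary cases $\ell = 1$ or $\ell = n-1$ could be handled symmetrically by using (\ref{eq:rssellm1d}) and (\ref{eq:rssmd}) in place of (\ref{eq:rsselld}) and (\ref{eq:rss0}), flipping indices in the opposite order.
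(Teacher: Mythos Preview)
Your proposal is correct and follows essentially the same route as the paper, which simply states that the identity is obtained ``by combining (\ref{eq:rss0}) and (\ref{eq:rsselld}).'' Your extra remark that the two flips commute (and the brief handling of the boundary cases $\ell=1$ and $\ell=n-1$) is a minor clarification, not a different argument.
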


\begin{example}
As an example, let us illustrate (\ref{eq:rssmd}) with $(\ell,m,n) = (3,5,7)$.
We have
\begin{center}
\begin{tabular}{c|ccccccc}
$i$ & 0 & 1 & 2 & 3 & 4 & 5 & 6 \\
\hline
$i m {\rm ~mod~} n$ & 0 & 5 & 3 & 1 & 6 & 4 & 2
\end{tabular}
\end{center}
thus by (\ref{eq:rss}), $\cF[3,5,7] = LRRLRRL$ (see also Fig.~\ref{fig:rotSymSeqSchem}).
Moreover, $\cF[4,5,7] = LRLLRRL$.
Here $d = 3$, and so to evaluate the left hand-side of (\ref{eq:rssmd}) we use $-d = 4$
(in modulo $7$ arithmetic).
Flipping the symbol $\cF[3,5,7]_4$ produces
$\cF[3,5,7]^{\overline{-d}} = LRRLLRL$.
Conversely, the right hand-side of (\ref{eq:rssmd})
is given by the third left shift permutation of $\cF[4,5,7]$, namely
$\cF[4,5,7]^{(3)} = LRRLLRL$, which we see is indeed the same as $\cF[3,5,7]^{\overline{-d}}$.
\label{ex:rss}
\end{example}

\subsection{Partitions of rotational symbol sequences}
\label{sub:partitions}

\begin{definition}
For any $\cS = \cF[\ell,m,n]$, let\removableFootnote{
A more general notation one could use is
$\cS^{(i,j)} = \cS_{i {\rm \,mod\,} n} \cdots \cS_{(j-1) {\rm \,{\rm mod}\,} n}$.
}
\begin{align}
\cX &= \cS_0 \cdots \cS_{(\ell d-1) {\rm \,mod\,} n} \;, &
\cY &= \cS_{\ell d {\rm \,mod\,} n} \cdots \cS_{n-1} \;, \label{eq:XY} \\
\hat{\cX} &= \cS_0 \cdots \cS_{(-d-1) {\rm \,mod\,} n} \;, &
\hat{\cY} &= \cS_{-d {\rm \,mod\,} n} \cdots \cS_{n-1} \;, \label{eq:XYhat} \\
\check{\cX} &= \cS_{\ell d {\rm \,mod\,} n} \cdots \cS_{((\ell-1)d-1) {\rm \,mod\,} n} \;, &
\check{\cY} &= \cS_{(\ell-1)d {\rm \,mod\,} n} \cdots \cS_{(\ell d-1) {\rm \,mod\,} n} \;. \label{eq:XYcheck}
\end{align}
\label{df:XYall}
\end{definition}
\vspace{-5mm}								

These six words are determined by the values of $\ell$, $m$ and $n$.
We do not explicitly write them as functions of $\ell$, $m$ and $n$
as it should always be clear which values of $\ell$, $m$ and $n$ are being used.

The word $\cX$, for instance, consists of the first $\ell d {\rm ~mod~} n$ symbols of $\cS$,
and $\cY$ consists of the remaining symbols of the word $\cS$.
We can therefore write $\cF[\ell,m,n] = \cX \cY$.
Further partitions of $\cF[\ell,m,n]$ are provided below in Proposition \ref{pr:partitions}.

First let us resolve a minor ambiguity in the definitions of $\check{\cX}$ and $\check{\cY}$.
To be precise, $\check{\cX}$ consists of the symbols of $\cS$ in cyclical order
starting from $\cS_{\ell d {\rm \,mod\,} n}$ and ending with $\cS_{((\ell-1)d-1) {\rm \,mod\,} n}$
(and similarly for $\check{\cY}$).
For example, with $\cF[3,5,7] = LRRLRRL$, we have
$d = 3$, thus $\ell d {\rm \,mod\,} n = 2$,
and $((\ell-1)d-1) {\rm \,mod\,} n = 5$.
Thus $\check{\cX} = RLRR$, and similarly $\check{\cY} = LLR$.

\begin{figure}[b!]
\begin{center}
\setlength{\unitlength}{1cm}
\begin{picture}(8,6)
\put(0,0){\includegraphics[height=6cm]{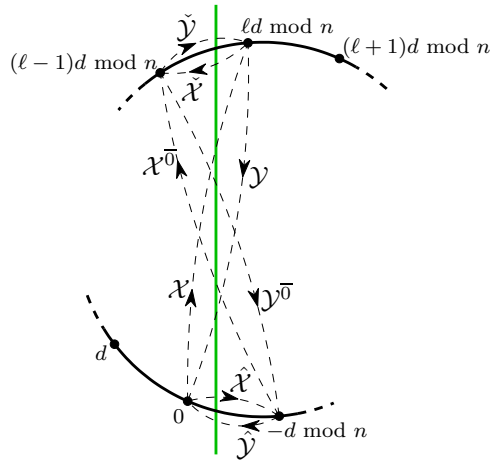}}
\put(2.72,2.1){\footnotesize $\cX$}
\put(3.8,3.6){\footnotesize $\cY$}
\put(2.38,3.74){\footnotesize $\cX^{\overline{0}}$}
\put(3.96,1.96){\footnotesize $\cY^{\overline{0}}$}
\put(3.54,.85){\footnotesize $\hat{\cX}$}
\put(3.63,0){\footnotesize $\hat{\cY}$}
\put(2.9,4.72){\footnotesize $\check{\cX}$}
\put(2.84,5.56){\footnotesize $\check{\cY}$}
\put(2.8,.44){\scriptsize $0$}
\put(4.04,.25){\scriptsize $-d {\rm ~mod~} n$}
\put(1.76,1.28){\scriptsize $d$}
\put(.62,5.14){\scriptsize $(\ell-1)d {\rm ~mod~} n$}
\put(3.7,5.62){\scriptsize $\ell d {\rm ~mod~} n$}
\put(5.04,5.36){\scriptsize $(\ell+1)d {\rm ~mod~} n$}
\end{picture}
\caption{
A pictorial interpretation of the words (\ref{eq:XY})-(\ref{eq:XYcheck}).
\label{fig:partitionsSchem}
}
\end{center}
\end{figure}

Fig.~\ref{fig:partitionsSchem} illustrates the words (\ref{eq:XY})-(\ref{eq:XYcheck}) pictorially.
For instance, the word $\cX$ ``follows'' $\cS$ from node $0$ to node $\ell d {\rm ~mod~}n$,
and the word $\check{\cX}$ ``follows'' $\cS$ from node $\ell d {\rm ~mod~}n$
to node $(\ell-1) d {\rm ~mod~}n$.

The words (\ref{eq:XY})-(\ref{eq:XYcheck}) can be used to partition $\cF[\ell,m,n]$ in different ways
and are useful to us for constructing the symbol sequences
of periodic solutions in nearby mode-locking regions of (\ref{eq:f}).
We omit a proof of Proposition \ref{pr:partitions} as it follows simply from (\ref{eq:XY})-(\ref{eq:XYcheck})
and Proposition \ref{pr:rssIdentities}\removableFootnote{
These are trivial, except
$\cF[\ell,m,n]^{(-d)} = \cX^{\overline{0}} \cY^{\overline{0}}$
which follows from (\ref{eq:rss0}) and (\ref{eq:rsselld}),
and $\cF[\ell,m,n]^{((\ell-1)d)} = \cY^{\overline{0}} \cX^{\overline{0}}$,
which follows from taking the $\ell d$ permutation of both sides of the first equation.
}.

\begin{proposition}
For any $\cF[\ell,m,n]$,
\begin{align}
\cF[\ell,m,n] &= \cX \cY \;, &
\cF[\ell,m,n] &= \hat{\cX} \hat{\cY} \;, \label{eq:partitions1} \\
\cF[\ell,m,n]^{(\ell d)} &= \cY \cX \;, &
\cF[\ell,m,n]^{(\ell d)} &= \check{\cX} \check{\cY} \;, \label{eq:partitions2} \\
\cF[\ell,m,n]^{(-d)} &= \cX^{\overline{0}} \cY^{\overline{0}} \;, &
\cF[\ell,m,n]^{(-d)} &= \hat{\cY} \hat{\cX} \;, \label{eq:partitions3} \\
\cF[\ell,m,n]^{((\ell-1)d)} &= \cY^{\overline{0}} \cX^{\overline{0}} \;, &
\cF[\ell,m,n]^{((\ell-1)d)} &= \check{\cY} \check{\cX} \;. \label{eq:partitions4}
\end{align}
\label{pr:partitions}
\end{proposition}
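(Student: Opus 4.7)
The plan is to verify each of the eight identities by careful bookkeeping of indices, relying on Definition \ref{df:XYall}, the cyclic-shift convention, and the corollary (\ref{eq:rssMainIdentity}) of Proposition \ref{pr:rssIdentities}. The four identities in the second column of (\ref{eq:partitions1})--(\ref{eq:partitions4}) are essentially immediate from the definitions, while three of the four in the first column are also immediate; only one requires the nontrivial input from Proposition \ref{pr:rssIdentities}.

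First I would dispose of the identities that follow directly from the definitions of $\cX,\cY,\hat\cX,\hat\cY,\check\cX,\check\cY$. Both identities in (\ref{eq:partitions1}) are just the statements that $\cS$ is the concatenation of its first $\ell d \bmod n$ (resp.\ $(-d)\bmod n = n-d$) symbols with the remaining symbols, which is (\ref{eq:XY}) and (\ref{eq:XYhat}) read off. For (\ref{eq:partitions2}), the word $\cY\cX$ lists the symbols of $\cS$ cyclically starting at position $\ell d\bmod n$, which by definition of a cyclic shift is $\cS^{(\ell d)}$; similarly $\check\cX\check\cY$ lists the symbols cyclically from position $\ell d$ around to $\ell d - 1$, so it equals $\cS^{(\ell d)}$ as well. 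The identity $\cS^{(-d)} = \hat\cY\hat\cX$ in (\ref{eq:partitions3}) and the identity $\cS^{((\ell-1)d)} = \check\cY\check\cX$ in (\ref{eq:partitions4}) are derived the same way.

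The key nontrivial identity is $\cF[\ell,m,n]^{(-d)} = \cX^{\overline{0}}\cY^{\overline{0}}$ in (\ref{eq:partitions3}). Here I would invoke the corollary $\cS^{\overline{0}\,\overline{\ell d}(d)} = \cS$ stated in (\ref{eq:rssMainIdentity}), which rearranges to
\begin{equation*}
\cS^{(-d)} = \cS^{\overline{0}\,\overline{\ell d}}.
\end{equation*}
Now, in the partition $\cS = \cX\cY$ from (\ref{eq:partitions1}), the symbol $\cS_0$ is the first symbol of $\cX$ and the symbol $\cS_{\ell d\bmod n}$ is the first symbol of $\cY$. Flipping both of these symbols therefore yields exactly $\cX^{\overline{0}}\cY^{\overline{0}}$, which gives the claimed identity.

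Finally, the remaining identity $\cF[\ell,m,n]^{((\ell-1)d)} = \cY^{\overline{0}}\cX^{\overline{0}}$ in (\ref{eq:partitions4}) follows by applying the cyclic shift by $\ell d$ to both sides of the identity just proved: the left side becomes $\cS^{(-d)(\ell d)} = \cS^{((\ell-1)d)}$, and the right side, being the shift of $\cX^{\overline{0}}\cY^{\overline{0}}$ by the length of $\cX^{\overline{0}}$, becomes $\cY^{\overline{0}}\cX^{\overline{0}}$. The main obstacle is purely notational: one has to handle the mod-$n$ arithmetic of indices consistently when reading off cyclic subwords, which is why treating the two columns of (\ref{eq:partitions1})--(\ref{eq:partitions4}) via the pictorial interpretation of Fig.~\ref{fig:partitionsSchem} is the cleanest approach.
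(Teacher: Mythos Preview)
Your proposal is correct and matches the paper's own (omitted) argument essentially verbatim: the paper notes that all identities are trivial from the definitions except $\cF[\ell,m,n]^{(-d)} = \cX^{\overline{0}} \cY^{\overline{0}}$, which follows from (\ref{eq:rss0}) and (\ref{eq:rsselld}) (equivalently the corollary (\ref{eq:rssMainIdentity}) you invoke), and that the first identity in (\ref{eq:partitions4}) then follows by applying the $\ell d$ shift.
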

\vspace{-5mm}								

The next result equates various concatenations of the words (\ref{eq:XY})-(\ref{eq:XYcheck}).
These can be understood intuitively by following the arrows in Fig.~\ref{fig:partitionsSchem}.
For instance, roughly speaking, both $\cX \check{\cX}$ and $\hat{\cX} \cX^{\overline{0}}$
take us from node $0$ to node $(\ell-1) d {\rm ~mod~}n$ via an intermediary node. 

\begin{proposition}
For any $\cF[\ell,m,n]$,
\begin{align}
\cX \check{\cX} &= \hat{\cX} \cX^{\overline{0}} \;, \label{eq:XXcheck} \\
\cY \hat{\cX} &= \check{\cX} \cY^{\overline{0}} \;, \label{eq:YXhat} \\
\hat{\cY} \cX &= \cX^{\overline{0}} \check{\cY} \;, \label{eq:YhatX} \\
\check{\cY} \cY &= \cY^{\overline{0}} \hat{\cY} \;. \label{eq:YcheckY}
\end{align}
\label{pr:XYformulas}
\end{proposition}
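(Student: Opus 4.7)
The approach is to embed $\cS = \cF[\ell,m,n]$ in its bi-infinite periodic extension $\tilde{\cS}_j \defeq \cS_{j \bmod n}$ for $j \in \mathbb{Z}$, so that each of the six words of Definition~\ref{df:XYall} is a contiguous window of $\tilde{\cS}$ of known starting position and length: $\cX$ and $\hat{\cX}$ begin at position $0$ with lengths $|\cX| = \ell d \bmod n$ and $n-d$; $\cY$ and $\check{\cX}$ begin at position $|\cX|$ with lengths $n - |\cX|$ and $n-d$; $\hat{\cY}$ begins at position $n-d \equiv -d$ with length $d$; and $\check{\cY}$ begins at position $|\cX| - d$ with length $d$. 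The overlined words $\cX^{\overline{0}}$ and $\cY^{\overline{0}}$ agree with $\cX$ and $\cY$ everywhere except at their first symbol, which corresponds to position $0$ or position $|\cX|$ of $\tilde{\cS}$.

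The only structural ingredient needed, which follows directly from (\ref{eq:rss}), is that shifting $\tilde{\cS}$ by $d$ flips exactly two symbols per period:
\[
\tilde{\cS}_j \ne \tilde{\cS}_{j-d} \text{ if and only if } j \equiv 0 \text{ or } j \equiv |\cX| \pmod{n},
\]
and equivalently $\tilde{\cS}_j \ne \tilde{\cS}_{j+d}$ if and only if $j \equiv -d$ or $j \equiv (\ell-1)d \pmod{n}$. This is because $\tilde{\cS}_j = L$ iff $jm \bmod n < \ell$, and replacing $j$ by $j-d$ decrements $jm \bmod n$ by one modulo $n$, crossing the $L$--$R$ boundary only at $jm \equiv \ell$ (giving $j \equiv |\cX|$) and wrapping at $jm \equiv 0$ (giving $j \equiv 0$).

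I would first verify identities (iii) and (iv) by position-wise comparison. For (iii), the left side $\hat{\cY}\cX$ is simply the window $\tilde{\cS}_{-d}\tilde{\cS}_{-d+1}\cdots\tilde{\cS}_{|\cX|-1}$, so $(\hat{\cY}\cX)_i = \tilde{\cS}_{i-d}$ throughout $i \in [0, d+|\cX|)$. The right side $\cX^{\overline{0}} \check{\cY}$ equals $\text{flip}(\tilde{\cS}_0)$ at $i=0$, equals $\tilde{\cS}_i$ for $i \in (0,|\cX|)$, and equals $\tilde{\cS}_{i-d}$ for $i \in [|\cX|,|\cX|+d)$. Agreement on $[|\cX|,|\cX|+d)$ is immediate; on $(0,|\cX|)$ it follows from the structural fact, since $i \not\equiv 0$ and $i \not\equiv |\cX|$ in that range; and at $i=0$ the flip produces exactly $\tilde{\cS}_{-d}$, again by the structural fact at $j=0$. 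Identity (iv) is entirely analogous: $\check{\cY}\cY$ is the window starting at $|\cX|-d$, and the sole nontrivial position $i=0$ of $\cY^{\overline{0}}\hat{\cY}$ is handled by the structural fact at $j=|\cX|$.

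Finally, I would derive (i) from (iii) and (ii) from (iv) by suffix cancellation in the free monoid, using the partitions in Proposition~\ref{pr:partitions}. For (i), expand $\cX\cY\cX$ in two ways: as $\cX\cdot(\cY\cX) = \cX\check{\cX}\check{\cY}$ using $\cY\cX = \check{\cX}\check{\cY}$, and as $(\cX\cY)\cdot\cX = \hat{\cX}\cdot(\hat{\cY}\cX) = \hat{\cX}\cX^{\overline{0}}\check{\cY}$ using $\cX\cY = \hat{\cX}\hat{\cY}$ and identity (iii); cancelling the common suffix $\check{\cY}$ yields (i). A symmetric manipulation on $\cY\cX\cY$, invoking identity (iv) in place of (iii), gives (ii) after cancelling $\hat{\cY}$. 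The only bookkeeping subtlety is keeping window positions straight modulo $n$, but the bi-infinite embedding makes every wraparound automatic, so no case analysis on the relative sizes of $d$ and $|\cX|$ is required.
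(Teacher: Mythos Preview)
Your proposal is correct. Both your argument and the paper's rest on the same combinatorial observation --- that the shift $\tilde{\cS}_j \mapsto \tilde{\cS}_{j-d}$ flips exactly the symbols at $j \equiv 0$ and $j \equiv |\cX| \pmod n$ --- which in the paper's language is precisely the content of (\ref{eq:partitions3}): $\cS^{(-d)} = \cX^{\overline{0}} \cY^{\overline{0}}$.

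The organization differs, however. The paper attacks (\ref{eq:XXcheck}) directly: using (\ref{eq:partitions3}) it writes $\cX^{\overline{0}}$ as the cyclic window $\cS_{-d \bmod n} \cdots \cS_{((\ell-1)d-1) \bmod n}$, so that both $\cX\check{\cX}$ and $\hat{\cX}\cX^{\overline{0}}$ are visibly the same cyclic block of $\cS$ running from position $0$ to position $(\ell-1)d - 1$, with a one-line length check to confirm they match. The remaining identities are declared analogous. Your route instead isolates the two-flip fact as an explicit lemma, verifies (\ref{eq:YhatX}) and (\ref{eq:YcheckY}) by position-wise comparison (these are the identities where only one flip lands inside the relevant window, so the check is cleanest), and then recovers (\ref{eq:XXcheck}) and (\ref{eq:YXhat}) by the elegant free-monoid cancellation on $\cX\cY\cX$ and $\cY\cX\cY$ via Proposition~\ref{pr:partitions}. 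Your version trades a little extra scaffolding for a cleaner separation of the combinatorial input from the purely formal word manipulations; the paper's version is shorter per identity but leaves more to the reader under ``similarly''.
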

\vspace{-5mm}								

\begin{proof}
Here we derive (\ref{eq:XXcheck}).
The remaining identities can be derived similarly.

Let $\cS = \cF[\ell,m,n]$.
By (\ref{eq:XY}) and (\ref{eq:XYcheck}),
\begin{equation}
\cX \check{\cX} = \cS_0 \cdots \cS_{(\ell d-1) {\rm \,mod\,} n}
\cS_{\ell d {\rm \,mod\,} n} \cdots \cS_{((\ell-1)d-1) {\rm \,mod\,} n} \;.
\end{equation}
Also $\cS^{(-d)} = \cX^{\overline{0}} \cY^{\overline{0}}$, (\ref{eq:partitions3}).
Therefore $\cX^{\overline{0}}$ consists of the first $\ell d$ symbols of $\cS^{(-d)}$,
i.e.~$\cX^{\overline{0}} = \cS_{-d {\rm \,mod\,} n} \cdots \cS_{((\ell-1)d-1) {\rm \,mod\,} n}$.
Therefore
\begin{equation}
\hat{\cX} \cX^{\overline{0}} = \cS_0 \cdots \cS_{(-d-1) {\rm \,mod\,} n}
\cS_{-d {\rm \,mod\,} n} \cdots \cS_{(\ell-1)d-1) {\rm \,mod\,} n} \;,
\end{equation}
where we have substituted the definition of $\hat{\cX}$ (\ref{eq:XYhat}).
Therefore $\cX \check{\cX}$ and $\hat{\cX} \cX^{\overline{0}}$
both consist of the symbols of $\cS$ in cyclical order
starting from $\cS_0$ and ending with $\cS_{(\ell-1)d-1) {\rm \,mod\,} n}$.
The words $\cX$ and $\cX^{\overline{0}}$ both have length $\ell d {\rm ~mod~} n$,
and the words $\check{\cX}$ and $\hat{\cX}$ both have length $-d {\rm ~mod~} n$.
Thus $\cX \check{\cX}$ and $\hat{\cX} \cX^{\overline{0}}$
consist of the same number of symbols, which verifies (\ref{eq:XXcheck}).
\end{proof}

\subsection{Sequences of symbol sequences}
\label{sub:nearbySequences}


We begin by reviewing Farey addition and the Farey tree\removableFootnote{
My understanding is that the Farey tree has $0$ and $1$
as the roots at the top and involves all rational numbers between $0$ and $1$,
whereas the Stern-Brocot tree has just $1$ at the top
and involves all positive rational numbers.
},
and then apply the results to the symbol sequences $\cG^\pm[k,\chi]$ (\ref{eq:Gplusminusearly}).

The Farey tree is a graph with the rational numbers in $[0,1]$ as its vertices \cite{LaTr95,BrRi02}.
The Farey tree can be constructed by starting with the numbers $\frac{0}{1}$ and $\frac{1}{1}$
and supposing that there is an edge between them.
Then all rational numbers between $0$ and $1$ are incorporated into the tree
by repeatedly applying the following rule.
Given any two fractions $\frac{m^-}{n^-}$ and $\frac{m^+}{n^+}$ that are connected by an edge,
we create the new fraction $\frac{m}{n} = \frac{m^- + m^+}{n^- + n^+}$ (this is {\em Farey addition}),
and say that this fraction is connected by an edge to both $\frac{m^-}{n^-}$ and $\frac{m^+}{n^+}$.
Assuming $\frac{m^-}{n^-} < \frac{m^+}{n^+}$,
we refer to $\frac{m^-}{n^-}$ and $\frac{m^+}{n^+}$
as the {\em left} and {\em right roots} of $\frac{m}{n}$, respectively.

For any $\frac{m}{n}$ in the Farey tree,
$\frac{m}{n}$ is irreducible, i.e.~${\rm gcd}(m,n) = 1$,
and its left and right roots satisfy $m^+ n^- - m^- n^+ = 1$.
As a consequence, $m n^- - m^- n = 1$, $m^+ n - m n^+ = 1$,
$d = n^-$, and $-d {\rm ~mod~} n = n^+$
(where again $d$ is the multiplicative inverse of $m$ modulo $n$).
By applying these observations to the quantities in Definition \ref{df:cG}
we immediately obtain the following result (illustrated in Fig.~\ref{fig:Farey}).

\begin{lemma}
Write $m_0^\pm = m^\pm$ and $n_0^\pm = n^\pm$ (to accommodate the case $k=1$).
For all $k \in \mathbb{Z}^+$, the left and right roots of $\frac{m_k^+}{n_k^+}$
are $\frac{m}{n}$ and $\frac{m_{k-1}^+}{n_{k-1}^+}$, respectively,
and the left and right roots of $\frac{m_k^-}{n_k^-}$
are $\frac{m_{k-1}^-}{n_{k-1}^-}$ and $\frac{m}{n}$, respectively.
Moreover $d_k^+ = n$ and $-d_k^- {\rm ~mod~} n_k^- = n$\removableFootnote{
This is more useful than $d_k^- = n_{k-1}^-$.
}.
\label{le:mndkpm}
\end{lemma}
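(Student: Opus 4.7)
The plan is to prove the two parts separately: first the Farey-root assertions by induction on $k$, then the modular identities for $d_k^\pm$ by direct computation.

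I would establish the $+$ case by induction; the $-$ case is entirely symmetric. In the base case $k=1$, the fraction $\frac{m_1^+}{n_1^+} = \frac{m+m^+}{n+n^+}$ is by definition the Farey mediant of $\frac{m}{n}$ and $\frac{m^+}{n^+} = \frac{m_0^+}{n_0^+}$, and the hypothesis that $\frac{m^+}{n^+}$ is the right root of $\frac{m}{n}$ means precisely that these two fractions are joined by an edge in the Farey tree at the moment the mediant is inserted. This identifies them as the left and right roots of $\frac{m_1^+}{n_1^+}$, respectively. For the inductive step, I would assume $\frac{m_{k-1}^+}{n_{k-1}^+}$ has $\frac{m}{n}$ as its left root. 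The construction of the Farey tree then guarantees that once $\frac{m_{k-1}^+}{n_{k-1}^+}$ is inserted, it is joined to $\frac{m}{n}$ by an edge, and so the next fraction to appear between them is their mediant. A short computation,
\[
\frac{m + m_{k-1}^+}{n + n_{k-1}^+} = \frac{km + m^+}{kn + n^+} = \frac{m_k^+}{n_k^+}\,,
\]
identifies $\frac{m_k^+}{n_k^+}$ as this mediant, so its left and right roots are $\frac{m}{n}$ and $\frac{m_{k-1}^+}{n_{k-1}^+}$.

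For the formulas $d_k^+ = n$ and $-d_k^- \bmod n_k^- = n$, I would invoke the Farey-neighbour identities $m^+ n - m n^+ = 1$ and $m n^- - m^- n = 1$ stated just before the lemma. A brief calculation then gives
\[
m_k^+ \cdot n = kmn + m^+ n = m \,n_k^+ + 1 \quad\text{and}\quad m_k^- \cdot n = kmn + m^- n = m \,n_k^- - 1\,,
\]
so $m_k^+ \cdot n \equiv 1 \pmod{n_k^+}$ and $m_k^- \cdot n \equiv -1 \pmod{n_k^-}$. Since $n$ lies in $\{1,\dots,n_k^\pm - 1\}$, uniqueness of the multiplicative inverse modulo $n_k^\pm$ yields $d_k^+ = n$ and $-d_k^- \bmod n_k^- = n$.

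The only potentially delicate point is justifying, in the inductive step, that the edge between $\frac{m}{n}$ and $\frac{m_{k-1}^+}{n_{k-1}^+}$ is the correct one whose mediant produces $\frac{m_k^+}{n_k^+}$. This is precisely what the inductive hypothesis supplies, together with the fact that each fraction in the Farey tree has a unique parent pair, so I do not anticipate a substantive obstacle; beyond this, both parts reduce to short identities.
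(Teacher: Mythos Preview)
Your proof is correct and fills in precisely the details the paper omits; the paper gives no explicit proof, merely stating that the result follows ``immediately'' from the Farey identities $m^+ n - m n^+ = 1$, $m n^- - m^- n = 1$, $d = n^-$, and $-d \bmod n = n^+$ recorded just before the lemma. Your inductive argument for the root identifications is the natural one.

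One small difference worth noting: the paper's phrasing suggests deriving the modular identities \emph{from} the Farey-root part. Once you know the left root of $\frac{m_k^+}{n_k^+}$ is $\frac{m}{n}$, the general fact $d = (\text{denominator of left root})$ gives $d_k^+ = n$ directly, and similarly the right root of $\frac{m_k^-}{n_k^-}$ being $\frac{m}{n}$ gives $-d_k^- \bmod n_k^- = n$. Your approach instead proves the modular identities by an independent one-line computation from $m^+ n - m n^+ = 1$ and $m n^- - m^- n = 1$. Both routes are equally short; yours has the minor advantage that the two halves of the lemma become logically independent.
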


\begin{figure}[b!]
\begin{center}
\setlength{\unitlength}{1cm}
\begin{picture}(8,6)
\put(0,0){\includegraphics[height=6cm]{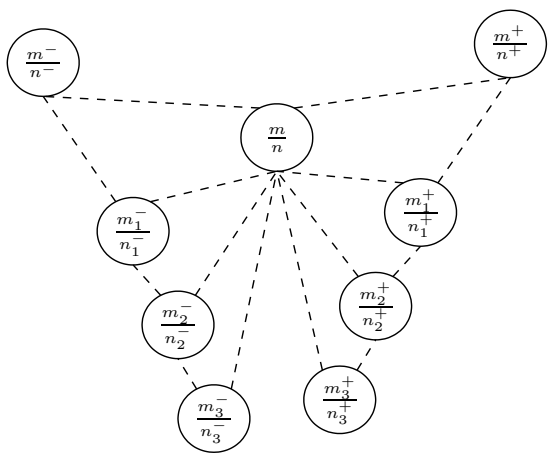}}
\put(.62,5.18){\scriptsize $\frac{m^-}{n^-}$}
\put(1.815,2.935){\scriptsize $\frac{m_1^-}{n_1^-}$}
\put(2.415,1.69){\scriptsize $\frac{m_2^-}{n_2^-}$}
\put(2.89,.445){\scriptsize $\frac{m_3^-}{n_3^-}$}
\put(3.835,4.18){\scriptsize $\frac{m}{n}$}
\put(4.57,.69){\scriptsize $\frac{m_3^+}{n_3^+}$}
\put(5.045,1.935){\scriptsize $\frac{m_2^+}{n_2^+}$}
\put(5.645,3.18){\scriptsize $\frac{m_1^+}{n_1^+}$}
\put(6.83,5.425){\scriptsize $\frac{m^+}{n^+}$}
\end{picture}
\caption{
Part of the Farey tree centred about
an arbitrary irreducible fraction $\frac{m}{n}$.
\label{fig:Farey}
}
\end{center}
\end{figure}

The next result concerns $\tilde{\ell}$ -- the number of $L$'s in $\cG^\pm[k,\chi]$.
This result is useful in later sections because $\tilde{\ell} d_k^\pm {\rm ~mod~} n_k^\pm$
is one of the four indices corresponding to a curve of border-collision bifurcations
emanating from a $\cG^\pm[k,\chi]$-shrinking point.

\begin{lemma}
For any $k \in \mathbb{Z}^+$, $|\chi| < k$, and $\cF[\ell,m,n]$,
\begin{equation}
\tilde{\ell} d_k^\pm {\rm ~mod~} n_k^\pm =
\left( \ell d {\rm ~mod~} n \pm \chi n \right) {\rm ~mod~} n_k^\pm \;, \label{eq:lkdkplusminus}
\end{equation}
\end{lemma}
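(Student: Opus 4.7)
The plan is to reduce everything to simple modular arithmetic using the previous lemma. By Lemma~\ref{le:mndkpm} we have $d_k^+ = n$, while $d_k^- \equiv -n \pmod{n_k^-}$. Substituting these into $\tilde\ell\, d_k^\pm$ reduces the statement to two clean identities:
\begin{equation*}
\tilde\ell\, n \equiv (\ell d \bmod n) + \chi n \pmod{n_k^+}, \qquad
-\tilde\ell\, n \equiv (\ell d \bmod n) - \chi n \pmod{n_k^-}.
\end{equation*}

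For the $+$ case, I would first pin down $\ell^+ n - \ell n^+$. The Farey-neighbour relation $m^+ n - m n^+ = 1$ (recalled in \S\ref{sub:nearbySequences}) gives $n^+ \equiv -d \pmod n$, so $\ell n^+ \equiv -\ell d \pmod n$. Combined with the definition $\ell^+ = \lceil \ell n^+ / n\rceil$, which forces $\ell^+ n - \ell n^+ \in [0,n)$, this nails down $\ell^+ n - \ell n^+ = \ell d \bmod n$ exactly. Then I would just expand:
\begin{equation*}
\tilde\ell\, n = (k\ell + \ell^+ + \chi)n = \ell(kn + n^+) + (\ell d \bmod n) + \chi n = \ell\, n_k^+ + (\ell d \bmod n) + \chi n,
\end{equation*}
and reducing modulo $n_k^+$ kills the first term, yielding the claim.

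For the $-$ case I would run the mirror argument. The identity $m n^- - m^- n = 1$ gives $n^- \equiv d \pmod n$ (in fact $n^- = d$ as noted earlier), and the floor definition $\ell^- = \lfloor \ell n^-/n \rfloor$ yields $\ell^- n - \ell n^- = -(\ell d \bmod n)$. Then
\begin{equation*}
-\tilde\ell\, n = -(k\ell + \ell^- + \chi)n = -\ell\, n_k^- + (\ell d \bmod n) - \chi n,
\end{equation*}
which modulo $n_k^-$ gives $-\tilde\ell\, n \equiv (\ell d \bmod n) - \chi n$. Using $d_k^- \equiv -n$ to replace $-\tilde\ell\, n$ by $\tilde\ell\, d_k^-$ finishes the argument.

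There is no serious obstacle here; the only thing to be careful about is the ceiling-versus-floor bookkeeping for $\ell^+$ and $\ell^-$ and verifying that the bounds $0 \le \ell d \bmod n < n$ force the two ``correction terms'' to be exactly $\pm(\ell d \bmod n)$ rather than differing by a multiple of $n$. Since the statement is phrased with an outer $\bmod\ n_k^\pm$ on both sides, I do not need to worry about which representative in $\{0,\ldots,n_k^\pm - 1\}$ the right-hand side actually lands in; this avoids a small case analysis on the sign of $\chi$.
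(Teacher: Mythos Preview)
Your argument is correct and follows essentially the same route as the paper's proof: both hinge on the identity $\ell^+ n - \ell n^+ = \ell d \bmod n$ (equivalently $\ell^- n - \ell n^- = -(\ell d \bmod n)$) together with $d_k^+ = n$ and $-d_k^- \equiv n \pmod{n_k^-}$ from Lemma~\ref{le:mndkpm}. The only cosmetic difference is that the paper phrases the ceiling/floor bookkeeping via fractional parts, writing $\ell_k^+ = \ell n_k^+/n + (\ell d/n \bmod 1)$ and then multiplying through by $d_k^+ = n$, whereas you work directly with integers; the content is identical.
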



\begin{proof}
By (\ref{eq:lmnkpm}) and the definition of $\ell^+$,
\begin{equation}
\ell_k^+ = k \ell + \left\lceil \frac{\ell n^+}{n} \right\rceil =
k \ell + \frac{\ell n^+}{n} +
\left( \frac{-\ell n^+}{n} {\rm ~mod~} 1 \right)  =
\frac{\ell n_k^+}{n} + \left( \frac{\ell d}{n} {\rm ~mod~} 1 \right) \;,
\label{eq:ellkplus}
\end{equation}
where in the last step we substituted $n^+ = n - d$.
Since $d_k^+ = n$ (see Lemma \ref{le:mndkpm}), (\ref{eq:ellkplus}) implies (\ref{eq:lkdkplusminus})
in the ``$+$ case''.
Similarly,
\begin{equation}
\ell_k^- = k \ell + \left\lfloor \frac{\ell n^-}{n} \right\rfloor =
k \ell + \frac{\ell n^-}{n} - \left( \frac{\ell n^-}{n} {\rm ~mod~} 1 \right) =
\frac{\ell n_k^-}{n} - \left( \frac{\ell d}{n} {\rm ~mod~} 1 \right) \;,
\label{eq:ellkminus}
\end{equation}
which with $-d_k^- {\rm ~mod~} n_k^- = n$, leads to (\ref{eq:lkdkplusminus})
in the ``$-$ case''.
\end{proof}

The next result provides us with an alternative interpretation of $\ell^+$ and $\ell^-$.
Here we state the result, present an example, then give a proof.

\begin{lemma}
For any $\cF[\ell,m,n]$,
$\ell^+$ is equal to the number of $L$'s in $\hat{\cX}$,
and $\ell^-$ is equal to the number of $L$'s in $\hat{\cY}$.
Moreover, $\ell^+ + \ell^- = \ell$.
\label{le:ellplusminus}
\end{lemma}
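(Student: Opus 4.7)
The plan is to translate the statement into a counting of residues modulo $n$, and then relate this count to the number of wrap-arounds in the orbit $\{j d \bmod n : 0 \le j \le \ell - 1\}$.

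First I would use the alternative characterisation (\ref{eq:rssAlt}) of $\cF[\ell,m,n]$, which says that the $L$-positions are exactly $\{j d \bmod n : 0 \le j \le \ell - 1\}$. Since $\hat{\cX}$ consists of the first $-d \bmod n = n^+$ symbols of $\cF[\ell,m,n]$ (using Lemma \ref{le:mndkpm}), the number of $L$'s in $\hat{\cX}$ equals
\begin{equation*}
N_{\hat{\cX}} \defeq \#\{ j : 0 \le j \le \ell - 1,\; j d \bmod n < n^+ \}.
\end{equation*}

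Second, using $n^+ = n - d$, the condition $j d \bmod n < n^+$ is equivalent to $(j d \bmod n) + d < n$, i.e.\ to the property that the step $j \mapsto j+1$ does \emph{not} produce a wrap-around of the orbit modulo $n$. The complementary steps (with wrap-around) correspond to subtracting $n$ once. After $\ell$ steps of size $d$ starting from $0$ the raw value is $\ell d$, while the residue is $\ell d \bmod n$, so the total number of wrap-arounds among the first $\ell$ steps is exactly $\lfloor \ell d / n \rfloor$. Hence
\begin{equation*}
N_{\hat{\cX}} = \ell - \lfloor \ell d / n \rfloor .
\end{equation*}

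Third, I would check that this coincides with $\ell^+$, and dispatch the remaining two claims. Since $n^- = d$, directly $\ell^- = \lfloor \ell n^-/n \rfloor = \lfloor \ell d/n \rfloor$. For $\ell^+$, writing $\ell n^+/n = \ell - \ell d/n$ and noting that $\gcd(d,n) = 1$ together with $1 \le \ell \le n-1$ force $\ell d \bmod n \ne 0$, one obtains $\ell^+ = \lceil \ell - \ell d/n \rceil = \ell - \lfloor \ell d/n \rfloor$. Summing gives $\ell^+ + \ell^- = \ell$, and since $\cF[\ell,m,n]$ has exactly $\ell$ $L$'s in total, the number of $L$'s in $\hat{\cY}$ equals $\ell - \ell^+ = \ell^-$.

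The main (minor) obstacle is handling the arithmetic of the ceiling cleanly; the assumption $1 \le \ell \le n-1$ combined with $\gcd(d,n) = 1$ eliminates the boundary case $\ell d \equiv 0 \pmod n$ and makes the identification $\lceil \ell - \ell d/n \rceil = \ell - \lfloor \ell d/n \rfloor$ unambiguous. Otherwise the argument is a clean counting of wrap-arounds.
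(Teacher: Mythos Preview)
Your proof is correct and takes a genuinely different route from the paper's. The paper works directly from the original definition (\ref{eq:rss}): it writes the number of $L$'s in $\hat{\cX}$ as $|\{i=0,\ldots,n^+-1 : im \bmod n < \ell\}|$, then exploits the Farey relation $m^+ n - m n^+ = 1$ to rewrite $im \bmod n$ as $\frac{im^+ n}{n^+} \bmod n$, which recasts the count as $|\{i=0,\ldots,n^+-1 : im^+ \bmod n^+ < \ell n^+/n\}|$; since $\gcd(m^+,n^+)=1$, this is just $\lceil \ell n^+/n \rceil = \ell^+$. The identity $\ell^+ + \ell^- = \ell$ is then obtained by a short floor/ceiling manipulation, and the claim for $\hat{\cY}$ follows by complementation, exactly as you do.

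Your argument instead starts from the alternative description (\ref{eq:rssAlt}) of the $L$-positions as $\{jd \bmod n : 0 \le j \le \ell-1\}$ and interprets the condition $jd \bmod n < n^+ = n-d$ dynamically, as the absence of a wrap-around in the step $j \mapsto j+1$. This turns the count into $\ell$ minus the number of wrap-arounds, which is immediately $\ell - \lfloor \ell d/n \rfloor$. The payoff is that you never need the Farey identity $m^+ n - m n^+ = 1$ or the auxiliary bijection modulo $n^+$; the only arithmetic input is $n^\pm = n \mp d$, and the argument is arguably more transparent. The paper's approach, on the other hand, makes the role of the Farey roots $(m^+,n^+)$ explicit and connects more directly to the definition $\ell^+ = \lceil \ell n^+/n \rceil$ without first computing $\ell^-$.
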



\begin{example}
Consider $\cF[3,5,7] = LRRLRRL$.
Here $\frac{m}{n} = \frac{5}{7}$, thus $\frac{m^+}{n^+} = \frac{3}{4}$ and $\frac{m^-}{n^-} = \frac{2}{3}$.
Therefore $\ell^+ = \left\lceil \frac{\ell n^+}{n} \right\rceil =
\left\lceil \frac{12}{7} \right\rceil = 2$,
and $\ell^- = \left\lfloor \frac{\ell n^-}{n} \right\rfloor =
\left\lfloor \frac{9}{7} \right\rfloor = 1$.

Also $d = 3$, so by (\ref{eq:XYhat}), $\hat{\cX} = LRRL$ and $\hat{\cY} = RRL$.
Thus the number of $L$'s in $\hat{\cX}$ is $2$
and the number of $L$'s in $\hat{\cY}$ is $1$,
in agreement with Lemma \ref{le:ellplusminus}.
\end{example}	

\begin{proof}[Proof of Lemma \ref{le:ellplusminus}]
Let $\hat{\ell}^+$ denote the number of $L$'s in $\hat{\cX}$.
By (\ref{eq:rss}),
\begin{equation}
\hat{\ell}^+ = \Big| \left\{ i = 0,\ldots,n^+-1 ~\middle|~ i m {\rm ~mod~} n < \ell \right\} \Big| \;.
\label{eq:ellPlusProof1}
\end{equation}
For each $i = 0,\ldots,n^+-1$, 
\begin{equation}
i m {\rm ~mod~} n =
\left( i m + \frac{i}{n^+} \right) {\rm ~mod~} n =
\frac{i m^+ n}{n^+} {\rm ~mod~} n \;,
\label{eq:ellPlusProof2}
\end{equation}
where we have used $m^+ n - m n^+ = 1$ in the last equality.
Using (\ref{eq:ellPlusProof2}) we can rewrite (\ref{eq:ellPlusProof1}) as
\begin{equation}
\hat{\ell}^+ =
\left| \left\{ i = 0,\ldots,n^+-1 ~\middle|~
i m^+ {\rm ~mod~} n^+ < \frac{\ell n^+}{n} \right\} \right| \;.
\label{eq:ellPlusProof3}
\end{equation}
Since ${\rm gcd}(m^+,n^+) = 1$, (\ref{eq:ellPlusProof3}) is the same as
$\hat{\ell}^+ = 
\left| \left\{ j = 0,\ldots,n^+-1 ~\middle|~
j {\rm ~mod~} n^+ < \frac{\ell n^+}{n} \right\} \right|$.
That is, $\hat{\ell}^+ = \left\lceil \frac{\ell n^+}{n} \right\rceil = \ell^+$,
as required.

Also,
$\ell^- =
\left\lfloor \frac{\ell n^-}{n} \right\rfloor =
\ell + \left\lfloor \frac{\ell n^-}{n} - \ell \right\rfloor =
\ell - \left\lceil \ell - \frac{\ell n^-}{n} \right\rceil =
\ell - \left\lceil \frac{\ell n^+}{n} \right\rceil =
\ell - \ell^+$,
where we have used $n^- + n^+ = n$.
Therefore $\ell^-$ is equal to the
number of $L$'s in $\cF[\ell,m,n]$
minus the number of $L$'s in $\hat{\cX}$.
Since $\cF[\ell,m,n] = \hat{\cX} \hat{\cY}$, see (\ref{eq:partitions1}),
$\ell^-$ equals the number of $L$'s in $\hat{\cY}$.
\end{proof}



The final result of this section provides
explicit expressions for $\cG^\pm[k,\chi]$ in terms of $\cS = \cF[\ell,m,n]$,
$\hat{\cX}$ and $\hat{\cY}$\removableFootnote{
These expressions can be rewritten in the following symmetric manner:
\begin{align}
\cG^+[k,\chi]^{\left( \tilde{\ell} d_k^+ \right)} &= 
\left( \cY \cX^{\overline{0}} \right)^{-\chi-1} \check{\cX}
\left( \cS^{((\ell-1)d)} \right)^{k+\chi+1} \;, \quad \chi = -k+1,\ldots,-1 \;, \\
\cG^+[k,\chi] &=
\left( \cX \cY^{\overline{0}} \right)^{\chi} \hat{\cX}
\left( \cS^{(-d)} \right)^{k-\chi} \;, \quad \chi = 0,\ldots,k-1 \;, \\
\cG^-[k,\chi]^{\left( -d_k^- \right)} &=
\left( \cX^{\overline{0}} \cY \right)^{-\chi} \hat{\cY}
\cS^{k+\chi} \;, \quad \chi = -k+1,\ldots,0 \;, \\
\cG^-[k,\chi]^{\left( \left( \tilde{\ell}-1 \right) d_k^- \right)} &=
\left( \cY^{\overline{0}} \cX \right)^{\chi-1} \check{\cY}
\left( \cS^{(\ell d)} \right)^{k-\chi+1} \;, \quad \chi = 1,\ldots,k-1 \;.
\end{align}
}.
Again we state the result, present an example, then give a proof.

\begin{proposition}
For any $k \in \mathbb{Z}^+$, $|\chi| < k$, and $\cF[\ell,m,n]$,
\begin{align}
\cG^+[k,\chi] &= \begin{cases}
\cS^{k+\chi} \hat{\cX} \left( \cS^{\overline{0}} \right)^{-\chi} \;, &
\chi = -k+1,\ldots,-1 \\
\left( \cS^{\overline{\ell d}} \right)^{\chi} \cS^{k-\chi} \hat{\cX} \;, &
\chi = 0,\ldots,k-1
\end{cases} \;, \label{eq:Gplus} \\
\cG^-[k,\chi] &= \begin{cases}
\cS \left( \cS^{\overline{0}} \right)^{-\chi} \hat{\cY} \cS^{k+\chi-1} \;, &
\chi = -k+1,\ldots,0 \\
\cS^{\overline{\ell d}} \hat{\cY} \cS^{k-\chi}
\left( \cS^{\overline{\ell d}} \right)^{\chi - 1} \;, &
\chi = 1,\ldots,k-1
\end{cases} \;. \label{eq:Gminus}
\end{align}
\label{pr:Gall}
\end{proposition}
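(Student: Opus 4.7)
The plan is to verify each expression in (\ref{eq:Gplus})--(\ref{eq:Gminus}) directly by invoking the equivalent characterization (\ref{eq:rssAlt}) of rotational symbol sequences together with Lemma \ref{le:mndkpm}, which gives $d_k^+ = n$ and $-d_k^- \bmod n_k^- = n$. Under (\ref{eq:rssAlt}), the sequence $\cF[\tilde{\ell}, m_k^+, n_k^+]$ is determined by requiring that the symbol at index $jn \bmod n_k^+$ equals $L$ for $j = 0,\ldots,\tilde{\ell}-1$ and equals $R$ for $j = \tilde{\ell},\ldots,n_k^+ - 1$. These indices enumerate $\{0,\ldots,n_k^+ - 1\}$ since $\gcd(n, n_k^+) = \gcd(n, n^+) = 1$ (using $m^+ n - m n^+ = 1$), and a symmetric statement applies to $\cG^-$ using $-d_k^- \bmod n_k^- = n$.

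I would first dispatch the base case $\chi = 0$ of (\ref{eq:Gplus}): the word $\cS^k \hat{\cX}$ has length $kn + n^+ = n_k^+$ and contains $k\ell + \ell^+ = \tilde{\ell}$ copies of $L$ by Lemma \ref{le:ellplusminus}. For $j = 0,\ldots,k-1$, the index $jn \bmod n_k^+ = jn$ lands at position $0$ of the $j$-th copy of $\cS$, so the symbol is $\cS_0 = L$, as required since $j < \tilde{\ell}$. For $j \ge k$, reducing $jn$ modulo $n_k^+$ places the index either inside the trailing $\hat{\cX}$ block or back inside an earlier $\cS$ block; in either case, applying (\ref{eq:rssAlt}) to $\cS$ itself (with its multiplicative inverse $d$) translates the question into a statement about the rotational structure of $\cS$, and the verification reduces to checking when $(j-k) d \bmod n$ is less than $\ell^+$ (exploiting that the first $n^+$ symbols of $\cS$ form $\hat{\cX}$ and contain exactly $\ell^+$ L's by Lemma \ref{le:ellplusminus}).

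For general $\chi$ I would proceed by induction on $|\chi|$. Incrementing $\tilde{\ell}$ by one changes exactly one symbol of $\cF[\tilde{\ell}, m_k^+, n_k^+]$, namely the symbol at index $\tilde{\ell} d_k^+ \bmod n_k^+$, which by Lemma \ref{le:mndkpm} and (\ref{eq:lkdkplusminus}) equals $((\ell d \bmod n) + \chi n) \bmod n_k^+$. Thus it suffices to show that the two proposed words for consecutive $\chi$ values differ in exactly one position, and at exactly this index. Passing from $\chi$ to $\chi - 1$ in the first case of (\ref{eq:Gplus}) replaces one leading $\cS$ by one trailing $\cS^{\overline{0}}$; using the identity $\cS \hat{\cX} = \hat{\cX} \cS^{(-d)}$ (from (\ref{eq:partitions1}) and (\ref{eq:partitions3})) together with the fact that $\cS^{\overline{0}}$ agrees with $\cS$ everywhere except at index $0$, one sees that the two concatenations differ in exactly one symbol at the expected location. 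The transition between the two subcases of (\ref{eq:Gplus}) (from $\chi = -1$ to $\chi = 0$) is handled similarly using (\ref{eq:rss0}), and the $\cG^-$ formulas are treated analogously by swapping the roles $(\cX, \cY, \ell d, \hat{\cX}) \leftrightarrow (\cY, \cX, -d, \hat{\cY})$ and using (\ref{eq:rssmd}) in place of (\ref{eq:rssellm1d}).

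The principal obstacle is the careful modular bookkeeping: each case requires translating a claim about block-level concatenation into a claim about individual symbols at positions modulo $n_k^\pm$, and confirming these match the flip-location formula coming from (\ref{eq:lkdkplusminus}). Each individual verification is routine, but there are four formulas and two transitions between subcases to handle, so a clean write-up amounts to an extensive case analysis built on top of Propositions \ref{pr:rssIdentities}--\ref{pr:XYformulas} and Lemma \ref{le:mndkpm}.
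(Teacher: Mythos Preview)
Your approach is correct but genuinely different from the paper's. The paper works directly with the defining inequality (\ref{eq:rss}): it writes each index as $i = jn + r$, uses the Farey identity $m_k^+ n - m n_k^+ = 1$ to express $\frac{i m_k^+}{n_k^+} \bmod 1$ as $\frac{s}{n} + \frac{j}{n_k^+} + \frac{r}{n n_k^+}$ with $s = rm \bmod n$, and then compares this directly to $\frac{\tilde\ell}{n_k^+}$ via (\ref{eq:ellkplus}). The case $\chi = 0$ falls out immediately, and for $\chi \ne 0$ the paper only needs to check the specific values $s = \ell$ and $s = \ell - 1$ to see which symbols flip. Your route instead uses the alternative description (\ref{eq:rssAlt}) together with $d_k^+ = n$, establishes the base case $\chi = 0$, and then inducts on $|\chi|$ by tracking the single index at which consecutive words differ, invoking the block identities $\cS\hat\cX = \hat\cX\cS^{(-d)}$ and $\cS^{(-d)} = \cS^{\overline{0}\,\overline{\ell d}}$ to locate that index and match it against (\ref{eq:lkdkplusminus}).

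The trade-offs: the paper's argument is essentially self-contained arithmetic and handles all $\chi$ uniformly once the key difference formula (\ref{eq:GplusDiff}) is in hand, whereas your induction leans more heavily on the symbolic machinery already built (Propositions~\ref{pr:rssIdentities}--\ref{pr:partitions}, Lemma~\ref{le:mndkpm}) and makes the one-symbol-flip structure between consecutive $\chi$ values completely transparent. Your base case for $j \ge k$ is the only place that still hides real work; there you are effectively reproving that $\hat\cX = \cF[\ell^+, m^+, n^+]$ in disguise, which is doable but is exactly where the paper's fractional computation is cleaner. Either way the proof goes through.
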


\begin{example}
Consider again $\cF[3,5,7] = LRRLRRL$.
Here $\hat{\cX} = LRRL$ and $\hat{\cY} = RRL$.
With $k = 3$, for example,
\begin{align*}
\ell_3^+ &= 3 \times 3 + 2 = 11 \;, &
\ell_3^- &= 3 \times 3 + 1 = 10 \;, \\
m_3^+ &= 3 \times 5 + 3 = 18 \;, &
m_3^- &= 3 \times 5 + 2 = 17 \;, \\
n_3^+ &= 3 \times 7 + 4 = 25 \;, &
n_3^- &= 3 \times 7 + 3 = 24 \;.
\end{align*}
Therefore with $\chi = 0$, for example,
\begin{equation}
\begin{gathered}
\cG^+[3,0] = \cF[11,18,25] = LRRLRRLLRRLRRLLRRLRRLLRRL = \cS^3 \hat{\cX} \;, \\
\cG^-[3,0] = \cF[10,17,24] = LRRLRRLRRLLRRLRRLLRRLRRL = \cS \hat{\cY} \cS^2 \;,
\end{gathered}
\nonumber
\end{equation}
matching Proposition \ref{pr:Gall}.

These sequences are illustrated in Fig.~\ref{fig:nearbySchem}.
Notice that for both $\cG^+[3,0]$ and $\cG^-[3,0]$, node $2$ lies immediately to the right of
the upper intersection of the circle and the line.
This is because with $\chi = 0$, by (\ref{eq:lkdkplusminus}),
$\tilde{\ell} d_k^\pm = \ell d {\rm ~mod~} n = 3 \times 3 {\rm ~mod~} 7 = 2$.
Also $d_k^+ = n = 7$, thus node $7$ of $\cG^+[3,0]$ lies immediately to the left of node $0$.
Similarly $-d_k^- {\rm ~mod~} n_k^- = n = 7$,
thus node $7$ of $\cG^-[3,0]$ lies immediately to the right of node $0$.
\label{ex:cG}
\end{example}


\begin{figure}[b!]
\begin{center}
\setlength{\unitlength}{1cm}
\begin{picture}(12,6.4)
\put(0,0){\includegraphics[height=6cm]{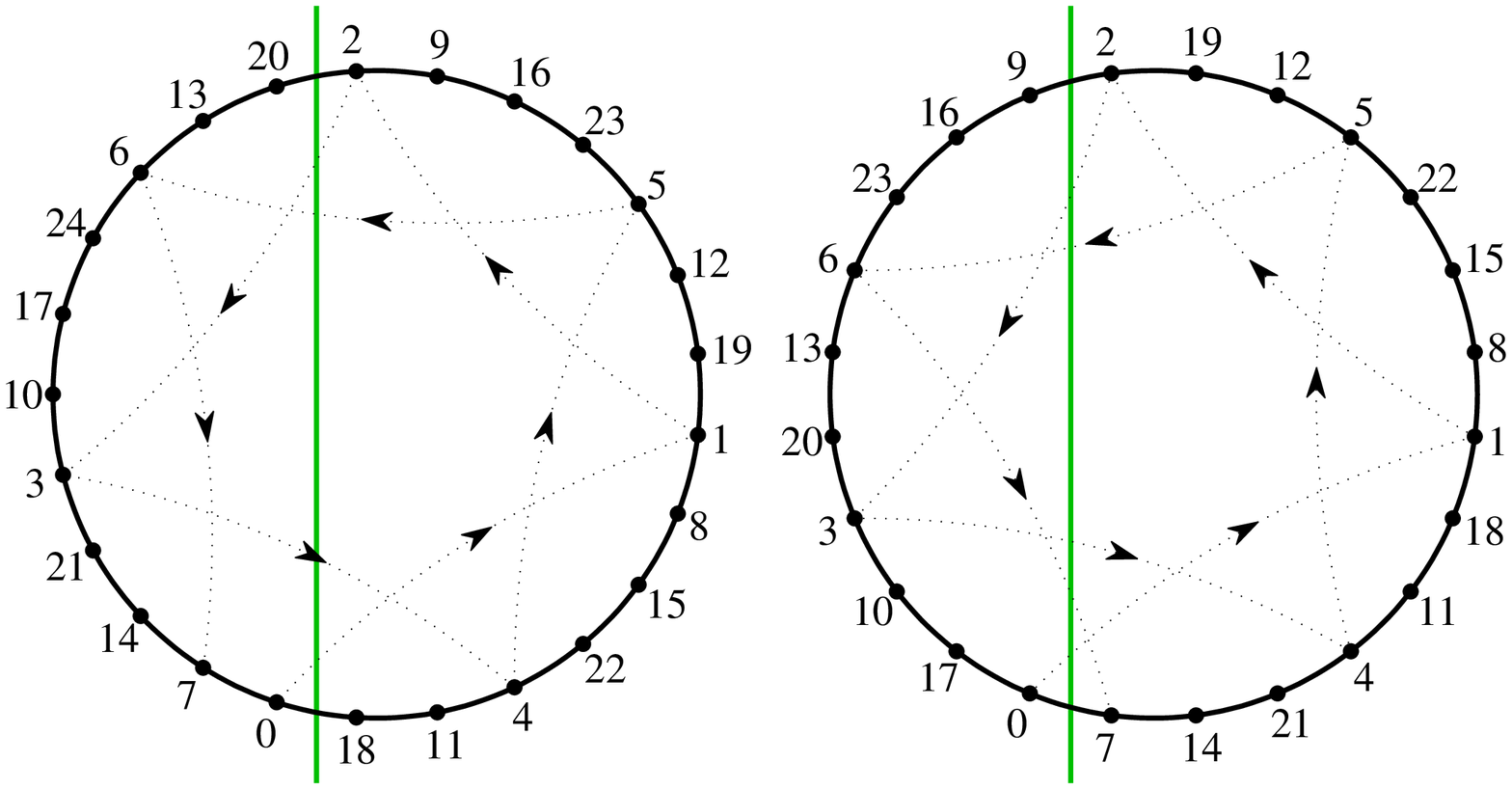}}
\put(1.8,6.3){\small $\cF[11,18,25]$}
\put(7.8,6.3){\small $\cF[10,17,24]$}
\end{picture}
\caption{
Pictorial interpretations of $\cG^+[3,0]$ and $\cG^-[3,0]$,
using $\cF[3,5,7] = LRRLRRL$, given in Example \ref{ex:cG}.
\label{fig:nearbySchem}
}
\end{center}
\end{figure}

\begin{proof}[Proof of Proposition \ref{pr:Gall}]
Here we prove the result for $\cG^+[k,\chi]$.
The result for $\cG^-[k,\chi]$ can be obtained similarly.

By the definition of a rotational symbol sequence (\ref{eq:rss}),
$\cG^+[k,\chi]_i = L$ if $\frac{i m_k^+}{n_k^+} {\rm ~mod~} 1 < \frac{\tilde{\ell}}{n_k^+}$,
and $\cG^+[k,\chi]_i = R$ otherwise.
To evaluate $\frac{i m_k^+}{n_k^+} {\rm ~mod~} 1$,
for each $i = 0,\ldots,n_k^\pm-1$, we write $i = j n + r$
with $j = 0,\ldots,k$ and $r = 0,\ldots,n-1$
(for $j = k$, $r = 0,\ldots,n^+-1$).
Using $m_k^+ n - m n_k^+ = 1$ we obtain
\begin{equation}
\frac{i m_k^+}{n_k^+} {\rm ~mod~} 1 =
\frac{(j n + r)(m n_k^+ + 1)}{n n_k^+} {\rm ~mod~} 1
= \left( \frac{r m}{n} + \frac{j}{n_k^+} + \frac{r}{n n_k^+} \right) {\rm ~mod~} 1 \;.
\end{equation}
Now let $s = r m {\rm ~mod~} n$. 
Then
\begin{equation}
\frac{i m_k^+}{n_k^+} {\rm ~mod~} 1 = \frac{s}{n} + \frac{j}{n_k^+} + \frac{r}{n n_k^+} \;,
\label{eq:Gallproof1}
\end{equation}
where the ``mod 1'' is omitted on the right hand side of (\ref{eq:Gallproof1})
because the right hand side has a value between $0$ and $1$\removableFootnote{
We can interpret the terms in the expression as follows:
$\frac{i m_k^+}{n_k^+} {\rm ~mod~} 1$ is the $i^{\rm th}$ point of the $\cG^+$-cycle,
$\frac{s}{n}$ is the $r^{\rm th}$ point of the $\cS$-cycle,
$\frac{j}{n_k^+}$ is a clockwise increment,
and $\frac{r}{n n_k^+}$ is a small correction.
}.

Next, by combining (\ref{eq:ellkplus}), (\ref{eq:Gallproof1})
and $\tilde{\ell} = \ell_k^+ + \chi$, we obtain
\begin{equation}
\frac{i m_k^+}{n_k^+} {\rm ~mod~} 1 - \frac{\tilde{\ell}}{n_k^+} =
\frac{s-\ell}{n} + \frac{j - \chi}{n_k^+} + \frac{r - \left( \ell d {\rm ~mod~} n \right)}{n n_k^+} \;.
\label{eq:GplusDiff}
\end{equation}
Notice that the sign of (\ref{eq:GplusDiff}) determines the symbol $\cG^+[k,\chi]_i$.
In contrast, $\cS_i$ is determined by the sign of
\begin{equation}
\frac{r m}{n} {\rm ~mod~} 1 - \frac{\ell}{n} = \frac{s-\ell}{n} \;.
\label{eq:SDiff}
\end{equation}
In the case $\chi = 0$,
it is straight-forward to see that for all $j$ and $r$ the right hand-sides of
(\ref{eq:GplusDiff}) and (\ref{eq:SDiff}) have the same sign,
and so $\cG^+[k,0]_i = \cS_i$ for all $i$.
We therefore have (\ref{eq:Gplus}) in the case $\chi = 0$.

With $s = \ell$, we have $r = \ell d {\rm ~mod~} n$.
Substituting this into (\ref{eq:GplusDiff}) produces
$\frac{i m_k^+}{n_k^+} {\rm ~mod~} 1 - \frac{\ell_k^+ + \chi}{n_k^+} =
\frac{j - \chi}{n_k^+}$,
which is negative if and only if $j < \chi$,
and this implies (\ref{eq:Gplus}) in the case $\chi > 0$.

Similarly with $s = \ell - 1$, we have $r = (\ell-1) d {\rm ~mod~} n$.
Since we can rewrite
$i = j n + (\ell-1) d {\rm ~mod~} n$ as
$i = \left( (j-k-1) n + \ell d {\rm ~mod~} n \right) {\rm ~mod~} n_k^+$,
into (\ref{eq:GplusDiff}) we can substitute
$r = \ell d {\rm ~mod~} n$, $s = \ell$ and $j \mapsto j-k-1$ to obtain
$\frac{i m_k^+}{n_k^+} {\rm ~mod~} 1 - \frac{\ell_k^+ + \chi}{n_k^+} =
\frac{j-k+1 - \chi}{n_k^+}$.
This is negative if and only if $j < k + 1 + \chi$,
which implies (\ref{eq:Gplus}) in the case $\chi < 0$.
\end{proof}

\section{Periodic solutions}
\label{sec:periodicSolns}
\setcounter{equation}{0}

In this section we provide some essential algebraic results for periodic solutions of (\ref{eq:f}).
Much of this theory is also developed in \cite{Si15,SiMe09,Si10}.
We begin in \S\ref{sub:linearAlgebra} by briefly reviewing some specific linear algebra concepts.
Then in \S\ref{sub:periodicSolns} we characterise periodic solutions of (\ref{eq:f})
as $\cS$-cycles and describe their properties.
Lastly in \S\ref{sub:aNonzero} we provide additional results relating to $\cS$-cycles
when it is known that the matrix $M_{\cS^{\overline{0}}}$ is non-singular
(as is the case at an $\cS$-shrinking point).

\subsection{Linear algebra tools}
\label{sub:linearAlgebra}

Given an $N \times N$ matrix $A$,
let $m_{ij}$ denote the determinant of the $(N-1) \times (N-1)$ matrix formed 
by removing the $i^{\rm th}$ row and $j^{\rm th}$ column from $A$
(the $m_{ij}$ are the {\em minors} of $A$).
The {\em adjugate} of $A$ 
is then defined by ${\rm adj}(A)_{ij} = (-1)^{i+j} m_{ji}$.
For any $A$,
\begin{equation}
{\rm adj}(A) A = A \,{\rm adj}(A) = \det(A) I \;,
\label{eq:adjIdentity}
\end{equation}
and if $A$ is nonsingular, $A^{-1} = \frac{{\rm adj}(A)}{\det(A)}$,
see \cite{Be92,Ko96,PiOd07} for further details.

The following result is known as the matrix determinant lemma\removableFootnote{
We use this most importantly in the derivations
of the formulas for $\det \left( \rho I - M_{\cT} \right)$, Lemma \ref{le:detAll},
but also in the proofs of Lemma \ref{le:slow}
and Lemma \ref{le:rankNm1} (now omitted?).
}.
For a proof using partitioned matrices, see \cite{Be05}.

\begin{lemma}
Let $A$ be an $N \times N$ matrix, and $u, v \in \mathbb{R}^N$.
Then
\begin{equation}
\det \left( A + v u^{\sf T} \right) = \det(A) + u^{\sf T} {\rm adj}(A) v \;.
\end{equation}
\label{le:matrixDeterminant}
\end{lemma}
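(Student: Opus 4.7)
The plan is to prove this using multilinearity of the determinant in its rows, which quickly isolates the contribution of the rank-one perturbation and causes the adjugate to appear naturally. Writing $a_i^{\sf T}$ for the $i$-th row of $A$, the $i$-th row of $A+vu^{\sf T}$ is $a_i^{\sf T}+v_i u^{\sf T}$. First I would apply multilinearity in the rows to expand $\det(A+vu^{\sf T})$ as $\sum_S \det(D_S)$ over all subsets $S \subseteq \{1,\ldots,N\}$, where $D_S$ has row $i$ equal to $v_i u^{\sf T}$ for $i\in S$ and equal to $a_i^{\sf T}$ otherwise.

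The key simplification is that for any $S$ with $|S|\ge 2$, the matrix $D_S$ has two rows each proportional to $u^{\sf T}$, so $\det(D_S)=0$. Only the empty set and the singletons survive, giving
\begin{equation*}
\det\left(A+vu^{\sf T}\right) \;=\; \det(A) + \sum_{i=1}^N v_i \det\!\left(A^{(i)}\right) \;,
\end{equation*}
where $A^{(i)}$ is $A$ with its $i$-th row replaced by $u^{\sf T}$. Next I would expand $\det\!\left(A^{(i)}\right)$ by cofactors along row $i$ to obtain $\sum_{j=1}^N u_j (-1)^{i+j} m_{ij}$, where $m_{ij}$ is the $(i,j)$-minor of $A$ (which is unaffected by the row replacement, since row $i$ is deleted when forming the minor). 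By the definition ${\rm adj}(A)_{ji}=(-1)^{i+j}m_{ij}$, the resulting double sum $\sum_{i,j} v_i u_j (-1)^{i+j} m_{ij}$ is exactly $u^{\sf T} {\rm adj}(A) v$, completing the argument.

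There is no genuine obstacle, merely careful bookkeeping of signs and indices; in particular, this route does not require $A$ to be invertible, matching the full generality of the statement. An alternative (closer to the partitioned-matrix proof cited as \cite{Be05}) is to evaluate
\begin{equation*}
\det\begin{pmatrix} A & -v \\ u^{\sf T} & 1 \end{pmatrix}
\end{equation*}
in two ways: once via the block factorisation $\left(\begin{smallmatrix} I & -v \\ 0 & 1 \end{smallmatrix}\right)\left(\begin{smallmatrix} A+vu^{\sf T} & 0 \\ u^{\sf T} & 1 \end{smallmatrix}\right)$, whose left factor has determinant $1$ and whose right factor has determinant $\det(A+vu^{\sf T})$ by block-triangularity, and once by cofactor expansion along the last row followed by expansion of each remaining $N\times N$ determinant along its $-v$ column, which produces $\det(A)+u^{\sf T}{\rm adj}(A)v$ after collecting signs. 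Equating the two expressions gives the identity.
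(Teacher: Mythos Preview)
Your proposal is correct. Both arguments you give are valid and complete; in particular the multilinearity route is fully rigorous and does not require $A$ to be invertible, and your index/sign bookkeeping matches the paper's definition ${\rm adj}(A)_{ij}=(-1)^{i+j}m_{ji}$.

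Note that the paper does not actually supply its own proof of this lemma: it simply cites \cite{Be05} for a partitioned-matrix argument. Your alternative block-matrix computation is precisely that cited approach, so in that sense your second route coincides with the paper's. Your primary route via row-multilinearity is different and arguably more elementary: it avoids any auxiliary $(N{+}1)\times(N{+}1)$ matrix and makes the adjugate appear directly from cofactor expansion. The trade-off is that the block-matrix version generalises more readily (e.g.\ to the matrix determinant lemma with higher-rank updates, via Schur complements), whereas the multilinearity version is shorter and entirely self-contained for the rank-one case needed here.
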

\vspace{-5mm}								

The next result is useful to us in view of the relationship between
$A_L$ and $A_R$, (\ref{eq:continuityCondition})\removableFootnote{
In this paper it suffices to consider $u = e_1$, as in \cite{DiFe99,DiGa02,Si14d,Si15}.
Other nonsmooth dynamical systems publications that provide
this formula include \cite{DiGa02,SiMe09,SiKo09,SiMe10,Si10,SiMe12}.
In the appendix of \cite{DiFe99} the result is given with $e_i^{\sf T}$,
for arbitrary $i$, instead of $e_1^{\sf T}$.
On pg.~139 of \cite{DiBu08},
Lemma 3.1 gives a formula for
$u^{\sf T} {\rm adj} \left( C + v u^{\sf T} \right) v$,
where $C$ is a companion matrix.
}.
Indeed in later sections we only require (\ref{eq:adjIdentity2}) with $u = e_1$,
and in this case (\ref{eq:adjIdentity2})
follows immediately from the above definition of an adjugate matrix \cite{Si15,DiFe99}.
For completeness we provide a proof of Lemma \ref{le:adjIdentity2} 
in Appendix \ref{app:proofs}.

\begin{lemma}
Let $A$ be an $N \times N$ matrix, and $u, v \in \mathbb{R}^N$.
Then
\begin{equation}
u^{\sf T} {\rm adj} \left( A + v u^{\sf T} \right) = u^{\sf T} {\rm adj}(A) \;.
\label{eq:adjIdentity2}
\end{equation}
\label{le:adjIdentity2}
\end{lemma}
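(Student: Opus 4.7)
The plan is to derive the identity by applying the matrix determinant lemma (Lemma \ref{le:matrixDeterminant}) to a single matrix in two different ways, and then extracting the result by comparing coefficients of a free parameter. This bootstraps the rank-one update perspective already established and avoids any invertibility hypothesis on $A$.

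Fix $j \in \{1, \ldots, N\}$ and let $\epsilon \in \mathbb{R}$. The matrix $(A + v u^{\sf T}) + \epsilon e_j u^{\sf T}$ can equally well be written as $A + (v + \epsilon e_j) u^{\sf T}$. Applying Lemma \ref{le:matrixDeterminant} to the second form, treating it as a rank-one update of $A$, gives
\begin{equation*}
\det \bigl( A + (v + \epsilon e_j) u^{\sf T} \bigr) = \det(A) + u^{\sf T} {\rm adj}(A) (v + \epsilon e_j) \;.
\end{equation*}
Applying the same lemma to the first form, now as a rank-one update of $A + v u^{\sf T}$, gives
\begin{equation*}
\det \bigl( (A + v u^{\sf T}) + \epsilon e_j u^{\sf T} \bigr) = \det(A + v u^{\sf T}) + \epsilon \, u^{\sf T} {\rm adj}(A + v u^{\sf T}) e_j \;.
\end{equation*}
The two left-hand sides are identical. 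Setting $\epsilon = 0$ in the first equation yields $\det(A + v u^{\sf T}) = \det(A) + u^{\sf T} {\rm adj}(A) v$; substituting this back cancels the $\epsilon$-independent terms, and what remains is $\epsilon \, u^{\sf T} {\rm adj}(A) e_j = \epsilon \, u^{\sf T} {\rm adj}(A + v u^{\sf T}) e_j$ for every $\epsilon$. Hence $u^{\sf T} {\rm adj}(A + v u^{\sf T}) e_j = u^{\sf T} {\rm adj}(A) e_j$, and since $j$ was arbitrary, the two row vectors agree componentwise, which is the claim.

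There is no substantive obstacle. The case $u = e_1$ mentioned in the text before the lemma is essentially tautological from the cofactor definition: the perturbation $v e_1^{\sf T}$ only alters the first column of $A$, while each entry of the first row of ${\rm adj}(A)$ is, up to sign, the determinant of a submatrix that excludes the first column and so is blind to the change. The content of Lemma \ref{le:adjIdentity2} is that the matrix determinant lemma promotes this observation to arbitrary $u$ with essentially no extra work. An alternative route is to restrict first to invertible $A$, use ${\rm adj}(M) = \det(M) M^{-1}$ together with the Sherman--Morrison formula, and then extend to singular $A$ by noting that both sides of the identity are polynomial (and therefore continuous) in the entries of $A$; but the approach above avoids this case distinction.
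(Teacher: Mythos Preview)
Your proof is correct and takes a genuinely different route from the paper's. The paper first assumes that both $A$ and $A + v u^{\sf T}$ are nonsingular, invokes the Sherman--Morrison formula for $(A + v u^{\sf T})^{-1}$, converts inverses to adjugates, multiplies by $u^{\sf T}$ on the left and observes a cancellation, and then extends to singular $A$ by the density of triples $(A,u,v)$ with both matrices invertible together with the continuity of the adjugate. Your argument instead bootstraps directly from Lemma~\ref{le:matrixDeterminant}: you perturb by an auxiliary $\epsilon e_j u^{\sf T}$ and read the identity off componentwise as the linear-in-$\epsilon$ term of a polynomial equality. This is more economical---it uses only a tool the paper has already stated, requires no invertibility hypothesis, and dispenses with the density/continuity step entirely. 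The paper's approach, on the other hand, yields as a by-product a full formula for ${\rm adj}(A + v u^{\sf T})$ in terms of ${\rm adj}(A)$ (their equation (A.3)), which your method does not; but since only the $u^{\sf T}$-row is ever used downstream, this extra information is not needed. You correctly note the Sherman--Morrison alternative in your closing remark; that is exactly what the paper does.
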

\vspace{-5mm}								

The next result provides an explicit formula for
the adjugate of a singular matrix.
A proof is given in Appendix \ref{app:proofs}. 
Related properties of adjugate matrices are discussed in \cite{Si93,St98}\removableFootnote{
I came up with (\ref{eq:adjIdentity3}) myself,
and I can't find a place where it is given.
The eigenvalues of ${\rm adj}(A)$ and the singular value decomposition of ${\rm adj}(A)$
are given in \cite{St98}.
My formula for ${\rm adj}(A)$ in the case that $A$ has rank $N-1$ could probably be derived
simply from the singular value decomposition.
Cases for the rank of ${\rm adj}(A)$ are derived in \cite{Si93}.
}.

\begin{lemma}
Let $A$ be an $N \times N$ matrix.
If ${\rm rank}(A) = N-1$, then
\begin{equation}
{\rm adj}(A) = c v u^{\sf T} \;,
\label{eq:adjIdentity3}
\end{equation}
where $u^{\sf T} A = 0$, $A v = 0$, $u^{\sf T} v = 1$,
and $c$ is the product of all nonzero eigenvalues of $A$, counting multiplicity.
If ${\rm rank}(A) < N-1$, then ${\rm adj}(A)$ is the zero matrix.
\label{le:adjugateRank}
\end{lemma}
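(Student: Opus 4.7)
The plan is to treat the two cases separately and, in the nontrivial case, first deduce the rank-one structure of ${\rm adj}(A)$ from basic identities and then pin down the scalar by a trace computation.

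First I would dispose of the case ${\rm rank}(A) < N-1$ directly from the definition. Each entry of ${\rm adj}(A)$ is, up to sign, the determinant of an $(N-1) \times (N-1)$ submatrix of $A$. Any such submatrix has rank at most ${\rm rank}(A) < N-1$, so its determinant vanishes. Hence ${\rm adj}(A) = 0$.

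Now suppose ${\rm rank}(A) = N-1$, so the null space of $A$ is one-dimensional and spanned by $v$, and the left null space is one-dimensional and spanned by $u^{\sf T}$. The identity (\ref{eq:adjIdentity}) reads $A \,{\rm adj}(A) = {\rm adj}(A) \,A = \det(A) I = 0$. The first equation forces every column of ${\rm adj}(A)$ to lie in $\ker(A) = {\rm span}(v)$, and the second forces every row of ${\rm adj}(A)$ to lie in ${\rm span}(u^{\sf T})$. Combining these constraints yields
\begin{equation}
{\rm adj}(A) = \alpha v u^{\sf T}
\label{eq:adjPropAlpha}
\end{equation}
for some scalar $\alpha \in \mathbb{R}$.

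It remains to identify $\alpha = c$. Taking the trace of (\ref{eq:adjPropAlpha}) and using $u^{\sf T} v = 1$ gives ${\rm tr}({\rm adj}(A)) = \alpha$. On the other hand, the diagonal entries of ${\rm adj}(A)$ are precisely the principal $(N-1) \times (N-1)$ minors of $A$, so ${\rm tr}({\rm adj}(A))$ equals the elementary symmetric function $e_{N-1}$ of the eigenvalues of $A$. Writing these eigenvalues as $\lambda_1 = 0, \lambda_2, \ldots, \lambda_N$, the characteristic polynomial factors as $\det(\lambda I - A) = \lambda \prod_{i=2}^N (\lambda - \lambda_i)$, whose coefficient on $\lambda$ is $(-1)^{N-1} \prod_{i=2}^N \lambda_i = (-1)^{N-1} c$; comparing with the standard expansion $\det(\lambda I - A) = \sum_k (-1)^k e_k(A) \lambda^{N-k}$ gives $e_{N-1}(A) = c$. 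Therefore $\alpha = c$, completing the proof.

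The argument is largely routine; the only step that requires any thought is the trace computation, and this is handled cleanly by equating two expressions for the $\lambda$-coefficient of the characteristic polynomial. A perturbation approach via ${\rm adj}(A) = \lim_{\epsilon \to 0} \det(A + \epsilon I)(A + \epsilon I)^{-1}$ would also work but seems less direct.
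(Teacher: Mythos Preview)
Your proof is correct and handles the low-rank case exactly as the paper does. For ${\rm rank}(A)=N-1$ both arguments first deduce ${\rm adj}(A)=\alpha v u^{\sf T}$ from $A\,{\rm adj}(A)={\rm adj}(A)\,A=0$, but you identify $\alpha$ differently. The paper uses precisely the perturbation you mention at the end: it applies (\ref{eq:adjIdentity}) to $A+\ee I$, obtains $\det(A+\ee I)\,v = \ee\,{\rm adj}(A)\,v + \cO(\ee^2) = \ee\,\alpha v + \cO(\ee^2)$, and matches this with the factorisation $\det(A+\ee I)=\ee\prod_{i\ge 2}\lambda_i(\ee)=c\ee+\cO(\ee^2)$. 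Your route instead takes the trace, giving $\alpha={\rm tr}({\rm adj}(A))=e_{N-1}(\lambda_1,\dots,\lambda_N)=\prod_{i\ge 2}\lambda_i=c$. Your argument is purely algebraic and avoids any continuity or limit; the paper's is slightly more hands-on but makes the connection to the characteristic polynomial via the perturbed eigenvalues rather than via symmetric functions. Both are short, and both implicitly use that the zero eigenvalue is algebraically simple (otherwise the normalisation $u^{\sf T}v=1$ fails), which is how the lemma is applied in the paper.
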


\subsection{Basic properties of periodic solutions}
\label{sub:periodicSolns}

An $\cS$-cycle is a periodic solution $\{ x^{\cS}_i \}$
of the half maps of (\ref{eq:f}) in the order determined by $\cS$,
refer to Definition \ref{df:Scycle} for a formal statement.
If $s^{\cS}_i \le 0$ whenever $\cS_i = L$, and 
$s^{\cS}_i \ge 0$ whenever $\cS_i = R$,
then the $\cS$-cycle is a periodic solution of (\ref{eq:f}) and said to be {\em admissible},
otherwise it is said to be {\em virtual}.
The following result relates to border-collision bifurcations of $\cS$-cycles
(at which $s^{\cS}_j = 0$, for some $j$)
and is an immediate consequence of the continuity of (\ref{eq:f}).

\begin{lemma}
Let $\{ x^{\cS}_i \}$ be an $\cS$-cycle.
If $s^{\cS}_j = 0$, for some $j$, then $\{ x^{\cS}_i \}$ is also an $\cS^{\overline{j}}$-cycle.
\label{le:sZero}
\end{lemma}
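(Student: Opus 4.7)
The plan is straightforward: the lemma is essentially a direct consequence of continuity of $f$ across the switching manifold, so the work is simply to spell out what the $\cS$-cycle and $\cS^{\overline{j}}$-cycle conditions require and to observe that they coincide whenever $s^{\cS}_j = 0$.

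First I would use the continuity condition (\ref{eq:continuityCondition}) to extract the precise implication of $s = 0$ at the half-map level. Writing
\[
f^R(x) - f^L(x) = (A_R - A_L)x = C e_1^{\sf T} x = C\, s,
\]
we see that $f^L(x) = f^R(x)$ whenever $s = e_1^{\sf T} x = 0$. This is the only analytic fact about $f$ needed.

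Next I would verify the defining relation of Definition \ref{df:Scycle} for $\cS^{\overline{j}}$ using the same $n$-tuple $\{ x^{\cS}_i \}$. For indices $i$ with $i \not\equiv j \pmod n$, we have $\cS^{\overline{j}}_i = \cS_i$, so the identity $x^{\cS}_{(i+1) {\rm \,mod\,} n} = f^{\cS^{\overline{j}}_i}(x^{\cS}_i)$ is inherited verbatim from the assumption that $\{ x^{\cS}_i \}$ is an $\cS$-cycle. For the index $i = j \bmod n$, the hypothesis $s^{\cS}_j = 0$ together with the observation above gives $f^L(x^{\cS}_j) = f^R(x^{\cS}_j)$, so
\[
f^{\cS^{\overline{j}}_j}\!\left( x^{\cS}_j \right) = f^{\cS_j}\!\left( x^{\cS}_j \right) = x^{\cS}_{(j+1) {\rm \,mod\,} n},
\]
regardless of whether $\cS_j$ equals $L$ or $R$. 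Assembling the two cases, $\{ x^{\cS}_i \}$ satisfies Definition \ref{df:Scycle} with respect to $\cS^{\overline{j}}$, completing the argument.

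There is no substantive obstacle here: the lemma is a bookkeeping statement, and the only ingredient beyond the definitions is the half-map continuity identity already embedded in (\ref{eq:continuityCondition}). The only minor care needed is in handling the ``mod $n$'' in the definition of $\cS^{\overline{j}}$ — strictly speaking every index congruent to $j$ mod $n$ has its symbol flipped — but since the index set $\{0,\ldots,n-1\}$ used in Definition \ref{df:Scycle} contains exactly one such representative, this causes no complication.
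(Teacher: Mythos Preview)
Your proof is correct and is precisely the continuity argument the paper has in mind; the paper does not give a written proof at all, merely noting that the result ``is an immediate consequence of the continuity of (\ref{eq:f}),'' which is exactly what you have spelled out.
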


Each $x^{\cS}_i$ is a fixed point of
\begin{equation}
f^{\cS^{(i)}}(x) = M_{\cS^{(i)}} x + P_{\cS^{(i)}} B \mu \;,
\label{eq:fSi}
\end{equation}
see (\ref{eq:fS2}), where $\cS^{(i)}$ denotes the $i^{\rm th}$ left shift permutation of $\cS$.
By (\ref{eq:MS}), changing $i$ only changes the cyclic order in which $A_L$ and $A_R$
are multiplied to produce $M_{\cS^{(i)}}$.
This is the basis for the following result which
is a minor generalisation of a result proved in \cite{SiMe09,Si10},
and so we omit a proof\removableFootnote{
How does one prove the multiplicities part?
}.

\begin{lemma}
The determinant of $M_{\cS^{(i)}}$, and its eigenvalues and the multiplicities of the eigenvalues,
are independent of $i$\removableFootnote{
It follows that the same result is true for $\rho I - M_{\cS^{(i)}}$
because the eigenvalues of this matrix equal $\rho$ minus the eigenvalues of $M_{\cS^{(i)}}$,
with multiplicities preserved.
In many places I need this result with either $\rho = 0$ or $\rho = 1$.
Only in the proof Lemma \ref{le:detAll} do I need this result for general $\rho$.
}.
\label{le:eigMSindep}
\end{lemma}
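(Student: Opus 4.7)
The plan is to reduce the statement to the classical fact that cyclic permutations of a product of square matrices preserve the characteristic polynomial. Write $n$ for the period of $\cS$. Since $\cS^{(i)}_j = \cS_{(i+j) \,\mathrm{mod}\, n}$, splitting the product in (\ref{eq:MS}) at position $i$ gives
\begin{equation*}
M_{\cS} = V U, \qquad M_{\cS^{(i)}} = U V,
\end{equation*}
where $U \defeq A_{\cS_{n-1}} A_{\cS_{n-2}} \cdots A_{\cS_i}$ and $V \defeq A_{\cS_{i-1}} A_{\cS_{i-2}} \cdots A_{\cS_0}$ are both $N \times N$ matrices (with the convention that $V = I$ when $i = 0$, in which case there is nothing to prove).

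The determinant claim follows immediately from multiplicativity: $\det(M_{\cS}) = \det(V)\det(U) = \det(U)\det(V) = \det(M_{\cS^{(i)}})$. For the eigenvalues and their multiplicities I would invoke the standard linear-algebra identity
\begin{equation*}
\det(\lambda I - UV) \equiv \det(\lambda I - VU),
\end{equation*}
valid whenever $U$ and $V$ are square of the same size. This identity is a textbook fact (one quick derivation uses the Schur complement applied to the block matrix $\bigl[\begin{smallmatrix} \lambda I & U \\ V & I \end{smallmatrix}\bigr]$, expanded two different ways). It says that $M_{\cS}$ and $M_{\cS^{(i)}}$ have identical characteristic polynomials, so their eigenvalues agree with the same algebraic multiplicities.

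Since $i$ was arbitrary, the conclusion holds for every $i \in \{0,1,\ldots,n-1\}$, which is the statement of the lemma. There is no real obstacle here; the only subtle point worth flagging is that the identity $\det(\lambda I - UV) = \det(\lambda I - VU)$ controls algebraic multiplicities directly (through equality of characteristic polynomials), so we never need to separately analyse geometric multiplicities or Jordan structure to obtain the claim as stated.
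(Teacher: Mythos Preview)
Your argument is correct and is precisely the standard route the paper alludes to: the paper does not spell out a proof but simply remarks that ``changing $i$ only changes the cyclic order in which $A_L$ and $A_R$ are multiplied'' and defers to \cite{SiMe09,Si10}, which is exactly your $UV$ versus $VU$ reduction followed by the characteristic-polynomial identity $\det(\lambda I - UV) = \det(\lambda I - VU)$. One small slip: with your definitions of $U$ and $V$ you actually get $M_{\cS} = UV$ and $M_{\cS^{(i)}} = VU$, not the other way round, but since the identity is symmetric this has no effect on the argument.
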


In view of (\ref{eq:fSi}), the stability of an $\cS$-cycle is
governed by the eigenvalues of $M_{\cS^{(i)}}$,
and by Lemma \ref{le:eigMSindep} it suffices to consider $i = 0$.
These observations provide us with the following result.

\begin{proposition}
An admissible $\cS$-cycle, with $s^{\cS}_i \ne 0$ for all $i$, is attracting 
if and only if all eigenvalues of $M_{\cS}$
have modulus less than $1$. 
\label{pr:stability}
\end{proposition}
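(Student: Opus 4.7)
The plan is to reduce the question to the classical linear stability of an affine fixed point, by exploiting the fact that $f^n$ agrees with the purely affine map $f^{\cS}$ on a neighborhood of the $\cS$-cycle. The crucial hypothesis is that each $s^{\cS}_i \ne 0$, which, combined with admissibility, ensures that each $x^{\cS}_i$ lies in the interior of the half-space $\{s \le 0\}$ or $\{s \ge 0\}$ selected by $\cS_i$.

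First, I would use continuity of $s$ to produce an open neighborhood $U_i$ of each $x^{\cS}_i$ on which $\operatorname{sgn}(s(x))$ matches $\operatorname{sgn}(s^{\cS}_i)$ (or is zero with the correct ``allowed'' sign). On such a $U_i$ the map $f$ coincides with the affine half-map $f^{\cS_i}$, by definition of $f$ in (\ref{eq:f}). Shrinking the $U_i$ if necessary, using the continuity of the $f^{\cS_i}$, we can arrange $f^{\cS_i}(U_i) \subseteq U_{(i+1) \bmod n}$. It then follows that, for every $x \in U_0$, the orbit $x, f(x), \ldots, f^{n-1}(x)$ traces through $U_0, U_1, \ldots, U_{n-1}$, and hence
\begin{equation}
f^n(x) = f^{\cS}(x) = M_{\cS} x + P_{\cS} B \mu
\label{eq:propStabLocal}
\end{equation}
for all $x \in U_0$, using (\ref{eq:fS2}). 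Thus $f^n$ restricted to $U_0$ is exactly an affine map whose linear part is $M_{\cS}$ and whose fixed point is $x^{\cS}_0$.

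Next, I would appeal to the standard linear stability dichotomy for affine maps. If every eigenvalue of $M_{\cS}$ has modulus strictly less than $1$, then $\|M_{\cS}^k\| \to 0$, so for every $x \in U_0$ the iterates $(f^n)^k(x)$ converge to $x^{\cS}_0$; interleaving with the first $n-1$ iterates of $f$, the forward orbit of $x$ converges to the $\cS$-cycle. By Lemma \ref{le:eigMSindep} the same conclusion applies at each point of the cycle, so the $\cS$-cycle is attracting. Conversely, if some eigenvalue $\lambda$ of $M_{\cS}$ satisfies $|\lambda| \ge 1$, then with $v$ a corresponding (possibly generalized) eigenvector, the affine representation (\ref{eq:propStabLocal}) gives $(f^n)^k(x^{\cS}_0 + \alpha v) - x^{\cS}_0 = \alpha M_{\cS}^k v$ for all sufficiently small $\alpha \ne 0$, and this does not tend to $0$ as $k \to \infty$. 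Hence $x^{\cS}_0$ is not attracting for $f^n$, so the $\cS$-cycle is not attracting for $f$.

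The proof is essentially routine; the only genuinely piecewise-linear ingredient is the local identity (\ref{eq:propStabLocal}), which is forced by admissibility together with $s^{\cS}_i \ne 0$. The main point to be careful about is not letting the neighborhoods $U_i$ cross the switching manifold, but the uniform lower bound $\min_i |s^{\cS}_i| > 0$ makes this automatic.
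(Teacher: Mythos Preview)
Your argument is correct and is exactly the approach the paper has in mind: the paper does not give a detailed proof of this proposition, but simply notes in the sentence preceding it that, in view of (\ref{eq:fSi}), stability is governed by the eigenvalues of $M_{\cS^{(i)}}$, and that by Lemma~\ref{le:eigMSindep} it suffices to consider $i=0$. Your write-up fills in the routine details (constructing the neighbourhoods on which $f^n$ agrees with the affine map $f^{\cS}$, and invoking the standard linear stability dichotomy) that the paper leaves implicit.
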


Equation (\ref{eq:fSi}) provides us with an explicit expression for each $x^{\cS}_i$,
as stated in the next result.

\begin{proposition}
The $\cS$-cycle is unique if and only if $I - M_{\cS}$ is nonsingular,
and if $I - M_{\cS}$ is nonsingular then
\begin{equation}
x^{\cS}_i = \left( I - M_{\cS^{(i)}} \right)^{-1} P_{\cS^{(i)}} B \mu \;.
\label{eq:xSiGen}
\end{equation}
\label{pr:existence}
\end{proposition}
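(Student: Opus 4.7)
The plan is to reduce the cycle condition to a linear fixed-point equation, one for each $i$. By Definition \ref{df:Scycle} together with the expansion (\ref{eq:fSi}), each $x^{\cS}_i$ must satisfy
\begin{equation*}
\left( I - M_{\cS^{(i)}} \right) x^{\cS}_i = P_{\cS^{(i)}} B \mu .
\end{equation*}
By Lemma \ref{le:eigMSindep}, $I - M_{\cS^{(i)}}$ is nonsingular for every $i$ precisely when $I - M_{\cS}$ is. So once $I - M_{\cS}$ is known to be nonsingular, (\ref{eq:xSiGen}) follows immediately; the remaining task is to connect existence and uniqueness of the full $n$-tuple $\left\{ x^{\cS}_i \right\}$ to invertibility of $I - M_{\cS}$.

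For the forward direction, assume $I - M_{\cS}$ is nonsingular. Then (\ref{eq:xSiGen}) yields a unique candidate $x^{\cS}_i$ for each $i$, and I would verify the compatibility condition $x^{\cS}_{(i+1) {\rm \,mod\,} n} = f^{\cS_i} \left( x^{\cS}_i \right)$. Setting $x'_{i+1} \defeq f^{\cS_i} \left( x^{\cS}_i \right)$ and using the commutation identity $f^{\cS^{(i+1)}} \circ f^{\cS_i} = f^{\cS_i} \circ f^{\cS^{(i)}}$, which follows directly from the cyclic definition of $\cS^{(i)}$, I compute $f^{\cS^{(i+1)}} \left( x'_{i+1} \right) = f^{\cS_i} \left( f^{\cS^{(i)}} \left( x^{\cS}_i \right) \right) = f^{\cS_i} \left( x^{\cS}_i \right) = x'_{i+1}$. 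Hence $x'_{i+1}$ is also a fixed point of $f^{\cS^{(i+1)}}$, and uniqueness of that fixed point forces $x'_{i+1} = x^{\cS}_{i+1}$. This compatibility check is the only place where the cyclic structure of the maps is genuinely used and is the main (albeit mild) obstacle in the argument.

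For the reverse direction, assume $I - M_{\cS}$ is singular. If no $\cS$-cycle exists then uniqueness fails vacuously. Otherwise, let $\left\{ x^{\cS}_i \right\}$ be a cycle and pick a nonzero $v \in \ker \left( I - M_{\cS} \right)$. Define $\tilde{x}^{\cS}_0 \defeq x^{\cS}_0 + v$ and $\tilde{x}^{\cS}_i \defeq x^{\cS}_i + A_{\cS_{i-1}} \cdots A_{\cS_0} v$ for $i = 1, \ldots, n-1$. Affinity of each $f^{\cS_i}$ gives $\tilde{x}^{\cS}_{i+1} = f^{\cS_i} \left( \tilde{x}^{\cS}_i \right)$ for $i < n-1$, and the cycle closes at $i = n-1$ because $A_{\cS_{n-1}} \cdots A_{\cS_0} v = M_{\cS} v = v$. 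Since $\tilde{x}^{\cS}_0 \ne x^{\cS}_0$, this is a second, distinct $\cS$-cycle, contradicting uniqueness. Everything outside the compatibility check is routine linear algebra combined with Lemma \ref{le:eigMSindep}.
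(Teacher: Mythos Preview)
Your proof is correct. The paper does not give an explicit proof of this proposition; it treats the result as immediate from the observation preceding it that each $x^{\cS}_i$ is a fixed point of the affine map $f^{\cS^{(i)}}$, see (\ref{eq:fSi}).

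Your compatibility check is valid, but there is a slightly more direct route that avoids it altogether. From Definition \ref{df:Scycle}, an $\cS$-cycle is completely determined by its base point $x^{\cS}_0$: once $x^{\cS}_0$ is fixed, the remaining points are forced by $x^{\cS}_i = f^{\cS_{i-1}} \circ \cdots \circ f^{\cS_0} \left( x^{\cS}_0 \right)$, and the only constraint is that the orbit closes, i.e.\ $f^{\cS} \left( x^{\cS}_0 \right) = x^{\cS}_0$. Hence $\cS$-cycles are in bijection with fixed points of $f^{\cS}$, and uniqueness is equivalent to invertibility of $I - M_{\cS}$ directly, with no separate compatibility verification needed. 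The formula (\ref{eq:xSiGen}) then follows because each $x^{\cS}_i$ is automatically a fixed point of $f^{\cS^{(i)}}$. Your approach of solving all $n$ fixed-point equations independently and then checking consistency via the conjugacy $f^{\cS^{(i+1)}} \circ f^{\cS_i} = f^{\cS_i} \circ f^{\cS^{(i)}}$ reaches the same conclusion but does a little more work than necessary.
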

\vspace{-5mm}								

Lastly, to obtain a useful explicit expression for each $s^{\cS}_i$
(the first component of $x^{\cS}_i$), we use the row vector
\begin{equation}
\varrho^{\sf T} \defeq e_1^{\sf T} {\rm adj} \left( I - A_L \right)
= e_1^{\sf T} {\rm adj} \left( I - A_R \right) \;,
\label{eq:varrho}
\end{equation}
where the second equality is a consequence of (\ref{eq:continuityCondition}) and (\ref{eq:adjIdentity2}).
The following identity,
\begin{equation}
e_1^{\sf T} {\rm adj} \left( I-M_{\cS^{(i)}} \right) P_{\cS^{(i)}}
= \det \left( P_{\cS^{(i)}} \right) \varrho^{\sf T} \;,
\label{eq:id}
\end{equation}
is a consequence of (\ref{eq:continuityCondition}),
see \cite{Si10,SiMe10} for a derivation.
By combining (\ref{eq:xSiGen}) and (\ref{eq:id}) we obtain
\begin{equation}
\det(I-M_{\cS}) s^{\cS}_i = \det \left( P_{\cS^{(i)}} \right) \varrho^{\sf T} B \mu \;,
\label{eq:id2}
\end{equation}
from which the next result follows immediately.

\begin{proposition}~\\
\vspace{-5mm}								
\begin{enumerate}
\item
If $I - M_{\cS}$ is nonsingular, then
\begin{equation}
s^{\cS}_i = \frac{\det \left( P_{\cS^{(i)}} \right) \varrho^{\sf T} B \mu}
{\det \left( I - M_{\cS} \right)} \;.
\label{eq:sFormula}
\end{equation}
\item
If $I - M_{\cS}$ is singular, $f^{\cS}$ has a fixed point, $\mu \ne 0$, and $\varrho^{\sf T} B \ne 0$,
then $P_{\cS^{(i)}}$ is singular for all $i$.
\end{enumerate}
\label{pr:admissibility}
\end{proposition}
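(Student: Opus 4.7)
Part (i) is essentially already done. The identity~(\ref{eq:id2}), which the text derives immediately before the proposition by combining the explicit formula~(\ref{eq:xSiGen}) for $x^{\cS}_i$ with the identity~(\ref{eq:id}), reads
\begin{equation*}
\det(I-M_{\cS})\, s^{\cS}_i = \det\!\left( P_{\cS^{(i)}} \right) \varrho^{\sf T} B \mu \;.
\end{equation*}
Under the hypothesis $\det(I-M_{\cS}) \ne 0$ I just divide by this scalar to obtain the claimed formula~(\ref{eq:sFormula}).

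For part (ii), the plan is to extract the same identity~(\ref{eq:id2}) directly from~(\ref{eq:id}) without using the (now unavailable) inverse of $I-M_{\cS^{(i)}}$, and then interpret the left-hand side. First, by Lemma~\ref{le:eigMSindep}, $I-M_{\cS^{(i)}}$ is singular for every $i$, because the eigenvalues (with multiplicities) of $M_{\cS^{(i)}}$ do not depend on $i$. Next, because $f^{\cS}$ has a fixed point, there is an $\cS$-cycle $\{x^{\cS}_i\}$ in the sense of Definition~\ref{df:Scycle}, and each $x^{\cS}_i$ is a fixed point of $f^{\cS^{(i)}}$; in particular
\begin{equation*}
(I-M_{\cS^{(i)}})\, x^{\cS}_i = P_{\cS^{(i)}} B \mu \;,
\end{equation*}
so $P_{\cS^{(i)}} B \mu$ lies in the range of $I-M_{\cS^{(i)}}$.

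Now I right-multiply identity~(\ref{eq:id}) by $B\mu$ to obtain
\begin{equation*}
e_1^{\sf T} \,{\rm adj}\!\left( I-M_{\cS^{(i)}} \right) P_{\cS^{(i)}} B \mu \;=\; \det\!\left( P_{\cS^{(i)}} \right) \varrho^{\sf T} B \mu \;.
\end{equation*}
By~(\ref{eq:adjIdentity}), ${\rm adj}(I-M_{\cS^{(i)}}) \cdot (I-M_{\cS^{(i)}}) = \det(I-M_{\cS^{(i)}})\, I = 0$, so ${\rm adj}(I-M_{\cS^{(i)}})$ annihilates the range of $I-M_{\cS^{(i)}}$. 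Since $P_{\cS^{(i)}} B \mu$ lies in that range, the left-hand side of the displayed equation vanishes. With $\mu \ne 0$ and $\varrho^{\sf T} B \ne 0$ we conclude $\det\!\left( P_{\cS^{(i)}} \right) = 0$, which is the claim.

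There is no real obstacle here; the only delicate point worth flagging is that one must resist the temptation to substitute~(\ref{eq:xSiGen}) directly (that derivation implicitly used invertibility of $I-M_{\cS}$). The clean route is to reprove the identity in part (i) at the level of~(\ref{eq:id}) before taking the scalar quotient, which automatically supplies the dichotomy needed for part (ii).
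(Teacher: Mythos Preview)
Your argument is correct and matches the paper's approach: the paper simply says the proposition ``follows immediately'' from identity~(\ref{eq:id2}), and both parts do---part~(i) by dividing through, part~(ii) because the left-hand side of~(\ref{eq:id2}) vanishes when $\det(I-M_{\cS})=0$. Your treatment of part~(ii) is in fact more careful than the paper's, since you correctly note that the paper's derivation of~(\ref{eq:id2}) via~(\ref{eq:xSiGen}) presupposes invertibility, and you re-establish the needed identity directly from~(\ref{eq:id}) together with the fixed-point equation $(I-M_{\cS^{(i)}})x^{\cS}_i = P_{\cS^{(i)}}B\mu$ and the adjugate relation~(\ref{eq:adjIdentity}); this is exactly the computation implicit in the paper's claim.
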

\vspace{-5mm}								

\subsection{Consequences of $\det \left( I - M_{\cS^{\overline{0}}} \right) \ne 0$}
\label{sub:aNonzero}

Our definition of a shrinking point (Definition \ref{df:shrPoint})
includes the assumption $\det \left( I - M_{\cS^{\overline{0}}} \right) \ne 0$.
By Proposition \ref{pr:existence}, this ensures that the $\cS^{\overline{0}}$-cycle is unique.
In this section we provide two important results requiring the assumption
$\det \left( I - M_{\cS^{\overline{0}}} \right) \ne 0$\removableFootnote{
The following result is not used in later sections
(the first part of Lemma \ref{le:varrhoNonzero} is required to show
that the $t_i$ are nonzero, \cite{Si10})
and so I have omitted it from the text.

\begin{lemma}
Suppose $I - M_{\cS^{\overline{0}}}$ is nonsingular.
Then
\begin{enumerate}
\item
$I - A_L$ and $I - A_R$ cannot both be singular;
\item
$\varrho^{\sf T} \ne 0$
\end{enumerate}
\label{le:varrhoNonzero}
\end{lemma}

\begin{proof}
\begin{enumerate}
\item
By (\ref{eq:adjIdentity}) and (\ref{eq:varrho}),
\begin{equation}
\varrho^{\sf T} \left( I - A_J \right) = \det \left( I - A_J \right) e_1^{\sf T} \;,
\label{eq:varrhoIdentity}
\end{equation}
for $J = L,R$.
If $I - A_L$ and $I - A_R$ are both singular,
then by (\ref{eq:varrhoIdentity}) we have
$\varrho^{\sf T} A_L = \varrho^{\sf T} A_R = \varrho^{\sf T}$.
$M_{\cS^{\overline{0}}}$ is a product of instances of $A_L$ and $A_R$,
hence this implies $\varrho^{\sf T} M_{\cS^{\overline{0}}} = \varrho^{\sf T}$,
and thus that $I - M_{\cS^{\overline{0}}}$ is singular.
\item
By part (i), $I - A_J$ is nonsingular for some $J = L,R$.
This implies ${\rm adj} \left( I - A_J \right)$ is nonsingular
(since $\det({\rm adj}(A)) = \det(A)^{N-1}$),
therefore $\varrho = e_1^{\sf T} {\rm adj} \left( I - A_J \right)$ cannot be the zero vector.
\end{enumerate}
\end{proof}
}.

\begin{lemma}
Suppose $I - M_{\cS^{\overline{0}}}$ is nonsingular and $I - M_{\cS}$ is singular.
Then the eigenvalue $1$ of $M_{\cS}$ has algebraic multiplicity $1$,
and the corresponding right eigenspace of $M_{\cS}$ is not orthogonal to $e_1$\removableFootnote{
I don't need a lemma stating that if $P_{\cS}$ is singular
then the algebraic multiplicity of the zero eigenvalue is $1$
(this is Lemma 3(v) of {\sc LinearAlgebra.pdf})
because the expansions of $\det \left( P_{\cS} \right)$ in later sections
implicitly imply that the algebraic multiplicity is $1$.
}.
\label{le:rankNm1}
\end{lemma}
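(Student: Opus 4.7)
The plan is to exploit the continuity condition (\ref{eq:continuityCondition}), which forces $M_{\cS}$ and $M_{\cS^{\overline{0}}}$ to differ by a rank-one matrix of a very specific form. Let $\sigma_0 \in \{-1,+1\}$ satisfy $A_{\cS_0} - A_{\overline{\cS_0}} = \sigma_0\,C\,e_1^{\sf T}$, so that
\begin{equation*}
M_{\cS} - M_{\cS^{\overline{0}}} \;=\; A_{\cS_{n-1}} \cdots A_{\cS_1}\,(A_{\cS_0} - A_{\overline{\cS_0}}) \;=\; v\,e_1^{\sf T}, \qquad v \defeq \sigma_0\,A_{\cS_{n-1}} \cdots A_{\cS_1}\,C.
\end{equation*}
Then $I - M_{\cS} = (I - M_{\cS^{\overline{0}}}) - v\,e_1^{\sf T}$ is a rank-one perturbation of the invertible matrix $I - M_{\cS^{\overline{0}}}$, so $\mathrm{rank}(I - M_{\cS}) \ge N - 1$; the assumed singularity forces equality, and the unit eigenvalue of $M_{\cS}$ has geometric multiplicity one.

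Write $K \defeq (I - M_{\cS^{\overline{0}}})^{-1}$. Any $w \in \ker(I - M_{\cS})$ satisfies $(I - M_{\cS^{\overline{0}}}) w = (e_1^{\sf T} w)\,v$, so $w = (e_1^{\sf T} w)\,Kv$. If $e_1^{\sf T} w$ vanished we would get $w = 0$, contradicting $w \ne 0$; hence $e_1^{\sf T} w \ne 0$, which establishes the ``not orthogonal to $e_1$'' half of the conclusion.

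For the algebraic multiplicity, apply the matrix-determinant lemma (Lemma \ref{le:matrixDeterminant}) to obtain the factorisation
\begin{equation*}
\det(\rho I - M_{\cS}) \;=\; \det(\rho I - M_{\cS^{\overline{0}}})\bigl(1 - e_1^{\sf T}(\rho I - M_{\cS^{\overline{0}}})^{-1} v\bigr)
\end{equation*}
for $\rho$ near $1$. The first factor is nonzero at $\rho=1$ and the second vanishes there (equivalently $e_1^{\sf T} Kv = 1$), so the order of the root at $\rho=1$ equals the order of vanishing of the second factor. Differentiating, using $\frac{d}{d\rho}(\rho I - M_{\cS^{\overline{0}}})^{-1}\big|_{\rho=1} = -K^2$, reduces simplicity of the root to the nonvanishing $e_1^{\sf T} K^2 v \ne 0$; in the natural normalisations $w = K v$ and $u^{\sf T} = e_1^{\sf T} K$ for the right and (by the mirror argument applied to $u^{\sf T}(I - M_{\cS}) = 0$) left eigenvectors, this is exactly the biorthogonality $u^{\sf T} w \ne 0$. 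The hard part will be verifying this nonvanishing; I would close it by invoking Lemma \ref{le:adjugateRank} applied to $I - M_{\cS}$, whose rank-$(N-1)$ conclusion $\mathrm{adj}(I - M_{\cS}) = c\,w\,u^{\sf T}$ in the stated biorthogonal normalisation supplies $u^{\sf T} w = 1$, so that $\frac{d}{d\rho}\det(\rho I - M_{\cS})\big|_{\rho=1} = \det(I - M_{\cS^{\overline{0}}})\cdot u^{\sf T} w \ne 0$ as required.
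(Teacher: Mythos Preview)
Your rank-one setup and the argument that $e_1^{\sf T} w \ne 0$ for any eigenvector $w$ coincide with the paper's: it writes $M_{\cS^{\overline{0}}} = M_{\cS} + C_{\cS}\,e_1^{\sf T}$ and observes that $(I - M_{\cS}) w = 0$ forces $(I - M_{\cS^{\overline{0}}}) w = -C_{\cS}(e_1^{\sf T} w)$, so invertibility of $I - M_{\cS^{\overline{0}}}$ yields $e_1^{\sf T} w \ne 0$. For the algebraic-multiplicity half the paper is much terser than you: from the matrix-determinant lemma it obtains $\det(I - M_{\cS^{\overline{0}}}) = \pm\,e_1^{\sf T}\,\mathrm{adj}(I - M_{\cS})\,C_{\cS} \ne 0$, hence $\mathrm{adj}(I - M_{\cS}) \ne 0$, and then simply cites Lemma~\ref{le:adjugateRank} for ``algebraic multiplicity one''. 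Your route via differentiating $\det(\rho I - M_{\cS})$ is more explicit and correctly reduces the question to whether $e_1^{\sf T} K^2 v = u^{\sf T} w$ vanishes.

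However, your closing step --- invoking Lemma~\ref{le:adjugateRank} to supply $u^{\sf T} w = 1$ --- is circular, and the paper's shortcut rests on the same point. Lemma~\ref{le:adjugateRank} \emph{states} that when $\mathrm{rank}(A) = N-1$ one may normalise the null vectors so that $u^{\sf T} v = 1$, but its proof simply asserts such a biorthogonal pair exists; that normalisation is unavailable precisely when the zero eigenvalue has a nontrivial Jordan block (geometric multiplicity one, algebraic multiplicity $\ge 2$), since then every left null vector is orthogonal to every right null vector (if $A v' = v$ is a generalised eigenvector relation, $u^{\sf T} v = u^{\sf T} A v' = 0$), while $\mathrm{adj}(A)$ is still nonzero. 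Concretely, with $N=3$, period $n=1$, $A_L = \left(\begin{smallmatrix} 1 & -1 & 0 \\ 0 & 1 & 0 \\ 0 & 0 & 0 \end{smallmatrix}\right)$ and $A_R = A_L + e_2 e_1^{\sf T}$, one has $I - M_{\cS^{\overline{0}}} = I - A_R$ nonsingular and $I - M_{\cS} = I - A_L$ singular, yet $1$ is an eigenvalue of $M_{\cS} = A_L$ of algebraic multiplicity two; in your notation $u^{\sf T} w = e_1^{\sf T} K^2 v = 0$. So neither your derivative computation nor the paper's adjugate argument establishes simplicity from the stated hypotheses alone.
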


\begin{proof}
By (\ref{eq:continuityCondition}) and (\ref{eq:MS}), we can write
\begin{equation}
M_{\cS^{\overline{0}}} = M_{\cS} + C_{\cS} e_1^{\sf T} \;,
\end{equation}
for some $C_{\cS} \in \mathbb{R}^N$.
By Lemma \ref{le:matrixDeterminant},
\begin{equation}
\det \left( I - M_{\cS^{\overline{0}}} \right) =
\det \left( I - M_{\cS} - C_{\cS} e_1^{\sf T} \right) =
e_1^{\sf T} {\rm adj} \left( I - M_{\cS} \right) C_{\cS} \;,
\end{equation}
because $\det \left( I - M_{\cS} \right) = 0$.
Since $\det \left( I - M_{\cS^{\overline{0}}} \right) \ne 0$, by assumption,
${\rm adj} \left( I - M_{\cS} \right)$ cannot be the zero matrix.
Thus by Lemma \ref{le:adjugateRank},
the eigenvalue $1$ of $M_{\cS}$ has algebraic multiplicity $1$.

Let $v \in \mathbb{R}^N$ be an eigenvector of $M_{\cS}$ corresponding to the eigenvalue $1$.
That is $v \ne 0$ and
\begin{equation}
0 = \left( I - M_{\cS} \right) v =
\left( I - M_{\cS^{\overline{0}}} \right) v + C_{\cS} e_1^{\sf T} v \;.
\end{equation}
But $\left( I - M_{\cS^{\overline{0}}} \right) v \ne 0$,
because $I - M_{\cS^{\overline{0}}}$ is nonsingular,
therefore $e_1^{\sf T} v \ne 0$ as required.
\end{proof}


\begin{lemma}
Suppose $I - M_{\cS^{\overline{0}}}$ is nonsingular,
$I - M_{\cS}$ and $P_{\cS}$ are singular,
$\mu \ne 0$, and $\varrho^{\sf T} B \ne 0$.
Then $P_{\cS^{(i)}}$ is singular for all $i$\removableFootnote{
This result is only used in later sections
to note that at a point in parameter space where $P_{\cS}$ and $I - M_{\cS}$ are singular,
by Lemma \ref{le:PSisingular} we can expect that $P_{\cS^{((\ell-1)d)}}$ is also singular,
and so we have a shrinking point (if admissibility and non-degeneracy conditions are satisfied).
}\removableFootnote{
The result remains true if the assumptions $\mu \ne 0$ and $\varrho^{\sf T} B \ne 0$ are omitted!
However, proving the result without these assumptions (done below) requires more work.
I have chosen to include the redundant assumptions so that
the reader can see the result nicely following on from Proposition \ref{pr:admissibility}(ii),
and because these assumptions are needed in regards to shrinking points.
Moreover, I am not going to mention that the assumptions are redundant in the text,
because I think this is unnecessary detail and a bit technical.
The more general result would perhaps be appropriate in a book chapter.

Lemma \ref{le:PSisingular}, without
the assumptions $\mu \ne 0$ and $\varrho^{\sf T} B \ne 0$,
is an immediate consequence of the following result.

\begin{lemma}
Suppose $P_{\cS}$ is singular and $I - M_{\cS^{\overline{0}}}$ is nonsingular.
Then for all $i$,
\begin{equation}
P_{\cS^{(i)}} = \left( I - M_{\cS^{(i)}} \right)
\left( I - M_{\cS^{\overline{0}(i)}} \right)^{-1} P_{\cS^{\overline{0}(i)}} \;.
\label{eq:PSidentity}
\end{equation}
\label{le:PSidentity}
\end{lemma}
\vspace{-5mm}								

I have a ``direct'' proof of Lemma \ref{le:PSidentity},
i.e.~without referring to a vector $B$ or periodic solutions of (\ref{eq:f}).
This is nice, however the proof is quite long requiring mathematical induction of $i$,
see Lemma 2(ix) of {\sc LinearAlgebra.pdf}.

\begin{proof}[Proof of Lemma \ref{le:PSidentity}]
The given assumptions place no restrictions on $B$ and $\mu$,
so let us choose any $B \in \mathbb{R}^N$ and $\mu \in \mathbb{R}$.

Since $I - M_{\cS^{\overline{0}}}$ is nonsingular,
by Proposition \ref{pr:existence} the map (\ref{eq:f}) has a unique $\cS^{\overline{0}}$-cycle
for which the $i^{\rm th}$ point is
\begin{equation}
x^{\cS^{\overline{0}}}_i = \left( I - M_{\cS^{\overline{0}(i)}} \right)^{-1}
P_{\cS^{\overline{0}(i)}} B \mu \;.
\label{eq:PSidentityProof1}
\end{equation}
Since $P_{\cS^{\overline{0}}} = P_{\cS}$, see (\ref{eq:PS}),
and $P_{\cS}$ is assumed to be singular,
by Proposition \ref{pr:admissibility}(i) we have $s^{\cS^{\overline{0}}}_0 = 0$.
Therefore by Lemma \ref{le:sZero},
the $\cS^{\overline{0}}$-cycle is also an $\cS$-cycle.
That is,
\begin{equation}
\left( I - M_{\cS^{(i)}} \right) x^{\cS^{\overline{0}}}_i = P_{\cS^{(i)}} B \mu \;.
\label{eq:PSidentityProof2}
\end{equation}
By combining (\ref{eq:PSidentityProof1}) and (\ref{eq:PSidentityProof2}), we obtain
\begin{equation}
P_{\cS^{(i)}} B \mu = \left( I - M_{\cS^{(i)}} \right)
\left( I - M_{\cS^{\overline{0}(i)}} \right)^{-1} P_{\cS^{\overline{0}(i)}} B \mu \;.
\label{eq:PSidentityProof3}
\end{equation}
Since (\ref{eq:PSidentityProof3}) holds for any
$B \in \mathbb{R}^N$ and $\mu \in \mathbb{R}$,
we must have (\ref{eq:PSidentity})
(to actually see this, simply put $\mu = 1$ and $B = e_1,\ldots,e_N$).
\end{proof}
}.
\label{le:PSisingular}
\end{lemma}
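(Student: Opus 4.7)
The strategy will be to reduce the claim to Proposition \ref{pr:admissibility}(ii), which already yields the conclusion $P_{\cS^{(i)}}$ singular for all $i$ provided we can exhibit a fixed point of $f^{\cS}$. So the whole task is to produce such a fixed point under the given hypotheses.

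First, since $I - M_{\cS^{\overline{0}}}$ is nonsingular, Proposition \ref{pr:existence} applies to $\cS^{\overline{0}}$ and yields a unique $\cS^{\overline{0}}$-cycle $\{x^{\cS^{\overline{0}}}_i\}$ with
\begin{equation*}
x^{\cS^{\overline{0}}}_i = \left(I - M_{\cS^{\overline{0}(i)}}\right)^{-1} P_{\cS^{\overline{0}(i)}} B \mu.
\end{equation*}
I would then observe that $P_{\cS^{\overline{0}}} = P_{\cS}$, since by (\ref{eq:PS}) the matrix $P_{\cS}$ depends only on $\cS_1, \ldots, \cS_{n-1}$ and not on $\cS_0$.

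Next, applying identity (\ref{eq:id2}) to the sequence $\cS^{\overline{0}}$ with $i=0$ gives
\begin{equation*}
\det\left(I - M_{\cS^{\overline{0}}}\right) s^{\cS^{\overline{0}}}_0 = \det\left(P_{\cS^{\overline{0}}}\right) \varrho^{\sf T} B \mu = \det(P_{\cS}) \varrho^{\sf T} B \mu = 0,
\end{equation*}
where the right-hand side vanishes because $P_{\cS}$ is singular by hypothesis. Since $\det(I - M_{\cS^{\overline{0}}}) \ne 0$, it follows that $s^{\cS^{\overline{0}}}_0 = 0$. By Lemma \ref{le:sZero}, the $\cS^{\overline{0}}$-cycle is therefore also an $\cS$-cycle; in particular $x^{\cS^{\overline{0}}}_0$ is a fixed point of $f^{\cS}$.

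Finally, having established that $f^{\cS}$ has a fixed point, and with the remaining hypotheses $I - M_{\cS}$ singular, $\mu \ne 0$, and $\varrho^{\sf T} B \ne 0$ already in place, I would invoke Proposition \ref{pr:admissibility}(ii) directly to conclude $P_{\cS^{(i)}}$ is singular for every $i$. The only mild subtlety — and the one place one must be careful — is verifying that $P_{\cS^{\overline{0}}} = P_{\cS}$ to get the vanishing of $s^{\cS^{\overline{0}}}_0$; everything else is bookkeeping on top of results already at hand, so no substantial obstacle is expected.
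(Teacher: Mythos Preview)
Your proof is correct and follows essentially the same approach as the paper's: use nonsingularity of $I-M_{\cS^{\overline{0}}}$ to obtain the unique $\cS^{\overline{0}}$-cycle, note $P_{\cS^{\overline{0}}}=P_{\cS}$ to deduce $s^{\cS^{\overline{0}}}_0=0$, invoke Lemma~\ref{le:sZero} to see this cycle is also an $\cS$-cycle, then apply Proposition~\ref{pr:admissibility}(ii). The only cosmetic difference is that you cite (\ref{eq:id2}) where the paper cites (\ref{eq:sFormula}), which is equivalent.
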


\begin{proof}
Since $I - M_{\cS^{\overline{0}}}$ is nonsingular,
there exists a unique $\cS^{\overline{0}}$-cycle,
$\left\{ x^{\cS^{\overline{0}}}_i \right\}$.
The matrix $P_{\cS^{\overline{0}}}$ is singular
because $P_{\cS^{\overline{0}}} = P_{\cS}$, (\ref{eq:PS}).
Therefore $s^{\cS^{\overline{0}}}_0 = 0$, (\ref{eq:sFormula}).
By Lemma \ref{le:sZero},
$\left\{ x^{\cS^{\overline{0}}}_i \right\}$ is also an $\cS$-cycle,
and so $x^{\cS^{\overline{0}}}_0$ is a fixed point of $f^{\cS}$.
Therefore, by Proposition \ref{pr:admissibility}(ii),
$P_{\cS^{(i)}}$ is singular for all $i$.
\end{proof}

\section{Shrinking points}
\label{sec:shrPoints}
\setcounter{equation}{0}

Our definition of a shrinking point, Definition \ref{df:shrPoint},
is based on the conditions $s^{\cS^{\overline{0}}}_0 = 0$
and $s^{\cS^{\overline{0}}}_{\ell d} = 0$,
where the $s^{\cS^{\overline{0}}}_i$ are the first coordinates of the points
of the $\cS^{\overline{0}}$-cycle,
and $\cS = \cF[\ell,m,n]$ is a rotational symbol sequence.
Here we begin by providing additional
motivation for our restriction to rotational symbol sequences,
and the choice of the indices $i = 0$ and $i = \ell d$ in Definition \ref{df:shrPoint}.

\subsection{Motivation for Definition \ref{df:shrPoint}}
\label{sub:shrPointMotivation}

Conceptually, a shrinking point is a point in parameter space where (\ref{eq:f}) has
an $\cS$-cycle, $\{ x^{\cS}_i \}$, with two points on the switching manifold.
Without loss of generality we can suppose that one of these points is $x^{\cS}_0$, and that $\cS_0 = L$.
Let $x^{\cS}_\alpha$, where $1 \le \alpha \le n-1$
and $n$ is the period, be the other point of the $\cS$-cycle on the switching manifold.
By a double application of Lemma \ref{le:sZero},
this $\cS$-cycle is also an $\cS^{\overline{0} \, \overline{\alpha}}$-cycle.
If $\cS_\alpha = R$, then
$\cS^{\overline{0} \, \overline{\alpha}}$ has the same number of $L$'s as $\cS$.
In this case it is possible for there to exist an integer $d$ such that
\begin{equation}
\cS^{\overline{0} \, \overline{\alpha} (d)} = \cS \;.
\label{eq:rssMainIdentityAlt}
\end{equation}
That is, if we flip the $0^{\rm th}$ and $\alpha^{\rm th}$ symbols of $\cS$,
then apply the $d^{\rm th}$ left shift permutation, we recover the original symbol sequence.

The next result tells us that if (\ref{eq:rssMainIdentityAlt}) holds,
then $\cS$ must equal $\cF[\ell,m,n]$ for some integers $\ell$ and $m$,
and $\alpha = \ell d {\rm ~mod~} n$.
In other words, our restriction to rotational symbol sequences
and the choice of the indices $i = 0$ and $i = \ell d$ in Definition \ref{df:shrPoint}
can be viewed as a consequence of supposing that the symbol sequence associated with a shrinking point
satisfies (\ref{eq:rssMainIdentityAlt}) for some values of $\alpha$ and $d$.

\begin{proposition}
Let $\cS$ be a periodic symbol sequence of period $n$ with $\cS_0 = L$, and suppose
$\cS^{\overline{0} \, \overline{\alpha} (d)} = \cS$,
for some $1 \le \alpha \le n-1$ and $1 \le d \le n-1$ with ${\rm gcd}(d,n) = 1$\removableFootnote{
Suppose instead ${\rm gcd}(d,n) = g$, for some $g > 1$,
and for simplicity suppose $\cS_0 = L$.
Let $\cU = \cS_0 \cdots \cS_{g-1}$, and consider the map
\begin{equation}
x_{i+1} = \begin{cases}
f^{\cU}(x_i) \;, & s_i \le 0 \\
f^{\cU^{\overline{0}}}(x_i) \;, & s_i \ge 0
\end{cases} \;.
\end{equation}
Let $\tilde{n} = \frac{n}{g}$,
and $\cT = \cS_0 \cS_g \cdots \cS_{(\tilde{n}-1) g}$.
Then it is straight-forward to show that
$\cT$ is rotational and $\cS$-cycles of (\ref{eq:f})
correspond to $\cT$-cycles of the above map.

Note, I haven't developed the notation to compound symbol sequences
as in this scenario, e.g.~$\cS = \cT(\cU)$.
}.
Let $m$ denote the multiplicative inverse of $d$ modulo $n$,
and $\ell = m \alpha {\rm ~mod~} n$.
Then $\cS = \cF[\ell,m,n]$, and $\alpha = \ell d {\rm ~mod~} n$.
\label{pr:rss}
\end{proposition}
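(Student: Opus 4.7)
The plan is to translate the stated identity into a pointwise relation on symbols, and then exploit $\gcd(d,n) = 1$ so that iterating the shift by $d$ visits every position of $\cS$ exactly once. The exceptional positions produced by the flips $\overline{0}$ and $\overline{\alpha}$ will show up as exactly two transitions in an otherwise constant sequence, and the alternative characterisation (\ref{eq:rssAlt}) of rotational sequences then yields the conclusion.

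First I would rewrite $\cS^{\overline{0}\,\overline{\alpha}(d)} = \cS$ entry-by-entry. Since $\cS^{(d)}_i = \cS_{(i+d) {\rm \,mod\,}n}$, the hypothesis is equivalent to $\cS^{\overline{0}\,\overline{\alpha}}_{i+d} = \cS_i$ for all $i$. Because $\cS^{\overline{0}\,\overline{\alpha}}$ differs from $\cS$ only at positions $0$ and $\alpha \pmod n$, this yields the key relation
\begin{equation}
\cS_{(i+d) {\rm \,mod\,} n} =
\begin{cases}
\text{flip of } \cS_i \;, & i \equiv -d \text{ or } i \equiv \alpha - d \pmod n \\
\cS_i \;, & \text{otherwise}
\end{cases} \;.
\label{eq:proofRel}
\end{equation}

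Next I would consider the orbit of $d$ on $\mathbb{Z}/n\mathbb{Z}$. Since $\gcd(d,n) = 1$, the values $0, d, 2d, \ldots, (n-1)d$ taken modulo $n$ are a permutation of $\{0,1,\ldots,n-1\}$. Applying (\ref{eq:proofRel}) with $i = jd {\rm \,mod\,} n$, we see that $\cS_{(j+1)d {\rm \,mod\,}n} = \cS_{jd {\rm \,mod\,}n}$ for every $j \in \{0,\ldots,n-1\}$ except the two indices $j$ satisfying $jd \equiv -d \pmod n$ (i.e.\ $j = n-1$) and $(j+1)d \equiv \alpha \pmod n$ (i.e.\ $j+1 \equiv m\alpha \equiv \ell \pmod n$, so $j = \ell - 1$); at those two values the symbol flips.

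Now I would walk around the cycle starting at $j = 0$. Since $\cS_0 = L$, the symbols $\cS_{jd {\rm \,mod\,}n}$ remain $L$ for $j = 0, 1, \ldots, \ell - 1$, then flip to $R$ at $j = \ell$ and remain $R$ for $j = \ell, \ldots, n-1$, and finally flip back to $L$ at $j = n$ (which closes the cycle consistently with $\cS_0 = L$). Comparing with the alternative form (\ref{eq:rssAlt}) of a rotational symbol sequence immediately gives $\cS = \cF[\ell, m, n]$. The final assertion $\alpha = \ell d {\rm \,mod\,}n$ follows by multiplying $\ell \equiv m\alpha \pmod n$ by $d$ and using $md \equiv 1 \pmod n$.

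The only real subtlety — and the step where I would be most careful — is the cyclic bookkeeping in the paragraph above: one must verify that the two symbol flips really do occur in the correct order (one at $j = \ell$, one at the wrap-around $j = n$) and that no third change has been overlooked, so that the resulting block structure is exactly $L^{\ell} R^{n-\ell}$ along the $d$-orbit rather than, say, $R^{\ell} L^{n-\ell}$ or an inconsistent pattern. Once the placement of the two flips has been pinned down using $\cS_0 = L$, the remainder of the argument is essentially a direct appeal to (\ref{eq:rssAlt}).
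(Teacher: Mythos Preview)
Your proposal is correct and follows essentially the same approach as the paper's proof: both translate the hypothesis into the recursion $\cS_{jd} = \cS_{(j-1)d}$ along the $d$-orbit, valid except at the two indices where the flips occur, and then propagate from $\cS_0 = L$ to verify the characterisation (\ref{eq:rssAlt}). Your presentation is marginally more conceptual (you explicitly identify the two flip positions $j=\ell-1$ and $j=n-1$ up front, then walk once around the cycle), whereas the paper breaks the argument into the ranges $1\le j\le \ell-1$ and $\ell+1\le j\le n-1$ separately, but the content is the same.
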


\begin{proof}
The formula $\alpha = \ell d {\rm ~mod~} n$
is a trivial consequence of our definitions of $\ell$ and $m$ in the statement of the theorem.
It remains to show that $\cS = \cF[\ell,m,n]$,
which we achieve by verifying (\ref{eq:rssAlt}).

For any $j = 1,\ldots,\ell-1$,
\begin{equation}
\cS_{j d {\rm \,mod\,} n} =
\cS^{\overline{0} \, \overline{\alpha}}_{j d {\rm \,mod\,} n} =
\cS^{\overline{0} \, \overline{\alpha} (d)}_{(j-1) d {\rm \,mod\,} n} \;,
\label{eq:rssProof1}
\end{equation}
because $j \ne 0,\ell$, and where the second equality
follows from the definition of a shift permutation.
Then
\begin{equation}
\cS^{\overline{0} \, \overline{\alpha} (d)}_{(j-1) d {\rm \,mod\,} n} =
\cS_{(j-1) d {\rm \,mod\,} n} \;,
\label{eq:rssProof2}
\end{equation}
because $\cS^{\overline{0} \, \overline{\alpha} (d)} = \cS$, by assumption.
By then starting with $\cS_0 = L$,
and recursively applying (\ref{eq:rssProof1})-(\ref{eq:rssProof2}),
we obtain $\cS_0 = \cS_d = \cdots = \cS_{(\ell-1) d {\rm \,mod\,} n}$,
matching (\ref{eq:rssAlt}).

Equation (\ref{eq:rssProof2}) is true for all $j \in \mathbb{Z}$,
whereas (\ref{eq:rssProof1}) is false for $j = \ell$
(because $\ell d {\rm ~mod~} n = \alpha$).
This implies $\cS_{\ell d {\rm \,mod\,} n} \ne \cS_{(\ell-1)d {\rm \,mod\,} n}$,
and thus $\cS_{\ell d {\rm \,mod\,} n} = R$.
Finally, (\ref{eq:rssProof1}) is true for all $j = \ell+1,\ldots,n-1$
and so a similar recursive argument gives us
$\cS_{\ell d {\rm \,mod\,} n} = \cS_{(\ell d+1) {\rm \,mod\,} n} = \cdots =
\cS_{-d {\rm \,mod\,} n} = R$,
which verifies (\ref{eq:rssAlt}), and hence $\cS = \cF[\ell,m,n]$.
\end{proof}

\subsection{Basic properties of shrinking points}
\label{sub:shrPoints}

\begin{figure}[b!]
\begin{center}
\setlength{\unitlength}{1cm}
\begin{picture}(8,6)
\put(0,0){\includegraphics[height=6cm]{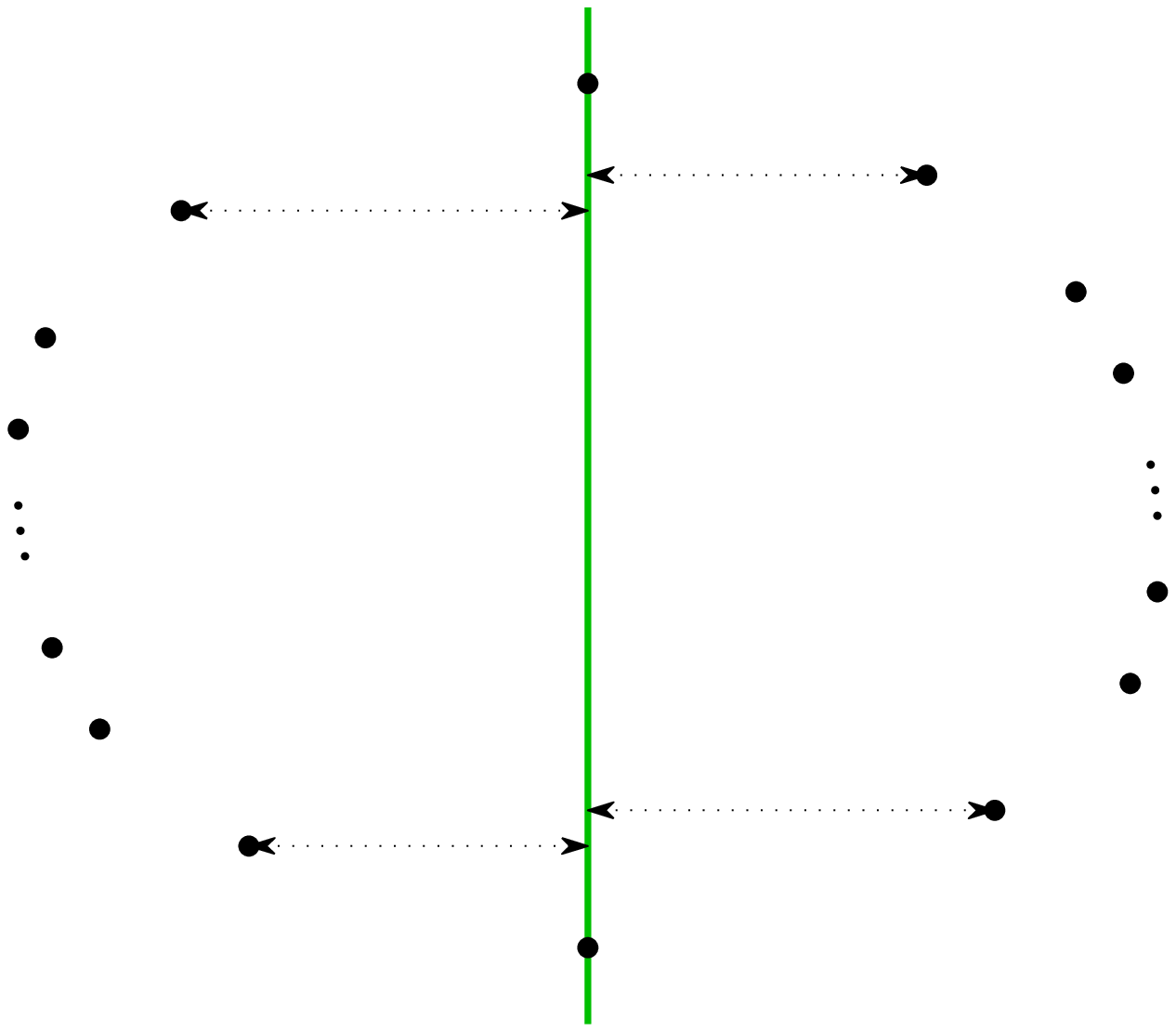}}
\put(.3,4.25){\scriptsize $y_{(\ell-2)d}$}
\put(1.1,5){\scriptsize $y_{(\ell-1)d}$}
\put(3.5,5.6){\scriptsize $y_{\ell d}$}
\put(5.8,5.2){\scriptsize $y_{(\ell+1)d}$}
\put(6.7,4.55){\scriptsize $y_{(\ell+2)d}$}
\put(2.17,4.58){\scriptsize $-t_{(\ell-1)d}$}
\put(4.57,4.78){\scriptsize $t_{(\ell+1)d}$}
\put(.6,1.6){\scriptsize $y_{2 d}$}
\put(1.66,.84){\scriptsize $y_d$}
\put(3.66,.3){\scriptsize $y_0$}
\put(6.15,1.02){\scriptsize $y_{-d}$}
\put(7,1.74){\scriptsize $y_{-2 d}$}
\put(2.66,1.23){\scriptsize $-t_d$}
\put(4.9,1.38){\scriptsize $t_{-d}$}
\put(4.04,3){\scriptsize $s=0$}
\end{picture}
\caption{
A schematic diagram showing the $\cS^{\overline{0}}$-cycle at an $\cS$-shrinking point.
\label{fig:perSolnAtShrPoint}
}
\end{center}
\end{figure}


Recall, at an $\cS$-shrinking point, $y_i$ denotes the $i^{\rm th}$ point of the $\cS^{\overline{0}}$-cycle
and $t_i$ denotes the first coordinate of $y_i$, see \S\ref{sub:definitions}.
By assumption
\begin{equation}
t_0 = 0 \;, \qquad
t_{\ell d} = 0 \;,
\label{eq:t0telld}
\end{equation}
and $t_i \ne 0$, for all $i \ne 0,\ell d$.
The $\cS^{\overline{0}}$-cycle is assumed to be admissible,
and this fixes the signs of the $t_i$.
In particular,
\begin{equation}
t_d < 0 \;, \qquad
t_{(\ell-1)d} < 0 \;, \qquad
t_{(\ell+1)d} > 0 \;, \qquad
t_{-d} > 0 \;,
\label{eq:tSigns}
\end{equation}
see Fig.~\ref{fig:perSolnAtShrPoint}\removableFootnote{
In Definition \ref{df:shrPoint} we include the assumption $t_i \ne 0$, for all $i \ne 0,\ell d$,
because it is required in most of the results below, especially Theorem \ref{th:basicUnfolding}.
Without this assumption (\ref{eq:tSigns}) still holds \cite{Si10}.

In my proof in \cite{Si10} I missed that logical point
to note that $I-A_L$ and $I-A_R$ cannot both be singular.
}.

The next three results provide key properties of shrinking points.

\begin{proposition}
At an $\cS$-shrinking point, $\{ y_i \}$ has period $n$.
\label{pr:periodn}
\end{proposition}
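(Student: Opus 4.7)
The plan is to argue by contradiction: suppose the minimal period $p$ of the orbit $\{y_i\}$ is a proper divisor of $n$. Since $y_i = y_{i+p}$ implies $t_i = t_{i+p}$, the set $\{i \in \mathbb{Z}/n : t_i = 0\} = \{0, \ell d\}$ (from Definition~\ref{df:shrPoint}) must be invariant under translation by $p$ in $\mathbb{Z}/n$. The case $p \equiv 0 \pmod n$ is excluded, so translation by $p$ must swap $0$ and $\ell d$, giving $p \equiv \ell d \pmod n$ together with $2p \equiv 0 \pmod n$. Combined with $p \mid n$ and $p < n$, this forces $p = n/2$, hence $n$ is even and $\ell d \equiv n/2 \pmod n$. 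Since $\gcd(m,n) = 1$ and $n$ is even, $m$ is odd, so $(n/2)m \equiv n/2 \pmod n$, and multiplying $\ell d \equiv n/2 \pmod n$ by $m$ yields $\ell = n/2$.

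Next I would translate the geometric periodicity into a symbolic identity. First note that $C \ne 0$ at any shrinking point: if $C = 0$ then $A_L = A_R$, so $P_{\cS^{\overline{0}(i)}}$ would be independent of $i$, and the two conditions $t_0 = t_{\ell d} = 0$ would via (\ref{eq:sFormula}) force $t_i = 0$ for every $i$, contradicting the hypothesis $t_i \ne 0$ for $i \ne 0, \ell d$ in Definition~\ref{df:shrPoint}. Hence for every $i \ne 0, n/2 \pmod n$, we have $t_i \ne 0$ and $f^L(y_i) - f^R(y_i) = -C t_i \ne 0$. The equality $y_{i+1} = y_{i+1+n/2}$ then forces $\cS^{\overline{0}}_i = \cS^{\overline{0}}_{i+n/2}$. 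Since $\cS^{\overline{0}}$ differs from $\cS$ only at positions $\equiv 0 \pmod n$, and neither $i$ nor $i + n/2$ is such a position, this reduces to $\cS_i = \cS_{i+n/2}$ for every $i \ne 0, n/2$.

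A contradiction follows immediately by applying this identity at $i = d$. Because $\gcd(d,n) = 1$ with $n$ even, $d$ is odd, so $d \ne 0, n/2$ and the identity applies. By (\ref{eq:rss}), $dm \equiv 1 \pmod n$ is less than $\ell = n/2$, so $\cS_d = L$; whereas $(d+n/2)m \equiv 1 + n/2 \pmod n$ is at least $\ell$, so $\cS_{d+n/2} = R$. This contradicts $\cS_d = \cS_{d+n/2}$, completing the proof. The main obstacle is the middle step's passage from the geometric identity $y_i = y_{i+n/2}$ to the symbolic identity on $\cS$; once $C \ne 0$ is established, the continuity relation (\ref{eq:continuityCondition}) does the work, and the first and third steps are purely arithmetic manipulations of the rotational structure.
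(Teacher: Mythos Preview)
Your argument is correct. The paper itself does not prove this proposition; it merely cites \cite{Si10} and remarks that the result ``is a consequence of the rotational nature of $\cF[\ell,m,n]$''. Your proof is a clean, self-contained realisation of that remark: the zero set $\{i : t_i = 0\} = \{0,\ell d\}$ from Definition~\ref{df:shrPoint} is the key invariant, and forcing it to be closed under translation by a proper divisor $p$ of $n$ pins down $p = n/2$, $\ell = n/2$; the rotational formula~(\ref{eq:rss}) then gives the contradiction at $i = d$. Each step checks out, including the auxiliary claim $C \ne 0$ (if $A_L = A_R$ then every $P_{\cS^{\overline{0}(i)}}$ coincides, so by~(\ref{eq:sFormula}) all $t_i$ vanish, contradicting $t_d \ne 0$).

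One small remark: you implicitly use that the minimal period of $\{y_i\}$ divides $n$; this is immediate since the $\cS^{\overline{0}}$-cycle is by construction $n$-periodic and any period is a multiple of the minimal one, but it is worth saying explicitly. Otherwise the argument is complete and arguably more transparent than deferring to an external reference.
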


This is proved in \cite{Si10}
and is a consequence of the rotational nature of $\cF[\ell,m,n]$\removableFootnote{
In {\sc LinearAlgebra.pdf} I showed directly that
$\left( I-M_{\cS^{\overline{0}(d)}} \right)^{-1} P_{\cS^{\overline{0}(d)}} \ne
\left( I-M_{\cS^{\overline{0}}} \right)^{-1} P_{\cS^{\overline{0}}}$.
The proof is rather long (it uses mathematical induction and different cases).
}.

\begin{proposition}
At an $\cS$-shrinking point,
\begin{enumerate}
\item
$\{ y_i \}$ is both an $\cS$-cycle and an $\cS^{(-d)}$-cycle,
\item
$I-M_{\cS}$ is singular\removableFootnote{
Note, by Lemma \ref{le:rankNm1},
the algebraic multiplicity of the unit eigenvalue of $M_{\cS}$ is $1$.
},
\item
$P_{\cS^{(i)}}$ is singular for all $i$.
\end{enumerate}
\label{pr:MSPSsingular}
\end{proposition}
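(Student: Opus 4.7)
The plan is to establish the three parts in order: part (i) provides the structural fact that $\{y_i\}$ simultaneously realises two distinct symbol sequences, which drives a short contradiction argument for (ii), and then (iii) falls out from results already in hand.

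For (i), I would apply Lemma \ref{le:sZero} twice to the $\cS^{\overline{0}}$-cycle $\{y_i\}$. Since $t_0 = 0$ and $t_{\ell d} = 0$ by the defining conditions of an $\cS$-shrinking point, Lemma \ref{le:sZero} shows that $\{y_i\}$ is also an $(\cS^{\overline{0}})^{\overline{0}}$-cycle and an $(\cS^{\overline{0}})^{\overline{\ell d}}$-cycle. The first of these simplifies to $\cS$, giving the $\cS$-cycle claim. For the second, I would invoke the key identity (\ref{eq:rssMainIdentity}), which rearranges to $\cS^{\overline{0}\,\overline{\ell d}} = \cS^{(-d)}$, completing part (i).

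For (ii), I would argue by contradiction: assume $I - M_{\cS}$ is non-singular, so by Proposition \ref{pr:existence} the map $f^{\cS}$ has a unique fixed point. Viewing $\{y_i\}$ as an $\cS$-cycle, $y_0$ is a fixed point of $f^{\cS^{(0)}} = f^{\cS}$. Viewing $\{y_i\}$ as an $\cS^{(-d)}$-cycle, $y_d$ is a fixed point of $f^{(\cS^{(-d)})^{(d)}} = f^{\cS}$. Uniqueness of the fixed point then forces $y_0 = y_d$, so the minimal period of $\{y_i\}$ divides $d$ and is strictly less than $n$, contradicting Proposition \ref{pr:periodn}. Hence $I - M_{\cS}$ must be singular.

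Finally, (iii) is immediate from Proposition \ref{pr:admissibility}(ii): the shrinking-point hypotheses give $\mu \ne 0$ and $\varrho^{\sf T} B \ne 0$, part (i) provides a fixed point $y_0$ of $f^{\cS}$, and part (ii) establishes that $I - M_{\cS}$ is singular, so $P_{\cS^{(i)}}$ is singular for every $i$. The main subtlety in executing the plan is the shift bookkeeping in step (ii) — one must carefully verify that in the $\cS^{(-d)}$-cycle interpretation the point $y_d$ sits at exactly the shift index that recovers $f^{\cS}$, rather than some other composition of the half-maps.
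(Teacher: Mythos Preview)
Your argument is correct and matches the paper's approach: Lemma \ref{le:sZero} twice plus (\ref{eq:rssMainIdentity}) for (i), the observation that $y_0$ and $y_d$ are both fixed points of $f^{\cS}$ for (ii), and Proposition \ref{pr:admissibility}(ii) for (iii). The paper establishes $y_0 \ne y_d$ more directly via $t_0 = 0$ and $t_d < 0$ from (\ref{eq:tSigns}), bypassing Proposition \ref{pr:periodn}; your inference ``minimal period divides $d$'' is not quite justified (the half-map applied depends on $i$), but since Proposition \ref{pr:periodn} asserts the $n$ points are distinct, $y_0 = y_d$ with $0 < d < n$ already yields the contradiction.
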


\begin{proof}
By definition, $\{ y_i \}$ is an $\cS^{\overline{0}}$-cycle.
Since $t_0 = 0$, by Lemma \ref{le:sZero}, $\{ y_i \}$ is also an $\cS$-cycle.
Since $t_{\ell d} = 0$, $\{ y_i \}$ is similarly also an $\cS^{\overline{0} \, \overline{\ell d}}$-cycle,
and so by (\ref{eq:rssMainIdentity}), $\{ y_i \}$ is also an $\cS^{(-d)}$-cycle,
which proves part (i).

By part (i), $y_0$ and $y_d$ are both fixed points of $f^{\cS}$.
By (\ref{eq:t0telld}) and (\ref{eq:tSigns}) these points are distinct,
hence by Proposition \ref{pr:existence}, $I - M_{\cS}$ is singular.
Finally, $P_{\cS^{(i)}}$ is singular for all $i$ by Proposition \ref{pr:admissibility}(ii).
\end{proof}

\begin{proposition}
For any $\cS$-shrinking point in the phase space of (\ref{eq:f}),
let $\cP$ denote the nonplanar polygon formed by
joining each $y_i$ to $y_{(i+d)}$ by a line segment.
Then each point on $\cP$ belongs to an admissible $\cS$-cycle of (\ref{eq:f}),
and the restriction of (\ref{eq:f}) to $\cP$
is homeomorphic to rigid rotation with rotation number $\frac{m}{n}$.
\label{pr:polygon}
\end{proposition}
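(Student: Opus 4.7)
The plan is to exploit the fact that Proposition \ref{pr:MSPSsingular}(i) supplies two distinct $\cS$-cycles, namely $\{y_i\}$ and (after re-indexing) $\{y_{i+d}\}$. Affine interpolation between them will produce a one-parameter family of $\cS$-cycles whose orbits sweep out $\cP$, and the rotation number will drop out of the combinatorics of how $f$ permutes the vertices $y_{jd}$.

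First I would verify that $\{y_{i+d}\}_{i=0}^{n-1}$ is itself an $\cS$-cycle. This is an immediate re-indexing of the $\cS^{(-d)}$-cycle property of $\{y_i\}$: the relation $y_{(i+1)+d} = f^{\cS^{(-d)}_{i+d}}(y_{i+d}) = f^{\cS_i}(y_{i+d})$ is precisely the defining equation for an $\cS$-cycle starting at $y_d$. With both $\{y_i\}$ and $\{y_{i+d}\}$ known to be $\cS$-cycles, each $f^{\cS_i}$ being affine implies that, for every $\lambda \in [0,1]$, the sequence
\begin{equation}
z_i(\lambda) \defeq (1-\lambda) y_i + \lambda y_{i+d} \;, \qquad i = 0,\ldots,n-1,
\end{equation}
satisfies $z_{i+1}(\lambda) = f^{\cS_i}(z_i(\lambda))$ and is therefore an $\cS$-cycle. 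As $i$ ranges over $\{0,\ldots,n-1\}$ and $\lambda$ over $[0,1]$, the points $z_i(\lambda)$ trace out precisely the edges of $\cP$.

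Next I would check admissibility of each $\{z_i(\lambda)\}$ by showing that $(1-\lambda)t_i + \lambda t_{i+d}$ has the sign prescribed by $\cS_i$. Using (\ref{eq:rss}) directly, $\cS_i$ and $\cS_{i+d}$ can disagree only when $im \bmod n \in \{\ell-1, n-1\}$, i.e.~only at $i = (\ell-1)d$ and $i = -d$. For every other $i$ the common value $\cS_i = \cS_{i+d}$ forces $t_i$ and $t_{i+d}$ to share the correct sign (from admissibility of the $\cS^{\overline{0}}$-cycle), and their convex combination inherits it. At the two exceptional indices we invoke (\ref{eq:t0telld}) and (\ref{eq:tSigns}): the convex combination collapses to $(1-\lambda) t_{(\ell-1)d} \le 0$ (matching $\cS_{(\ell-1)d} = L$) or $(1-\lambda) t_{-d} \ge 0$ (matching $\cS_{-d} = R$). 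Thus every $\{z_i(\lambda)\}$ is admissible, and every point of $\cP$ belongs to such a cycle.

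For the rigid-rotation claim, the admissibility analysis simultaneously shows that $f$ agrees with the affine map $f^{\cS_i}$ on the edge $E_i \defeq [y_i, y_{i+d}]$ and maps $E_i$ bijectively onto $E_{i+1}$ by $z_i(\lambda) \mapsto z_{i+1}(\lambda)$. Relabelling the vertices cyclically as $w_j \defeq y_{jd}$, which is possible since $\gcd(d,n) = 1$, one has $f(w_j) = y_{jd+1} = y_{(j+m)d} = w_{j+m}$, because $md \equiv 1 \pmod n$. Parameterising $\cP$ by $\phi : \mathbb{R}/\mathbb{Z} \to \cP$ with $\phi(j/n) = w_j$ and extending linearly on each arc produces the conjugacy $f \circ \phi = \phi \circ R_{m/n}$, where $R_{m/n}(t) = t + \tfrac{m}{n} \bmod 1$, which is the desired homeomorphism to rigid rotation. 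The main obstacle is the admissibility check at the two exceptional indices, where only the shrinking-point conditions $t_0 = t_{\ell d} = 0$ save the convex combination from picking up the wrong sign; everything else is affine algebra together with the rotational identities of \S\ref{sub:rss}.
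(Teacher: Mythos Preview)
Your admissibility argument and the construction of the interpolated $\cS$-cycles $\{z_i(\lambda)\}$ are correct and follow the approach the paper defers to in \cite{SiMe09,Si10}; in particular your treatment of the two exceptional indices $i=(\ell-1)d$ and $i=-d$, where (\ref{eq:t0telld}) rescues the convex combination, is exactly right. Likewise the identification $f(w_j)=w_{j+m}$ and the intertwining relation $f\circ\phi=\phi\circ R_{m/n}$ are correct.

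There is, however, a gap. You call $\phi$ a conjugacy, but what you have constructed is only a continuous surjection $\mathbb{R}/\mathbb{Z}\to\cP$ satisfying the intertwining relation; for $f|_{\cP}$ to be \emph{homeomorphic} to rigid rotation you need $\phi$ to be injective, i.e.\ you need $\cP$ to have no self-intersections. That the $n$ vertices $y_i$ are distinct (Proposition~\ref{pr:periodn}) does not by itself rule out two non-adjacent edges $[y_i,y_{i+d}]$ and $[y_j,y_{j+d}]$ crossing in $\mathbb{R}^N$. The paper, in a source-level comment accompanying its citation of \cite{SiMe09,Si10}, explicitly flags this step as the one non-elementary ingredient of the proof, so it should not be dismissed as routine. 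Without it you have only a semi-conjugacy (a factor map from the rotation onto $f|_{\cP}$), not the claimed homeomorphism.
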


Refer to \cite{SiMe09,Si10} for a proof\removableFootnote{
The proof is elementary except a short argument
is required to show that $\cP$ has no self-intersections.
}.

\subsection{Eigenvectors associated with shrinking points}
\label{sub:eigenvectors}

By Proposition \ref{pr:MSPSsingular}(ii),
$M_{\cS}$ has a unit eigenvalue.
Since $\det \left( I - M_{\cS^{\overline{0}}} \right) \ne 0$,
by Lemma \ref{le:rankNm1} the unit eigenvalue is of algebraic multiplicity one.
Furthermore, by Lemma \ref{le:eigMSindep} each $M_{\cS^{(i)}}$
has a unit eigenvalue of algebraic multiplicity one.

Recall, in \S\ref{sub:definitions} we let $u_j^{\sf T}$ and $v_j$
denote the left and right eigenvectors of $M_{\cS^{(j)}}$ corresponding to the unit eigenvalue
normalised by $u_j^{\sf T} v_j = 1$ and $e_1^{\sf T} v_j = 1$,
for $j = 0, (\ell-1)d, \ell d, -d$.
The following result provides explicit expressions for $u_j^{\sf T}$ and $v_j$.

\begin{lemma}
At an $\cS$-shrinking point, for each $j \in \left\{ 0, (\ell-1)d, \ell d, -d \right\}$,
\begin{equation}
u_j^{\sf T} = \frac{e_1^{\sf T} {\rm adj} \left( I - M_{\cS^{(j)}} \right)}{c} \;, \qquad
v_j = \frac{y_{j+d}-y_j}{t_{j+d}-t_j} \;,
\label{eq:uvj}
\end{equation}
where $c$ is given by (\ref{eq:c}).
\label{le:uv}
\end{lemma}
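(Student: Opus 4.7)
My plan is to derive both formulas from Lemma~\ref{le:adjugateRank} together with the two-orbit structure of $\{y_i\}$ provided by Proposition~\ref{pr:MSPSsingular}(i). Since $I-M_{\cS^{\overline{0}}}$ is nonsingular and $I-M_{\cS}$ is singular (Proposition~\ref{pr:MSPSsingular}(ii)), Lemma~\ref{le:rankNm1} says the unit eigenvalue of $M_{\cS}$ has algebraic multiplicity one, and by Lemma~\ref{le:eigMSindep} the same is true of each $M_{\cS^{(j)}}$; in particular $I-M_{\cS^{(j)}}$ has rank $N-1$. Lemma~\ref{le:adjugateRank} then yields a factorization $\mathrm{adj}(I-M_{\cS^{(j)}}) = c\,\hat v\hat u^{\sf T}$, where $\hat u^{\sf T}$ and $\hat v$ are left and right null vectors of $I-M_{\cS^{(j)}}$ with $\hat u^{\sf T}\hat v = 1$; by Lemma~\ref{le:eigMSindep} the constant $c$ is independent of $j$ and agrees with the definition (\ref{eq:c}).

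Next I will establish the formula for $v_j$. Because $\{y_i\}$ is an $\cS$-cycle, $y_j$ is a fixed point of $f^{\cS^{(j)}}$. The key point is that, by Proposition~\ref{pr:MSPSsingular}(i), the same sequence $\{y_i\}$ is simultaneously an $\cS^{(-d)}$-cycle, so the $(j+d)$th point satisfies $y_{j+d} = f^{(\cS^{(-d)})^{(j+d)}}(y_{j+d}) = f^{\cS^{(j)}}(y_{j+d})$, i.e., $y_{j+d}$ is also a fixed point of $f^{\cS^{(j)}}$. Since $f^{\cS^{(j)}}$ is affine with linear part $M_{\cS^{(j)}}$, the difference $y_{j+d}-y_j$ lies in the one-dimensional kernel of $I-M_{\cS^{(j)}}$. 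For $j\in\{0,(\ell-1)d,\ell d,-d\}$, the sign information (\ref{eq:t0telld})--(\ref{eq:tSigns}) shows $t_{j+d}-t_j\neq 0$, so this difference is a nonzero eigenvector and the normalization $e_1^{\sf T}v_j=1$ yields the claimed formula.

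Finally, for $u_j^{\sf T}$, write $\hat v = \alpha v_j$ with $\alpha = e_1^{\sf T}\hat v$, using that the right eigenspace is one-dimensional and $e_1^{\sf T}v_j=1$. The condition $\hat u^{\sf T}\hat v = 1$ together with $u_j^{\sf T}v_j=1$ then forces $u_j^{\sf T} = \alpha\hat u^{\sf T}$, whence
$$\frac{e_1^{\sf T}\,\mathrm{adj}(I-M_{\cS^{(j)}})}{c} = (e_1^{\sf T}\hat v)\hat u^{\sf T} = \alpha\hat u^{\sf T} = u_j^{\sf T},$$
as required. The only nontrivial step is the identification $y_{j+d}\in\mathrm{Fix}(f^{\cS^{(j)}})$, which is where the $\cS^{(-d)}$-cycle interpretation of Proposition~\ref{pr:MSPSsingular}(i) plays its essential role; the restriction to the four indices listed in the lemma is what guarantees $t_{j+d}-t_j\neq 0$ and thereby makes $v_j$ normalizable against $e_1$.
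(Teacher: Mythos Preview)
Your proof is correct and follows essentially the same approach as the paper: both apply Lemma~\ref{le:adjugateRank} to $I-M_{\cS^{(j)}}$ to obtain the adjugate factorization (the paper applies it directly with the normalized pair $u_j,v_j$, whereas you introduce intermediate vectors $\hat u,\hat v$ and then identify them, but this is only a cosmetic difference), and both use Proposition~\ref{pr:MSPSsingular}(i) to see that $y_j$ and $y_{j+d}$ are simultaneously fixed points of $f^{\cS^{(j)}}$, making their difference the desired right eigenvector. Your write-up is in fact slightly more explicit than the paper's in justifying that $c$ is independent of $j$ and that $t_{j+d}-t_j\neq 0$ for the four listed indices.
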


\begin{proof}
By applying Lemma \ref{le:adjugateRank} to the matrix $A = I - M_{\cS^{(j)}}$,
we obtain ${\rm adj} \left( I - M_{\cS^{(j)}} \right) = c v_j u_j^{\sf T}$.
Since also $e_1^{\sf T} v_j = 1$,
this implies $\frac{e_1^{\sf T} {\rm adj} \left( I - M_{\cS^{(j)}} \right)}{c} = u_j^{\sf T}$.

Next let $\hat{v}_j = \frac{y_{j+d}-y_j}{t_{j+d}-t_j}$.
It remains to show that $\hat{v}_j = v_j$.
Trivially $e_1^{\sf T} \hat{v}_j = 1$.
By Proposition \ref{pr:MSPSsingular},
$y_j$ and $y_{j+d}$ are both fixed points of $f^{\cS^{(j)}}$, and thus
\begin{equation}
y_{j+d} - y_j = 
f^{\cS^{(j)}} \left( y_{j+d} \right) - f^{\cS^{(j)}} \left( y_j \right) =
M_{\cS^{(j)}} \left( y_{j+d} - y_j \right) \;.
\end{equation}
Therefore $M_{\cS^{(j)}} \hat{v}_j = \hat{v}_j$.
That is, each $\hat{v}_j$ satisfies the same properties as $v_j$.
But $v_j$ is unique, hence $\hat{v}_j = v_j$, as required.
\end{proof}

\begin{figure}[b!]
\begin{center}
\setlength{\unitlength}{1cm}
\begin{picture}(16,12)
\put(0,0){\includegraphics[height=12cm]{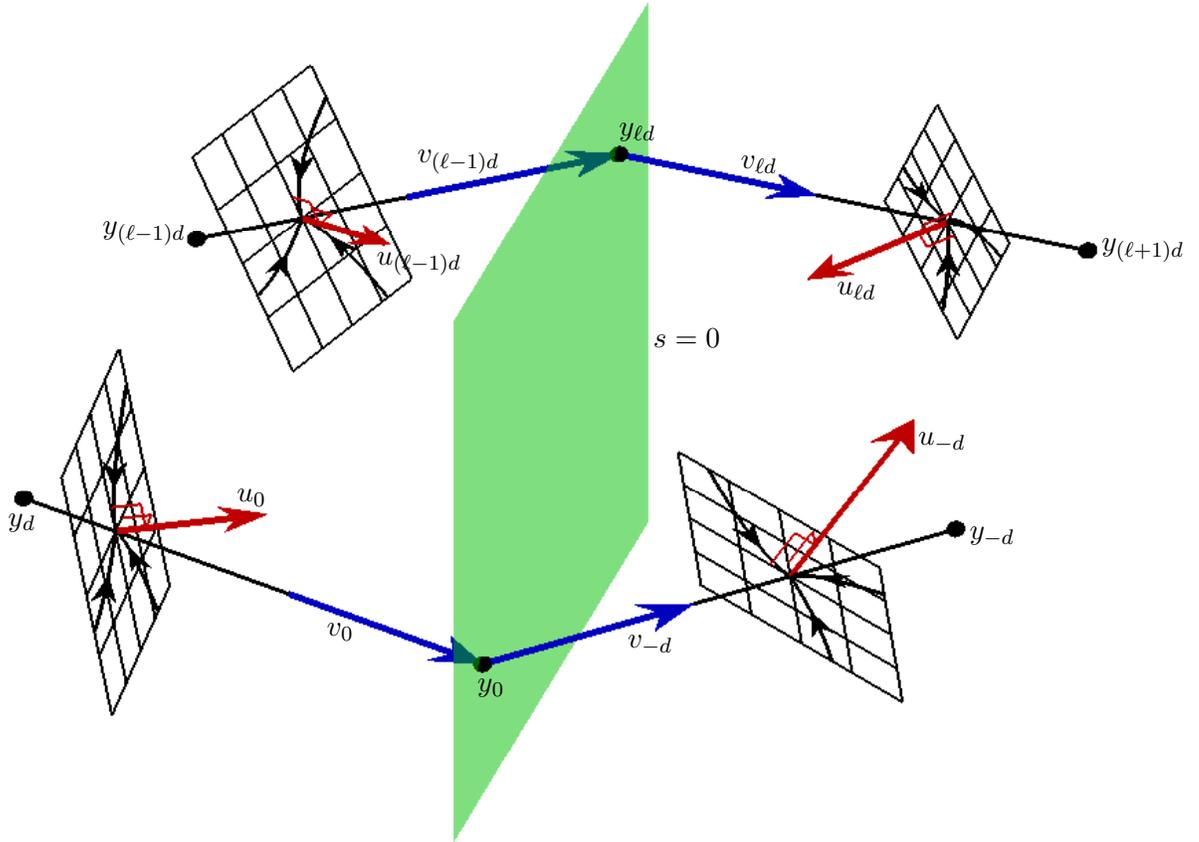}}
\put(9.34,7){\small $s=0$}
\put(7,2.42){\small $y_0$}
\put(.76,4.62){\small $y_d$}
\put(2.02,8.5){\small $y_{(\ell-1)d}$}
\put(8.88,9.82){\small $y_{\ell d}$}
\put(15.3,8.28){\small $y_{(\ell+1)d}$}
\put(13.54,4.44){\small $y_{-d}$}
\put(5,3.16){\small $v_0$}
\put(6.2,9.46){\small $v_{(\ell-1)d}$}
\put(10.5,9.38){\small $v_{\ell d}$}
\put(9,3){\small $v_{-d}$}
\put(3.8,4.94){\small $u_0$}
\put(5.66,8.1){\small $u_{(\ell-1)d}$}
\put(11.78,7.7){\small $u_{\ell d}$}
\put(12.86,5.7){\small $u_{-d}$}
\end{picture}
\caption{
A schematic diagram illustrating the periodic solution $\{ y_i \}$
and the eigenvectors $u_j^{\sf T}$ and $v_j$.
\label{fig:eigSchem}
}
\end{center}
\end{figure}

The eigenvectors $u_j^{\sf T}$ and $v_j$ are sketched in Fig.~\ref{fig:eigSchem}.
By Lemma \ref{le:uv}, the eigenvector $v_0$, for example,
has the same direction as the line segment connecting $y_0$ to $y_d$, and $e_1^{\sf T} v_j = 1$.
Let $z$ be any point on this line segment other than $y_0$ and $y_d$.
By Proposition \ref{pr:polygon}, $z$ is a fixed point of $f^{\cS}$.
Moreover, there exists a neighbourhood of $z$ that follows the sequence $\cS$
under the next $n$ iterations of (\ref{eq:f}).
Within this neighbourhood, the hyperplane that intersects $z$ and is orthogonal to $u_0^{\sf T}$ is invariant.
If all the eigenvalues of $M_{\cS}$, other than the unit eigenvalue,
have modulus less than $1$ (i.e.~$\sigma < 1$, see (\ref{eq:sigma})),
then within this neighbourhood the hyperplane is the stable manifold of $z$ for the map $f^{\cS}$.
In summary, iterates of $f^{\cS}$ approach the line segment connecting $y_0$ to $y_d$
(which has direction $v_0$) on a hyperplane orthogonal to $u_0^{\sf T}$.
The remaining eigenvectors $u_j^{\sf T}$ and $v_j$ can be interpreted similarly.

The next result indicates how the eigenvectors are related to one another algebraically.

\begin{lemma}
We have
\begin{align}
v_{\ell d} &= \frac{t_d}{t_{(\ell+1)d}} M_{\cX} v_0 \;, &
u_{\ell d}^{\sf T} &= \frac{t_{(\ell+1)d}}{t_d} u_0^{\sf T} M_{\cY} \;,
\label{eq:vuelld} \\
v_0 &= \frac{t_{(\ell+1)d}}{t_d} M_{\cY} v_{\ell d} \;, &
u_0^{\sf T} &= \frac{t_d}{t_{(\ell+1)d}} u_{\ell d}^{\sf T} M_{\cX} \;,
\label{eq:vu0} \\
v_{(\ell-1)d} &= \frac{t_{-d}}{t_{(\ell-1)d}} M_{\cX^{\overline{0}}} v_{-d} \;, &
u_{(\ell-1)d}^{\sf T} &= \frac{t_{(\ell-1)d}}{t_{-d}} u_{-d}^{\sf T} M_{\cY^{\overline{0}}} \;,
\label{eq:vuellm1d} \\
v_{-d} &= \frac{t_{(\ell-1)d}}{t_{-d}} M_{\cY^{\overline{0}}} v_{(\ell-1)d} \;, &
u_{-d}^{\sf T} &= \frac{t_{-d}}{t_{(\ell-1)d}} u_{(\ell-1)d}^{\sf T} M_{\cX^{\overline{0}}} \;.
\label{eq:vumd}
\end{align}
Furthermore\removableFootnote{
From this result we can immediately derive formulas such as
\begin{equation}
u_{-d}^{\sf T} M_{\hat{\cX}} v_{-d} = u_0^{\sf T} M_{\hat{\cX}} v_0 \;,
\end{equation}
but I have chosen to include such formulas only within proofs below as needed,
rather than in a separate Lemma.
},
\begin{align}
v_{-d} &= -\frac{t_d}{t_{-d}} M_{\hat{\cX}} v_0 \;, &
u_{-d}^{\sf T} &= -\frac{t_{-d}}{t_d} u_0^{\sf T} M_{\hat{\cY}} \;,
\label{eq:vumd2} \\
v_0 &= -\frac{t_{-d}}{t_d} M_{\hat{\cY}} v_{-d} \;, &
u_0^{\sf T} &= -\frac{t_d}{t_{-d}} u_{-d}^{\sf T} M_{\hat{\cX}} \;,
\label{eq:vu02} \\
v_{(\ell-1)d} &= -\frac{t_{(\ell+1)d}}{t_{(\ell-1)d}} M_{\check{\cX}} v_{\ell d} \;, &
u_{(\ell-1)d}^{\sf T} &= -\frac{t_{(\ell-1)d}}{t_{(\ell+1)d}} u_{\ell d}^{\sf T} M_{\check{\cY}} \;,
\label{eq:vuellm1d2} \\
v_{\ell d} &= -\frac{t_{(\ell-1)d}}{t_{(\ell+1)d}} M_{\check{\cY}} v_{(\ell-1)d} \;, &
u_{\ell d}^{\sf T} &= -\frac{t_{(\ell+1)d}}{t_{(\ell-1)d}} u_{(\ell-1)d}^{\sf T} M_{\check{\cX}} \;.
\label{eq:vuelld2}
\end{align}
\label{le:vuall}
\end{lemma}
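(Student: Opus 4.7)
The plan is to combine the explicit formulas $v_j = (y_{j+d}-y_j)/(t_{j+d}-t_j)$ and $u_j^{\sf T} = e_1^{\sf T}\,\mathrm{adj}(I-M_{\cS^{(j)}})/c$ from Lemma \ref{le:uv} with the fact, from Proposition \ref{pr:MSPSsingular}(i), that at an $\cS$-shrinking point the tuple $\{y_i\}$ satisfies the cycle equations for \emph{both} $\cS$ and $\cS^{(-d)}$. Geometrically, by Proposition \ref{pr:polygon}, $f$ restricted to the polygon $\cP$ is a rigid rotation, so each edge $[y_i,y_{i+d}]$ is carried rigidly to $[y_{i+k},y_{i+d+k}]$ by $k$ appropriately chosen iterations of $f$. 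This identification is the engine behind every identity.

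For the eight right-eigenvector identities I would, for each relevant word $\cW \in \{\cX,\cY,\cX^{\overline{0}},\cY^{\overline{0}},\hat{\cX},\hat{\cY},\check{\cX},\check{\cY}\}$, evaluate
\[
M_{\cW}(y_{i+d}-y_i) \;=\; f^{\cW}(y_{i+d}) - f^{\cW}(y_i).
\]
The image of $y_i$ is obtained from the $\cS$-cycle interpretation (applying $\cS_i,\cS_{i+1},\ldots$ starting at $y_i$ yields $y_{i+k}$), and the image of $y_{i+d}$ from the $\cS^{(-d)}$-cycle interpretation (which lets us apply the very same symbols of $\cS$, only starting a distance $d$ further along the cycle, to obtain $y_{i+d+k}$). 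The partitions in Proposition \ref{pr:partitions} identify each $\cW$ as an initial segment of $\cS$, $\cS^{(\ell d)}$, $\cS^{(-d)}$, or $\cS^{((\ell-1)d)}$, and thus pin down the correct starting points and cycle interpretation; any flipped initial symbol in $\cX^{\overline{0}}$ or $\cY^{\overline{0}}$ is absorbed by continuity at $y_0$ and $y_{\ell d}$, where $t_0 = t_{\ell d} = 0$. A representative calculation is
\[
t_d\,M_{\cX}v_0 \;=\; M_{\cX}(y_d-y_0) \;=\; f^{\cX}(y_d) - f^{\cX}(y_0) \;=\; y_{(\ell+1)d} - y_{\ell d} \;=\; t_{(\ell+1)d}\,v_{\ell d},
\]
which yields the first half of (\ref{eq:vuelld}); the remaining seven right-eigenvector identities follow by the same template with appropriate bookkeeping of indices and of the denominators $t_{j+d}-t_j$.

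For the left-eigenvector identities I would proceed by a three-step argument: first verify that the proposed right-hand side is a left $1$-eigenvector of the relevant $M_{\cS^{(j')}}$; then conclude proportionality to $u_{j'}^{\sf T}$, using that the $1$-eigenspace is one-dimensional (Lemmas \ref{le:rankNm1} and \ref{le:eigMSindep}); and finally fix the proportionality constant via the normalisation $u_{j'}^{\sf T}v_{j'} = 1$ together with the companion right-eigenvector identity just proved. For example, $M_{\cS} = M_{\cY}M_{\cX}$ and $M_{\cS^{(\ell d)}} = M_{\cX}M_{\cY}$ (from (\ref{eq:partitions1})--(\ref{eq:partitions2}) and (\ref{eq:MS})) give $(u_0^{\sf T} M_{\cY})M_{\cS^{(\ell d)}} = u_0^{\sf T} M_{\cY}$, so $u_{\ell d}^{\sf T} = c_1\,u_0^{\sf T} M_{\cY}$; pairing with $v_{\ell d}$ and using $v_{\ell d} = \frac{t_d}{t_{(\ell+1)d}} M_{\cX} v_0$ together with $u_0^{\sf T}v_0 = 1$ forces $c_1 = t_{(\ell+1)d}/t_d$, which is (\ref{eq:vuelld}).

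The main obstacle is purely bookkeeping: there are sixteen scalar identities, each requiring the correct starting point, cycle interpretation (via Proposition \ref{pr:partitions}), and sign arising from $t_{j+d}-t_j$ in light of $t_0 = t_{\ell d} = 0$ and the signs in (\ref{eq:tSigns}). The cleanest write-up, in my view, is to give one detailed calculation in each of the four ``groups'' corresponding to the matrix pairs $(M_{\cX},M_{\cY})$, $(M_{\cX^{\overline{0}}},M_{\cY^{\overline{0}}})$, $(M_{\hat{\cX}},M_{\hat{\cY}})$ and $(M_{\check{\cX}},M_{\check{\cY}})$, and then to observe that the remaining cases in each group follow either by the normalisation argument above or by the symmetry swapping the two zero indices $0 \leftrightarrow \ell d$ and the corresponding shift permutations.
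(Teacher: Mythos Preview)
Your proposal is correct and matches the paper's own proof essentially line for line: the paper also derives the right-eigenvector identities from $M_{\cW}(y_{j+d}-y_j)=f^{\cW}(y_{j+d})-f^{\cW}(y_j)$ via Proposition~\ref{pr:MSPSsingular}(i), giving exactly your representative computation for $M_{\cX}v_0$, and then obtains each left-eigenvector identity by first checking that, e.g., $u_0^{\sf T}M_{\cY}$ is a left $1$-eigenvector of $M_{\cS^{(\ell d)}}$ (from $M_{\cY}M_{\cS^{(\ell d)}}=M_{\cS}M_{\cY}$) and then fixing the scalar via the normalisation $u_{\ell d}^{\sf T}v_{\ell d}=1$ together with the companion right-eigenvector formula. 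The only cosmetic difference is that the paper pins down the constant using $v_0=\tfrac{t_{(\ell+1)d}}{t_d}M_{\cY}v_{\ell d}$ rather than $v_{\ell d}=\tfrac{t_d}{t_{(\ell+1)d}}M_{\cX}v_0$, which amounts to the same calculation after one application of $M_{\cS}v_0=v_0$.
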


\begin{proof}
By Proposition \ref{pr:MSPSsingular},
$f^{\cX}(y_0) = y_{\ell d}$ and
$f^{\cX}(y_d) = y_{(\ell+1)d}$.
Therefore
\begin{equation}
M_{\cX} v_0 = \frac{1}{t_d} M_{\cX} \left( y_d - y_0 \right) =
\frac{1}{t_d} \left( y_{(\ell+1)d} - y_{\ell d} \right) =
\frac{t_{(\ell+1)d}}{t_d} v_{\ell d} \;,
\end{equation}
which verifies the first part of (\ref{eq:vuelld}).
The first parts of the remaining equations can be derived in the same fashion.

By (\ref{eq:partitions1}) and (\ref{eq:partitions2}),
\begin{equation}
M_{\cY} M_{\cS^{(\ell d)}} =
M_{\cY} M_{\cX} M_{\cY} =
M_{\cS} M_{\cY} \;.
\end{equation}
Therefore
\begin{equation}
u_0^{\sf T} M_{\cY} M_{\cS^{(\ell d)}} =
u_0^{\sf T} M_{\cS} M_{\cY} = 
u_0^{\sf T} M_{\cY} \;,
\end{equation}
i.e.~$u_0^{\sf T} M_{\cY}$ is a left eigenvector of $M_{\cS^{(\ell d)}}$
corresponding to the eigenvalue $1$,
and therefore is a multiple of $u_{\ell d}$.
Also, by using the first part of (\ref{eq:vu0}) we obtain
\begin{equation}
\frac{t_{(\ell+1)d}}{t_d} u_0^{\sf T} M_{\cY} v_{\ell d} =
u_0^{\sf T} v_0 = 1 \;.
\end{equation}
Therefore $\frac{t_{(\ell+1)d}}{t_d} u_0^{\sf T} M_{\cY}$
has the same magnitude and direction as $u_{\ell d}$.
This verifies the second part of (\ref{eq:vuelld}),
and second parts of the remaining equations can be demonstrated similarly.
\end{proof}

\subsection{A basic unfolding of shrinking points}
\label{sub:unfolding}

The behaviour of $\cF[\ell,m,n]$-cycles and
$\cF[\ell \pm 1,m,n]$-cycles near an $\cS$-shrinking point,
where $\cS = \cF[\ell,m,n]$, was summarised in \S\ref{sub:definitions}.
In this section we review this behaviour more carefully.

We assume $\xi = (\xi_1,\xi_2) \in \mathbb{R}^2$, for simplicity,
let $\xi^*$ be an $\cS$-shrinking point,
and introduce local $(\eta,\nu)$-coordinates (\ref{eq:etanu}).
The condition $\det(J) \ne 0$, where $J$ is given by (\ref{eq:J}),
ensures that the coordinate change $(\xi_1,\xi_2) \leftrightarrow (\eta,\nu)$ is invertible.

The following result specifies curves of border-collision bifurcations,
$\eta = \psi_1(\nu)$ and $\nu = \psi_2(\eta)$,
along which $\cF[\ell,m,n]$ and $\cF[\ell+1,m,n]$-cycles coincide.
The subsequent result provides a useful expression for $\det \left( I - M_{\cS} \right)$.
Both results are proved in \cite{SiMe09,Si10},
except that expressions for the coefficients in terms of the $t_i$ are derived in \cite{SiMe10}.

\begin{lemma}
Suppose (\ref{eq:f}) with $K \ge 2$ has an $\cS$-shrinking point
at $\xi = \xi^*$ and $\det(J) \ne 0$.
Then, in a neighbourhood of $\xi = \xi^*$,
\begin{enumerate}
\item
there exists a unique $C^K$\removableFootnote{
Take care to note that a different definition of $K$ is used in \cite{SiMe09,Si10}.
}
function $\psi_1 : \mathbb{R} \to \mathbb{R}$, with
\begin{equation}
\psi_1(\nu) = -\frac{t_d}{t_{(\ell-1)d} t_{(\ell+1)d}} \nu^2 + \co \left( \nu^2 \right) \;,
\label{eq:phi1}
\end{equation}
such that $s^{\cS^{\overline{\ell d}}}_{\ell d} = 0$ on the locus $\eta = \psi_1(\nu)$,
\item
there exists a unique $C^K$ function $\psi_2 : \mathbb{R} \to \mathbb{R}$, with
\begin{equation}
\psi_2(\eta) = -\frac{t_{(\ell-1)d}}{t_d t_{-d}} \eta^2 + \co \left( \eta^2 \right) \;,
\label{eq:phi2}
\end{equation}
such that $s^{\cS^{\overline{\ell d}}}_0 = 0$ on the locus $\nu = \psi_2(\eta)$.
\end{enumerate}
\label{le:psi12}
\end{lemma}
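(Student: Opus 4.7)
The plan is to apply the implicit function theorem to the $C^K$ functions $F_1(\eta,\nu) \defeq s^{\cS^{\overline{\ell d}}}_{\ell d}$ and $F_2(\eta,\nu) \defeq s^{\cS^{\overline{\ell d}}}_0$, regarded as functions of $(\eta,\nu)$ via the local $C^K$ diffeomorphism $(\xi_1,\xi_2) \leftrightarrow (\eta,\nu)$ guaranteed by $\det(J) \neq 0$; the $\cS^{\overline{\ell d}}$-cycle itself depends $C^K$-smoothly on parameters in a neighbourhood of the shrinking point by Proposition~\ref{pr:existence} and $\det(I - M_{\cS^{\overline{\ell d}}}) = b \neq 0$. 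At the origin, $t_0 = t_{\ell d} = 0$, so two applications of Lemma~\ref{le:sZero} make the $\cS^{\overline{0}}$-cycle simultaneously an $\cS^{\overline{\ell d}}$-cycle; by uniqueness these cycles coincide there. Hence $F_1(0,0) = t_{\ell d} = 0$ and $F_2(0,0) = t_0 = 0$.

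To invoke IFT, the Jacobian of $(F_1,F_2)$ at the origin must be diagonal with both diagonal entries nonzero. Using the representation $s^{\cT}_i = \det(P_{\cT^{(i)}}) \varrho^{\sf T} B \mu / \det(I - M_{\cT})$ from Proposition~\ref{pr:admissibility}(i), the matrix-determinant identity of Lemma~\ref{le:matrixDeterminant} applied to the rank-one differences between matrices associated with $\cS^{\overline{\ell d}}$ and $\cS^{\overline{0}}$, and the rank-one form ${\rm adj}(I - M_{\cS^{(j)}}) = c\, v_j u_j^{\sf T}$ (Lemmas~\ref{le:adjugateRank} and~\ref{le:uv}), a direct computation establishes this structure. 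IFT then delivers unique $C^K$ branches $\eta = \psi_1(\nu)$ and $\nu = \psi_2(\eta)$ with $\psi_1(0) = \psi_1'(0) = 0$ and $\psi_2(0) = \psi_2'(0) = 0$.

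The quadratic coefficient of $\psi_1$ is then $-\tfrac{1}{2}(\partial^2 F_1/\partial\nu^2)/(\partial F_1/\partial\eta)$ evaluated at the origin by implicit differentiation, and symmetrically for $\psi_2$. To evaluate this ratio I would iterate the matrix-determinant expansion to second order and reduce the resulting scalar products $u_j^{\sf T} M_{\cW} v_k$ using the chain relations (\ref{eq:vuelld})--(\ref{eq:vumd2}) of Lemma~\ref{le:vuall}, which convert products involving $M_{\cX}, M_{\cY}, M_{\check{\cX}}, M_{\hat{\cX}}$, etc., into ratios of the $t_i$. After the cancellations the ratio collapses to $-t_d/(t_{(\ell-1)d} t_{(\ell+1)d})$, and the analogous calculation yields $-t_{(\ell-1)d}/(t_d t_{-d})$ for $\psi_2$.

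The main obstacle is the algebraic bookkeeping in this last step: because $\cS^{\overline{\ell d}}$ and $\cS^{\overline{0}}$ differ in two cyclic positions, the rank-one determinant expansion must be iterated, producing a proliferation of terms indexed by the block words $\cX, \cY, \hat{\cX}, \hat{\cY}, \check{\cX}, \check{\cY}$ of Definition~\ref{df:XYall}. Grouping these so that the eigenvector identities of Lemma~\ref{le:vuall} apply cleanly, and tracking signs through the various shift indices, is where the substantive work sits, and where the specific coefficients in (\ref{eq:phi1})--(\ref{eq:phi2}) emerge organically.
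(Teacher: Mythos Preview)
The paper does not prove Lemma~\ref{le:psi12} in the text; it cites \cite{SiMe09,Si10} for the existence and smoothness of $\psi_1,\psi_2$ and \cite{SiMe10} for the explicit quadratic coefficients. Your approach---apply the implicit function theorem to $F_1=s^{\cS^{\overline{\ell d}}}_{\ell d}$ and $F_2=s^{\cS^{\overline{\ell d}}}_0$, using $\det(I-M_{\cS^{\overline{\ell d}}})=b\ne 0$ for smooth dependence of the $\cS^{\overline{\ell d}}$-cycle and the coincidence of cycles at the origin---is precisely the strategy of those references, so your proposal is sound.

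One simplification worth recording, since it removes most of the ``algebraic bookkeeping'' you anticipate for the first-order part. Because $P_{\cT}$ is independent of $\cT_0$ (see \eqref{eq:PS}), and $\cS^{\overline{\ell d}(\ell d)}=(\cS^{(\ell d)})^{\overline{0}}$, one has $P_{\cS^{\overline{\ell d}(\ell d)}}=P_{\cS^{(\ell d)}}$ and hence
\[
F_1=\frac{\det\!\big(P_{\cS^{(\ell d)}}\big)\,\varrho^{\sf T}B\mu}{\det\!\big(I-M_{\cS^{\overline{\ell d}}}\big)}\;.
\]
The denominator is $b+\cO(\eta,\nu)$, while the numerator equals $s^{\cS}_{\ell d}\det(I-M_{\cS})$; reading off the first component of \eqref{eq:xSi} with $i=\ell d$ and using $t_{\ell d}=0$ gives
\[
\det\!\big(P_{\cS^{(\ell d)}}\big)\,\varrho^{\sf T}B\mu=\frac{a\,t_{(\ell+1)d}}{t_d}\,\eta+\cO\!\big((\eta,\nu)^2\big)\;,
\]
so $\partial F_1/\partial\nu|_{(0,0)}=0$ and $\partial F_1/\partial\eta|_{(0,0)}=a\,t_{(\ell+1)d}/(b\,t_d)\ne 0$ fall out without any iterated rank-one expansion. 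The companion statement for $F_2$ uses the identity $\cS^{\overline{\ell d}\,\overline{0}}=\cS^{(-d)}$ from \eqref{eq:rssMainIdentity} to identify $\{F_2=0\}$ with $\{\det(P_{\cS^{(-d)}})=0\}$, and \eqref{eq:xSi} with $i=-d$ (using $t_0=0$) gives the $\nu$-linear leading term. The quadratic coefficients in \eqref{eq:phi1}--\eqref{eq:phi2} are then obtained in \cite{SiMe10} by pushing these same determinant expansions to second order, rather than through the eigenvector chain of Lemma~\ref{le:vuall} that you propose; either route works, but the determinant route stays closer to \eqref{eq:xSi} and avoids introducing the $u_j,v_j$ at this stage.
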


\begin{lemma}
Suppose (\ref{eq:f}) with $K \ge 2$ has an $\cS$-shrinking point
at $\xi = \xi^*$ and $\det(J) \ne 0$.
Then
\begin{equation}
\det(I-M_{\cS}) = \frac{a}{t_d} \eta + \frac{a}{t_{(\ell-1)d}} \nu + \cO \left( (\eta,\nu)^2 \right) \;,
\label{eq:detImMS}
\end{equation}
where $a = \det \left( I-M_{\cS^{\overline{0}}} \right) \big|_{\xi = \xi^*}$.
\end{lemma}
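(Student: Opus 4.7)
The plan is to compute $\det \left( I - M_{\cS} \right)$ along each coordinate axis $\nu = 0$ and $\eta = 0$ via a symmetric pair of identities derived from (\ref{eq:id2}), then combine the two axis expansions using smoothness of the determinant in $(\eta, \nu)$.

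First I would derive expansions for $\det \left( P_{\cS} \right)$ and $\det \bigl( P_{\cS^{((\ell-1)d)}} \bigr)$ to leading order. Applying (\ref{eq:id2}) to the sequence $\cS^{\overline{0}}$ at index $i = 0$, and using $P_{\cS^{\overline{0}}} = P_{\cS}$ (index $0$ does not appear in the defining sum (\ref{eq:PS})) together with $\det \bigl( I - M_{\cS^{\overline{0}}} \bigr) = a + \cO \bigl( (\eta, \nu) \bigr)$, gives
\[
\det \left( P_{\cS} \right) = \frac{a \eta}{\varrho^{\sf T} B \mu} + \cO \bigl( \eta \cdot (\eta, \nu) \bigr).
\]
For the parallel statement, I would apply (\ref{eq:id2}) to $\cS^{\overline{(\ell-1)d}} = \cF[\ell-1, m, n]$ (using (\ref{eq:rssellm1d})) at index $i = (\ell-1)d$. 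Two key observations apply: $\cS^{\overline{(\ell-1)d}((\ell-1)d)} = \cS^{((\ell-1)d)\overline{0}}$ differs from $\cS^{((\ell-1)d)}$ only at index $0$, so $P_{\cS^{\overline{(\ell-1)d}((\ell-1)d)}} = P_{\cS^{((\ell-1)d)}}$; and via (\ref{eq:rss0}) we have $\cF[\ell-1, m, n] = \cS^{\overline{0}(d)}$, which gives $x^{\cF[\ell-1, m, n]}_{(\ell-1)d} = y_{\ell d}$ and hence $s^{\cF[\ell-1, m, n]}_{(\ell-1)d} = \nu$ identically in $\xi$. Combined with $\det \bigl( I - M_{\cF[\ell-1, m, n]} \bigr) = a + \cO \bigl( (\eta, \nu) \bigr)$ (Lemma \ref{le:eigMSindep}), this yields
\[
\det \bigl( P_{\cS^{((\ell-1)d)}} \bigr) = \frac{a \nu}{\varrho^{\sf T} B \mu} + \cO \bigl( \nu \cdot (\eta, \nu) \bigr).
\]

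Next I would determine $\det \left( I - M_{\cS} \right)$ on each axis. On $\nu = 0$, Lemma \ref{le:sZero} together with (\ref{eq:rssMainIdentity}) implies that the $\cS^{\overline{0}}$-cycle coincides with the $\cS^{(-d)}$-cycle, so $y_d = x^{\cS}_0$ and $s^{\cS}_0 = t_d + \cO(\eta)$; substituting into (\ref{eq:sFormula}) for $\cS$ at $i = 0$ together with the expansion of $\det \left( P_{\cS} \right)$ yields $\det \left( I - M_{\cS} \right) = \frac{a \eta}{t_d} + \cO(\eta^2)$ on $\nu = 0$. Symmetrically, on $\eta = 0$ the $\cS^{\overline{0}}$-cycle coincides with the $\cS$-cycle, so $s^{\cS}_{(\ell-1)d} = t_{(\ell-1)d} + \cO(\nu)$; substituting into (\ref{eq:sFormula}) at $i = (\ell-1)d$ with the expansion of $\det \bigl( P_{\cS^{((\ell-1)d)}} \bigr)$ yields $\det \left( I - M_{\cS} \right) = \frac{a \nu}{t_{(\ell-1)d}} + \cO(\nu^2)$ on $\eta = 0$. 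Since $\det \left( I - M_{\cS} \right)$ is a $C^K$ function of $(\eta, \nu)$ vanishing at the origin (by Proposition \ref{pr:MSPSsingular}(ii)), knowledge of its restriction to each axis determines its linear part, and the stated expansion follows.

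The main subtlety is the polynomial identity $s^{\cF[\ell-1, m, n]}_{(\ell-1)d} = \nu$, which rests on the equivalence $\cS^{\overline{(\ell-1)d}} = \cS^{\overline{0}(d)}$ from Proposition \ref{pr:rssIdentities}: the $\cF[\ell-1, m, n]$-cycle is literally the $\cS^{\overline{0}}$-cycle with a cyclic reindexing of its points. This observation is what produces an expression for $\det \bigl( P_{\cS^{((\ell-1)d)}} \bigr)$ structurally parallel to that for $\det \left( P_{\cS} \right)$; without it, the $\eta = 0$ calculation would demand a more delicate matrix-determinant-lemma argument relating $P_{\cS^{((\ell-1)d)}}$ to $P_{\cS^{\overline{0}((\ell-1)d)}}$.
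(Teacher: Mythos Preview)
Your argument is correct. The paper itself does not give a proof of this lemma, deferring instead to \cite{SiMe09,Si10,SiMe10}, but your method is exactly the one the paper uses for the closely related Lemma~\ref{le:xSi}: exploit that on the axis $\eta=0$ the $\cS^{\overline{0}}$-cycle \emph{is} the $\cS$-cycle, while on $\nu=0$ it is the $\cS^{(-d)}$-cycle (so $x^{\cS}_0 = x^{\cS^{\overline{0}}}_d$), and then read off the linear part of the smooth function $\det(I-M_{\cS})$ from its restriction to the two axes.

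Two small points worth tightening. First, your notation ``$y_d = x^{\cS}_0$'' overloads $y_d$, which in the paper is by definition the constant $x^{\cS^{\overline{0}}}_d\big|_{\xi=\xi^*}$; what you mean (and use) is $x^{\cS}_0 = x^{\cS^{\overline{0}}}_d$ as functions along $\nu=0$, whence $s^{\cS}_0 = t_d + \cO(\eta)$. Second, invoking (\ref{eq:sFormula}) on each axis tacitly assumes $\det(I-M_{\cS})\ne 0$ there. The cleanest way to avoid circularity is to use the polynomial identity that underlies it: on $\nu=0$ the point $x^{\cS^{\overline{0}}}_d$ is a genuine fixed point of $f^{\cS}$, so $(I-M_{\cS})x^{\cS^{\overline{0}}}_d = P_{\cS}B\mu$, and multiplying by $e_1^{\sf T}\mathrm{adj}(I-M_{\cS})$ together with (\ref{eq:id}) gives $\det(I-M_{\cS})\,s^{\cS^{\overline{0}}}_d = \det(P_{\cS})\varrho^{\sf T}B\mu$ as a smooth identity in $\eta$, from which you divide by $s^{\cS^{\overline{0}}}_d = t_d + \cO(\eta)\ne 0$. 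This is exactly your computation, just phrased so that no appeal to uniqueness of the $\cS$-cycle is needed.
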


The next result identifies regions, $\Psi_1$ and $\Psi_2$,
within which $\cF[\ell-1,m,n]$, $\cF[\ell,m,n]$ and $\cF[\ell+1,m,n]$-cycles reside,
and represents the basic unfolding of a shrinking point.
The reader is referred to \cite{SiMe09,Si10} for a proof\removableFootnote{
Theorem \ref{th:basicUnfolding} is proved in \cite{SiMe09}, although we seem have omitted the $t_i \ne 0$ requirement.
The analogous theorem in \cite{SiMe10} requires that $M_{\cS}$ does not have any other eigenvalues with unit modulus.
This was imposed such that we could prove the existence of saddle-node bifurcations, and is not required here.
}.
Fig.~\ref{fig:shrPointUnfolding} summarises the unfolding.
If $\sigma < 1$, then some of the periodic solutions are stable, see Table \ref{tb:stability},
but note that Theorem \ref{th:basicUnfolding} does not concern stability
and holds for any value of $\sigma$.

\begin{theorem}
Suppose (\ref{eq:f}) with $K \ge 2$ has an $\cS$-shrinking point
at $\xi = \xi^*$ and $\det(J) \ne 0$.
Let $\Psi_1 = \left\{ (\eta,\nu) ~\big|~ \eta,\nu \ge 0 \right\}$
and $\Psi_2 = \left\{ (\eta,\nu) ~\big|~ \eta \le \psi_1(\nu) ,\, \nu \le \psi_2(\eta) \right\}$,
where $\psi_1$ and $\psi_2$ are specified by Lemma \ref{le:psi12}.
Then there exists a neighbourhood $\cN$ of $(\eta,\nu) = (0,0)$,
such that (\ref{eq:f}) has unique admissible
$\cF[\ell,m,n]$ and $\cF[\ell-1,m,n]$ cycles in $\Psi_1 \cap \cN \setminus \{(0,0)\}$
and (\ref{eq:f}) has admissible
$\cF[\ell,m,n]$ and $\cF[\ell+1,m,n]$ cycles in $\Psi_2 \cap \cN \setminus \{(0,0)\}$.
\label{th:basicUnfolding}
\end{theorem}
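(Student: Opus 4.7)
My plan is to prove the theorem by handling each of the two regions in parallel: for $\Psi_j$ I must show that the two asserted cycles exist uniquely and are admissible there. Existence and uniqueness will follow from the nonvanishing of the appropriate $\det(I - M_\cdot)$, and admissibility from a sign analysis of the coordinates $s^{\cT}_i$ via the four boundary loci of the shrinking-point mode-locking region.

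First I would establish existence and uniqueness. By Definition \ref{df:shrPoint} the determinants $\det(I - M_{\cS^{\overline{0}}}) = a$ and $\det(I - M_{\cS^{\overline{\ell d}}}) = b$ are nonzero at $\xi^*$, so by continuity they remain so on some neighbourhood $\cN$. Proposition \ref{pr:existence} then yields unique $\cS^{\overline{0}}$- and $\cS^{\overline{\ell d}}$-cycles in $\cN$; by Proposition \ref{pr:rssIdentities} these are precisely the $\cF[\ell-1,m,n]$- and $\cF[\ell+1,m,n]$-cycles (up to relabelling of the starting index). For the $\cS$-cycle, substituting $t_d, t_{(\ell-1)d} < 0$ into (\ref{eq:detImMS}) shows that the two leading coefficients of $\det(I - M_{\cS})$ in $(\eta,\nu)$ carry the common sign $-{\rm sgn}(a)$, so $\det(I - M_{\cS}) \ne 0$ on $(\Psi_1 \cup \Psi_2) \cap \cN \setminus \{(0,0)\}$ for $\cN$ small enough, giving the $\cS$-cycle uniquely there as well.

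Next I would verify admissibility. For any index $i$ at which the relevant $s^{\cT}_i$ is bounded away from zero at the shrinking point (i.e.~the corresponding $t_i \ne 0$), continuity alone fixes its sign and matches it to the appropriate symbol via (\ref{eq:tSigns}) in a sufficiently small $\cN$. The real content is at the degenerate indices. For the $\cS^{\overline{0}}$-cycle these are $i = 0$ and $i = \ell d$, with $s^{\cS^{\overline{0}}}_0 = \eta$ and $s^{\cS^{\overline{0}}}_{\ell d} = \nu$ by definition; both symbols of $\cS^{\overline{0}}$ at these indices are $R$, so this cycle is admissible exactly when $\eta, \nu \ge 0$, i.e.~on $\Psi_1 \cap \cN$. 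For the $\cS^{\overline{\ell d}}$-cycle, again $i = 0$ and $i = \ell d$ are degenerate, and Lemma \ref{le:psi12} identifies their zero-loci as $\nu = \psi_2(\eta)$ and $\eta = \psi_1(\nu)$; the corresponding symbols are $L$, so admissibility requires $s \le 0$ at each, which a sign check at a test point in $\Psi_2$ confirms throughout $\Psi_2 \cap \cN$.

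Finally, admissibility of the $\cS$-cycle is the most delicate step. Using Lemma \ref{le:sZero} together with Proposition \ref{pr:rssIdentities} and the identity (\ref{eq:rssMainIdentity}), I would pin down the four zero-loci: $s^{\cS}_0 = 0$ on $\eta = 0$, $s^{\cS}_{(\ell-1)d} = 0$ on $\nu = 0$, $s^{\cS}_{\ell d} = 0$ on $\eta = \psi_1(\nu)$, and $s^{\cS}_{-d} = 0$ on $\nu = \psi_2(\eta)$. Then (\ref{eq:sFormula}) together with a leading-order expansion using (\ref{eq:detImMS}) and the known signs of the $t_i$ yields the sign of each $s^{\cS}_j$ at a test point in each of $\Psi_1$ and $\Psi_2$, and I would verify in each case that the sign matches the symbol of $\cS$ at $j \in \{0, (\ell-1)d, \ell d, -d\}$. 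The main obstacle is precisely this expansion: by Proposition \ref{pr:MSPSsingular}, both the numerator $\det(P_{\cS^{(j)}})$ and the denominator $\det(I - M_{\cS})$ in (\ref{eq:sFormula}) vanish at the shrinking point, so the ratio is indeterminate at $(0,0)$, and extracting the leading-order behaviour requires the same kind of careful determinantal calculation that underpins Lemma \ref{le:psi12} and (\ref{eq:detImMS}).
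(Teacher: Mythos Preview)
Your plan is correct in outline and matches what one finds in the cited references \cite{SiMe09,Si10}; note that the paper itself defers the proof of this theorem entirely to those references, so there is no in-text argument to compare against.

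Two remarks on execution. First, the ``main obstacle'' you flag is already dispatched by Lemma~\ref{le:xSi} (which appears just after the theorem in \S\ref{sub:furtherProperties}): taking first components gives
\[
s^{\cS}_i = \frac{\frac{t_{i+d}}{t_d}\eta + \frac{t_i}{t_{(\ell-1)d}}\nu + \cO\!\left((\eta,\nu)^2\right)}{\frac{1}{t_d}\eta + \frac{1}{t_{(\ell-1)d}}\nu + \cO\!\left((\eta,\nu)^2\right)} \;,
\]
from which the signs at $j \in \{0,(\ell-1)d,\ell d,-d\}$ in each $\Psi_i$ follow by inspection using $t_0 = t_{\ell d} = 0$ and (\ref{eq:tSigns}). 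So no fresh determinantal calculation is required beyond what the paper already supplies.

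Second, there is a small gap in your treatment of the \emph{non-degenerate} indices of the $\cS$-cycle. You write that ``continuity alone fixes its sign'', but $s^{\cS}_i$ is not continuous at $(0,0)$: the $\cS$-cycle is not unique there (Proposition~\ref{pr:polygon}), and the limit of $s^{\cS}_i$ depends on the direction of approach. The fix is to use the same formula above together with the rotational structure (\ref{eq:rssAlt}), which forces $\cS_{i+d} = \cS_i$ whenever $i \notin \{(\ell-1)d,-d\}$; hence for $i \notin \{0,(\ell-1)d,\ell d,-d\}$ both $t_i$ and $t_{i+d}$ carry the sign of $\cS_i$, and the ratio above has that sign throughout each $\Psi_j$ for $\cN$ small. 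This replaces the continuity argument and completes your step.
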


\begin{figure}[b!]
\begin{center}
\setlength{\unitlength}{1cm}
\begin{picture}(16,12)
\put(0,0){\includegraphics[height=12cm]{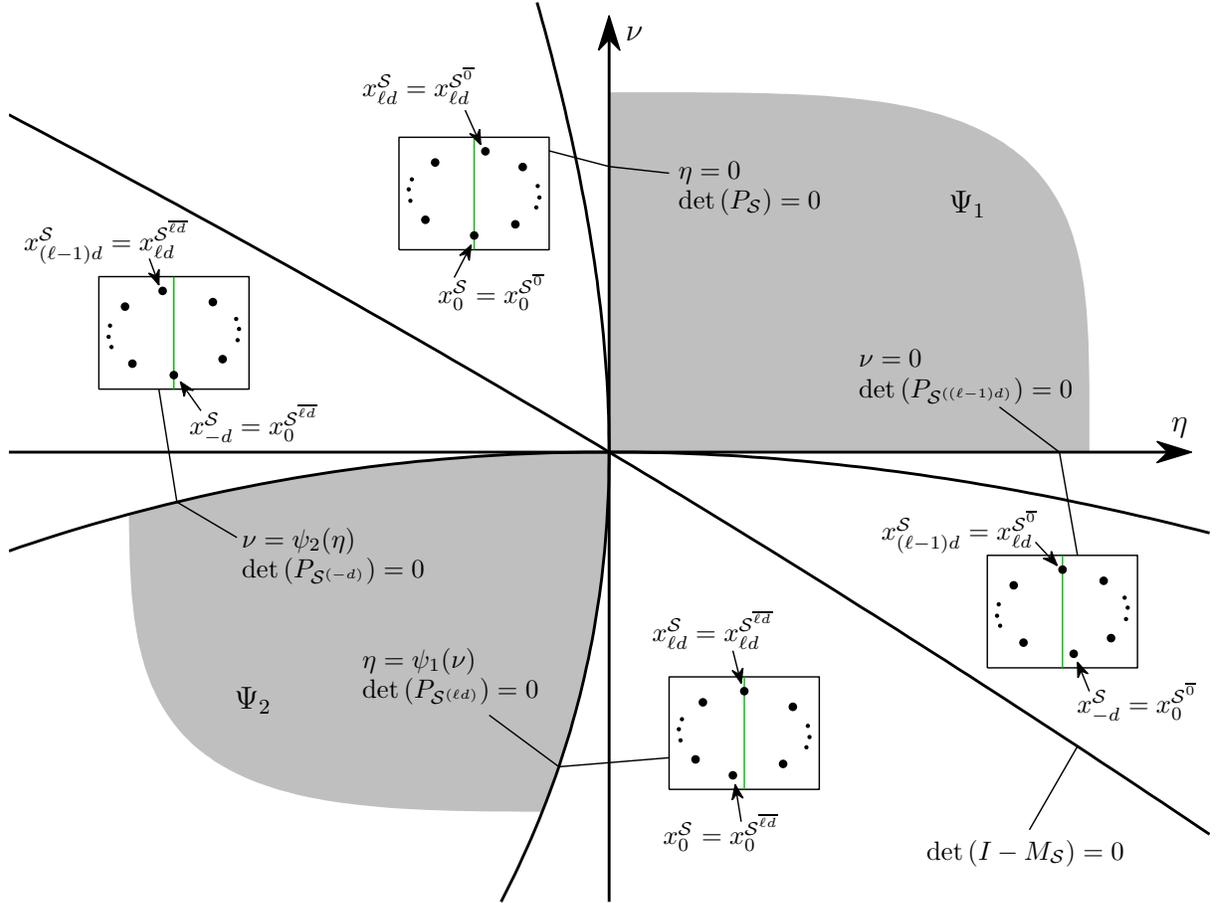}}
\put(15.45,6.3){$\eta$}
\put(8.2,11.5){$\nu$}
\put(12.5,9.2){$\Psi_1$}
\put(3,2.6){$\Psi_2$}
\put(8.9,9.25){\footnotesize $\det \left( P_{\cS} \right) = 0$}
\put(8.9,9.65){\footnotesize $\eta = 0$}
\put(11.3,6.75){\footnotesize $\det \left( P_{\cS^{((\ell-1)d)}} \right) = 0$}
\put(11.3,7.15){\footnotesize $\nu = 0$}
\put(4.7,2.75){\footnotesize $\det \left( P_{\cS^{(\ell d)}} \right) = 0$}
\put(4.7,3.15){\footnotesize $\eta = \psi_1(\nu)$}
\put(3.1,4.35){\footnotesize $\det \left( P_{\cS^{(-d)}} \right) = 0$}
\put(3.1,4.75){\footnotesize $\nu = \psi_2(\eta)$}
\put(12.2,.6){\footnotesize $\det \left( I - M_{\cS} \right) = 0$}
\put(5.7,8){\footnotesize $x^{\cS}_0 = x^{\cS^{\overline{0}}}_0$}
\put(4.7,10.75){\footnotesize $x^{\cS}_{\ell d} = x^{\cS^{\overline{0}}}_{\ell d}$}
\put(14.2,2.55){\footnotesize $x^{\cS}_{-d} = x^{\cS^{\overline{0}}}_0$}
\put(11.6,4.85){\footnotesize $x^{\cS}_{(\ell-1)d} = x^{\cS^{\overline{0}}}_{\ell d}$}
\put(8.7,.8){\footnotesize $x^{\cS}_0 = x^{\cS^{\overline{\ell d}}}_0$}
\put(8.55,3.5){\footnotesize $x^{\cS}_{\ell d} = x^{\cS^{\overline{\ell d}}}_{\ell d}$}
\put(2.4,6.25){\footnotesize $x^{\cS}_{-d} = x^{\cS^{\overline{\ell d}}}_0$}
\put(.2,8.7){\footnotesize $x^{\cS}_{(\ell-1)d} = x^{\cS^{\overline{\ell d}}}_{\ell d}$}
\end{picture}
\caption{
The basic unfolding of a shrinking point as specified by Theorem \ref{th:basicUnfolding}.
In $(\eta,\nu)$-coordinates, the shrinking point is located at the origin,
the positive axes are border-collision bifurcation curves that bound the region $\Psi_1$,
and $\eta = \psi_1(\nu)$ and $\nu = \psi_2(\eta)$
are border-collision bifurcation curves that bound $\Psi_2$.
We have also included sketches of the $\cS$-cycle in relation to the switching manifold
at a typical point on each of the four boundaries.
\label{fig:shrPointUnfolding}
}
\end{center}
\end{figure}

\subsection{Further identities relating to shrinking points}
\label{sub:furtherProperties}

We conclude this section by deriving additional algebraic expressions
regarding shrinking points that are used in later sections.

As implied by (\ref{eq:detImMS}),
$I - M_{\cS}$ is singular along a curve passing through the $\cS$-shrinking point.
At points where $I - M_{\cS}$ is non-singular,
the $\cS$-cycle is unique,
and the following result provides us with an asymptotic expression
for the location of the points of the $\cS$-cycle.

\begin{lemma}
Suppose (\ref{eq:f}) with $K \ge 2$ has an $\cS$-shrinking point
at $\xi = \xi^*$ and $\det(J) \ne 0$.
Then for all $(\eta,\nu)$ for which $\det(I-M_{\cS}) \ne 0$, for all $i$,
\begin{equation}
x^{\cS}_i = \frac{\frac{y_{i+d}}{t_d} \eta + \frac{y_i}{t_{(\ell-1)d}} \nu + \cO \left( (\eta,\nu)^2 \right)}
{\frac{1}{t_d} \eta + \frac{1}{t_{(\ell-1)d}} \nu + \cO \left( (\eta,\nu)^2 \right)} \;.
\label{eq:xSi}
\end{equation}
\label{le:xSi}
\end{lemma}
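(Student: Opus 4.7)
The plan is to write each $x^{\cS}_i$ as a ratio of two smooth, simultaneously-vanishing quantities and apply a first-order Taylor expansion. By Proposition \ref{pr:existence} and the adjugate identity (\ref{eq:adjIdentity}),
\[
x^{\cS}_i \;=\; \frac{{\rm adj}\!\left( I - M_{\cS^{(i)}} \right) P_{\cS^{(i)}} B \mu}{\det\!\left( I - M_{\cS^{(i)}} \right)} \;,
\]
and by Lemma \ref{le:eigMSindep} the denominator equals $\det(I-M_{\cS})$, whose expansion is given by (\ref{eq:detImMS}). Denote the numerator by $N(\eta,\nu)$ and the denominator by $D(\eta,\nu)$. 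Both vanish at the shrinking point: $D(0,0)=0$ by Proposition \ref{pr:MSPSsingular}(ii), while $N(0,0)=0$ because at the shrinking point $y_i$ is a fixed point of $f^{\cS^{(i)}}$ (Proposition \ref{pr:MSPSsingular}(i)), so $P_{\cS^{(i)}} B \mu = (I-M_{\cS^{(i)}})\, y_i$ lies in the range of $I - M_{\cS^{(i)}}$, which is annihilated by ${\rm adj}(I - M_{\cS^{(i)}})$ thanks to (\ref{eq:adjIdentity}).

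The key step is to determine $\partial_\eta N(0,0)$ and $\partial_\nu N(0,0)$ by examining $x^{\cS}_i$ along the two coordinate axes. On $\nu=0$ we have $s^{\cS^{\overline{0}}}_{\ell d}=0$, so by Lemma \ref{le:sZero} the sequence $\{y_j(\eta,0)\}$ is also an $\cS^{\overline{0}\,\overline{\ell d}}$-cycle; by (\ref{eq:rssMainIdentity}) this coincides with an $\cS^{(-d)}$-cycle, so re-indexing $j \mapsto j+d$ shows that $\{y_{j+d}(\eta,0)\}$ is an $\cS$-cycle. Uniqueness (using $\det(I-M_{\cS})\ne 0$) then forces $x^{\cS}_i = y_{i+d}(\eta,0) = y_{i+d} + \cO(\eta)$, where the last estimate uses the smoothness of the $\cS^{\overline{0}}$-cycle guaranteed by Proposition \ref{pr:existence}. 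A symmetric argument on $\eta=0$ yields $x^{\cS}_i = y_i(0,\nu) = y_i + \cO(\nu)$. Multiplying $x^{\cS}_i = N/D$ through by $D$ along each axis then gives $N(\eta,0) = y_{i+d}\, (a/t_d)\, \eta + \cO(\eta^2)$ and $N(0,\nu) = y_i\, (a/t_{(\ell-1)d})\, \nu + \cO(\nu^2)$, so the first-order derivatives of $N$ at $(0,0)$ are $y_{i+d}\, a/t_d$ and $y_i\, a/t_{(\ell-1)d}$. Substituting these expansions of $N$ and $D$ into the ratio and dividing the common nonzero constant $a$ out of numerator and denominator produces exactly (\ref{eq:xSi}).

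The main delicate point is the careful handling of the error terms: $N/D$ has no continuous extension to the origin but only admits direction-dependent limits, so one cannot simply cite the Taylor expansions of $N$ and $D$ as if they defined a continuous function. I would verify that the $\cO((\eta,\nu)^2)$ corrections in $N$ (in particular those arising from $y_{i+d}(\eta,0) - y_{i+d} = \cO(\eta)$, which enter only after multiplication by $D = \cO(\eta)$) combine with the corresponding corrections in $D$ to produce a ratio of precisely the form claimed in the lemma, and that dividing out the nonzero constant $a$ preserves the $\cO((\eta,\nu)^2)$ error bound in both places.
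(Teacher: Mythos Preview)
Your proposal is correct and follows essentially the same approach as the paper: write $x^{\cS}_i$ as a ratio of smooth quantities, use (\ref{eq:detImMS}) for the denominator, determine the linear part of the numerator by restricting to the axes $\eta=0$ and $\nu=0$ where the $\cS^{\overline{0}}$-cycle coincides (after the shift by $d$ via (\ref{eq:rssMainIdentity})) with the $\cS$-cycle, and cancel the common factor $a$. The paper's proof is slightly terser about the axis identifications and the error bookkeeping, but the logic is the same.
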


\begin{proof}
By (\ref{eq:xSiGen}),
$x^{\cS}_i = \frac{{\rm adj} \left( I-M_{\cS^{(i)}} \right) P_{\cS^{(i)}} B \mu}{\det(I-M_{\cS})}$.
Therefore we can write
\begin{equation}
x^{\cS}_i(\eta,\nu) = \frac{C_0 + C_1 \eta + C_2 \nu + \cO \left( (\eta,\nu)^2 \right)}
{\frac{a}{t_d} \eta + \frac{a}{t_{(\ell-1)d}} \nu + \cO \left( (\eta,\nu)^2 \right)} \;,
\label{eq:xSiProof1}
\end{equation}
for some $C_0, C_1, C_2 \in \mathbb{R}^N$.
When $\eta = 0$, $x^{\cS}_i = x^{\cS^{\overline{0}}}_i$,
thus $x^{\cS}_i(0,\nu) = y_i + \cO(\eta,\nu)$,
hence $C_0 = 0$ and $C_2 = \frac{a}{t_{(\ell-1)d}} y_i$.
Similarly when $\nu = 0$, $x^{\cS}_i = x^{\cS^{\overline{0}}}_{i+d}$,
thus $x^{\cS}_i(\eta,0) = y_{i+d} + \cO(\eta,\nu)$,
hence $C_1 = \frac{a}{t_{d}} y_{i+d}$.
By substituting these expressions for $C_1$ and $C_2$ into (\ref{eq:xSiProof1})
and cancelling instances of $a$, we obtain (\ref{eq:xSi}) as required.
\end{proof}

At the shrinking point, $M_{\cS}$ has a unit eigenvalue
and so near the shrinking point $M_{\cS}$ has an eigenvalue near $1$.
Throughout this paper this eigenvalue is denoted by $\lambda$.
Locally $\lambda$ is $C^K$ function of $\eta$ and $\nu$\removableFootnote{
The eigenvalue $\lambda$ is a root of the characteristic polynomial.
The coefficients of the polynomial are $C^K$ because $M_{\cS}$ is $C^K$.
The implicit function theorem gives us $\lambda$ and tells us that it is $C^K$.
}
because the algebraic multiplicity
of the unit eigenvalue at the shrinking point is one, Lemma \ref{le:rankNm1}.

\begin{lemma}
Suppose (\ref{eq:f}) with $K \ge 2$ has an $\cS$-shrinking point
at $\xi = \xi^*$ and $\det(J) \ne 0$.
Then
\begin{equation}
\lambda = 1 - \frac{a}{c t_d} \eta - \frac{a}{c t_{(\ell-1)d}} \nu + \cO \left( (\eta,\nu)^2 \right) \;,
\label{eq:lambda}
\end{equation}
where $c$ is the product of the nonzero eigenvalues of $I-M_{\cS}$ at $\xi = \xi^*$, (\ref{eq:c}).
\label{le:lambda}
\end{lemma}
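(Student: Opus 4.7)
The plan is to extract $\lambda$ from the characteristic polynomial $p(\lambda;\eta,\nu) \defeq \det(\lambda I - M_{\cS})$, which is jointly $C^K$ in $\lambda$ and $(\eta,\nu)$ because the entries of $M_{\cS}$ are. At the shrinking point $p(1;0,0) = 0$, and by Lemma \ref{le:rankNm1} this is a simple root, so $\partial_\lambda p(1;0,0) \ne 0$. As the paper already notes, the implicit function theorem then provides a $C^K$ branch $\lambda(\eta,\nu)$ with $\lambda(0,0) = 1$, and the remaining $N-1$ roots of $p(\,\cdot\,;\eta,\nu)$ stay bounded away from $1$ locally.

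First I would factor $p(\lambda;\eta,\nu) = \bigl(\lambda - \lambda(\eta,\nu)\bigr)\, q(\lambda;\eta,\nu)$ with $q$ jointly $C^K$ (smoothness in $(\eta,\nu)$ follows from smoothness of $\lambda(\eta,\nu)$ and of the coefficients of $p$), and evaluate at $\lambda = 1$ to obtain
\begin{equation*}
\det(I - M_{\cS}) \;=\; \bigl(1 - \lambda(\eta,\nu)\bigr)\, q(1;\eta,\nu).
\end{equation*}
At $(\eta,\nu)=(0,0)$, the remaining factor is $q(1;0,0) = \prod_{i=2}^{N}(1-\rho_i) = c$, which is nonzero again by simplicity of the unit eigenvalue. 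Continuity then gives $q(1;\eta,\nu) = c + \cO\!\left((\eta,\nu)\right)$.

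Next I would rearrange and substitute the expression (\ref{eq:detImMS}) for $\det(I-M_{\cS})$ from the preceding lemma, obtaining
\begin{equation*}
1 - \lambda \;=\; \frac{\det(I - M_{\cS})}{q(1;\eta,\nu)} \;=\; \frac{1}{c + \cO\!\left((\eta,\nu)\right)}\!\left(\frac{a}{t_d}\,\eta + \frac{a}{t_{(\ell-1)d}}\,\nu + \cO\!\left((\eta,\nu)^2\right)\right).
\end{equation*}
Expanding $\frac{1}{c + \cO(\eta,\nu)} = \frac{1}{c} + \cO(\eta,\nu)$ and multiplying out yields (\ref{eq:lambda}), since the $\cO(\eta,\nu)$ correction multiplies a factor that is already $\cO\!\left((\eta,\nu)\right)$ and so only contributes to the $\cO\!\left((\eta,\nu)^2\right)$ remainder.

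The main obstacle, such as it is, is justifying that the factorisation $p = (\lambda - \lambda(\eta,\nu))\,q$ is genuinely smooth in $(\eta,\nu)$ and that $q(1;\cdot)$ does not vanish at the origin; both rely on the simplicity of the unit eigenvalue of $M_{\cS}$ at the shrinking point provided by Lemma \ref{le:rankNm1}. Without this, the leading coefficient of the expansion would be ill-defined. Once simplicity is in hand, the argument is essentially a one-line expansion driven by the previous lemma.
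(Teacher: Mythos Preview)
Your proposal is correct and follows essentially the same route as the paper: factor $\det(I-M_{\cS}) = (1-\lambda)\bigl(c + \cO(\eta,\nu)\bigr)$ using simplicity of the unit eigenvalue, then match linear terms against the known expansion (\ref{eq:detImMS}). The paper's version is terser---it writes the eigenvalue product directly rather than introducing the characteristic polynomial and the quotient $q$---but the substance is identical.
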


\begin{proof}
Write $\lambda = 1 + k_1 \eta + k_2 \nu + \cO(2)$, for some $k_1, k_2 \in \mathbb{R}$.
Then 
\begin{align}
\det(I-M_{\cS}) &= (1-\lambda) \left( c + \cO(\eta,\nu) \right) \nonumber \\
&= \left( -k_1 \eta - k_2 \nu + \cO \left( (\eta,\nu)^2 \right) \right)
\left( c + \cO(\eta,\nu) \right) \nonumber \\
&= -k_1 c \eta - k_2 c \nu + \cO \left( (\eta,\nu)^2 \right) \;.
\label{eq:lambdaProof1}
\end{align}
By matching (\ref{eq:detImMS}) and (\ref{eq:lambdaProof1}) we obtain (\ref{eq:lambda}) as required.
\end{proof}

The last two results provide identities that connect
various quantities associated with a shrinking point.
For a proof of Lemma \ref{le:fourtIdentity}, refer to \cite{SiMe10}.
The proof involves expanding
$s^{\cS^{\overline{0}}}_i$ and $s^{\cS^{\overline{\ell d}}}_i$
in terms of $\eta$ and $\nu$ (for certain values of $i$)
and matching coefficients.
This assumes $\det(J) \ne 0$, but we expect that
Lemmas \ref{le:fourtIdentity} and \ref{le:uvIdentities} hold
regardless of how (\ref{eq:f}) varies with $\xi$ as the results 
concern properties of the shrinking point itself\removableFootnote{
I conjecture that by considering all possible $A_L$ and $A_R$,
there always exists a choice $\xi$ such that $\det(J) \ne 0$.
This would solve the problem, but seems extremely difficult to prove.
}.

\begin{lemma}
Suppose (\ref{eq:f}) with $K \ge 2$ has an $\cS$-shrinking point
at $\xi = \xi^*$ and $\det(J) \ne 0$.
Then
\begin{equation}
\frac{a}{b} = -\frac{t_d t_{(\ell-1)d}}{t_{-d} t_{(\ell+1)d}} \;.
\label{eq:fourtIdentity}
\end{equation}
\label{le:fourtIdentity}
\end{lemma}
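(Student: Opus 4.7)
My plan is to derive (\ref{eq:fourtIdentity}) by expanding the scalar sequences $s^{\cS^{\overline{0}}}_i$ and $s^{\cS^{\overline{\ell d}}}_i$ in $(\eta,\nu)$-coordinates near the shrinking point and matching leading-order coefficients, following the strategy outlined in the surrounding text.

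I would first establish the setup. Since $a\neq 0$ and $b\neq 0$, Proposition \ref{pr:existence} makes both the $\cS^{\overline{0}}$-cycle and the $\cS^{\overline{\ell d}}$-cycle unique in a neighbourhood of the shrinking point; write $\tilde{t}_i \defeq s^{\cS^{\overline{\ell d}}}_i$. At the shrinking point itself, applying Lemma \ref{le:sZero} with $t_0 = 0$ identifies $\{y_i\}$ as an $\cS$-cycle; a second application with $t_{\ell d} = 0$, combined with the rotational identity (\ref{eq:rssMainIdentity}), further identifies $\{y_i\}$ as the $\cS^{\overline{\ell d}}$-cycle. Hence $\tilde{t}_i\big|_{(0,0)} = t_i$ for every $i$.

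Next I would expand $\tilde{t}_0$ and $\tilde{t}_{\ell d}$ in $(\eta,\nu)$-coordinates using formula (\ref{eq:sFormula}) applied to $\cS^{\overline{\ell d}}$. By Lemma \ref{le:psi12} the zero loci are the curves $\nu = \psi_2(\eta) = \cO(\eta^2)$ and $\eta = \psi_1(\nu) = \cO(\nu^2)$, both tangent to the coordinate axes by (\ref{eq:phi1})-(\ref{eq:phi2}). Combined with smoothness, this forces leading-order expansions $\tilde{t}_0 = \alpha_1 \nu + \cO((\eta,\nu)^2)$ and $\tilde{t}_{\ell d} = \alpha_2 \eta + \cO((\eta,\nu)^2)$ for nonzero constants $\alpha_1,\alpha_2$. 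I would compute these constants by differentiating $\det(P_{\cS^{\overline{\ell d}(i)}})\varrho^{\sf T} B\mu/b(\eta,\nu)$ at the shrinking point: since the numerator vanishes there, only the derivative of the determinant contributes, and that derivative can be evaluated by an argument parallel to the derivation of (\ref{eq:detImMS}), yielding $\alpha_1,\alpha_2$ in terms of $a$, $b$, and the $t_i$'s.

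Finally, since $(\eta,\nu)\leftrightarrow(\tilde{\eta},\tilde{\nu})$ (with $\tilde{\eta}=\tilde{t}_0$, $\tilde{\nu}=\tilde{t}_{\ell d}$) is a valid local coordinate change, the composition of the two expansions must be the identity to leading order; this yields algebraic relations tying $\alpha_1,\alpha_2$ to their tilde-analogs. Substituting these relations together with the explicit coefficients from (\ref{eq:phi1})-(\ref{eq:phi2}) into the two forms of (\ref{eq:detImMS}) produces an identity in $a, b, t_d, t_{(\ell-1)d}, t_{-d}, t_{(\ell+1)d}$ that collapses to (\ref{eq:fourtIdentity}). The hard part will be the explicit evaluation of $\alpha_1$ and $\alpha_2$: tracking how $P_{\cS^{\overline{\ell d}(i)}}$ depends on $\xi$ through $A_L$ and $A_R$ demands careful bookkeeping analogous to that implicit in Lemma \ref{le:psi12}, and producing the minus sign on the right-hand side of (\ref{eq:fourtIdentity}) depends on correctly combining contributions from the two transversal boundary curves with opposite orientation relative to the axes.
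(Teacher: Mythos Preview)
Your proposal is correct and follows essentially the same route the paper indicates: the paper defers the full proof to \cite{SiMe10} but explicitly states that it ``involves expanding $s^{\cS^{\overline{0}}}_i$ and $s^{\cS^{\overline{\ell d}}}_i$ in terms of $\eta$ and $\nu$ (for certain values of $i$) and matching coefficients,'' which is exactly your plan of expanding $\tilde t_0$ and $\tilde t_{\ell d}$ and matching against the known $\psi_1,\psi_2$ and $\det(I-M_{\cS})$ expansions. Your identification of the explicit evaluation of $\alpha_1,\alpha_2$ as the technical heart of the argument is accurate.
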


\begin{lemma}
Suppose (\ref{eq:f}) with $K \ge 2$ has an $\cS$-shrinking point
at $\xi = \xi^*$ and $\det(J) \ne 0$.
Then, repeating (\ref{eq:uvIdentity12early}),
\begin{align}
\frac{u_0^{\sf T} v_{-d}}{a} +
\frac{u_{\ell d}^{\sf T} v_{(\ell-1)d}}{b} &= \frac{1}{c} \;,
\label{eq:uvIdentity1} \\
\frac{u_{(\ell-1)d}^{\sf T} v_{\ell d}}{a} +
\frac{u_{-d}^{\sf T} v_0}{b} &= \frac{1}{c} \;.
\label{eq:uvIdentity2} 
\end{align}
\label{le:uvIdentities}
\end{lemma}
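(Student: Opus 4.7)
My plan is to prove these two bilinear identities by expanding $s^{\cS^{\overline{0}}}_i$ and $s^{\cS^{\overline{\ell d}}}_i$ to first order in the local coordinates $(\eta,\nu)$ and matching coefficients, along the lines indicated by the remark after the statement. The starting point is the observation that at the shrinking point the four cycles $\{x^{\cS}_i\}$, $\{x^{\cS^{\overline{0}}}_i\}$, $\{x^{\cS^{\overline{\ell d}}}_i\}$, $\{x^{\cS^{(-d)}}_i\}$ all coincide with the polygonal orbit $\{y_i\}$: this follows from Proposition \ref{pr:MSPSsingular}(i), Lemma \ref{le:sZero}, and the identity $\cS^{\overline{0}\,\overline{\ell d}(d)}=\cS$ from (\ref{eq:rssMainIdentity}), together with $t_0=t_{\ell d}=0$. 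In particular $s^{\cS^{\overline{\ell d}}}_i\big|_{(0,0)}=t_i$, so $s^{\cS^{\overline{\ell d}}}_0$ and $s^{\cS^{\overline{\ell d}}}_{\ell d}$ both vanish at the shrinking point and thus possess a well-defined first-order Taylor expansion in $(\eta,\nu)$.

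The first representation of these expansions comes from the explicit formula (\ref{eq:sFormula}), $s^{\cS^{\overline{\ell d}}}_i=\det(P_{\cS^{\overline{\ell d}(i)}})\varrho^{\sf T}B\mu/\det(I-M_{\cS^{\overline{\ell d}}})$, where I would rewrite each determinant as a rank-one perturbation of a determinant built from $\cS$. Because $A_R-A_L=Ce_1^{\sf T}$ is rank one (\ref{eq:continuityCondition}), a single symbol flip at position $i$ produces a rank-one change in $M$ and in each $P_{\cS^{(j)}}$, and Lemma \ref{le:matrixDeterminant} combined with the adjugate formula $\operatorname{adj}(I-M_{\cS^{(j)}})=c\,v_ju_j^{\sf T}$ from Lemma \ref{le:adjugateRank} converts the result into scalar inner products of eigenvectors with partial-product vectors. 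The value $\det(I-M_{\cS^{\overline{\ell d}}})=b$ at the shrinking point already gives one such identity, and I would derive its first-order corrections, together with those of $\det(P_{\cS^{\overline{\ell d}}})$ and $\det(P_{\cS^{\overline{\ell d}(\ell d)}})$, by the same mechanism. The second representation of the expansion comes from Lemma \ref{le:xSi}-style perturbation: one writes $x^{\cS^{\overline{\ell d}}}_i-y_i$ to leading order in $(\eta,\nu)$ using the slow-manifold decomposition guaranteed by $\sigma<1$, and projects with $e_1^{\sf T}$ via the explicit formulas for $v_j$ in Lemma \ref{le:uv}. Matching the two first-order expansions at $i=0$ and $i=\ell d$ produces linear equations in the four scalars $u_0^{\sf T}v_{-d}$, $u_{\ell d}^{\sf T}v_{(\ell-1)d}$, $u_{-d}^{\sf T}v_0$, $u_{(\ell-1)d}^{\sf T}v_{\ell d}$, which after simplification using the inter-eigenvector identities of Lemma \ref{le:vuall} and the ratio identity of Lemma \ref{le:fourtIdentity} collapse to (\ref{eq:uvIdentity1}).

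For (\ref{eq:uvIdentity2}) the cleanest route is to observe that transposing the map (replacing $A_L,A_R$ by $A_L^{\sf T},A_R^{\sf T}$) leaves $a$, $b$, $c$ invariant, interchanges left and right unit eigenvectors of each $M_{\cS^{(j)}}$, and hence converts (\ref{eq:uvIdentity1}) into (\ref{eq:uvIdentity2}); alternatively one matches coefficients in the expansion of $s^{\cS^{\overline{0}}}_i$ instead of $s^{\cS^{\overline{\ell d}}}_i$, which is the mirror-image computation obtained by the role swap $0\leftrightarrow\ell d$, $-d\leftrightarrow(\ell-1)d$. The main obstacle is the combinatorial bookkeeping: every rank-one update in the first representation produces a partial product of $A_L$ and $A_R$ factors, and translating these into the specific eigenvector inner products appearing on the left-hand side of (\ref{eq:uvIdentity1}) requires repeated use of the eight formulas (\ref{eq:vuelld})--(\ref{eq:vuelld2}) in the correct order, together with the fourt identity to absorb the ratio $a/b$. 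A secondary subtlety is that the derivation as stated uses the coordinate change $(\xi_1,\xi_2)\leftrightarrow(\eta,\nu)$, which requires $\det(J)\neq 0$; but since the identity involves only quantities intrinsic to the shrinking point, one expects (as the author remarks) that the identity continues to hold independently of this hypothesis, and I would justify this at the end by a density/continuity argument over the space of admissible two-parameter unfoldings.
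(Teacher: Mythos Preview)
Your plan takes a genuinely different route from the paper's proof. The paper gives a short, direct algebraic argument that never leaves the shrinking point itself and involves no perturbation in $(\eta,\nu)$: using the eigenvector relations of Lemma~\ref{le:vuall}, namely $u_0^{\sf T}=\tfrac{t_d}{t_{(\ell+1)d}}u_{\ell d}^{\sf T}M_{\cX}$ and $v_{-d}=\tfrac{t_{(\ell-1)d}}{t_{-d}}M_{\cY^{\overline{0}}}v_{(\ell-1)d}$, together with the ratio identity of Lemma~\ref{le:fourtIdentity}, one obtains $u_0^{\sf T}v_{-d}=-\tfrac{a}{b}\,u_{\ell d}^{\sf T}M_{\cX}M_{\cY^{\overline{0}}}v_{(\ell-1)d}$. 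Hence the left-hand side of (\ref{eq:uvIdentity1}) collapses to $\tfrac{1}{b}u_{\ell d}^{\sf T}\bigl(I-M_{\cX}M_{\cY^{\overline{0}}}\bigr)v_{(\ell-1)d}$. Now Lemma~\ref{le:uv} gives $u_{\ell d}^{\sf T}=\tfrac{1}{c}e_1^{\sf T}\mathrm{adj}\bigl(I-M_{\cS^{(\ell d)}}\bigr)$ with $M_{\cS^{(\ell d)}}=M_{\cX}M_{\cY}$, and Lemma~\ref{le:adjIdentity2} allows the symbol flip $M_{\cY}\to M_{\cY^{\overline{0}}}$ inside the adjugate. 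The basic identity (\ref{eq:adjIdentity}) then turns the expression into $\tfrac{1}{bc}e_1^{\sf T}v_{(\ell-1)d}\det\bigl(I-M_{\cX}M_{\cY^{\overline{0}}}\bigr)=\tfrac{1}{c}$, since that determinant equals $b$ and $e_1^{\sf T}v_{(\ell-1)d}=1$. Identity (\ref{eq:uvIdentity2}) is obtained by the same manipulation with the obvious index swap. The advantage is that this is three lines of linear algebra with no expansions, no coefficient matching, and no use of the unfolding coordinates.

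Two specific concerns with your proposal. First, your invocation of a slow-manifold decomposition ``guaranteed by $\sigma<1$'' is not available: Lemma~\ref{le:uvIdentities} does not assume $\sigma<1$, and in any case $x^{\cS^{\overline{\ell d}}}_i$ is already a smooth function of $(\eta,\nu)$ since $\det(I-M_{\cS^{\overline{\ell d}}})=b\neq0$, so no slow-manifold machinery is needed for the second representation. Second, your transposition argument for (\ref{eq:uvIdentity2}) does not go through: replacing $A_L,A_R$ by their transposes converts the continuity condition (\ref{eq:continuityCondition}) into $A_R^{\sf T}=A_L^{\sf T}+e_1C^{\sf T}$, which is not of the required form (the rank-one correction has $e_1$ on the left rather than $e_1^{\sf T}$ on the right), so the transposed system is not an instance of (\ref{eq:f}) and the normalisations $e_1^{\sf T}v_j=1$ do not carry over. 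Your alternative role-swap computation is the correct fallback, but note that $s^{\cS^{\overline{0}}}_0=\eta$ and $s^{\cS^{\overline{0}}}_{\ell d}=\nu$ are the coordinates themselves, so to extract nontrivial equations you must either use other indices $i$ or introduce the dual coordinate system based on the $\cS^{\overline{\ell d}}$-cycle.
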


\begin{proof}
Here we derive only (\ref{eq:uvIdentity1}).
Equation (\ref{eq:uvIdentity2}) may be derived similarly.

By (\ref{eq:vu0}), (\ref{eq:vumd}) and (\ref{eq:fourtIdentity}),
\begin{equation}
u_0^{\sf T} v_{-d} =
\left( \frac{t_d}{t_{(\ell+1)d}} u_{\ell d}^{\sf T} M_{\cX} \right)
\left( \frac{t_{(\ell-1)d}}{t_{-d}} M_{\cY^{\overline{0}}} v_{(\ell-1)d} \right) =
-\frac{a u_{\ell d} M_{\cX} M_{\cY^{\overline{0}}} v_{(\ell-1)d}}{b} \;.
\end{equation}
Therefore
\begin{equation}
\frac{u_0^{\sf T} v_{-d}}{a} +
\frac{u_{\ell d}^{\sf T} v_{(\ell-1)d}}{b} =
\frac{u_{\ell d}^{\sf T}
\left( I - M_{\cX} M_{\cY^{\overline{0}}} \right) v_{(\ell-1)d}}{b} \;.
\label{eq:uvIdentityProof2}
\end{equation}
By (\ref{eq:uvj}),
$u_{\ell d}^{\sf T} = \frac{e_1^{\sf T} {\rm adj} \left( I - M_{\cS^{(\ell d)}} \right)}{c}$,
and by (\ref{eq:partitions2}), $M_{\cS^{(\ell d)}} = M_{\cX} M_{\cY}$.
By (\ref{eq:MS}) and Lemma \ref{le:adjIdentity2},
$e_1^{\sf T} {\rm adj} \left( I - M_{\cX} M_{\cY} \right) =
e_1^{\sf T} {\rm adj} \left( I - M_{\cX} M_{\cY^{\overline{0}}} \right)$, thus
\begin{equation}
u_{\ell d}^{\sf T} = \frac{e_1^{\sf T} {\rm adj} \left( I - M_{\cX} M_{\cY^{\overline{0}}} \right)}{c} \;.
\label{eq:uvIdentityProof3}
\end{equation}
By substituting (\ref{eq:uvIdentityProof3}) into (\ref{eq:uvIdentityProof2})
and using (\ref{eq:adjIdentity}) we obtain
\begin{equation}
\frac{u_0^{\sf T} v_{-d}}{a} +
\frac{u_{\ell d}^{\sf T} v_{(\ell-1)d}}{b} =
\frac{e_1^{\sf T} v_{(\ell-1)d} \det \left( I - M_{\cX} M_{\cY^{\overline{0}}} \right)}{b c} \;.
\label{eq:uvIdentityProof4}
\end{equation}
Finally, $\det \left( I - M_{\cX} M_{\cY^{\overline{0}}} \right) = b$,
because $M_{\cX} M_{\cY^{\overline{0}}} =
M_{\cS^{(\ell d) \overline{0}}} =
M_{\cS^{\overline{\ell d} (\ell d)}} =
M_{\cF[\ell+1,m,n]^{(\ell d)}}$, by (\ref{eq:rsselld}).
Thus (\ref{eq:uvIdentityProof4}) reduces to (\ref{eq:uvIdentity1}),
because also $e_1^{\sf T} v_{(\ell-1)d} = 1$, by definition.
\end{proof}

\section{Locating nearby shrinking points}
\label{sec:locations}
\setcounter{equation}{0}

At an $\cS$-shrinking point, for each $j$, the line segment
connecting $y_j$ to $y_{j+d}$ consists of fixed points of $f^{\cS^{(j)}}$, see Fig.~\ref{fig:eigSchem}.
For parameter values near the shrinking point, the line segments persist 
as one-dimensional slow manifolds.
These are described in \S\ref{sub:slow}.
In \S\ref{sub:calculations} we then use these results to
determine the location of nearby $\cG^\pm[k,\chi]$-shrinking points to leading order.

\subsection{Slow manifolds}
\label{sub:slow}

In a neighbourhood of an $\cS$-shrinking point,
for each $j = 0, (\ell-1)d, \ell d, -d$,
we let $\omega_j^{\sf T}$ and $\zeta_j$
denote the left and right eigenvectors of $M_{\cS^{(j)}}$ corresponding to $\lambda$.
More specifically,
\begin{align}
M_{\cS^{(j)}} \zeta_j &= \lambda \zeta_j \;, &
e_1^{\sf T} \zeta_j &= 1 \;, \label{eq:zetaProperties} \\
\omega_j^{\sf T} M_{\cS^{(j)}} &= \lambda \omega_j^{\sf T} \;, &
\omega_j^{\sf T} \zeta_j &= 1 \;. \label{eq:omegaProperties}
\end{align}
Each $\omega_j^{\sf T}$ and $\zeta_j$ is a $C^K$ function\removableFootnote{
The matrix $M_{\cS^{(j)}}$ is $C^K$,
and $\lambda$ is $C^K$ (as noted above),
so the implicit function theorem can be used to obtain the eigenvectors,
subject to a length restriction, and show that they are $C^K$.
},
of $\eta$ and $\nu$.
Recall, $u_j^{\sf T}$ and $v_j$ denote the eigenvectors at the shrinking point,
see \S\ref{sub:definitions}, thus
\begin{equation}
\zeta_j(0,0) = v_j \;, \qquad
\omega_j^{\sf T}(0,0) = u_j^{\sf T} \;.
\label{eq:zetaomega00}
\end{equation}
The following result relates the eigenvectors to one another
based on the partitions of $\cS$ introduced in Definition \ref{df:XYall}.

\begin{lemma}
For any matrix $Q$,
\begin{align}
\omega_0^{\sf T} Q M_{\hat{\cX}} \zeta_0 &=
\omega_{-d}^{\sf T} M_{\hat{\cX}} Q \zeta_{-d} \;, \label{eq:omegaMXhatzeta} \\
\omega_0^{\sf T} M_{\hat{\cY}} Q \zeta_0 &=
\omega_{-d}^{\sf T} Q M_{\hat{\cY}} \zeta_{-d} \;, \label{eq:omegaMYhatzeta} \\
\omega_{\ell d}^{\sf T} Q M_{\check{\cX}} \zeta_{\ell d} &=
\omega_{(\ell-1)d}^{\sf T} M_{\check{\cX}} Q \zeta_{(\ell-1)d} \;, \label{eq:omegaMXcheckzeta} \\
\omega_{\ell d}^{\sf T} M_{\check{\cY}} Q \zeta_{\ell d} &=
\omega_{(\ell-1)d}^{\sf T} Q M_{\check{\cY}} \zeta_{(\ell-1)d} \;. \label{eq:omegaMYcheckzeta}
\end{align}
\label{le:omegaMzeta}
\end{lemma}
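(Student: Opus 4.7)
The plan is to exploit a simple intertwining relation. By Proposition \ref{pr:partitions} we have $\cS = \hat{\cX}\hat{\cY}$ and $\cS^{(-d)} = \hat{\cY}\hat{\cX}$, so $M_{\cS} = M_{\hat{\cY}} M_{\hat{\cX}}$ while $M_{\cS^{(-d)}} = M_{\hat{\cX}} M_{\hat{\cY}}$, and hence
\begin{equation*}
M_{\hat{\cX}} M_{\cS} = M_{\cS^{(-d)}} M_{\hat{\cX}} \;, \qquad M_{\cS} M_{\hat{\cY}} = M_{\hat{\cY}} M_{\cS^{(-d)}} \;.
\end{equation*}
I would first apply the left relation to $\zeta_0$: since $M_{\cS} \zeta_0 = \lambda \zeta_0$, we get $M_{\cS^{(-d)}} (M_{\hat{\cX}} \zeta_0) = \lambda (M_{\hat{\cX}} \zeta_0)$, so $M_{\hat{\cX}} \zeta_0$ lies in the $\lambda$-eigenspace of $M_{\cS^{(-d)}}$. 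By Lemma \ref{le:eigMSindep} combined with Lemma \ref{le:rankNm1}, the unit eigenvalue of $M_{\cS^{(-d)}}$ is simple at the shrinking point, and this simplicity persists in a neighbourhood by continuity, so I can write $M_{\hat{\cX}} \zeta_0 = \alpha \zeta_{-d}$ for a unique scalar $\alpha$. (Note $\alpha \ne 0$: if $M_{\hat{\cX}} \zeta_0 = 0$ then $\lambda \zeta_0 = M_{\hat{\cY}} M_{\hat{\cX}} \zeta_0 = 0$, contradicting $\lambda$ being close to $1$.) The same intertwining read on the left gives $\omega_{-d}^{\sf T} M_{\hat{\cX}} = \gamma\, \omega_0^{\sf T}$ for a unique scalar $\gamma$.

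Next I would show $\alpha = \gamma$ by evaluating $\omega_{-d}^{\sf T} M_{\hat{\cX}} \zeta_0$ two ways: associating to the right gives $\alpha\, \omega_{-d}^{\sf T} \zeta_{-d} = \alpha$, while associating to the left gives $\gamma\, \omega_0^{\sf T} \zeta_0 = \gamma$, using the normalisations (\ref{eq:zetaProperties})--(\ref{eq:omegaProperties}). Substituting both descriptions into (\ref{eq:omegaMXhatzeta}) then gives
\begin{equation*}
\omega_0^{\sf T} Q M_{\hat{\cX}} \zeta_0 = \alpha\, \omega_0^{\sf T} Q \zeta_{-d} \;, \qquad
\omega_{-d}^{\sf T} M_{\hat{\cX}} Q \zeta_{-d} = \gamma\, \omega_0^{\sf T} Q \zeta_{-d} \;,
\end{equation*}
which coincide. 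For (\ref{eq:omegaMYhatzeta}) I would repeat the argument using the second intertwining, producing scalars $\alpha', \gamma'$ with $M_{\hat{\cY}} \zeta_{-d} = \alpha' \zeta_0$ and $\omega_0^{\sf T} M_{\hat{\cY}} = \gamma' \omega_{-d}^{\sf T}$ and checking $\alpha' = \gamma'$ by the same two-way computation.

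The identities (\ref{eq:omegaMXcheckzeta}) and (\ref{eq:omegaMYcheckzeta}) follow by the same template applied to the pair $(\ell d, (\ell-1)d)$ instead of $(0,-d)$, using the partitions $\cS^{(\ell d)} = \check{\cX}\check{\cY}$ and $\cS^{((\ell-1)d)} = \check{\cY}\check{\cX}$ from (\ref{eq:partitions2}) and (\ref{eq:partitions4}), which yield the intertwinings $M_{\check{\cX}} M_{\cS^{(\ell d)}} = M_{\cS^{((\ell-1)d)}} M_{\check{\cX}}$ and $M_{\cS^{(\ell d)}} M_{\check{\cY}} = M_{\check{\cY}} M_{\cS^{((\ell-1)d)}}$.

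The only subtle step is the justification that the $\lambda$-eigenspaces remain one-dimensional in a neighbourhood, so that the scalars $\alpha, \gamma$ (and their primed counterparts) are unambiguously defined; this follows from Lemma \ref{le:rankNm1} and continuity of the spectrum of a $C^K$ family of matrices, and is the main technical hurdle. Everything else is bookkeeping involving the matrix factorisations of Proposition \ref{pr:partitions} and the normalisations in (\ref{eq:zetaProperties})--(\ref{eq:omegaProperties}).
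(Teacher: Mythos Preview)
Your proof is correct and follows essentially the same approach as the paper: both exploit the intertwining relations $M_{\hat{\cX}} M_{\cS} = M_{\cS^{(-d)}} M_{\hat{\cX}}$ (from the factorisations $M_{\cS} = M_{\hat{\cY}} M_{\hat{\cX}}$, $M_{\cS^{(-d)}} = M_{\hat{\cX}} M_{\hat{\cY}}$) to relate the eigenvectors by scalar multiples, then use the normalisations $\omega_j^{\sf T} \zeta_j = 1$ to pin down the scalars. The only cosmetic difference is that the paper writes $\zeta_0 = k_1 M_{\hat{\cY}} \zeta_{-d}$ and $\omega_0^{\sf T} = k_2 \omega_{-d}^{\sf T} M_{\hat{\cX}}$ and deduces $k_1 k_2 \lambda = 1$, whereas you write $M_{\hat{\cX}} \zeta_0 = \alpha \zeta_{-d}$ and $\omega_{-d}^{\sf T} M_{\hat{\cX}} = \gamma \omega_0^{\sf T}$ and deduce $\alpha = \gamma$; these are equivalent bookkeeping choices.
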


\begin{proof}
Here we derive (\ref{eq:omegaMXhatzeta}).
The remaining identities can be derived in the same fashion.

Since $M_{\cS} = M_{\hat{\cY}} M_{\hat{\cX}}$
and $M_{\cS^{(-d)}} = M_{\hat{\cX}} M_{\hat{\cY}}$,
refer to (\ref{eq:MS}), (\ref{eq:partitions1}) and (\ref{eq:partitions3}),
by (\ref{eq:zetaProperties}) we have
\begin{equation}
\zeta_0 = k_1 M_{\hat{\cY}} \zeta_{-d} \;,
\label{eq:omegaMXhatzetaProof1}
\end{equation}
for some $k_1 \in \mathbb{R}$.
Similarly
\begin{equation}
\omega_0^{\sf T} = k_2 \omega_{-d}^{\sf T} M_{\hat{\cX}} \;,
\label{eq:omegaMXhatzetaProof2}
\end{equation}
for some $k_2 \in \mathbb{R}$.
By using (\ref{eq:zetaProperties})-(\ref{eq:omegaProperties}), we then deduce
\begin{equation}
1 = \omega_0^{\sf T} \zeta_0 =
k_1 k_2 \omega_{-d}^{\sf T} M_{\hat{\cX}} M_{\hat{\cY}} \zeta_{-d} =
k_1 k_2 \omega_{-d}^{\sf T} \lambda \zeta_{-d} =
\lambda k_1 k_2 \;.
\label{eq:omegaMXhatzetaProof3}
\end{equation}
Finally, by combining (\ref{eq:omegaMXhatzetaProof1})-(\ref{eq:omegaMXhatzetaProof3}) we obtain
\begin{equation}
\omega_0^{\sf T} Q M_{\hat{\cX}} \zeta_0 =
k_1 k_2 \omega_{-d}^{\sf T} M_{\hat{\cX}} Q M_{\hat{\cX}} M_{\hat{\cY}} \zeta_{-d} =
k_1 k_2 \omega_{-d}^{\sf T} M_{\hat{\cX}} Q \lambda \zeta_{-d} =
\omega_{-d}^{\sf T} M_{\hat{\cX}} Q \zeta_{-d} \;,
\end{equation}
as required.
\end{proof}

Let us now consider the dynamics of $f^{\cS^{(j)}}$
(for any $j = 0, (\ell-1)d, \ell d, -d$).
If $\det \left( I - M_{\cS} \right) \ne 0$,
then $f^{\cS^{(j)}}$ has a unique fixed point, $x^{\cS}_j$.
The line intersecting $x^{\cS}_j$ and of direction $\zeta_j$
is a slow manifold on which the dynamics of $f^{\cS^{(j)}}$ is dictated by the value of $\lambda$.

Since $x^{\cS}_j$ is sensitive to changes in $\eta$ and $\nu$, see (\ref{eq:xSi}),
instead of $x^{\cS}_j$
it more helpful to use the intersection of the slow manifold with the switching manifold,
call it $\varphi_j$, as a reference point about which we can perform calculations.
If $\det \left( I - M_{\cS} \right) \ne 0$, then this intersection point is given by
\begin{equation}
\varphi_j = \left( I - \zeta_j e_1^{\sf T} \right) x^{\cS}_j \;.
\label{eq:varphi}
\end{equation}
The utility of $\varphi_j$ lies in the fact that
it is well-defined even when $\det \left( I - M_{\cS} \right) = 0$, see Lemma \ref{le:slow}.

Any point on the slow manifold can be written as $\varphi_j + h \zeta_j$,
where $h \in \mathbb{R}$ is the first component of this point
(since $e_1^{\sf T} \varphi_j = 0$ and $e_1^{\sf T} \zeta_j = 1$).
If $\det \left( I - M_{\cS} \right) \ne 0$,
then, since $x^{\cS}_j$ is a fixed point of $f^{\cS^{(j)}}$
and $M_{\cS^{(j)}} \zeta_j = \lambda \zeta_j$, we have
\begin{equation}
f^{\cS^{(j)}} \left( \varphi_j + h \zeta_j \right) =
\varphi_j + \left( h \lambda + \gamma_j \right) \zeta_j \;,
\label{eq:slowDyns}
\end{equation}
where,
\begin{equation}
\gamma_j = (1-\lambda) e_1^{\sf T} x^{\cS}_j \;.
\label{eq:gamma}
\end{equation}
Equation (\ref{eq:slowDyns}) describes the dynamics on the slow manifold, see Fig.~\ref{fig:slowManSchem}.
The next result justifies our use of $\varphi_j$ and $\gamma_j$
when $\det \left( I - M_{\cS} \right) = 0$.

\begin{figure}[b!]
\begin{center}
\setlength{\unitlength}{1cm}
\begin{picture}(12,9)
\put(0,0){\includegraphics[height=9cm]{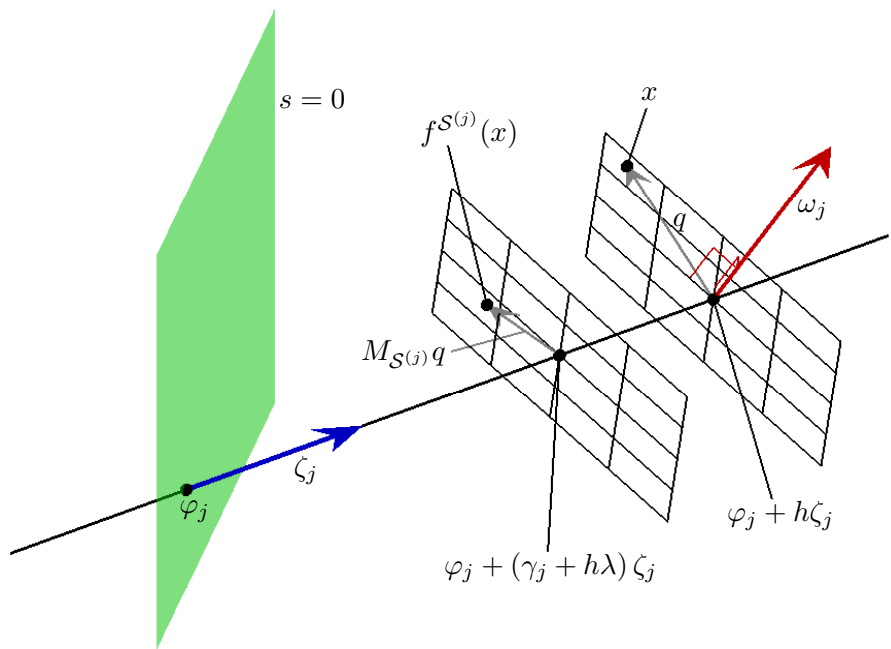}}
\put(3.7,7.6){\small $s=0$}
\put(2.38,2.2){\small $\varphi_j$}
\put(3.9,2.7){\small $\zeta_j$}
\put(5.9,1.4){\small $\varphi_j + \left( \gamma_j + h \lambda \right) \zeta_j$}
\put(9.66,2.1){\small $\varphi_j + h \zeta_j$}
\put(8.52,7.7){\small $x$}
\put(5.58,7.16){\small $f^{\cS^{(j)}}(x)$}
\put(10.6,6.22){\small $\omega_j$}
\put(8.94,6.02){\small $q$}
\put(4.8,4.24){\small $M_{\cS^{(j)}} q$}
\end{picture}
\caption{
An illustration of dynamics near the slow manifold of $f^{\cS^{(j)}}$,
for any $j = 0, (\ell-1)d, \ell d, -d$,
for parameter values near an $\cS$-shrinking point.
The slow manifold is a line of direction $\zeta_j$
(the slow eigenvector (\ref{eq:zetaProperties}))
that intersects the switching manifold at $\varphi_j$
In order to study images of $x$ under $f^{\cS^{(j)}}$,
it is helpful to decompose $x$ using the eigenspaces of $M_{\cS^{(j)}}$,
specifically (\ref{eq:xEigCoords}).
\label{fig:slowManSchem}
}
\end{center}
\end{figure}

\begin{lemma}
Suppose (\ref{eq:f}) with $K \ge 2$ has an $\cS$-shrinking point
at $\xi = \xi^*$ and $\det(J) \ne 0$.
Then there exists a neighbourhood $\cN$ of $(\eta,\nu) = (0,0)$,
such that for all $j \in \left\{ 0, (\ell-1)d, \ell d, -d \right\}$,
there exists unique $C^K$
functions $\varphi_j : \cN \to \mathbb{R}^N$ and $\gamma_j : \cN \to \mathbb{R}$
with $e_1^{\sf T} \varphi_j = 0$, $\gamma_j(0,0) = 0$, and
\begin{equation}
\varphi_0(0,0) = \varphi_{-d}(0,0) = y_0 \;, \qquad
\varphi_{(\ell-1)d}(0,0) = \varphi_{\ell d}(0,0) = y_{\ell d} \;,
\label{eq:varphi00}
\end{equation}
such that (\ref{eq:slowDyns}) is satisfied for all $h \in \mathbb{R}$ and all $(\eta,\nu) \in \cN$.
Moreover, (\ref{eq:varphi}) and (\ref{eq:gamma}) are satisfied
for all $(\eta,\nu) \in \cN$ for which $\det \left( I - M_{\cS} \right) \ne 0$.
\label{le:slow}
\end{lemma}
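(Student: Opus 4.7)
The plan is to recast (\ref{eq:slowDyns}) as a finite-dimensional linear system of size $N+1$ for the pair $(\varphi_j, \gamma_j)$ and then apply the implicit function theorem via smoothness of the coefficient matrix. Expanding $f^{\cS^{(j)}}(x) = M_{\cS^{(j)}} x + P_{\cS^{(j)}} B \mu$ in (\ref{eq:slowDyns}) and using $M_{\cS^{(j)}} \zeta_j = \lambda \zeta_j$, the $h \zeta_j$ terms cancel on both sides, so (\ref{eq:slowDyns}) holds for every $h \in \mathbb{R}$ precisely when
\begin{equation*}
\begin{bmatrix} I - M_{\cS^{(j)}} & \zeta_j \\ e_1^{\sf T} & 0 \end{bmatrix}
\begin{bmatrix} \varphi_j \\ \gamma_j \end{bmatrix}
= \begin{bmatrix} P_{\cS^{(j)}} B \mu \\ 0 \end{bmatrix},
\end{equation*}
where the second row encodes the normalization $e_1^{\sf T} \varphi_j = 0$ and the first row is the only constraint coming from (\ref{eq:slowDyns}).

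Next I would verify invertibility of this coefficient matrix at $(\eta, \nu) = (0,0)$. If $(I - M_{\cS^{(j)}}) \varphi + \gamma v_j = 0$ and $e_1^{\sf T} \varphi = 0$, then left-multiplication by $u_j^{\sf T}$, using $u_j^{\sf T}(I - M_{\cS^{(j)}}) = 0$ and $u_j^{\sf T} v_j = 1$, gives $\gamma = 0$; by Lemma \ref{le:rankNm1} (applied to $M_{\cS^{(j)}}$ via Lemma \ref{le:eigMSindep}) the unit eigenvalue is simple, so $\varphi = \alpha v_j$, and $e_1^{\sf T} v_j = 1$ forces $\alpha = 0$. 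Since $M_{\cS^{(j)}}$, $\lambda$ and $\zeta_j$ depend $C^K$-smoothly on $(\eta,\nu)$ (the latter two from the implicit function theorem applied to the characteristic polynomial at a simple root), the matrix remains invertible on a neighborhood $\cN$, and Cramer's rule yields $(\varphi_j, \gamma_j)$ as $C^K$ functions on $\cN$. The initial values (\ref{eq:varphi00}) and $\gamma_j(0,0) = 0$ follow geometrically: by Proposition \ref{pr:MSPSsingular}(i), $y_j$ is a fixed point of $f^{\cS^{(j)}}$ at the shrinking point, and the slow direction $v_j = (y_{j+d} - y_j)/(t_{j+d} - t_j)$ means the fixed-point line meets $s = 0$ at whichever of $y_j$, $y_{j+d}$ has vanishing first coordinate, which, in view of (\ref{eq:t0telld})--(\ref{eq:tSigns}), is $y_0$ for $j \in \{0, -d\}$ and $y_{\ell d}$ for $j \in \{\ell d, (\ell-1)d\}$; together with $\lambda(0,0) = 1$ this also gives $\gamma_j(0,0) = 0$.

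To finish I would reconcile the solution with (\ref{eq:varphi}) and (\ref{eq:gamma}) on the open set where $\det(I - M_{\cS}) \ne 0$. Substituting the candidates $\varphi_j = (I - \zeta_j e_1^{\sf T}) x^{\cS}_j$ and $\gamma_j = (1 - \lambda) e_1^{\sf T} x^{\cS}_j$ into the linear system above uses only $(I - M_{\cS^{(j)}}) x^{\cS}_j = P_{\cS^{(j)}} B \mu$ and $(I - M_{\cS^{(j)}}) \zeta_j = (1 - \lambda) \zeta_j$, so both rows are satisfied; by uniqueness these candidates coincide with the constructed $\varphi_j, \gamma_j$ on this open dense subset of $\cN$. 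The main obstacle is the invertibility step: the augmentation by $\zeta_j$ and $e_1^{\sf T}$ restores invertibility only because the unit eigenvalue of $M_{\cS^{(j)}}$ is algebraically simple, which is exactly where the shrinking-point hypothesis $\det(I - M_{\cS^{\overline{0}}}) \ne 0$ enters via Lemma \ref{le:rankNm1}; a minor bookkeeping subtlety is tracking which endpoint of the fixed-point segment lies on the switching manifold for each of the four indices $j$, but this is settled directly by the sign pattern (\ref{eq:tSigns}).
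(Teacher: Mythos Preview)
Your argument is correct and follows essentially the same approach as the paper: both reduce (\ref{eq:slowDyns}) to a single linear system for $(\varphi_j,\gamma_j)$, show the coefficient matrix is invertible at the shrinking point, and then invoke smooth dependence. The only cosmetic difference is bookkeeping: the paper packs the unknowns into a single vector $\phi_j = \varphi_j + \gamma_j e_1 \in \mathbb{R}^N$ (using $e_1^{\sf T}\varphi_j = 0$) and solves the $N\times N$ system
\[
\bigl((I-M_{\cS^{(j)}})(I-e_1 e_1^{\sf T}) + \zeta_j e_1^{\sf T}\bigr)\phi_j = P_{\cS^{(j)}} B\mu,
\]
computing its determinant explicitly as $e_1^{\sf T}\,\mathrm{adj}(I-M_{\cS^{(j)}})\,\zeta_j = c$ via the matrix determinant lemma (Lemma~\ref{le:matrixDeterminant}) and Lemma~\ref{le:adjugateRank}; you instead use the $(N{+}1)\times(N{+}1)$ bordered system and a kernel argument based on $u_j^{\sf T}v_j = 1$ and simplicity of the unit eigenvalue. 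Both routes hinge on the same fact (algebraic simplicity from Lemma~\ref{le:rankNm1}), and the verification of the initial values and of (\ref{eq:varphi})--(\ref{eq:gamma}) is identical in substance.
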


\begin{proof}
Consider the matrix equation
\begin{equation}
\left( \left( I - M_{\cS^{(j)}} \right) \left( I - e_1 e_1^{\sf T} \right) +
\zeta_j e_1^{\sf T} \right) \phi_j = P_{\cS^{(j)}} B \mu \;,
\label{eq:slowProof1}
\end{equation}
where we wish to solve for the unknown vector $\phi_j$.
Here we show that (\ref{eq:slowProof1}) defines $\phi_j$ uniquely,
and that the desired quantities $\varphi_j$ and $\gamma_j$ are given by
\begin{equation}
\varphi_j = \left( I - e_1 e_1^{\sf T} \right) \phi_j \;, \qquad
\gamma_j = e_1^{\sf T} \phi_j \;.
\label{eq:slowProof11}
\end{equation}
To show that (\ref{eq:slowProof1}) has a unique solution,
we apply Lemma \ref{le:matrixDeterminant} to write
\begin{align}
\det \left( \left( I - M_{\cS^{(j)}} \right) \left( I - e_1 e_1^{\sf T} \right) +
\zeta_j e_1^{\sf T} \right) &=
\det \left( I - M_{\cS^{(j)}} \right) \det \left( I - e_1 e_1^{\sf T} \right) \nonumber \\
&\quad+
e_1^{\sf T} {\rm adj} \left( I - e_1 e_1^{\sf T} \right)
{\rm adj} \left( I - M_{\cS^{(j)}} \right) \zeta_j \;.
\label{eq:slowProof2}
\end{align}
Since $\det \left( I - e_1 e_1^{\sf T} \right) = 0$
and ${\rm adj} \left( I - e_1 e_1^{\sf T} \right) = e_1 e_1^{\sf T}$,
(\ref{eq:slowProof2}) reduces to
\begin{equation}
\det \left( \left( I - M_{\cS^{(j)}} \right) \left( I - e_1 e_1^{\sf T} \right) +
\zeta_j e_1^{\sf T} \right) =
e_1^{\sf T} {\rm adj} \left( I - M_{\cS^{(j)}} \right) \zeta_j \;.
\end{equation}
At $(\eta,\nu) = (0,0)$, $e_1^{\sf T} {\rm adj} \left( I - M_{\cS^{(j)}} \right) = c u_j^{\sf T}$
and $\zeta_j = v_j$, see (\ref{eq:uvj}) and (\ref{eq:zetaomega00}).
Using also $u_j^{\sf T} v_j = 1$ we obtain
\begin{equation}
\det \left( \left( I - M_{\cS^{(j)}} \right) \left( I - e_1 e_1^{\sf T} \right) +
\zeta_j e_1^{\sf T} \right) = c + \cO(\eta,\nu) \;,
\end{equation}
which is nonzero in a neighbourhood of $(0,0)$.
Thus (\ref{eq:slowProof1}) has a unique solution $\phi_j$ in this neighbourhood.

Next we define $\varphi_j$ and $\gamma_j$ by (\ref{eq:slowProof11}).
These are $C^K$ functions of $\eta$ and $\nu$
because the components of (\ref{eq:slowProof1}) are $C^K$.
By (\ref{eq:slowProof11}), $e_1^{\sf T} \varphi_j = 0$. 
Also $\phi_j = \varphi_j + \gamma_j e_1$,
and by substituting this into (\ref{eq:slowProof1}) and simplifying we obtain
\begin{equation}
\left( I - M_{\cS^{(j)}} \right) \varphi_j + \gamma_j \zeta_j = P_{\cS^{(j)}} B \mu \;.
\label{eq:slowProof3}
\end{equation}
Thus for any $h \in \mathbb{R}$
\begin{equation}
M_{\cS^{(j)}} \left( \varphi_j + h \zeta_j \right) + P_{\cS^{(j)}} B \mu =
\varphi_j + \left( h \lambda + \gamma_j \right) \zeta_j \;,
\end{equation}
where we have used $M_{\cS^{(j)}} \zeta_j = \lambda_j \zeta_j$.
and therefore (\ref{eq:slowDyns}).
This shows that (\ref{eq:slowDyns}) is satisfied.

When $(\eta,\nu) = (0,0)$,
(\ref{eq:slowProof3}) is satisfied by $\gamma_j = 0$
and either $\varphi_j = y_0$ or $\varphi_j = y_{\ell d}$, as given in (\ref{eq:varphi00}).
This verifies (\ref{eq:varphi00}) and $\gamma_j(0,0) = 0$ because $\phi_j$ is unique.
Similarly, when $\det \left( I - M_{\cS} \right) \ne 0$,
(\ref{eq:slowProof3}) is satisfied by (\ref{eq:varphi}) and (\ref{eq:gamma}) because
\begin{equation}
\left( I - M_{\cS^{(j)}} \right) \left( I - \zeta_j e_1^{\sf T} \right) x^{\cS}_j + 
(1-\lambda) \zeta_j e_1^{\sf T} x^{\cS}_j =
\left( I - M_{\cS^{(j)}} \right) x^{\cS}_j \;,
\end{equation}
and therefore $\left( I - M_{\cS^{(j)}} \right) x^{\cS}_j = P_{\cS} B \mu$.
Again, because of the uniqueness of $\phi_j$,
(\ref{eq:varphi}) and (\ref{eq:gamma}) hold in a neighbourhood of $(\eta,\nu) = (0,0)$.
\end{proof}

For any $j$, and any $x \in \mathbb{R}^N$, it is helpful to write
\begin{equation}
x = \varphi_j + h \zeta_j + q \;,
\label{eq:xEigCoords}
\end{equation}
where $h \in \mathbb{R}$ and $q \in \mathbb{R}^N$ with $\omega_j^{\sf T} q = 0$.
The vector $h \zeta_j$ represents the component of $x - \varphi_j$ in the $\zeta_j$ direction.
The vector $q$ represents the component of $x - \varphi_j$ in the remaining eigendirections of $M_{\cS^{(j)}}$.
The decomposition (\ref{eq:xEigCoords}) is unique
and enables us to express iterates of $x$ under $f^{\cS^{(j)}}$ succinctly, Lemma \ref{le:slowDynsGen}.
This is illustrated in Fig.~\ref{fig:slowManSchem}
and formalised by the following result.
We omit a proof as it is a straight-forward application of eigenspace decomposition.

\begin{lemma}
For all $j \in \left\{ 0, (\ell-1)d, \ell d, -d \right\}$, and for all $x \in \mathbb{R}^N$,
there exists unique $h \in \mathbb{R}$ and $q \in \mathbb{R}^N$ with
$\omega_j^{\sf T} q = 0$ such that $x = \varphi_j + h \zeta_j + q$.
Moreover
\begin{equation}
h = \omega_j^{\sf T} \left( x - \varphi_j \right) \;, \qquad
q = \left( I - \zeta_j \omega_j^{\sf T} \right) \left( x - \varphi_j \right) \;,
\label{eq:hq}
\end{equation}
and for any $k \in \mathbb{Z}^+$,
\begin{equation}
f^{\left( \cS^{(j)} \right)^k}(x) = \varphi_j +
\left( \gamma_j \sum_{j=0}^{k-1} \lambda^j + h \lambda^k \right) \zeta_j +
M_{\cS^{(j)}}^k q \;.
\label{eq:slowDynsGen}
\end{equation}
\label{le:slowDynsGen}
\vspace{-5mm}								
\end{lemma}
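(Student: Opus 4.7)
The plan is to first establish the uniqueness and explicit form of the decomposition $x = \varphi_j + h\zeta_j + q$ by a direct projection argument, and then derive the iteration formula (\ref{eq:slowDynsGen}) by induction on $k$, with the base case being essentially the content of (\ref{eq:slowDyns}) extended to the transverse component. For the decomposition, given $x \in \mathbb{R}^N$, I want $x - \varphi_j = h\zeta_j + q$ with $\omega_j^{\sf T} q = 0$; applying $\omega_j^{\sf T}$ and using $\omega_j^{\sf T}\zeta_j = 1$ forces $h = \omega_j^{\sf T}(x - \varphi_j)$, after which $q = (I - \zeta_j\omega_j^{\sf T})(x - \varphi_j)$ is forced, and the identity $\omega_j^{\sf T}(I - \zeta_j\omega_j^{\sf T}) = 0$ confirms that this $q$ satisfies the orthogonality condition. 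This proves existence, uniqueness, and the formulas (\ref{eq:hq}) simultaneously.

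For the base case $k = 1$ of (\ref{eq:slowDynsGen}), Lemma \ref{le:slow} provides (\ref{eq:slowDyns}), namely $f^{\cS^{(j)}}(\varphi_j + h\zeta_j) = \varphi_j + (h\lambda + \gamma_j)\zeta_j$ for every $h \in \mathbb{R}$. Since $f^{\cS^{(j)}}$ is affine with linear part $M_{\cS^{(j)}}$ by (\ref{eq:fS2}), it splits as
\[
f^{\cS^{(j)}}(\varphi_j + h\zeta_j + q) = f^{\cS^{(j)}}(\varphi_j + h\zeta_j) + M_{\cS^{(j)}} q = \varphi_j + (h\lambda + \gamma_j)\zeta_j + M_{\cS^{(j)}} q,
\]
which is the desired $k = 1$ identity.

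The induction step hinges on the observation that $M_{\cS^{(j)}} q$ remains in the $\omega_j$-orthogonal complement, since $\omega_j^{\sf T} M_{\cS^{(j)}} q = \lambda\omega_j^{\sf T} q = 0$ by (\ref{eq:omegaProperties}). Assuming (\ref{eq:slowDynsGen}) holds at step $k$, set $h' = \gamma_j\sum_{i=0}^{k-1}\lambda^i + h\lambda^k$ and $q' = M_{\cS^{(j)}}^k q$, apply the $k = 1$ formula to the decomposition $\varphi_j + h'\zeta_j + q'$, and use the telescoping identity $\gamma_j + \lambda h' = \gamma_j\sum_{i=0}^{k}\lambda^i + h\lambda^{k+1}$ to obtain (\ref{eq:slowDynsGen}) at step $k+1$. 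There is no substantive obstacle here; the only things to keep straight are the affinity of $f^{\cS^{(j)}}$ and the invariance of the complementary subspace under $M_{\cS^{(j)}}$, both of which follow immediately from the setup. A minor cosmetic point is that the summation index in the statement clashes with the outer label $j$ and should be renamed in the write-up.
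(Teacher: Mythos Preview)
Your proof is correct and is precisely the ``straight-forward application of eigenspace decomposition'' that the paper alludes to when it omits the proof. Your observation about the clash between the summation index and the outer index $j$ is also apt.
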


The remaining results of this paper assume $\sigma < 1$,
where $\sigma$ denotes the maximum modulus of the eigenvalues of $M_{\cS}$,
excluding the unit eigenvalue, at the $\cS$-shrinking point, (\ref{eq:sigma}).
This assumption ensures iterates under $f^{\cS{(j)}}$ converge to the slow manifold,
which is central to the validity of our main results.

\begin{lemma}
If $\sigma < 1$, then $c > 0$\removableFootnote{
This relates to an observation that Feigin made for fixed points of (\ref{eq:f}).
Let $\sigma^+_{\cS}$ denote the number of real eigenvalues of $M_{\cS}$ that are greater than $1$.
If $1$ is not an eigenvalue of $M_{\cS}$,
then the sign of the product of the eigenvalues of $I - M_{\cS}$
is equal to $(-1)^{\sigma^+_{\cS}}$.
}.
\label{le:cPositive}
\end{lemma}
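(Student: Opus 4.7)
The plan is to unpack the definition of $c$ and exploit the fact that $M_{\cS}$ is a real matrix, so its non-real eigenvalues come in complex conjugate pairs.

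First I would recall that the eigenvalues $\lambda_i$ of $I - M_{\cS}$ are exactly $1 - \rho_i$, where $\rho_i$ are the eigenvalues of $M_{\cS}$, counted with multiplicity. Since $\rho_1 = 1$ gives $\lambda_1 = 0$, the definition of $c$ in (\ref{eq:c}) becomes
\begin{equation}
c = \prod_{i=2}^N (1 - \rho_i).
\end{equation}

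Next I would split the factors $1-\rho_i$ (for $i=2,\ldots,N$) into those corresponding to real $\rho_i$ and those corresponding to non-real complex conjugate pairs. Because $M_{\cS}$ is real, the non-real $\rho_i$ occur in conjugate pairs $\rho_i, \bar\rho_i$, and each such pair contributes
\begin{equation}
(1-\rho_i)(1-\bar\rho_i) = |1-\rho_i|^2 > 0,
\end{equation}
where positivity uses $\rho_i \neq 1$ (which holds since $|\rho_i|\le\sigma<1$). For real $\rho_i$ with $i\ge 2$, the hypothesis $\sigma<1$ gives $\rho_i \in (-1,1)$, so $1-\rho_i > 0$. Thus $c$ is a product of strictly positive real numbers, hence $c>0$.

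There is essentially no obstacle: the only small point to verify carefully is that each conjugate pair really does contribute a positive factor and that the remaining real factors are positive, both of which follow immediately from $\sigma<1$. No further tools beyond the definitions of $\sigma$ and $c$ given in (\ref{eq:sigma}) and (\ref{eq:c}) are needed.
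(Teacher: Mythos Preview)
Your proof is correct and follows essentially the same approach as the paper: both express $c=\prod_{i=2}^N(1-\rho_i)$, split the factors into real eigenvalues and complex conjugate pairs, and observe that $\sigma<1$ makes each real factor positive and each conjugate-pair product $|1-\rho_i|^2>0$.
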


\begin{proof}
As in (\ref{eq:sigma}), let $\rho_1,\ldots,\rho_N$ be the eigenvalues of $M_{\cS}$,
counting multiplicity, and $\rho_1 = 1$.
By the definition of $c$ (\ref{eq:c}), $c = \prod_{i=2}^N (1 - \rho_i)$.
The assumption $\sigma < 1$ implies $|\rho_i| < 1$, for each $i \ne 1$.
If $\rho_i \in \mathbb{R}$, then $1 - \rho_i > 0$.
Any $\rho_i \notin \mathbb{R}$ appear in complex conjugate pairs
with $(1-\rho_i)(1-\overline{\rho}_i) > 0$.
Thus $\prod_{i=2}^N (1 - \rho_i)$ can be expressed as a product of positive numbers,
and so $c > 0$ as required.
\end{proof}

As described above, $q$ is a linear combination of the eigendirections of $M_{\cS}$ other than $\zeta_j$.
If $\sigma < 1$, then the corresponding eigenvalues all have modulus less than $1$,
in which case $\left\| M_{\cS^{(j)}}^k q \right\| \to 0$, as $k \to \infty$.
Moreover, we can write $M_{\cS^{(j)}}^k q = \cO \left( \sigma^k \right)$.
This is true for any $q$ of the form (\ref{eq:hq}),
thus we have the following result\removableFootnote{
Here is my proof:

For any $x \in \mathbb{R}^N$,
$\omega_j^{\sf T} \left( I - \zeta_j \omega_j^{\sf T} \right) x = 0$
because $\omega_j^{\sf T} \zeta_j = 1$.
That is, the vector $\left( I - \zeta_j \omega_j^{\sf T} \right) x$ 
is orthogonal to the left eigenvector of $M_{\cS^{(j)}}$ corresponding to the eigenvalue $\lambda$.
Therefore $\left( I - \zeta_j \omega_j^{\sf T} \right) x$
can be written as a linear combination of basis vectors of the right eigenspaces of $M_{\cS}$
not corresponding to $\lambda$.
Hence $M_{\cS^{(j)}}^k \left( I - \zeta_j \omega_j^{\sf T} \right) x$
can be written as this linear combination with terms scaled by the $k^{\rm th}$ power
of the corresponding eigenvalue, and therefore is order $\sigma^k$.
Since $x$ is arbitrary, this completes the proof.
}.

\begin{lemma}
If $\sigma < 1$, then for each $j \in \left\{ 0, (\ell-1)d, \ell d, -d \right\}$,
\begin{equation}
M_{\cS^{(j)}}^k \left( I - \zeta_j \omega_j^{\sf T} \right) = \cO \left( \sigma^k \right) \;.
\label{eq:ordersigmak}
\end{equation}
\label{le:ordersigmak}
\vspace{-5mm}								
\end{lemma}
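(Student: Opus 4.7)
The plan is to recognise the matrix $I - \zeta_j \omega_j^{\sf T}$ as the spectral projection of $M_{\cS^{(j)}}$ away from its $\zeta_j$-eigenspace, so that iterating $M_{\cS^{(j)}}$ on its range only excites the remaining (non-$\lambda$) eigendirections, whose eigenvalues have modulus close to $\sigma < 1$ in a neighbourhood of $(\eta,\nu) = (0,0)$.

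First I would use the normalisation $\omega_j^{\sf T} \zeta_j = 1$ to compute
\[
\omega_j^{\sf T} \bigl( I - \zeta_j \omega_j^{\sf T} \bigr) = \omega_j^{\sf T} - (\omega_j^{\sf T} \zeta_j) \omega_j^{\sf T} = 0,
\]
so that the range of $I - \zeta_j \omega_j^{\sf T}$ is contained in $\ker(\omega_j^{\sf T})$. Since $\omega_j^{\sf T}$ is a left eigenvector of $M_{\cS^{(j)}}$, this kernel is an $M_{\cS^{(j)}}$-invariant subspace of dimension $N-1$, complementary to $\mathrm{span}(\zeta_j)$. Hence $M_{\cS^{(j)}}^k \bigl( I - \zeta_j \omega_j^{\sf T} \bigr)$ maps every $x \in \mathbb{R}^N$ into this invariant subspace and acts there as the restriction $M_{\cS^{(j)}}|_{\ker(\omega_j^{\sf T})}$.

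Next I would identify the spectrum of the restriction. By Lemma \ref{le:eigMSindep} the eigenvalues of $M_{\cS^{(j)}}$ coincide with those of $M_{\cS}$ and, at $(\eta,\nu) = (0,0)$, consist of $1$ (with right eigenvector $\zeta_j = v_j$) together with the eigenvalues $\rho_2,\ldots,\rho_N$ appearing in (\ref{eq:sigma}). Removing the $\zeta_j$-direction leaves precisely the latter, so the spectral radius of $M_{\cS^{(j)}}|_{\ker(\omega_j^{\sf T})}$ equals $\sigma$ at the shrinking point. Since $\sigma < 1$ by hypothesis and the spectrum depends continuously on $(\eta,\nu)$, we may shrink the neighbourhood so that this spectral radius is bounded by some fixed $\tilde\sigma \in (\sigma, 1)$ throughout. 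A standard argument via Jordan canonical form (or Gelfand's formula) then gives a constant $C$, independent of $k$, with $\bigl\| M_{\cS^{(j)}}^k|_{\ker(\omega_j^{\sf T})} \bigr\| \le C \tilde\sigma^k$, which yields the claimed bound.

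The only real obstacle is the sharp reading of $\cO(\sigma^k)$. If $M_{\cS}$ carries a nontrivial Jordan block at an eigenvalue of modulus exactly $\sigma$, one picks up a polynomial prefactor and the sharp statement is $\cO(\tilde\sigma^k)$ for every $\tilde\sigma > \sigma$, not with equality. For the downstream applications in \S\ref{sec:locations}--\S\ref{sec:properties} this is harmless, as only exponential decay at some rate strictly less than $1$ is ever exploited; I would therefore phrase the bound by choosing $\tilde\sigma$ strictly between $\sigma$ and $1$ once and absorbing any polynomial factor into the implicit constant.
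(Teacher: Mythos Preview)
Your argument is correct and follows essentially the same route as the paper: the paper's (informal) justification observes that $\omega_j^{\sf T}(I-\zeta_j\omega_j^{\sf T})=0$, so $(I-\zeta_j\omega_j^{\sf T})x$ lies in the span of the eigendirections of $M_{\cS^{(j)}}$ not corresponding to $\lambda$, and hence is scaled by $\cO(\sigma^k)$ under $M_{\cS^{(j)}}^k$. Your point about Jordan blocks and the need to read $\cO(\sigma^k)$ as $\cO(\tilde\sigma^k)$ for any fixed $\tilde\sigma\in(\sigma,1)$ is well taken and in fact more careful than the paper's own treatment, which glosses over this; as you note, only exponential decay at some rate below $1$ is used downstream, so the distinction is immaterial.
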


\subsection{Calculations for nearby shrinking points}
\label{sub:calculations}

In this section we derive formulas for the border-collision bifurcation curves of $\cG^\pm[k,\chi]$-cycles.
The first result provides us with expressions for
$\det \left( \rho I - M_{\cG^\pm[k,\chi]} \right)$,
useful for large values of $k \in \mathbb{Z}^+$.

\begin{lemma}
Suppose (\ref{eq:f}) with $K \ge 2$ has an $\cS$-shrinking point
at $\xi = \xi^*$ and $\det(J) \ne 0$ and $\sigma < 1$.
Choose any $\chi_{\rm max} \in \mathbb{Z}^+$\removableFootnote{
The idea here is that we require $k - |\chi|$ to be order $k$.
},
$k \in \mathbb{Z}^+$, and $\rho \in \mathbb{C}$.
Then, in a neighbourhood of $(\eta,\nu) = (0,0)$\removableFootnote{
We do not evaluate the leading order terms at $(\eta,\nu) = (0,0)$
as this keeps the error term exponentially small,
and, more importantly, it is not clear how to deal with powers of $\lambda$ in such an evaluation.
},
\begin{align}
\det \left( \rho I - M_{\cG^+[k,\chi]} \right) &= \begin{cases}
\rho^N \left( 1 - \frac{\lambda^{k+\chi+1}}{\rho}
\omega_{\ell d}^{\sf T} \left( M_{\cX^{\overline{0}}} M_{\cY} \right)^{-\chi-1}
M_{\check{\cX}} \zeta_{\ell d} \right) +
\cO \left( \sigma^k \right) \;, &
-\chi_{\rm max} \le \chi \le -1 \\
\rho^N \left( 1 - \frac{\lambda^{k-\chi}}{\rho}
\omega_0^{\sf T} \left( M_{\cY^{\overline{0}}} M_{\cX} \right)^{\chi}
M_{\hat{\cX}} \zeta_0 \right) +
\cO \left( \sigma^k \right) \;, &
0 \le \chi \le \chi_{\rm max}
\end{cases}
\;, \label{eq:detImMGplus} \\
\det \left( \rho I - M_{\cG^-[k,\chi]} \right) &= \begin{cases}
\rho^N \left( 1 - \frac{\lambda^{k+\chi}}{\rho}
\omega_{-d}^{\sf T} \left( M_{\cY} M_{\cX^{\overline{0}}} \right)^{-\chi}
M_{\hat{\cY}} \zeta_{-d} \right) +
\cO \left( \sigma^k \right) \;, &
-\chi_{\rm max} \le \chi \le 0 \\
\rho^N \left( 1 - \frac{\lambda^{k-\chi+1}}{\rho}
\omega_{(\ell-1)d}^{\sf T} \left( M_{\cX} M_{\cY^{\overline{0}}} \right)^{\chi-1}
M_{\check{\cY}} \zeta_{(\ell-1) d} \right) +
\cO \left( \sigma^k \right) \;, &
1 \le \chi \le \chi_{\rm max}
\end{cases}
\;. \label{eq:detImMGminus}
\end{align}
\label{le:detAll}
\end{lemma}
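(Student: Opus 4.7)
The plan is to reduce each of the four cases to a rank-one approximation of $M_{\cT}$ followed by Lemma~\ref{le:matrixDeterminant}. As a model, take $\cT = \cG^+[k,\chi]$ with $0 \le \chi \le \chi_{\rm max}$. By (\ref{eq:Gplus}),
\[ M_{\cG^+[k,\chi]} = M_{\hat{\cX}} \, M_{\cS}^{k-\chi} \, M_{\cS^{\overline{\ell d}}}^{\chi} , \]
and only $M_{\cS}^{k-\chi}$ has complexity growing with $k$. I would apply Lemma~\ref{le:ordersigmak} together with $M_{\cS} \zeta_0 = \lambda \zeta_0$ to obtain the spectral expansion
\[ M_{\cS}^{k-\chi} = \lambda^{k-\chi} \zeta_0 \omega_0^{\sf T} + \cO(\sigma^{k-\chi}) , \]
in which the error is $\cO(\sigma^k)$ uniformly because $|\chi| \le \chi_{\rm max}$. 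Substituting produces a rank-one leading term plus an $\cO(\sigma^k)$ perturbation. Applying Lemma~\ref{le:matrixDeterminant} with ${\rm adj}(\rho I) = \rho^{N-1} I$ and using $M_{\cS^{\overline{\ell d}}} = M_{\cY^{\overline{0}}} M_{\cX}$ yields exactly the claimed formula for $\chi \ge 0$. Continuity of $\det$ on bounded matrix sets then transfers the $\cO(\sigma^k)$ perturbation of the matrix to an $\cO(\sigma^k)$ perturbation of the determinant.

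The remaining three cases follow the same scheme but require an eigenvector change. For $\cG^+[k,\chi]$ with $\chi \le -1$, (\ref{eq:Gplus}) gives $M_{\cG^+[k,\chi]} = M_{\cS^{\overline{0}}}^{-\chi} M_{\hat{\cX}} M_{\cS}^{k+\chi}$, whose direct spectral expansion produces exponent $\lambda^{k+\chi}$ and eigenvectors $\omega_0, \zeta_0$, not the $\lambda^{k+\chi+1}$ and $\omega_{\ell d}, \zeta_{\ell d}$ appearing in the statement. I would bridge the gap by first using $\det(\rho I - AB) = \det(\rho I - BA)$ together with Propositions~\ref{pr:rssIdentities} and~\ref{pr:partitions} to pass (modulo cyclic shift) to a representative in which a high power of $M_{\cS^{((\ell-1)d)}} = M_{\check{\cX}} M_{\check{\cY}}$ is isolated at one end, then repeating the rank-one spectral expansion to extract $\lambda^{k+\chi+1}$ and $\omega_{(\ell-1)d}, \zeta_{(\ell-1)d}$, and finally applying identity (\ref{eq:omegaMXcheckzeta}) of Lemma~\ref{le:omegaMzeta} to swap those eigenvectors for $\omega_{\ell d}, \zeta_{\ell d}$. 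The $\cG^-$ cases proceed analogously, with (\ref{eq:Gminus}) in place of (\ref{eq:Gplus}) and with (\ref{eq:omegaMYhatzeta}) or (\ref{eq:omegaMYcheckzeta}) of Lemma~\ref{le:omegaMzeta} performing the appropriate eigenvector swap depending on the sign of $\chi$.

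The main obstacle will be bookkeeping rather than genuine difficulty: for each of the four cases one must choose the cyclic representative of $\cG^\pm[k,\chi]$ that isolates a high power of the correct shifted matrix at one end, and then invoke exactly the matching identity of Lemma~\ref{le:omegaMzeta} so that the leading coefficient lands in the form written in (\ref{eq:detImMGplus})--(\ref{eq:detImMGminus}) rather than an equivalent but superficially different form. The only analytic point requiring care is uniformity of the $\cO(\sigma^k)$ constants in $k$; this holds because the non-growing matrix factors $M_{\hat{\cX}}, M_{\check{\cX}}, M_{\hat{\cY}}, M_{\check{\cY}}, M_{\cS^{\overline{0}}}^{|\chi|}, M_{\cS^{\overline{\ell d}}}^{|\chi|}$, and the eigenvectors $\zeta_j, \omega_j$, are all bounded uniformly on a neighbourhood of $(\eta,\nu)=(0,0)$ once $|\chi|$ is bounded by $\chi_{\rm max}$.
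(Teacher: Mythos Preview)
Your proposal is correct and follows essentially the same route as the paper: the paper chooses a cyclic permutation of $\cG^+[k,\chi]$ (via Lemma~\ref{le:eigMSindep}, your $\det(\rho I-AB)=\det(\rho I-BA)$) that places the high power of $M_{\cS^{(j)}}$ at one end, applies Lemma~\ref{le:ordersigmak} to get the rank-one approximation $\lambda^{k-\chi}\zeta_0\omega_0^{\sf T}+\cO(\sigma^k)$, and finishes with Lemma~\ref{le:matrixDeterminant}. Your explicit identification of Lemma~\ref{le:omegaMzeta} as the device that converts the $(\ell-1)d$-eigenvectors to $\ell d$-eigenvectors in the $\chi\le-1$ case is exactly what the paper leaves implicit when it says the remaining formulas ``can be derived similarly''.
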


\begin{proof}
Here we derive (\ref{eq:detImMGplus}) for $0 \le \chi \le \chi_{\rm max}$.
The remaining formulas can be derived similarly.

By Lemma \ref{le:eigMSindep},
$M_{\cG^+[k,\chi]}$ and $M_{\cG^+[k,\chi]^{(i)}}$ share the same eigenvalues and multiplicities, for any $i$.
The same is true for $\rho I - M_{\cG^+[k,\chi]}$ and $\rho I - M_{\cG^+[k,\chi]^{(i)}}$, thus, for any $i$,
\begin{equation}
\det \left( \rho I - M_{\cG^+[k,\chi]} \right) = 
\det \left( \rho I - M_{\cG^+[k,\chi]^{(i)}} \right) \;.
\label{eq:detImMGplusproof1}
\end{equation}

For any $0 \le \chi \le \chi_{\rm max}$,
$\cG^+[k,\chi] = \left( \cS^{\overline{\ell d}} \right)^{\chi} \cS^{k-\chi} \hat{\cX}$, (\ref{eq:Gplus}).
The word $\hat{\cX}$ has $n-d$ symbols (\ref{df:XYall}), thus
\begin{equation}
\cG^+[k,\chi]^{(d-n)} =
\hat{\cX} \left( \cS^{\overline{\ell d}} \right)^{\chi} \cS^{k-\chi} \;.
\label{eq:detImMGplusproof2}
\end{equation}
$\cG^+[k,\chi]^{(d-n)}$ is a particularly useful permutation of $\cG^+[k,\chi]$ for the purposes of this proof
because, by (\ref{eq:detImMGplusproof2}), it ends in a power involving $k$.
By (\ref{eq:detImMGplusproof2}), 
\begin{equation}
M_{\cG^+[k,\chi]^{(d-n)}} = M_{\cS}^{k-\chi} M_{\cS^{\overline{\ell d}}}^{\chi} M_{\hat{\cX}} \;.
\label{eq:detImMGplusproof3}
\end{equation}
Since $M_{\cS} \zeta_0 = \lambda \zeta_0$, by Lemma \ref{le:ordersigmak} with $j=0$ we can write
\begin{equation}
M_{\cS}^{k-\chi} =
M_{\cS}^{k-\chi} \zeta_0 \omega_0^{\sf T} + M_{\cS}^{k-\chi} \left( I - \zeta_0 \omega_0^{\sf T} \right) =
\lambda^{k-\chi} \zeta_0 \omega_0^{\sf T} + \cO \left( \sigma^k \right) \;,
\label{eq:detImMGplusproof4}
\end{equation}
with which (\ref{eq:detImMGplusproof3}) becomes
\begin{equation}
M_{\cG^+[k,\chi]^{(d-n)}} =
\lambda^{k-\chi} \zeta_0 \omega_0^{\sf T}
\left( M_{\cY^{\overline{0}}} M_{\cX} \right)^{\chi} M_{\hat{\cX}} +
\cO \left( \sigma^k \right) \;,
\label{eq:detImMGplusproof5}
\end{equation}
where we have also substituted $M_{\cS^{\overline{\ell d}}} = M_{\cY^{\overline{0}}} M_{\cX}$.
Finally, by using Lemma \ref{le:matrixDeterminant},
(\ref{eq:detImMGplusproof1}) with $i = n-d$, and (\ref{eq:detImMGplusproof5}),
we arrive at (\ref{eq:detImMGplus}) for $0 \le \chi \le \chi_{\rm max}$ as required.
\end{proof}

To motivate the next result,
recall that boundaries of $\cG^\pm_k$-mode-locking regions
are points where $s^{\cG^\pm[k,\chi]}_i = 0$, for certain values of $i$.
Each $s^{\cG^\pm[k,\chi]}_i$ can be evaluated using (\ref{eq:sFormulaEarly}).
The following result provides asymptotic expressions for the numerator of
(\ref{eq:sFormulaEarly}), applied to $s^{\cG^\pm[k,\chi]}_i$.
In view of Proposition \ref{pr:rssIdentities} and Lemma \ref{le:sZero},
different values of $\chi$ and $i$ can give the same boundary $s^{\cG^\pm[k,\chi]}_i = 0$.
This gives us some choice as to the values of $\chi$ and $i$ that we can use to describe a given boundary.
In Lemma \ref{le:sGall2} we choose the index $i$ that provides the simplest algebraic expression,
leading to four different cases, as indicated\removableFootnote{
Also
\begin{align}
\cG^+[k,\chi]^{\left( \tilde{\ell} d_k^+ \right)} &=
\left( \cY \cX^{\overline{0}} \right)^{-\chi - 1}
\check{\cX} \left( \cS^{((\ell-1)d)} \right)^{k+\chi+1} \;, \qquad
-k+1 \le \chi \le -1 \;, \\
\cG^+[k,\chi] &=
\left( \cX \cY^{\overline{0}} \right)^{\chi}
\hat{\cX} \left( \cS^{(-d)} \right)^{k - \chi} \;, \qquad
0 \le \chi \le k-1 \;, \\
\cG^-[k,\chi]^{(-d_k^-)} &=
\left( \cX^{\overline{0}} \cY \right)^{-\chi} \hat{\cY} \cS^{k+\chi} \;, \qquad
-k+1 \le \chi \le 0 \;, \\
\cG^-[k,\chi]^{\left( \left( \tilde{\ell}-1 \right) d_k^- \right)} &=
\left( \cY^{\overline{0}} \cX \right)^{\chi - 1} \check{\cY} \left( \cS^{(\ell d)} \right)^{k-\chi+1} \;, \qquad
1 \le \chi \le k-1 \;.
\end{align}
}.



\begin{lemma}
Suppose (\ref{eq:f}) with $K \ge 2$ has an $\cS$-shrinking point
at $\xi = \xi^*$ and $\det(J) \ne 0$ and $\sigma < 1$.
Choose any $\chi_{\rm max} \in \mathbb{Z}^+$ and $k \in \mathbb{Z}^+$.
Then, in a neighbourhood of $(\eta,\nu) = (0,0)$,
\begin{align}
\det \left( P_{\cG^+[k,\chi]^{\left( \tilde{\ell} d_k^+ \right)}} \right) \varrho^{\sf T} B \mu &=
\gamma_{(\ell-1)d} \sum_{j=0}^{k+\chi} \lambda^j +
\omega_{(\ell-1)d}^{\sf T} \left( f^{\left( \cY \cX^{\overline{0}} \right)^{-\chi-1} \check{\cX}}
\left( \varphi_{(\ell-1)d} \right) - \varphi_{(\ell-1)d} \right) \lambda^{k + \chi + 1} \nonumber \\
&\quad+ \cO \left( \sigma^k \right) \;, {\rm ~for~all~} -\chi_{\rm max} \le \chi \le -1 \;,
\label{eq:sGplusminus} \\
\det \left( P_{\cG^+[k,\chi]} \right) \varrho^{\sf T} B \mu &=
\gamma_{-d} \sum_{j=0}^{k-\chi-1} \lambda^j +
\omega_{-d}^{\sf T} \left( f^{\left( \cX \cY^{\overline{0}} \right)^{\chi} \hat{\cX}}
\left( \varphi_{-d} \right) - \varphi_{-d} \right) \lambda^{k - \chi} +
\cO \left( \sigma^k \right) \;, \nonumber \\
& \quad {\rm ~for~all~} 0 \le \chi \le \chi_{\rm max} \;,
\label{eq:sGplusplus} \\
\det \left( P_{\cG^-[k,\chi]^{\left( -d_k^- \right)}} \right) \varrho^{\sf T} B \mu &=
\gamma_0 \sum_{j=0}^{k-\chi-1} \lambda^j +
\omega_0^{\sf T} \left( f^{\left( \cX^{\overline{0}} \cY \right)^{-\chi} \hat{\cY}}
\left( \varphi_0 \right) - \varphi_0 \right) \lambda^{k + \chi} +
\cO \left( \sigma^k \right) \;, \nonumber \\
& \quad {\rm ~for~all~} -\chi_{\rm max} \le \chi \le 0 \;,
\label{eq:sGminusminus} \\
\det \left( P_{\cG^-[k,\chi]^{\left( \left( \tilde{\ell}-1 \right) d_k^- \right)}} \right) \varrho^{\sf T} B \mu &=
\gamma_{\ell d} \sum_{j=0}^{k-\chi} \lambda^j +
\omega_{\ell d}^{\sf T} \left( f^{\left( \cY^{\overline{0}} \cX \right)^{\chi - 1} \check{\cY}}
\left( \varphi_{\ell d} \right) - \varphi_{\ell d} \right) \lambda^{k - \chi + 1} +
\cO \left( \sigma^k \right) \;, \nonumber \\
& \quad {\rm ~for~all~} 1 \le \chi \le \chi_{\rm max} \;.
\label{eq:sGminusplus}
\end{align}
\label{le:sGall2}
\end{lemma}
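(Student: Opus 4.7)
The plan is to exploit the slow-manifold structure developed in \S\ref{sub:slow} together with commutation identities among the partition words of Definition \ref{df:XYall}. I describe the argument for case (\ref{eq:sGplusplus}); the remaining three cases are entirely analogous once the correct cyclic shift of $\cG^{\pm}[k,\chi]$ and slow-manifold reference index $j \in \{0, -d, \ell d, (\ell-1)d\}$ are chosen.

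First I would verify the commutation identity $\cS \hat{\cX} = \hat{\cX}\, \cS^{(-d)}$ (immediate from $\cS^{(-d)} = \hat{\cY}\hat{\cX}$ in Proposition \ref{pr:partitions}), which iterates to $\cS^{k-\chi}\hat{\cX} = \hat{\cX}(\cS^{(-d)})^{k-\chi}$. Combined with (\ref{eq:Gplus}) this gives
\[
\cG^+[k,\chi] = (\cS^{\overline{\ell d}})^{\chi}\, \hat{\cX}\, (\cS^{(-d)})^{k-\chi},
\]
exposing a long power of $\cS^{(-d)}$ at the end, suitable for slow-manifold analysis anchored at $\varphi_{-d}$. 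Working at parameter values with $\det(I - M_{\cG^+[k,\chi]}) \ne 0$, set $z_0 = x^{\cG^+[k,\chi]}_0$ and $\tilde{w} = f^{(\cS^{\overline{\ell d}})^{\chi}\hat{\cX}}(z_0)$, so that $z_0 = f^{(\cS^{(-d)})^{k-\chi}}(\tilde{w})$. Decomposing $\tilde{w} = \varphi_{-d} + \tilde{h}\zeta_{-d} + \tilde{q}$ with $\omega_{-d}^{\sf T}\tilde{q} = 0$, Lemma \ref{le:slowDynsGen} (with $j = -d$) together with Lemma \ref{le:ordersigmak} (noting $\sigma^{k-\chi} = \cO(\sigma^k)$ since $|\chi|$ is bounded) yield
\[
z_0 = \varphi_{-d} + \Big(\gamma_{-d}\sum_{j=0}^{k-\chi-1}\lambda^j + \tilde{h}\lambda^{k-\chi}\Big)\zeta_{-d} + \cO(\sigma^k).
\]

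Setting $r = \omega_{-d}^{\sf T}(z_0 - \varphi_{-d})$ and using the affine form of $f^{(\cS^{\overline{\ell d}})^{\chi}\hat{\cX}}$ gives $\tilde{h} = \Omega + r\Theta + \cO(\sigma^k)$, where $\Omega := \omega_{-d}^{\sf T}\big(f^{(\cS^{\overline{\ell d}})^{\chi}\hat{\cX}}(\varphi_{-d}) - \varphi_{-d}\big)$ and $\Theta := \omega_{-d}^{\sf T} M_{\hat{\cX}} M_{\cS^{\overline{\ell d}}}^{\chi} \zeta_{-d}$. Substituting and rearranging produces the key identity $r(1 - \Theta\lambda^{k-\chi}) = \gamma_{-d}\sum_{j=0}^{k-\chi-1}\lambda^j + \Omega\lambda^{k-\chi} + \cO(\sigma^k)$. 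Lemma \ref{le:omegaMzeta} applied with $Q = M_{\cS^{\overline{\ell d}}}^{\chi} = (M_{\cY^{\overline{0}}} M_{\cX})^{\chi}$ (using $\cS^{\overline{\ell d}} = \cX \cY^{\overline{0}}$) identifies $\Theta$ with $\omega_0^{\sf T}(M_{\cY^{\overline{0}}} M_{\cX})^{\chi} M_{\hat{\cX}} \zeta_0$, which by Lemma \ref{le:detAll} is precisely the $\lambda^{k-\chi}$-coefficient in $\det(I - M_{\cG^+[k,\chi]}) = 1 - \Theta\lambda^{k-\chi} + \cO(\sigma^k)$. Since $s^{\cG^+[k,\chi]}_0 = r + \cO(\sigma^k)$ (the orthogonal remainder $\tilde{q}$ propagates through $(\cS^{(-d)})^{k-\chi}$ as $\cO(\sigma^k)$), identity (\ref{eq:id2}) yields $\det(P_{\cG^+[k,\chi]})\varrho^{\sf T} B \mu = \det(I - M_{\cG^+[k,\chi]})\, s^{\cG^+[k,\chi]}_0 = r(1 - \Theta\lambda^{k-\chi}) + \cO(\sigma^k)$, matching (\ref{eq:sGplusplus}); the identity between $C^K$ functions then extends by continuity to parameter values where $\det(I - M) = 0$.

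The main obstacle is the bookkeeping across the four cases: each requires its own cyclic shift of $\cG^{\pm}[k,\chi]$ (by $\tilde{\ell}d_k^{\pm}$, $(\tilde{\ell}-1)d_k^{\pm}$, $-d_k^{\pm}$, or $0$, with indices determined via Lemma \ref{le:mndkpm}), an analogous commutation identity drawn from Proposition \ref{pr:XYformulas} featuring one of $\hat{\cX}, \check{\cX}, \hat{\cY}, \check{\cY}$, and the matching companion identity from Lemma \ref{le:omegaMzeta} to close the self-consistency loop against the appropriate determinant factor from Lemma \ref{le:detAll}. The four cases enjoy a clean symmetry (swapping roles of $\cX \leftrightarrow \cY$ and $\hat{\cdot} \leftrightarrow \check{\cdot}$), so the same self-consistent slow-manifold argument adapts uniformly in each.
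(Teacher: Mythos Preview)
Your proposal is correct and follows essentially the same route as the paper's own proof: rewrite $\cG^+[k,\chi]$ via the commutation identity $\cS\hat{\cX}=\hat{\cX}\cS^{(-d)}$ so a long power of $\cS^{(-d)}$ sits at the end, decompose relative to the $j=-d$ slow manifold, solve the resulting self-consistency relation for the slow coordinate, invoke Lemma~\ref{le:omegaMzeta} to match the denominator with $\det(I-M_{\cG^+[k,\chi]})$ from Lemma~\ref{le:detAll}, and finish via (\ref{eq:id2}) plus continuity. The only cosmetic difference is that the paper decomposes $x_0^{\cG^+[k,\chi]}$ itself in the form $\varphi_{-d}+h\zeta_{-d}+q$ and then applies $f^{\cG^+[k,\chi]}$, whereas you decompose the intermediate image $\tilde w$; both lead to the same linear equation in the slow coordinate with identical error control.
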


\begin{proof}
Here we derive (\ref{eq:sGplusplus}).
The remaining expressions can be derived in the same fashion.

By (\ref{eq:partitions1}), (\ref{eq:partitions3}), (\ref{eq:XXcheck}) and (\ref{eq:YXhat}),
\begin{equation}
\cS \hat{\cX} = \cX \cY \hat{\cX} = \cX \check{\cX} \cY^{\overline{0}} =
\hat{\cX} \cX^{\overline{0}} \cY^{\overline{0}} = \hat{\cX} \cS^{(-d)} \;.
\label{eq:Gplusplus1}
\end{equation}
Thus, for $0 \le \chi \le \chi_{\rm max}$, (\ref{eq:Gplus}) can be rewritten as
\begin{equation}
\cG^+[k,\chi] = \left( \cS^{\overline{\ell d}} \right)^{\chi}
\hat{\cX} \left( \cS^{(-d)} \right)^{k - \chi} \;.
\label{eq:Gplusplus2}
\end{equation}
Then, by Lemma \ref{le:slowDynsGen} with $j = -d$, for any $x \in \mathbb{R}^N$,
\begin{align}
f^{\cG^+[k,\chi]}(x) &=
f^{\left( \cS^{(-d)} \right)^{k-\chi}}
\left( f^{\left( \cS^{\overline{\ell d}} \right)^{\chi} \hat{\cX}}(x) \right) \nonumber \\
&= f^{\left( \cS^{(-d)} \right)^{k-\chi}}
\left( M_{\left( \cS^{\overline{\ell d}} \right)^{\chi} \hat{\cX}} x +
P_{\left( \cS^{\overline{\ell d}} \right)^{\chi} \hat{\cX}} B \mu \right) \nonumber \\
&= \varphi_{-d} +
\left( \gamma_{-d} \sum_{j=0}^{k-\chi-1} \lambda^j +
\omega_{-d}^{\sf T} \left( M_{\left( \cS^{\overline{\ell d}} \right)^{\chi} \hat{\cX}} x +
P_{\left( \cS^{\overline{\ell d}} \right)^{\chi} \hat{\cX}} B \mu -
\varphi_{-d} \right) \lambda^{k-\chi} \right)
\zeta_{-d} \nonumber \\
&\quad+M_{\cS^{(-d)}}^{k-\chi} \left( I - \zeta_{-d} \omega_{-d}^{\sf T} \right)
\left( M_{\left( \cS^{\overline{\ell d}} \right)^{\chi} \hat{\cX}} x +
P_{\left( \cS^{\overline{\ell d}} \right)^{\chi} \hat{\cX}} B \mu -
\varphi_{-d} \right) \;.
\label{eq:sGplusplusproof1}
\end{align}

Now suppose $\det \left( I - M_{\cG^+[k,\chi]} \right) \ne 0$.
Then $x^{\cG^+[k,\chi]}_0$ is well-defined and is the unique fixed point of $f^{\cG^+[k,\chi]}$.
By Lemma \ref{le:slowDynsGen} we can uniquely write
\begin{equation}
x^{\cG^+[k,\chi]}_0 = \varphi_{-d} + h \zeta_{-d} + q \;,
\label{eq:xGplusForm}
\end{equation}
where $h \in \mathbb{R}$ and $\omega_{-d}^{\sf T} q = 0$.
By substituting (\ref{eq:xGplusForm}) for $x$ and $f^{\cG^+[k,\chi]}(x)$ in (\ref{eq:sGplusplusproof1}),
multiplying both sides of (\ref{eq:sGplusplusproof1}) by $e_1^{\sf T}$ on the left,
and applying Lemma \ref{le:ordersigmak}, we obtain\removableFootnote{
In Notebook I I solved a similar equation for $h$ and $q$ exactly
by using formulas for partitioned matrices which is possible because the equation is linear in $h$ and $q$.
This required a lot of effort but only culminated in formulas
that can be derived much more simply!
We have
\begin{equation}
x^{\cG^+[k,0]}_0 = \left( I - M_{\cG^+[k,0]} \right)^{-1} P_{\cG^+[k,0]} B \mu \;.
\end{equation}
From $\cG^+[k,0] = \hat{\cX} \left( \cS^{(-d)} \right)^k$, it follows that
\begin{equation}
P_{\cG^+[k,0]} = \left( I - M_{\cS^{(-d)}}^k \right)
\left( I - M_{\cS^{(-d)}} \right)^{-1} P_{\cS^{(-d)}} +
M_{\cS^{(-d)}}^k P_{\hat{\cX}} \;.
\end{equation}
Substituting this and
$x^{\cS}_{-d} = \left( I - M_{\cS^{(-d)}} \right)^{-1} P_{\cS^{(-d)}} B \mu$ yields
\begin{align}
x^{\cG^+[k,0]}_0 &= \left( I - M_{\cG^+[k,0]} \right)^{-1}
\left( \left( I - M_{\cS^{(-d)}}^k \right) x^{\cS}_{-d} +
M_{\cS^{(-d)}}^k P_{\hat{\cX}} B \mu \right) \nonumber \\
&= \left( I - M_{\cG^+[k,0]} \right)^{-1}
\left( x^{\cS}_{-d} - M_{\cS^{(-d)}}^k \left( x^{\cS}_{-d} - P_{\hat{\cX}} B \mu \right) \right)
\nonumber \\
&= \left( I - M_{\cG^+[k,0]} \right)^{-1}
\left( x^{\cS}_{-d} - M_{\cS^{(-d)}}^k M_{\hat{\cX}} x^{\cS}_0 \right) \nonumber \\
&= \left( I - M_{\cG^+[k,0]} \right)^{-1}
\left( x^{\cS}_{-d} - M_{\cG^+[k,0]} x^{\cS}_0 \right) \nonumber \\
&= x^{\cS}_0 + \left( I - M_{\cG^+[k,0]} \right)^{-1}
\left( x^{\cS}_{-d} - x^{\cS}_0 \right) \;,
\end{align}
where we have used
$x^{\cS}_{-d} = M_{\hat{\cX}} x^{\cS}_0 + P_{\hat{\cX}} B \mu$.
This formula does not seem useful because it involves $\left( I - M_{\cG^+[k,0]} \right)^{-1}$.
By then appropriately adding and subtracting $\zeta_{-d} \omega_{-d}^{\sf T}$
one can obtain formulas for quantities akin to $h$ and $q$,
but these formulas involve $\left( I - M_{\cG^+[k,0]} \right)^{-1}$.
}
\begin{equation}
h = \gamma_{-d} \sum_{j=0}^{k-\chi-1} \lambda^j +
\omega_{-d}^{\sf T} \left( M_{\left( \cS^{\overline{\ell d}} \right)^{\chi} \hat{\cX}}
\left( \varphi_{-d} + h \zeta_{-d} \right) +
P_{\left( \cS^{\overline{\ell d}} \right)^{\chi} \hat{\cX}} B \mu -
\varphi_{-d} \right) \lambda^{k-\chi} + \cO \left( \sigma^k \right) \;,
\label{eq:sGplusplusproof2}
\end{equation}
and
\begin{equation}
q = \cO \left( \sigma^k \right) \;.
\label{eq:sGplusplusproof10}
\end{equation}
By (\ref{eq:xGplusForm}) and (\ref{eq:sGplusplusproof10}),
$s^{\cG^+[k,\chi]}_0 = h + \cO \left( \sigma^k \right)$,
and by solving for $h$ in (\ref{eq:sGplusplusproof2}), we arrive at
\begin{equation}
s^{\cG^+[k,\chi]}_0 = \frac{\gamma_{-d} \sum_{j=0}^{k-\chi-1} \lambda^j +
\omega_{-d}^{\sf T} \left( f^{\left( \cS^{\overline{\ell d}} \right)^{\chi} \hat{\cX}}
\left( \varphi_{-d} \right) - \varphi_{-d} \right) \lambda^{k-\chi}}
{1 - \lambda^{k-\chi} \omega_{-d}^{\sf T} M_{\hat{\cX}}
\left( M_{\cS^{\overline{\ell d}}} \right)^{\chi} \zeta_{-d}} +
\cO \left( \sigma^k \right) \;,
\label{eq:sGplus0proof3}
\end{equation}

Finally, by (\ref{eq:omegaMXhatzeta}) with $Q = M_{\cS^{\overline{\ell d}}} = M_{\cY^{\overline{0}}} M_{\cX}$,
and (\ref{eq:detImMGplus}) with $0 \le \chi \le \chi_{\rm max}$ and $\rho = 1$, we obtain
\begin{equation}
\det \left( I - M_{\cG^+[k,\chi]} \right) =
1 - \lambda^{k-\chi} \omega_{-d}^{\sf T} M_{\hat{\cX}}
\left( M_{\cS^{\overline{\ell d}}} \right)^{\chi} \zeta_{-d} + \cO \left( \sigma^k \right) \;.
\label{eq:detImMGplusplusAlt}
\end{equation}
Therefore, by (\ref{eq:sFormulaEarly}), (\ref{eq:sGplus0proof3}) and (\ref{eq:detImMGplusplusAlt}),
we produce (\ref{eq:sGplusplus}), as required.
Above we assumed \linebreak													
$\det \left( I - M_{\cG^+[k,\chi]} \right) \ne 0$.
By continuity, (\ref{eq:sGplusplus}) also holds at points near $(\eta,\nu) = (0,0)$
for which $\det \left( I - M_{\cG^+[k,\chi]} \right) = 0$. 
\end{proof}


We have now developed the tools necessary to prove Theorem \ref{th:main1}.
The proof is given Appendix \ref{app:proofs}.
In this proof we obtain additional asymptotic expressions for the boundary curves that are useful below.
Specifically, on the curves $\det \left( P_{\cG^+[k,\chi]} \right) = 0$
and $\det \left( P_{\cG^+[k,\chi]^{\left( \left( \tilde{\ell}-1 \right) d_k^+ \right)}} \right) = 0$, we have
\begin{equation}
\frac{\lambda^k}{t_d} \eta + \frac{1}{t_{(\ell-1)d}} \nu +
\cO \left( \left( \eta,\nu \right)^2 \right) = 0 \;,
\label{eq:plusBoundaries}
\end{equation}
and on the curves $\det \left( P_{\cG^-[k,\chi]} \right) = 0$
and $\det \left( P_{\cG^-[k,\chi]^{\left( \left( \tilde{\ell}-1 \right) d_k^+ \right)}} \right) = 0$, we have
\begin{equation}
\frac{1}{t_d} \eta + \frac{\lambda^k}{t_{(\ell-1)d}} \nu +
\cO \left( \left( \eta,\nu \right)^2 \right) = 0 \;.
\label{eq:minusBoundaries}
\end{equation}

The next result concerns the eigenvalues of $M_{\cG^\pm[k,\chi]}$
and is used below to prove Theorem \ref{th:main2}.
Theorem \ref{th:main3} follows from this and is proved in Appendix \ref{app:proofs}.

\begin{lemma}
Suppose (\ref{eq:f}) with $K \ge 2$ has an $\cS$-shrinking point
at $\xi = \xi^*$ and $\det(J) \ne 0$ and $\sigma < 1$.
Choose any $\chi_{\rm max} \in \mathbb{Z}^+$ and $k \in \mathbb{Z}^+$.
Then, in a neighbourhood of $(\eta,\nu) = (0,0)$,
for any $|\chi| \le \chi_{\rm max}$,
$M_{\cG^\pm[k,\chi]}$ has an eigenvalue $\rho_{\cG^+[k,\chi]}$ satisfying
\begin{align}
\rho_{\cG^+[k,\chi]} &= \begin{cases}
\lambda^{k + \chi + 1} \omega_{\ell d}^{\sf T}
\left( M_{\cX^{\overline{0}}} M_{\cY} \right)^{-\chi-1}
M_{\check{\cX}} \zeta_{\ell d} + \cO \left( \sigma^k \right) \;, &
-\chi_{\rm max} \le \chi \le -1 \\
\lambda^{k - \chi} \omega_0^{\sf T}
\left( M_{\cY^{\overline{0}}} M_{\cX} \right)^{\chi}
M_{\hat{\cX}} \zeta_0 + \cO \left( \sigma^k \right) \;, &
0 \le \chi \le \chi_{\rm max}
\end{cases} \;, \label{eq:rhoPlus} \\
\rho_{\cG^-[k,\chi]} &= \begin{cases}
\lambda^{k + \chi} \omega_{-d}^{\sf T}
\left( M_{\cY} M_{\cX^{\overline{0}}} \right)^{-\chi}
M_{\hat{\cY}} \zeta_{-d} + \cO \left( \sigma^k \right) \;, &
-\chi_{\rm max} \le \chi \le 0 \\
\lambda^{k - \chi + 1} \omega_{(\ell-1)d}^{\sf T}
\left( M_{\cX} M_{\cY^{\overline{0}}} \right)^{\chi - 1}
M_{\check{\cY}} \zeta_{(\ell-1) d} + \cO \left( \sigma^k \right) \;, &
1 \le \chi \le \chi_{\rm max}
\end{cases} \;, \label{eq:rhoMinus}
\end{align}
and all other eigenvalues of $M_{\cG^\pm[k,\chi]}$ are $\cO \left( \sigma^k \right)$.
\label{le:eigenvalues}
\end{lemma}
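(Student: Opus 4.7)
The plan is to show that $M_{\cG^\pm[k,\chi]}$ is similar to a small-norm perturbation of a rank-one matrix, and to read the eigenvalues off the perturbation structure. The algebraic identity needed is essentially the one used inside the proof of Lemma \ref{le:detAll}. For concreteness consider the case $\cT = \cG^+[k,\chi]$ with $0 \le \chi \le \chi_{\rm max}$. Permuting cyclically to expose the high power of $M_{\cS}$ gives
$$M_{\cT^{(d-n)}} = M_{\cS}^{k-\chi}\,N, \qquad N = M_{\cS^{\overline{\ell d}}}^\chi\,M_{\hat{\cX}}.$$
At the shrinking point $\lambda$ is a simple eigenvalue of $M_{\cS}$ (Lemma \ref{le:rankNm1}) with spectral projection $P = \zeta_0\omega_0^{\sf T}$, since $\omega_0^{\sf T}\zeta_0 = 1$. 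Lemma \ref{le:ordersigmak} shows that the complementary part of $M_{\cS}^{k-\chi}$ has operator norm $\cO(\sigma^k)$, so
$$M_{\cT^{(d-n)}} = \lambda^{k-\chi}\,\zeta_0\,w^{\sf T} + E, \qquad w^{\sf T} := \omega_0^{\sf T}\,N, \qquad \|E\| = \cO(\sigma^k).$$
The leading rank-one matrix has a single nonzero eigenvalue $\alpha_k := \lambda^{k-\chi}\,w^{\sf T}\zeta_0$, exactly the expression on the right of (\ref{eq:rhoPlus}) in this case, and an $(N-1)$-fold zero eigenvalue on $\ker(w^{\sf T})$. The three remaining cases are handled by the analogous cyclic permutations from (\ref{eq:Gplus})--(\ref{eq:Gminus}) together with the corresponding spectral triples at $j \in \{-d, \ell d, (\ell-1)d\}$.

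Next I would extract the actual spectrum of $M_{\cT^{(d-n)}}$ by a block-matrix perturbation argument. Changing to a basis adapted to the splitting $\mathrm{span}(\zeta_0) \oplus \ker(\omega_0^{\sf T})$ puts $M_{\cT^{(d-n)}}$ in the block form
$$\begin{pmatrix} \alpha_k + \cO(\sigma^k) & \text{(bounded row)} \\ \cO(\sigma^k) & D \end{pmatrix}, \qquad \|D\| = \cO(\sigma^k),$$
the bottom blocks being small because $\omega_0^{\sf T}$ annihilates the rank-one leading term. A Schur complement argument, or equivalently the implicit function theorem applied to the equation defining a perturbed invariant subspace, then produces a unique one-dimensional invariant subspace close to $\mathrm{span}(\zeta_0)$ on which the eigenvalue is $\alpha_k + \cO(\sigma^k)$, and a unique complementary $(N-1)$-dimensional invariant subspace close to $\ker(\omega_0^{\sf T})$ on which $M_{\cT^{(d-n)}}$ acts as an operator of norm $\cO(\sigma^k)$, each of whose eigenvalues therefore has modulus $\cO(\sigma^k)$. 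Lemma \ref{le:eigMSindep} transfers this spectrum to $M_{\cT}$ itself.

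The main obstacle is achieving the sharp $\cO(\sigma^k)$ bound, rather than only $\cO(\sigma^{k/(N-1)})$, on the $N-1$ small eigenvalues. A coefficient-wise perturbation of the characteristic polynomial only yields the weaker Hölder bound, so it is essential to exploit the operator-norm estimate $\|D\| = \cO(\sigma^k)$ directly. A secondary issue is the degenerate regime where the constant $u_0^{\sf T}(M_{\cY^{\overline{0}}}M_{\cX})^\chi M_{\hat{\cX}}v_0$ vanishes at the shrinking point: then $\alpha_k$ need not dominate $\|E\|$, the Schur complement fails to separate the two clusters, and the assertion that every eigenvalue is $\cO(\sigma^k)$ must instead be recovered from a direct spectral-radius estimate $\rho(M_{\cT^{(d-n)}}) \le \|M_{\cT^{(d-n)}}^p\|^{1/p}$ that exploits the factor $M_{\cS}^{k-\chi}$ present in each power of the product. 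Uniformity over $\chi \in \{-\chi_{\rm max},\ldots,\chi_{\rm max}\}$ and over a neighbourhood of $(\eta,\nu) = (0,0)$ is routine since only finitely many cases of $\chi$ arise and each bound is continuous in $(\eta,\nu)$.
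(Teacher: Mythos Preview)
Your approach is essentially the paper's: the paper uses the same cyclic permutation $\cT \mapsto \cT^{(d-n)}$ to expose the factor $M_{\cS}^{k-\chi}$, the same decomposition
\[
M_{\cT^{(d-n)}} = \lambda^{k-\chi}\zeta_0\omega_0^{\sf T}\!\left(M_{\cY^{\overline{0}}}M_{\cX}\right)^{\chi}M_{\hat{\cX}} + \cO(\sigma^k)
\]
(this is equation (\ref{eq:detImMGplusproof5}) from the proof of Lemma~\ref{le:detAll}), and then simply asserts that the eigenvalues of $M_{\cT^{(d-n)}}$ differ from those of the rank-one part by $\cO(\sigma^k)$. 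Your block/Schur-complement argument is exactly what is needed to make that assertion rigorous; the paper does not spell it out. In the non-degenerate case the same conclusion follows from Bauer--Fike, since the rank-one matrix is diagonalisable with a diagonalising basis ($\zeta_0$ and $\ker w^{\sf T}$) whose condition number is bounded uniformly in $k$.

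One caveat on your fallback: in the degenerate case $w^{\sf T}\zeta_0 = 0$ the rank-one part is nilpotent with square zero, so $\bigl\|M_{\cT^{(d-n)}}^{2}\bigr\| = \cO(\sigma^k)$ and your spectral-radius estimate yields only $\rho(M_{\cT^{(d-n)}}) = \cO(\sigma^{k/2})$, not $\cO(\sigma^k)$. The paper does not address this case either; in practice it is excluded downstream by the hypothesis $\kappa^{\pm}_{\chi}\neq 0$ in Theorem~\ref{th:main2}, so it does no damage, but the sharp $\cO(\sigma^k)$ bound on \emph{all} eigenvalues need not hold at such a degeneration.
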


\begin{proof}
Here we prove the result for $\cG^+[k,\chi]$ with $0 \le \chi \le \chi_{\rm max}$.
The remaining parts can be proved similarly.

By Lemma \ref{le:eigMSindep}, the eigenvalues of $M_{\cG^+[k,\chi]}$
are the same as those of $M_{\cG^+[k,\chi]^{(d-n)}}$,
where the latter matrix is given by (\ref{eq:detImMGplusproof5}).
Therefore the eigenvalues of $M_{\cG^+[k,\chi]}$
differ from those of the rank-one matrix $\lambda^{k-\chi} \zeta_0 \omega_0^{\sf T}
\left( M_{\cY^{\overline{0}}} M_{\cX} \right)^{\chi} M_{\hat{\cX}}$, by $\cO \left( \sigma^k \right)$,
and the only non-zero eigenvalue of this matrix is
$\lambda^{k - \chi} \omega_0^{\sf T}
\left( M_{\cY^{\overline{0}}} M_{\cX} \right)^{\chi} M_{\hat{\cX}} \zeta_0$.
\end{proof}

\begin{proof}[Proof of Theorem \ref{th:main2}]
Here we prove the result for $\cG^+[k,\chi]$, with $0 \le \chi \le \chi_{\rm max}$.
The remaining parts of Theorem \ref{th:main2} can be proved in a similar fashion.

We look for intersections of $\det \left( P_{\cG^+[k,\chi]} \right) = 0$,
with $\det \left( \rho I - M_{\cG^+[k,\chi]} \right) = 0$, for $\rho = \pm 1$.
Along $\det \left( P_{\cG^+[k,\chi]} \right) = 0$,
the parameters $\eta$ and $\nu$ are $\cO \left( \frac{1}{k} \right)$, and satisfy (\ref{eq:plusBoundaries}).
In polar coordinates (\ref{eq:polarCoords}),
$\tan(\theta) = -\frac{t_d \nu}{t_{(\ell-1)d} \eta}$.
Thus by (\ref{eq:plusBoundaries}), if $\det \left( P_{\cG^+[k,\chi]} \right) = 0$, then
\begin{equation}
\tan(\theta) = -\lambda^k + \cO \left( \frac{1}{k} \right) \;.
\label{eq:main2proof1}
\end{equation}

Notice $\det \left( \rho I - M_{\cG^+[k,\chi]} \right) = 0$
when $\rho = \rho_{\cG^+[k,\chi]}$, as given by (\ref{eq:rhoPlus}).
Thus by rearranging (\ref{eq:rhoPlus}) with $0 \le \chi \le \chi_{\rm max}$ and putting $\rho_{\cG^+[k,\chi]} = \pm 1$,
we see that $\det \left( \rho I - M_{\cG^+[k,\chi]} \right) = 0$ when
\begin{equation}
\lambda^k = \left. \frac{\pm 1}{u_0^{\sf T} \left( M_{\cY^{\overline{0}}} M_{\cX} \right)^{\chi}
M_{\hat{\cX}} v_0} \right|_{(0,0)} + \cO \left( \frac{1}{k} \right) \;,
\label{eq:main2proof2}
\end{equation}
where we have used the fact that
$\omega_0^{\sf T}$ and $\zeta_0$ are equal to $u_0^{\sf T}$ and $v_0$, to leading order (\ref{eq:zetaomega00}).
Since $M_{\cS^{\overline{\ell d}}} = M_{\cY^{\overline{0}}} M_{\cX}$
and $v_{-d} = -\frac{t_d}{t_{-d}} M_{\hat{\cX}} v_0$ (\ref{eq:vumd2}),
by (\ref{eq:kappaPlus}), (\ref{eq:main2proof2}) reduces to
\begin{equation}
\lambda^k = \frac{\mp t_d}{t_{-d} \kappa^+_\chi} + \cO \left( \frac{1}{k} \right) \;,
\label{eq:main2proof3}
\end{equation}
where $\kappa^+_{\chi}$ is given by (\ref{eq:kappaPlus}),
and we assume $\kappa^+_{\chi} \ne 0$.

Therefore, by (\ref{eq:main2proof1}) and (\ref{eq:main2proof3}), 
any intersection of $\det \left( P_{\cG^+[k,\chi]} \right) = 0$ and 
$\det \left( I - M_{\cG^+[k,\chi]} \right) = 0$ must satisfy
$\tan(\theta) = \frac{t_d}{t_{-d} \kappa^+_\chi} + \cO \left( \frac{1}{k} \right)$.
Since $\tan(\theta) < 0$, $t_d < 0$ and $t_{-d} > 0$, 
the intersection point exists if and only if $\kappa^+_{\chi} > 0$.

Similarly any intersection of $\det \left( P_{\cG^+[k,\chi]} \right) = 0$ and 
$\det \left( -I - M_{\cG^+[k,\chi]} \right) = 0$ must satisfy
$\tan(\theta) = \frac{-t_d}{t_{-d} \kappa^+_\chi} + \cO \left( \frac{1}{k} \right)$.
In this case the intersection point exists if and only if $\kappa^+_{\chi} < 0$.
\end{proof}

\section{Properties of nearby shrinking points}
\label{sec:properties}
\setcounter{equation}{0}

In this section we work towards a proof of Theorem \ref{th:main4}.

Recall, $\eta = s^{\cS^{\overline{0}}}_0$ and $\nu = s^{\cS^{\overline{0}}}_{\ell d}$ (\ref{eq:etanu})
provide a convenient local coordinate system in which to study
the dynamics of (\ref{eq:f}) near an $\cS$-shrinking point.
As in \S\ref{sub:definitions}, let $\tilde{\eta} = s^{\cT^{\overline{0}}}_0$ and
$\tilde{\nu} = s^{\cT^{\overline{0}}}_{\tilde{\ell} d_k^\pm}$ denote the analogous 
coordinates for a nearby $\cT = \cG^\pm[k,\chi]$-shrinking point,
where the $\cT$-shrinking point is located at $(\eta,\nu) = \left( \eta_{\cT}, \nu_{\cT} \right)$,
Theorem \ref{th:main2}.

In order to relate $\tilde{\eta}$ and $\tilde{\nu}$ to $\eta$ and $\nu$,
we first represent points $(\eta,\nu)$
as perturbations from $\left( \eta_{\cT}, \nu_{\cT} \right)$, by writing
\begin{equation}
(\eta,\nu) = \left( \eta_{\cT} + \Delta \eta, \nu_{\cT} + \Delta \nu \right) \;.
\label{eq:Deltaetanu}
\end{equation}
At the $\cT$-shrinking point, $\Delta \eta = \Delta \nu = 0$
and $\tilde{\eta} = \tilde{\nu} = 0$.
From Lemmas \ref{le:detAll} and \ref{le:sGall2} and (\ref{eq:sFormulaEarly}),
it can be seen that $\frac{\partial s^{\cT^{\overline{0}}}_0}{\partial \eta}$ and
$\frac{\partial s^{\cT^{\overline{0}}}_0}{\partial \nu}$ are $\cO(k)$,
as are the first derivatives of $s^{\cT^{\overline{0}}}_{\tilde{\ell} d_k^\pm}$.
It follows that $\tilde{\eta}$ and $\tilde{\nu}$ admit the following expansion:
\begin{align}
\tilde{\eta} &= \left( p_1 k + p_3 + \cO \left( \frac{1}{k} \right) \right) \Delta \eta +
\left( p_2 k + p_4 + \cO \left( \frac{1}{k} \right) \right) \Delta \nu +
\cO \left( \left( \Delta \eta, \Delta \nu \right)^2 \right) \;, \label{eq:tildeeta} \\
\tilde{\nu} &= \left( q_1 k + q_3 + \cO \left( \frac{1}{k} \right) \right) \Delta \eta +
\left( q_2 k + q_4 + \cO \left( \frac{1}{k} \right) \right) \Delta \nu +
\cO \left( \left( \Delta \eta, \Delta \nu \right)^2 \right) \;, \label{eq:tildenu}
\end{align}
for some constants $p_i, q_i \in \mathbb{R}$.

The condition $\det(J) \ne 0$ ensures that the change of coordinates
from $(\xi_1,\xi_2) \leftrightarrow (\eta,\nu)$ is locally invertible.
Similarly, the condition $\det(\tilde{J}) \ne 0$, where $\tilde{J}$ is
given by (\ref{eq:JtildeEarly}), ensures that the change of coordinates
$(\eta,\nu) \leftrightarrow \left( \tilde{\eta}, \tilde{\nu} \right)$ is locally invertible.
In view of (\ref{eq:tildeeta}) and (\ref{eq:tildenu}), we can write
\begin{equation}
\tilde{J} \defeq \left. \begin{bmatrix}
\frac{\partial \tilde{\eta}}{\partial \eta} &
\frac{\partial \tilde{\eta}}{\partial \nu} \\
\frac{\partial \tilde{\nu}}{\partial \eta} &
\frac{\partial \tilde{\nu}}{\partial \nu}
\end{bmatrix} \right|_{\left( \eta_{\cT}, \nu_{\cT} \right)} =
\begin{bmatrix}
p_1 k + p_3 &
p_2 k + p_4 \\
q_1 k + q_3 &
q_2 k + q_4
\end{bmatrix}
+ \cO \left( \frac{1}{k} \right) \;.
\label{eq:Jtilde}
\end{equation}

Below in Lemma \ref{le:pq12} we derive identities involving the constants $p_i, q_i \in \mathbb{R}$.
First we show that in the analogous expansion for $\det \left( I - M_{\cT} \right)$
the leading order coefficients take a simple form and are independent to the choice of $\cT$.

\begin{lemma}
Suppose (\ref{eq:f}) with $K \ge 2$ has an $\cS$-shrinking point
at $\xi = \xi^*$ and $\det(J) \ne 0$ and $\sigma < 1$.
Then for any $\cT = \cG^\pm[k,\chi]$,
\begin{equation}
\det \left( I - M_{\cT} \right) =
\left( \frac{a}{c t_d} k + \cO(1) \right) \Delta \eta +
\left( \frac{a}{c t_{(\ell-1)d}} k + \cO(1) \right) \Delta \nu +
\cO \left( \left( \Delta \eta, \Delta \nu \right)^2 \right) \;.
\label{eq:detImMTnearby}
\end{equation}
\label{le:detImMTnearby}
\end{lemma}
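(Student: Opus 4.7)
The plan is to take the asymptotic formula $\det(\rho I - M_{\cT}) = \rho^N - \lambda^{k+\alpha}\rho^{N-1} g(\eta,\nu) + \cO(\sigma^k)$ from Lemma \ref{le:detAll}, specialise it to $\rho = 1$, and differentiate with respect to $\eta$ and $\nu$ at $(\eta,\nu) = (\eta_{\cT},\nu_{\cT})$. Here $\alpha \in \{-\chi-1, -\chi, \chi, \chi-1\}$ is an integer bounded uniformly in $k$ (depending on which of the four cases of $\cT = \cG^\pm[k,\chi]$ we are in), and $g$ is the appropriate $C^K$ matrix-sandwich expression such as $\omega_0^{\sf T}(M_{\cY^{\overline{0}}}M_{\cX})^{\chi}M_{\hat{\cX}}\zeta_0$. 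Crucially, in every case $g$ and its first partial derivatives are $\cO(1)$, and the ``shifted'' exponent $k + \alpha$ differs from $k$ only by a quantity bounded by $\chi_{\rm max}+1$.

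The central calculation is the product/chain rule:
\begin{equation}
\frac{\partial}{\partial \eta}\det(I - M_{\cT}) = -(k+\alpha)\lambda^{k+\alpha-1}\frac{\partial \lambda}{\partial \eta}\,g(\eta,\nu) - \lambda^{k+\alpha}\frac{\partial g}{\partial \eta} + \cO(\sigma^k),
\end{equation}
and similarly for $\partial/\partial \nu$. The defining property of the $\cT$-shrinking point is $\det(I - M_{\cT})|_{(\eta_{\cT},\nu_{\cT})} = 0$, which forces $\lambda^{k+\alpha}g = 1 + \cO(\sigma^k)$ at $(\eta_{\cT},\nu_{\cT})$. Combined with $\lambda = 1 + \cO(1/k)$ at $(\eta_{\cT},\nu_{\cT})$ (since this point lies within $\cO(1/k)$ of the origin by Theorem \ref{th:main2}, and $\lambda$ is $C^K$ with $\lambda(0,0)=1$), this gives $\lambda^{k+\alpha-1}g = 1 + \cO(1/k)$. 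Substituting the expansion $\partial\lambda/\partial\eta = -a/(c t_d) + \cO(\eta,\nu)$ from Lemma \ref{le:lambda} yields
\begin{equation}
\frac{\partial}{\partial \eta}\det(I - M_{\cT})\bigg|_{(\eta_{\cT},\nu_{\cT})} = \frac{a}{c t_d}k + \cO(1),
\end{equation}
because the second term $-\lambda^{k+\alpha}\partial g/\partial \eta$ contributes only $\cO(1)$. The analogous computation with $\partial\lambda/\partial\nu = -a/(c t_{(\ell-1)d}) + \cO(\eta,\nu)$ handles the $\nu$-derivative.

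Once both first partials at $(\eta_{\cT},\nu_{\cT})$ are established in the stated form, (\ref{eq:detImMTnearby}) follows from Taylor expansion around the $\cT$-shrinking point, the constant term being zero since $\det(I-M_{\cT})$ vanishes there. The routine but necessary step is to verify the $\cO(1)$ bound on the remaining $\Delta\eta\,\Delta\nu$ and higher-order coefficients; here one uses that the second partial derivatives of $\lambda^{k+\alpha}$ are at worst $\cO(k^2)\cdot\cO(1/k)\cdot\cO(1/k) = \cO(1)$ at the shrinking point (the two factors of $1/k$ coming from $\partial\lambda/\partial\eta$ and $\partial\lambda/\partial\nu$ evaluated at $(\eta_\cT,\nu_\cT)$ paired with the factor $1/\lambda^{k+\alpha}$ being $\cO(1)$). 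The remainder estimate in (\ref{eq:detImMTnearby}) is stated only to quadratic order, so these bounds suffice.

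The main obstacle is bookkeeping across the four cases of Lemma \ref{le:detAll}: one must check that in every case the factor $g(\eta_\cT,\nu_\cT)$ combines with $\lambda^{k+\alpha-1}$ in precisely the way dictated by the shrinking-point equation $\lambda^{k+\alpha}g = 1 + \cO(\sigma^k)$, so that the leading coefficient of $k$ in each partial derivative is exactly $a/(ct_d)$ or $a/(ct_{(\ell-1)d})$ regardless of which case or which sign of $\chi$ one is in. This uniformity -- which makes the formula (\ref{eq:detImMTnearby}) independent of $\cT \in \Xi_{\chi_{\rm max}}$ -- is what makes the lemma genuinely useful in the sequel, but it requires a uniform treatment of the error bounds $\cO(1/k)$ and $\cO(\sigma^k)$ as $k$ varies with $\chi$ held in the bounded set $|\chi| \le \chi_{\rm max}$.
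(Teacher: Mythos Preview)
Your approach is essentially the same as the paper's: both start from Lemma~\ref{le:detAll} specialised to $\rho=1$, use the vanishing $\det(I-M_{\cT})=0$ at $(\eta_{\cT},\nu_{\cT})$ to eliminate the unknown factor $\lambda^{k+\alpha}g$, and then differentiate (equivalently, Taylor-expand) using $\partial\lambda/\partial\eta = -a/(ct_d)+\cO(\eta,\nu)$ from Lemma~\ref{le:lambda}. The paper packages this slightly differently, writing $\det(I-M_{\cT})=1-c_1\lambda^k+\cO(1/k)$ and expanding $\lambda^k$ about $(\eta_{\cT},\nu_{\cT})$ first, but the content is the same.

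One correction: your claim that the second partial derivatives of $\lambda^{k+\alpha}$ are $\cO(1)$ is wrong. You write that the two factors of $\cO(1/k)$ come from $\partial\lambda/\partial\eta$ and $\partial\lambda/\partial\nu$ at $(\eta_{\cT},\nu_{\cT})$, but those derivatives are $-a/(ct_d)+\cO(1/k)$ and $-a/(ct_{(\ell-1)d})+\cO(1/k)$, hence $\cO(1)$, not $\cO(1/k)$. The second derivative $(k+\alpha)(k+\alpha-1)\lambda^{k+\alpha-2}(\partial\lambda/\partial\eta)^2$ is therefore genuinely $\cO(k^2)$. Fortunately this does not matter: the remainder $\cO((\Delta\eta,\Delta\nu)^2)$ in (\ref{eq:detImMTnearby}) is the ordinary Taylor remainder with implied constant allowed to depend on $k$, and the paper uses the lemma only through its linear coefficients (see the matching of $k^2$ and $k$ terms in the proof of Lemma~\ref{le:detJtilde}). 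So simply drop the paragraph about bounding quadratic coefficients; it is neither needed nor correct as stated.
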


\begin{proof}
By (\ref{eq:lambda}),
\begin{equation}
\lambda^k = \left( 1 - \frac{a}{c t_d} \eta -
\frac{a}{c t_{(\ell-1)d}} \nu + \cO \left( \left( \eta,\nu \right)^2 \right) \right)^k \;.
\label{eq:lambdak2}
\end{equation}
By (\ref{eq:lambda}),
$\frac{\partial \lambda}{\partial \eta} \left( \eta_{\cT}, \nu_{\cT} \right) =
\frac{-a}{c t_d} + \cO \left( \frac{1}{k} \right)$, and
$\frac{\partial \lambda}{\partial \nu} \left( \eta_{\cT}, \nu_{\cT} \right) =
\frac{-a}{c t_{(\ell-1)d}} + \cO \left( \frac{1}{k} \right)$.
Therefore by expanding (\ref{eq:lambdak2}) about
$(\eta,\nu) = \left( \eta_{\cT}, \nu_{\cT} \right)$, we obtain\removableFootnote{
We are evaluating
\begin{equation}
\lambda^k(\eta,\nu) = \lambda^k \left( \eta_{\cT}, \nu_{\cT} \right) +
k \lambda^{k-1} \left( \eta_{\cT}, \nu_{\cT} \right)
\frac{\partial \lambda}{\partial \eta} \left( \eta_{\cT}, \nu_{\cT} \right) \Delta \eta +
k \lambda^{k-1} \left( \eta_{\cT}, \nu_{\cT} \right)
\frac{\partial \lambda}{\partial \nu} \left( \eta_{\cT}, \nu_{\cT} \right) \Delta \nu +
\cO \left( \left( \Delta \eta, \Delta \nu \right)^2 \right) \;.
\end{equation}
}
\begin{equation}
\lambda^k(\eta,\nu) = \lambda^k \left( \eta_{\cT}, \nu_{\cT} \right)
\left( 1 - \left( \frac{a}{c t_d} k + \cO(1) \right) \Delta \eta -
\left( \frac{a}{c t_{(\ell-1)d}} k + \cO(1) \right) \Delta \nu +
\cO \left( \left( \Delta \eta, \Delta \nu \right)^2 \right) \right) \;,
\label{eq:detImMTnearbyproof1}
\end{equation}
where we have also substituted $\lambda^{k-1} \left( \eta_{\cT}, \nu_{\cT} \right) =
\lambda^k \left( \eta_{\cT}, \nu_{\cT} \right) + \cO \left( \frac{1}{k} \right)$.

By Lemma \ref{le:detAll} we can write
\begin{equation}
\det \left( I - M_{\cT} \right) =
1 - c_1 \lambda^k + \cO \left( \frac{1}{k} \right) \;,
\label{eq:detImMTnearbyproof2}
\end{equation}
where $c_1 \in \mathbb{R}$ depends on $\cT$ but is independent of $k$.
We have $\det \left( I - M_{\cT} \right) = 0$
at $(\eta,\nu) = \left( \eta_{\cT}, \nu_{\cT} \right)$, thus
\begin{equation}
\lambda^k \left( \eta_{\cT}, \nu_{\cT} \right) =
\frac{1}{c_1 \left( \eta_{\cT}, \nu_{\cT} \right)} + \cO \left( \frac{1}{k} \right) \;.
\label{eq:detImMTnearbyproof3}
\end{equation}
With (\ref{eq:detImMTnearbyproof1}) and (\ref{eq:detImMTnearbyproof3}),
(\ref{eq:detImMTnearbyproof2}) reduces to (\ref{eq:detImMTnearby}) as required.
\end{proof}

\begin{lemma}
For any $\cT = \cG^\pm[k,\chi]$,
the coefficients of (\ref{eq:tildeeta}) and (\ref{eq:tildenu}) satisfy
\begin{align}
\frac{1}{t_{(\ell-1)d}} \left( 1 \mp
\frac{{\rm sgn}(a)}{\Gamma \left( \theta^+_{\chi} \right) \sin \left( \theta^+_{\chi} \right)} \right) p_1 -
\frac{1}{t_d} \left( 1 \pm
\frac{{\rm sgn}(a)}{\Gamma \left( \theta^+_{\chi} \right) \cos \left( \theta^+_{\chi} \right)} \right) p_2 &= 0 \;,
\label{eq:p1p2} \\
\frac{1}{t_{(\ell-1)d}} \left( 1 \mp
\frac{{\rm sgn}(a)}{\Gamma \left( \theta^+_{\chi} \right) \sin \left( \theta^+_{\chi} \right)} \right) q_1 -
\frac{1}{t_d} \left( 1 \pm
\frac{{\rm sgn}(a)}{\Gamma \left( \theta^+_{\chi} \right) \cos \left( \theta^+_{\chi} \right)} \right) q_2 &= 0 \;,
\label{eq:q1q2}
\end{align}
where $\Gamma$ is given by (\ref{eq:Gamma})-(\ref{eq:GammaExtended}) and
the $\theta^\pm_{\chi}$ are given by (\ref{eq:thetaPlus})-(\ref{eq:thetaMinus}).
\label{le:pq12}
\end{lemma}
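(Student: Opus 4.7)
The plan is to identify $\tilde\eta = 0$ and $\tilde\nu = 0$ with the border-collision boundaries $\det(P_\cT) = 0$ and $\det(P_{\cT^{((\tilde\ell-1)d_k^\pm)}}) = 0$, differentiate the asymptotic expansion from Lemma~\ref{le:sGall2} at the $\cT$-shrinking point, and read off the leading $\cO(k)$ coefficients. Applying (\ref{eq:id2}) to the $\cT^{\overline{0}}$-cycle gives
\begin{equation*}
\tilde\eta\,\det(I - M_{\cT^{\overline{0}}}) = \det(P_\cT)\,\varrho^{\sf T} B\mu,
\qquad
\tilde\nu\,\det(I - M_{\cT^{\overline{0}}}) = \det(P_{\cT^{((\tilde\ell-1)d_k^\pm)}})\,\varrho^{\sf T} B\mu,
\end{equation*}
the first using $P_{\cT^{\overline{0}}} = P_\cT$ (since (\ref{eq:PS}) is independent of the $0^{\rm th}$ symbol) and the second the partition identity $\cT^{\overline{0}\,\overline{\tilde\ell d_k^\pm}(d_k^\pm)} = \cT$ from (\ref{eq:rssMainIdentity}). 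Since $\det(I - M_{\cT^{\overline{0}}})|_\cT = \tilde a \ne 0$, the pairs $(p_1, p_2)$ and $(q_1, q_2)$ are (up to a common nonzero factor) the $\cO(k)$ parts of the gradients of $\det(P_\cT)$ and $\det(P_{\cT^{((\tilde\ell-1)d_k^\pm)}})$ at $(\eta_\cT, \nu_\cT)$.

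For $\cT = \cG^+[k,\chi]$ with $\chi \ge 0$ I rewrite (\ref{eq:sGplusplus}) as $\det(P_\cT)\varrho^{\sf T} B\mu = A + XF + \cO(\sigma^k)$, where $X = \lambda^{k-\chi}$, $A = \gamma_{-d}/(1-\lambda)$, and $F = E - A$ with $E$ smooth. Using (\ref{eq:gamma}) and Lemma~\ref{le:xSi} one finds $A = s^\cS_{-d} = t_{-d}\sin\theta/(\cos\theta + \sin\theta) + \cO(r)$, so $A$ depends on $(\eta, \nu)$ to leading order only through the polar angle $\theta$. The crucial observation is that $\partial_\eta A|_\cT = A'(\theta^+_\chi)\,\partial_\eta\theta|_\cT$ is $\cO(k)$: although $A$ is bounded, $\partial_\eta\theta|_\cT = k|a|\sin\theta^+_\chi/(c t_d \Gamma(\theta^+_\chi)) + \cO(1)$ because the angle varies on an $\cO(1)$ scale over the $\cO(1/k)$ distance to the $\cS$-shrinking point. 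Consequently $\partial_\eta[\det(P_\cT)\varrho^{\sf T} B\mu]|_\cT$ has three $\cO(k)$ contributions --- from $\partial_\eta A$, from $F\,\partial_\eta X$ (with $\partial_\eta X = k\lambda^{k-\chi-1}\partial_\eta\lambda$), and from $X\,\partial_\eta F$. Substituting $X|_\cT = -\tan\theta^+_\chi$ (from the proof of Theorem~\ref{th:main2}), $F|_\cT = -A|_\cT/X|_\cT$ (from $\det(P_\cT)|_\cT = 0$), and $\partial_\eta\lambda|_\cT = -a/(c t_d)$ from (\ref{eq:lambda}), these three pieces collapse into
\begin{equation*}
\frac{\partial}{\partial\eta}\bigl[\det(P_\cT)\varrho^{\sf T}B\mu\bigr]\bigg|_\cT = \frac{a k\, t_{-d}\sin\theta^+_\chi}{c t_d(\cos\theta^+_\chi + \sin\theta^+_\chi)}\left(1 + \frac{{\rm sgn}(a)}{\Gamma(\theta^+_\chi)\cos\theta^+_\chi}\right) + \cO(1),
\end{equation*}
with the analogous $\partial_\nu$ derivative producing the factor $1 - {\rm sgn}(a)/(\Gamma(\theta^+_\chi)\sin\theta^+_\chi)$ and $t_{(\ell-1)d}$ in place of $t_d$. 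Hence $(p_1, p_2)$ is proportional to $\bigl(\tfrac{1}{t_d}(1 + {\rm sgn}(a)/(\Gamma\cos\theta^+_\chi)),\,\tfrac{1}{t_{(\ell-1)d}}(1 - {\rm sgn}(a)/(\Gamma\sin\theta^+_\chi))\bigr)$, which immediately satisfies (\ref{eq:p1p2}) with the upper signs. The identical argument applied to $\det(P_{\cT^{((\tilde\ell-1)d_k^\pm)}})$ --- whose leading asymptotic has the same $\theta$-dependence --- yields the same proportionality for $(q_1, q_2)$, establishing (\ref{eq:q1q2}). The cases $\chi < 0$ and $\cT = \cG^-[k,\chi]$ are handled in exactly the same way by starting from the appropriate part of Lemma~\ref{le:sGall2}; the $\cG^-$ case produces the lower-sign variant.

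The main obstacle is detecting and correctly combining the $\cO(k)$ contribution from $\partial_\eta A|_\cT$. Naively $A$ is bounded with smooth derivatives, so $\partial_\eta A$ appears $\cO(1)$; it is only because $A$ is essentially a function of the polar angle $\theta$, and $\theta$ varies rapidly in the $\cO(1/k)$ neighbourhood of the $\cS$-shrinking point, that $\partial_\eta A$ is actually $\cO(k)$. Omitting this piece collapses the calculation to $(p_1, p_2) \propto (1/t_d, 1/t_{(\ell-1)d})$ and erases the $\Gamma$-dependent factors of the lemma entirely. Once this contribution is handled, the remainder is routine: sign bookkeeping to convert $|c t_d/a|$ and $|c t_{(\ell-1)d}/a|$ into ${\rm sgn}(a)\cdot(c t_d/a)$ and similar, consistent with the $\theta^\pm_\chi$-quadrant conventions in Table~\ref{tb:thetaDomains}, and verifying that the $\cO(1)$ remainders in $A$, $F$, $\lambda^k|_\cT$ and the $\partial\lambda$'s contribute only to the subleading coefficients $p_3$, $p_4$, $q_3$, $q_4$.
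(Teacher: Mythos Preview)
Your proof is correct and conceptually parallel to the paper's --- both identify $\tilde\eta = 0$ with $\det(P_\cT) = 0$ and extract the leading-$k$ gradient at $(\eta_\cT,\nu_\cT)$ --- but the computational route differs. The paper does not differentiate $A + XF$ directly; instead it invokes the factored form (\ref{eq:main1proof6}) (equivalently (\ref{eq:plusBoundaries})) established in the proof of Theorem~\ref{th:main1}, which writes $\det(P_\cT)\varrho^{\sf T}B\mu$ as a nonzero prefactor $\tfrac{at_{-d}}{c(1-\lambda)}$ times the simpler expression $\tfrac{\lambda^k}{t_d}\eta + \tfrac{1}{t_{(\ell-1)d}}\nu + \cO((\eta,\nu)^2)$. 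Since the prefactor does not vanish near $(\eta_\cT,\nu_\cT)$, only this bracket needs to be linearised, and the sole $\cO(k)$ contribution then comes from $\partial\lambda^k$ via (\ref{eq:detImMTnearbyproof1}). This sidesteps precisely the ``main obstacle'' you flagged --- the hidden $\cO(k)$ in $\partial_\eta A$ arising from the rapid angular variation of $s^\cS_{-d}$ --- because that contribution is already absorbed into the prefactor. Your direct approach is more self-contained (it does not lean on the intermediate result (\ref{eq:plusBoundaries})) but requires tracking and recombining three separate $\cO(k)$ pieces; the paper's exploits work already done and is correspondingly shorter. For the $(q_1,q_2)$ half, your assertion that $\det(P_{\cT^{((\tilde\ell-1)d_k)}})$ has the same leading $\theta$-dependence is correct, but note that Lemma~\ref{le:sGall2} does not give this quantity directly; it is (\ref{eq:main1proof13}) that relates it linearly to $\det(P_\cT)$ and $\det(I-M_\cT)$, with the latter contributing only $\cO(1)$ because its coefficient $e_1^{\sf T}f^{(\cS^{\overline{\ell d}})^{\chi-1}\cX}(\varphi_{-d}) = t_{\ell d} + \cO(1/k) = \cO(1/k)$.
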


\begin{proof}
Here we prove the result for $\cT = \cG^+[k,\chi]$.
The result for $\cT = \cG^-[k,\chi]$ can be proved in the same fashion.

The curve $\tilde{\eta} = 0$
is a boundary of the $\cG^+_k$-mode-locking region emanating from the $\cT = G^+[k,\chi]$-shrinking point.
Note that $\det \left( P_{\cG^+[k,\chi]} \right) = 0$,
which is approximated by (\ref{eq:plusBoundaries}), defines the same curve.
By evaluating (\ref{eq:plusBoundaries}) at the $\cT$-shrinking point, we obtain
\begin{equation}
\lambda^k \left( \eta_{\cT}, \nu_{\cT} \right) = 
\frac{-t_d \nu_{\cT}}{t_{(\ell-1)d} \eta_{\cT}} + \cO \left( \frac{1}{k} \right) \;.
\label{eq:pq12proof1}
\end{equation}
By further expanding (\ref{eq:plusBoundaries}) about the $\cT$-shrinking point
with (\ref{eq:Deltaetanu}), we determine that $\tilde{\eta} = 0$ is described by
\begin{equation}
\frac{1}{t_d} \left( 1 - \frac{a k \eta_{\cT}}{c t_d} +
\cO \left( \frac{1}{k} \right) \right) \Delta \eta +
\frac{1}{t_{(\ell-1)d}} \left( \frac{1}{\lambda^k \left( \eta_{\cT}, \nu_{\cT} \right)} -
\frac{a k \eta_{\cT}}{c t_d} +
\cO \left( \frac{1}{k} \right) \right) \Delta \nu +
\cO \left( \left( \Delta \eta, \Delta \nu \right)^2 \right) = 0 \;.
\label{eq:pq12proof2}
\end{equation}
By multiplying both sides of (\ref{eq:pq12proof2}) by
$\frac{-c t_d}{a \eta_{\cT} k}$ and substituting (\ref{eq:pq12proof1}), we obtain
\begin{equation}
\left( \frac{1}{t_d} -
\frac{c}{a \eta_{\cT} k} +
\cO \left( \frac{1}{k} \right) \right) \Delta \eta +
\left( \frac{1}{t_{(\ell-1)d}} +
\frac{c}{a \nu_{\cT} k} +
\cO \left( \frac{1}{k} \right) \right) \Delta \nu +
\cO \left( \left( \Delta \eta, \Delta \nu \right)^2 \right) \;.
\label{eq:pq12proof3}
\end{equation}
By then evaluating $\eta_{\cT}$ and $\nu_{\cT}$ with (\ref{eq:polarCoords}) and (\ref{eq:nearbyCurve}),
and taking care to accommodate different cases depending on the sign of $a$,
we determine that $\tilde{\eta} = 0$ is described by
\begin{align}
& \frac{1}{t_d} \left( 1 + \frac{{\rm sgn}(a)}{\Gamma \left( \theta^+_{\chi} \right) \cos \left( \theta^+_{\chi} \right)} +
\cO \left( \frac{1}{k} \right) \right) \Delta \eta \nonumber \\
&+ \frac{1}{t_{(\ell-1)d}} \left( 1 - \frac{{\rm sgn}(a)}{\Gamma \left( \theta^+_{\chi} \right) \sin \left( \theta^+_{\chi} \right)} +
\cO \left( \frac{1}{k} \right) \right) \Delta \nu +
\cO \left( \left( \Delta \eta, \Delta \nu \right)^2 \right) \;.
\label{eq:pq12proof4}
\end{align}
By matching (\ref{eq:tildeeta}) and (\ref{eq:pq12proof4}) we obtain (\ref{eq:p1p2}) for $\cT = \cG^+[k,\chi]$.
The curve $\tilde{\nu} = 0$ is also given by (\ref{eq:plusBoundaries}),
hence the same result holds for $q_1$ and $q_2$, i.e.~(\ref{eq:q1q2}).
\end{proof}

We complete this section by deriving a novel identity for the leading order term of
$\det \left( \tilde{J} \right)$.
This is used to prove Theorem \ref{th:main4} in Appendix \ref{app:proofs}.
First note that by (\ref{eq:detImMS}) we can write
\begin{equation}
\det \left( I - M_{\cT} \right) =
\frac{\tilde{a}}{\tilde{t}_{d_k^\pm}} \tilde{\eta} + 
\frac{\tilde{a}}{\tilde{t}_{\left( \tilde{\ell} - 1 \right) d_k^\pm}} \tilde{\nu} +
\cO \left( \left( \tilde{\eta}, \tilde{\nu} \right)^2 \right) \;.
\label{eq:tildedetImMT2}
\end{equation}
Moreover, from Lemmas \ref{le:detAll} and \ref{le:sGall2}
it can be seen that $\tilde{t}_{d_k^\pm}$ and
$\tilde{t}_{\left( \tilde{\ell} - 1 \right) d_k^\pm}$ are $\cO \left( \frac{1}{k} \right)$,
and $\tilde{a}$ is $\cO(1)$,
and therefore we can write
\begin{equation}
\det \left( I - M_{\cT} \right) = \left( r_1 k + r_3 + \cO \left( \frac{1}{k} \right) \right) \tilde{\eta} +
\left( r_2 k + r_4 + \cO \left( \frac{1}{k} \right) \right) \tilde{\nu} +
\cO \left( \left( \tilde{\eta}, \tilde{\nu} \right)^2 \right) \;,
\label{eq:tildedetImMT}
\end{equation}
for some constants $r_i \in \mathbb{R}$.

\begin{lemma}
For any $\cT = \cG^\pm[k,\chi]$,
the coefficients of (\ref{eq:tildedetImMT}) satisfy
\begin{equation}
p_1 r_1 + q_1 r_2 = 0 \;,
\label{eq:consistencyCondition}
\end{equation}
and $\tilde{J}$ (\ref{eq:Jtilde}) satisfies\removableFootnote{
Here is an equivalent formula:
\begin{equation}
\det \left( \tilde{J} \right) =
-\frac{a}{c r_2} \left( \frac{p_2}{t_d} - \frac{p_1}{t_{(\ell-1)d}} \right) k + \cO(1) \;.
\end{equation}
}
\begin{equation}
\det \left( \tilde{J} \right) =
\frac{a}{c r_1} \left( \frac{q_2}{t_d} - \frac{q_1}{t_{(\ell-1)d}} \right) k + \cO(1) \;.
\label{eq:detJtilde}
\end{equation}
\label{le:detJtilde}
\end{lemma}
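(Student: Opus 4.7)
The plan is to exploit the fact that $\det(I - M_{\cT})$ can be expanded in two different coordinate systems, $(\Delta\eta,\Delta\nu)$ via Lemma~\ref{le:detImMTnearby} and $(\tilde\eta,\tilde\nu)$ via (\ref{eq:tildedetImMT}). Substituting the expansions (\ref{eq:tildeeta})--(\ref{eq:tildenu}) of $\tilde\eta$ and $\tilde\nu$ into (\ref{eq:tildedetImMT}) and equating the resulting expression with the expansion from Lemma~\ref{le:detImMTnearby} will produce a chain of identities among the constants $p_i$, $q_i$ and $r_i$, at successive orders in $k$. Matching the coefficients of $k^2\,\Delta\eta$ and $k^2\,\Delta\nu$ (both of which must vanish, since the right-hand side from Lemma~\ref{le:detImMTnearby} is only $\cO(k)$) will yield the consistency relation (\ref{eq:consistencyCondition}), namely $r_1 p_1 + r_2 q_1 = 0$, together with its sibling $r_1 p_2 + r_2 q_2 = 0$.

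Next I would directly expand $\det(\tilde J)$ from the matrix in (\ref{eq:Jtilde}) as a polynomial in $k$:
\begin{equation}
\det(\tilde J) = (p_1 q_2 - p_2 q_1) k^2 + (p_1 q_4 + p_3 q_2 - p_2 q_3 - p_4 q_1) k + \cO(1).
\end{equation}
The two consistency relations obtained above give $p_2 = -r_2 q_2/r_1$ and $q_1 = -r_1 p_1/r_2$, and together these imply $p_1 q_2 - p_2 q_1 = 0$, so the $k^2$ term drops out and $\det(\tilde J)$ is automatically $\cO(k)$. This eliminates the main dimensional worry.

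To evaluate the $k$-coefficient, I would match the $\cO(k)\,\Delta\eta$ and $\cO(k)\,\Delta\nu$ terms in the two expansions of $\det(I-M_{\cT})$, giving
\begin{align}
r_1 p_3 + r_3 p_1 + r_2 q_3 + r_4 q_1 &= \frac{a}{c\,t_d}, \\
r_1 p_4 + r_3 p_2 + r_2 q_4 + r_4 q_2 &= \frac{a}{c\,t_{(\ell-1)d}}.
\end{align}
Solving each for $p_3$ and $p_4$ and substituting into the $k$-coefficient of $\det(\tilde J)$, the contributions involving $r_3$ collapse by virtue of $p_1 q_2 - p_2 q_1 = 0$, and the contributions involving $r_4$ cancel outright. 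Using $p_1 = -r_2 q_1/r_1$ and $p_2 = -r_2 q_2/r_1$ (obtained by rearranging the two consistency relations), the remaining terms $p_1 q_4 - p_2 q_3$ cancel against $-r_2 q_2 q_3/r_1 + r_2 q_1 q_4/r_1$, leaving exactly the two $a/(c r_1)$ terms that give (\ref{eq:detJtilde}).

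The main obstacle I anticipate is bookkeeping: keeping track of which orders in $k$ give which algebraic identities, and being careful that the $\cO(1/k)$ remainders in (\ref{eq:tildeeta})--(\ref{eq:tildenu}) and in the expansion of $\lambda^k$ near $\left(\eta_{\cT},\nu_{\cT}\right)$ do not pollute the $\cO(1)$ part of $\det(\tilde J)$ in a way that spoils the coefficient identification. This should be controllable because the substitution produces only products $\cO(1/k) \cdot \cO(k)\,\Delta\eta = \cO(1)\,\Delta\eta$, which feed into the $\cO(1)$ remainder in (\ref{eq:detJtilde}) rather than the leading order term. The cleanest presentation will probably isolate the $k^2$ and $k$ matching as two separate sub-calculations before combining them in the final simplification.
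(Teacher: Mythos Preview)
Your proposal is correct and follows essentially the same route as the paper: substitute (\ref{eq:tildeeta})--(\ref{eq:tildenu}) into (\ref{eq:tildedetImMT}), match coefficients against Lemma~\ref{le:detImMTnearby} at orders $k^2$ and $k$, and then simplify the $k$-coefficient of $\det(\tilde J)$ using the resulting identities. The only minor difference is that the paper obtains $p_1 q_2 - p_2 q_1 = 0$ from Lemma~\ref{le:pq12} (the geometric relations (\ref{eq:p1p2})--(\ref{eq:q1q2})) rather than, as you do, directly from the pair of $k^2$-matching relations $r_1 p_1 + r_2 q_1 = 0$ and $r_1 p_2 + r_2 q_2 = 0$; your derivation is slightly more self-contained in this respect.
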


\begin{proof}
By (\ref{eq:Jtilde}),
\begin{equation}
\det \left( \tilde{J} \right) =
\left( p_1 q_4 + p_3 q_2 - p_2 q_3 - p_4 q_1 \right) k + \cO(1) \;,
\label{eq:detJtildeproof1}
\end{equation}
where the $k^2$-term has vanished because (\ref{eq:p1p2}) and (\ref{eq:q1q2}) imply
\begin{equation}
p_1 q_2 - p_2 q_1 = 0 \;.
\label{eq:pq12}
\end{equation}
By substituting (\ref{eq:tildeeta}) and (\ref{eq:tildenu}) into (\ref{eq:tildedetImMT}) we obtain
\begin{align}
\det \left( I - M_{\cS} \right) &=
\left( \left( p_1 r_1 + q_1 r_2 \right) k^2 +
\left( p_1 r_3 + p_3 r_1 + q_1 r_4 + q_3 r_2 \right) k + \cO(1) \right) \Delta \eta \nonumber \\
&\quad+\left( \left( p_2 r_1 + q_2 r_2 \right) k^2 +
\left( p_2 r_3 + p_4 r_1 + q_2 r_4 + q_4 r_2 \right) k + \cO(1) \right) \Delta \nu +
\cO \left( \left( \Delta \eta, \Delta \nu \right)^2 \right) \;.
\label{eq:detJtildeproof2}
\end{align}
By matching the $k^2$-terms of (\ref{eq:detImMTnearby}) and (\ref{eq:detJtildeproof2}),
we deduce that $p_1 r_1 + q_1 r_2 = 0$
(verifying (\ref{eq:consistencyCondition})) and $p_2 r_1 + q_2 r_2 = 0$.
Note that these equations are equivalent in view of (\ref{eq:pq12}).
By then matching the $k$-terms of (\ref{eq:detImMTnearby}) and (\ref{eq:detJtildeproof2}), we obtain
\begin{equation}
p_1 r_3 + p_3 r_1 + q_1 r_4 + q_3 r_2 = \frac{a}{c t_d} \;, \qquad
p_2 r_3 + p_4 r_1 + q_2 r_4 + q_4 r_2 = \frac{a}{c t_{(\ell-1)d}} \;. \label{eq:detJtildeproof3}
\end{equation}
By combining (\ref{eq:consistencyCondition}), (\ref{eq:pq12}) and (\ref{eq:detJtildeproof3}), we obtain
\begin{equation}
\frac{a}{c} \left( \frac{q_2}{t_d} - \frac{q_1}{t_{(\ell-1)d}} \right) =
r_1 \left( p_1 q_4 + p_3 q_2 - p_2 q_3 - p_4 q_1 \right) \;,
\end{equation}
which by (\ref{eq:detJtildeproof1}) yields (\ref{eq:detJtilde}), as required.
\end{proof}

\section{Summary}
\label{sec:conc}
\setcounter{equation}{0}

Shrinking points are codimension-two points in the parameter space of
piecewise-linear continuous maps at which mode-locking regions have zero width.
In this paper we have studied the $N$-dimensional map (\ref{eq:f}),
which has a single switching manifold, $s = 0$.
We have considered mode-locking regions that, in a symbolic sense,
can be assigned a rotation number, $\frac{m}{n}$.
At any shrinking point in such a mode-locking region
there exists an invariant polygon in the phase space of (\ref{eq:f}).
All orbits on the polygon have period $n$,
rotation number $\frac{m}{n}$, and, say, $\ell$ points to the left of the switching manifold per period
(except a special periodic orbit, labelled $\{ y_i \}$,
that has two points on the switching manifold).

This paper provides the first rigorous study into the dynamics near an arbitrary shrinking point,
other than the period-$n$ dynamics within the mode-locking region itself which was examined in \cite{SiMe09}.
We refer to the shrinking point as an $\cS$-shrinking point,
where $\cS = \cF[\ell,m,n]$ is the symbol sequence associated with orbits on the invariant polygon.
On each side of the mode-locking region connected to an $\cS$-shrinking point,
there is a sequence of mode-locking regions.
On one side the mode-locking regions have associated rotation numbers $\frac{k m + m^-}{k n + n^-}$,
and on the other side the mode-locking regions have associated rotation numbers $\frac{k m + m^+}{k n + n^+}$,
where $k \in \mathbb{Z}^+$ and $\frac{m^-}{n^-}$ and $\frac{m^+}{n^+}$
are the left and right Farey roots of $\frac{m}{n}$.
The local curvature and relative spacing of these mode-locking regions
was described using polar coordinates and
the nonlinear function $\Gamma$ (\ref{eq:Gamma})-(\ref{eq:GammaExtended}),
as indicated in Theorem \ref{th:main1}.

The two sequences of mode-locking regions themselves have shrinking points.
Thus sequences of shrinking points converge to the $\cS$-shrinking point.
We have characterised these shrinking points with symbol sequences, $\cG^\pm[k,\chi]$.
But the $\cG^\pm[k,\chi]$-shrinking points only exist for particular values of $\chi \in \mathbb{Z}$.
We proved, subject to certain non-degeneracy conditions, see Theorem \ref{th:main2},
that there exists a sequence of potential $\cG^\pm[k,\chi]$-shrinking points,
that converge to the $\cS$-shrinking point as $k \to \infty$,
if and only if $\kappa^\pm_{\chi} > 0$,
where $\kappa^\pm_{\chi}$ are scalar constants associated with the $\cS$-shrinking point.
The angular coordinates of the potential $\cG^\pm[k,\chi]$-shrinking points are
given, to leading order, by $\theta^\pm_{\chi}$.
Numerical investigations reveal that these points are commonly valid shrinking points,
but may not be due a lack of admissibility of the orbits on the associated invariant polygon.
Theorem \ref{th:main3} and equation (\ref{eq:uvIdentity12early})
show that there are some restrictions on the combinations of signs possible for the $\kappa^\pm_{\chi}$.
Theorem \ref{th:main4} tells us that nearby $\cG^\pm[k,\chi]$-shrinking points
are non-degenerate and have the same orientation as the $\cS$-shrinking point.

It remains to describe other dynamics near shrinking points,
such as periodic, quasiperiodic and chaotic dynamics at points in parameter space between
the nearby mode-locking regions that we have identified,
and consider more general classes of piecewise-smooth maps.
Such maps arise in diverse applications, and
if there is only weak nonlinearity in the smooth pieces of the map
(or if the relevant orbits are only traversing parts of phase space that involve weak nonlinearity),
then the mode-locking regions can exhibit a sausage-string structure
involving points of near-zero width \cite{Ti02,SzOs09,ZhMo08b}.
Border-collision bifurcations are described by piecewise-smooth continuous maps,
and the influence of the nonlinearity in the pieces of the map
increases with the distance in parameter space from the border-collision bifurcation.
This influence on mode-locking region boundaries emanating
from shrinking points was explained in \cite{SiMe10},
but it remains to understand the effect of such nonlinearities on other local dynamics.

\appendix

\section{Additional proofs}
\label{app:proofs}
\setcounter{equation}{0}

\begin{proof}[Proof of Lemma \ref{le:adjIdentity2}]
First, suppose that $A$ and $A + v u^{\sf T}$ are nonsingular.
The identity
\begin{equation}
\left( A + v u^{\sf T} \right)^{-1} =
A^{-1} - \frac{A^{-1} v u^{\sf T} A^{-1}}{1 + u^{\sf T} A^{-1} v} \;,
\label{eq:ShermanMorrison}
\end{equation}
is known as the Sherman-Morrison formula and be can verified directly.
We use (\ref{eq:adjIdentity}) to rewrite (\ref{eq:ShermanMorrison}) as
\begin{equation}
\frac{{\rm adj} \left( A + v u^{\sf T} \right)}{\det \left( A + v u^{\sf T} \right)} =
\frac{{\rm adj}(A)}{\det(A)} -
\frac{{\rm adj}(A) v u^{\sf T} {\rm adj}(A)}{\det(A) \det \left( A + v u^{\sf T} \right)} \;,
\end{equation}
and therefore
\begin{equation}
{\rm adj} \left( A + v u^{\sf T} \right) = {\rm adj}(A) +
{\rm adj}(A) u^{\sf T} {\rm adj}(A) v -
{\rm adj}(A) v u^{\sf T} {\rm adj}(A) \;.
\label{eq:ShermanMorrison3}
\end{equation}
Upon multiplying (\ref{eq:ShermanMorrison3}) by $u^{\sf T}$ on the left,
the last two terms cancel leaving us with (\ref{eq:adjIdentity2}).

The subset of triples $(A,u,v)$ for which both $A$ and $A + v u^{\sf T}$ are nonsingular
is dense in the set of all triples $(A,u,v)$.
Therefore since both sides of (\ref{eq:adjIdentity2}) are continuous functions of $A$, $u$ and $v$,
(\ref{eq:adjIdentity2}) holds in general.
\end{proof}

\begin{proof}[Proof of Lemma \ref{le:adjugateRank}]
First, suppose ${\rm rank}(A) = N-1$.
Then $0$ is an eigenvalue of $A$, and so there exist
$u, v \in \mathbb{R}^N$ such that $u^{\sf T} A = 0$,
$A v = 0$, and $u^{\sf T} v = 1$.
By (\ref{eq:adjIdentity}), ${\rm adj}(A)$ must be of the form
\begin{equation}
{\rm adj}(A) = \hat{c} v u^{\sf T} \;,
\label{eq:adjIdentity3proof1}
\end{equation}
for some $\hat{c} \in \mathbb{R}$.
To demonstrate (\ref{eq:adjIdentity3}) it remains to show that $\hat{c} = c$.

Let $\ee \in \mathbb{R}$.
Then by (\ref{eq:adjIdentity}) and $A v = 0$, we have
\begin{equation}
\det(A + \ee I) v =
{\rm adj}(A + \ee I)(A + \ee I) v =
{\rm adj}(A + \ee I) v \ee =
{\rm adj}(A) v \ee + \cO \left( \ee^2 \right) \;.
\label{eq:adjIdentity3proof2}
\end{equation}
By substituting (\ref{eq:adjIdentity3proof1}) into (\ref{eq:adjIdentity3proof2})
and using $u^{\sf T} v = 1$, we obtain
\begin{equation}
\det(A + \ee I) v = \hat{c} v \ee + \cO \left( \ee^2 \right) \;.
\label{eq:adjIdentity3proof3}
\end{equation}

Notice, $\ee$ is an eigenvalue of $A + \ee I$.
Let $\lambda_j(\ee)$, for $j = 2,\ldots,N$,
denote the remaining eigenvalues of $A + \ee I$, counting multiplicity.
By definition, $c = \prod_{j=2}^N \lambda_j(0)$, and $\det(A + \ee I)$
is the product of all eigenvalues of $A + \ee I$, thus
\begin{equation}
\det(A + \ee I) =
\ee \prod_{j=2}^N \lambda_j(\ee) =
c \ee + \cO \left( \ee^2 \right) \;.
\label{eq:adjIdentity3proof4}
\end{equation}
By matching (\ref{eq:adjIdentity3proof3}) and (\ref{eq:adjIdentity3proof4}),
we deduce that $\hat{c} = c$, and therefore (\ref{eq:adjIdentity3}) as required.

Second, if ${\rm rank}(A) < N-1$
then for any $i$ and $j$, the $(N-1) \times (N-1)$ matrix formed 
by removing the $i^{\rm th}$ row and $j^{\rm th}$ column from $A$ also has rank less than $N-1$.
Thus $m_{ij} = 0$ for all $i$ and $j$ and so ${\rm adj}(A)$ is the zero matrix.
\end{proof}

\begin{proof}[Proof of Theorem \ref{th:main1}]
Here we construct $C^K$ curves along which
$\det \left( P_{\cG^+[k,\chi]} \right) = 0$, for $0 \le \chi \le \chi_{\rm max}$,
and $\det \left( P_{\cG^+[k,\chi]^{\left( \left( \tilde{\ell}-1 \right) d_k^+ \right)}} \right) = 0$,
for $1 \le \chi \le \chi_{\rm max}$,
and verify (\ref{eq:nearbyCurve})-(\ref{eq:thetaSigns}) for these curves\removableFootnote{
These represent a quarter of all curves, given $\chi_{\rm max}$.
}.
The result for the remaining curves can be obtained in the same fashion.

To solve $\det \left( P_{\cG^+[k,\chi]} \right) = 0$ we use (\ref{eq:sGplusplus}).
In (\ref{eq:sGplusplus}) we substitute
$\varphi_{-d} = y_0 + \cO(\eta,\nu)$,
$f^{\cS^{\overline{\ell d}}} \left( y_0 \right) = y_0 + \cO(\eta,\nu)$,
and $f^{\hat{\cX}} \left( y_0 \right) = y_{-d} + \cO(\eta,\nu)$, to obtain
\begin{align}
\omega_{-d}^{\sf T} \left( f^{\left( \cS^{\overline{\ell d}} \right)^{\chi} \hat{\cX}}
\left( \varphi_{-d} \right) - \varphi_{-d} \right) &=
u_{-d}^{\sf T} \left( y_{-d} - y_0 \right) + \cO(\eta,\nu) \nonumber \\
&= t_{-d} + \cO(\eta,\nu) \;.
\end{align}
Thus by (\ref{eq:sGplusplus}), if $\det \left( I - M_{\cS} \right) \ne 0$
(in which case $\lambda \ne 1$), we can write
\begin{equation}
\det \left( P_{\cG^+[k,\chi]} \right) \varrho^{\sf T} B \mu =
\frac{\gamma_{-d} \left( 1 - \lambda^{k-\chi} \right)}{1-\lambda} +
\lambda^{k-\chi} \left( t_{-d} + \cO(\eta,\nu) \right) + \cO \left( \sigma^k \right) \;.
\label{eq:main1proof2}
\end{equation}

By (\ref{eq:lambda}),
\begin{equation}
1-\lambda = \frac{a}{c t_d} \eta + \frac{a}{c t_{(\ell-1)d}} \nu +
\cO \left( \left( \eta,\nu \right)^2 \right) \;.
\label{eq:lambda2}
\end{equation}
To evaluate $\gamma_{-d}$, we substitute (\ref{eq:xSi}) and (\ref{eq:lambda2}) into
(\ref{eq:gamma}) to obtain, after simplification,
\begin{equation}
\gamma_{-d} = \frac{a t_{-d}}{c t_{(\ell-1)d}} \nu +
\cO \left( \left( \eta,\nu \right)^2 \right) \;.
\label{eq:gammamd}
\end{equation}
Also, since $\lambda = 1 + \cO(\eta,\nu)$ and $\chi$ is a constant (independent of $k$), we can write
$\lambda^{k-\chi} = \lambda^k \left( 1 + \cO(\eta,\nu) \right)$.
By substituting these expressions into (\ref{eq:main1proof2}) we arrive at
\begin{equation}
\det \left( P_{\cG^+[k,\chi]} \right) \varrho^{\sf T} B \mu =
\frac{a t_{-d}}{c (1-\lambda)}
\left( \frac{\lambda^k}{t_d} \eta + \frac{1}{t_{(\ell-1)d}} \nu +
\cO \left( \left( \eta,\nu \right)^2 \right) \right) \;.
\label{eq:main1proof6}
\end{equation}
Note, the apparent singularity $\lambda = 1$ in (\ref{eq:main1proof6}) is spurious
because $\det \left( P_{\cG^+[k,\chi]} \right) \varrho^{\sf T} B \mu$
is $C^K$ in a neighbourhood of $(\eta,\nu) = (0,0)$.

The only instance of $k$ in the leading order term of (\ref{eq:main1proof6})
occurs in the quantity $\lambda^k$, where by (\ref{eq:lambda2})
\begin{equation}
\lambda^k = \left( 1 - \frac{a}{c t_d} \eta -
\frac{a}{c t_{(\ell-1)d}} \nu + \cO \left( \left( \eta,\nu \right)^2 \right) \right)^k \;.
\label{eq:lambdak}
\end{equation}
Therefore, in the limit $k \to \infty$,
$\lambda^k$ only can take $\cO(1)$ values other than $0$ and $1$
if $\eta, \nu = \cO \left( \frac{1}{k} \right)$, as the limit is taken.
For this reason it is appropriate to write
\begin{equation}
\eta = \frac{\hat{\eta}}{k} \;, \qquad
\nu = \frac{\hat{\nu}}{k} \;.
\label{eq:hatetanu}
\end{equation}
and treat $\hat{\eta}$ and $\hat{\nu}$ as $\cO(1)$ constants.
By substituting (\ref{eq:hatetanu}) into (\ref{eq:lambdak}) we obtain
\begin{equation}
\lambda^k = {\rm e}^{-\frac{a}{c}
\left( \frac{\hat{\eta}}{t_d} + \frac{\hat{\nu}}{t_{(\ell-1)d}} \right)} +
\cO \left( \frac{1}{k} \right) \;.
\label{eq:lambdakasexp}
\end{equation}
Then by substituting (\ref{eq:lambda2}) and (\ref{eq:lambdakasexp}) into (\ref{eq:main1proof6}) we obtain
\begin{equation}
\det \left( P_{\cG^+[k,\chi]} \right) \varrho^{\sf T} B \mu =
\frac{t_{-d}}{{\rm e}^{\frac{a \hat{\nu}}{c t_{(\ell-1)d}}}}
\frac{\frac{\hat{\eta}}{t_d} \,{\rm e}^{-\frac{a \hat{\eta}}{c t_d}} +
\frac{\hat{\nu}}{t_{(\ell-1)d}} \,{\rm e}^{\frac{a \hat{\nu}}{c t_{(\ell-1)d}}}}
{\frac{\hat{\eta}}{t_d} + \frac{\hat{\nu}}{t_{(\ell-1)d}}} + \cO \left( \frac{1}{k} \right) \;.
\label{eq:main1proof8}
\end{equation}

Next we work in polar coordinates (\ref{eq:polarCoords}).
For clarity, we consider only the case $a < 0$.
The result for $a > 0$ can be obtained by switching signs in the expressions that follow appropriately.

Since $t_d < 0$, $t_{(\ell-1)d} < 0$, see (\ref{eq:tSigns}), and $c > 0$ (Lemma \ref{le:cPositive}),
with $a < 0$,
\begin{equation}
\hat{\eta} = \frac{c t_d}{a} \hat{r} \cos(\theta) \;, \qquad
\hat{\nu} = \frac{c t_{(\ell-1)d}}{a} \hat{r} \sin(\theta) \;,
\label{eq:polarCoordshataneg}
\end{equation}
where we let
\begin{equation}
\hat{r} = \frac{r}{k} \;.
\label{eq:hatr}
\end{equation}
Then by (\ref{eq:polarCoordshataneg}) and (\ref{eq:main1proof8}) we can write
\begin{equation}
\det \left( P_{\cG^+[k,\chi]} \right) \varrho^{\sf T} B \mu =
\frac{t_{-d}}{{\rm e}^{\hat{r} \sin(\theta)}}
H_1(\hat{r},\theta) \;,
\label{eq:main1proof9}
\end{equation}
where $H_1$ is a $C^K$ function and
\begin{equation}
H_1(\hat{r},\theta) = H_2(\hat{r},\theta) + \cO \left( \frac{1}{k} \right) \;,
\label{eq:H1}
\end{equation}
where
\begin{equation}
H_2(\hat{r},\theta) = \frac{\cos(\theta) \,{\rm e}^{-\hat{r} \cos(\theta)} + \sin(\theta) \,{\rm e}^{\hat{r} \sin(\theta)}}
{\cos(\theta) - \sin(\theta)} \;.
\label{eq:H2}
\end{equation}
It is a straight-forward exercise to show that
$H_2 \left( \Gamma(\theta), \theta \right) = 0$,
where $\Gamma$ is given by (\ref{eq:Gamma}) and $\theta \in \left( \frac{3 \pi}{2}, 2 \pi \right)$.

Next we employ the implicit function theorem to
find where the right hand-side of (\ref{eq:main1proof9}) is zero.
We define,
\begin{equation}
H_3(\hat{r},\theta,\ee) = k \ee H_1(\hat{r},\theta) + (1 - k \ee) H_2(\hat{r},\theta) \;,
\label{eq:H3}
\end{equation}
and we are interested in small values of $\ee \in \mathbb{R}$.
Notice $H_3$ is $C^K$,
$H_3 \left( \Gamma(\theta), \theta, 0 \right) = 0$, and 
$H_3(\hat{r},\theta,\ee) = H_2(\hat{r},\theta) + \cO(\ee)$.
Therefore, for any $\theta \in \left( \frac{3 \pi}{2}, 2 \pi \right)$,
there exists a neighbourhood of $(\hat{r},\ee) = (\Gamma(\theta),0)$
in which we can apply the implicit function theorem.
That is, there exists a unique $C^K$ function $\tilde{\Gamma}(\theta,\ee)$,
such that $H_3 \left( \tilde{\Gamma}(\theta,\ee), \theta, \ee \right) = 0$,
inside the neighbourhood, and $\tilde{\Gamma}(\theta,0) = \Gamma(\theta)$.
Then, assuming $k$ is sufficiently large,
$H_1 \left( \tilde{\Gamma} \left( \theta, \frac{1}{k} \right), \theta \right) \equiv 0$.
By (\ref{eq:main1proof9}), this shows that $\det \left( P_{\cG^+[k,\chi]} \right) = 0$
along a unique $C^K$ curve satisfying (\ref{eq:nearbyCurve}) and (\ref{eq:thetaSigns}).

To obtain the same result for
$\det \left( P_{\cG^+[k,\chi]^{\left( \left( \tilde{\ell}-1 \right) d_k^+ \right)}} \right)$,
we begin by using (\ref{eq:lkdkplusminus}) to write
\begin{equation}
\left( \tilde{\ell} - 1 \right) d_k^+ {\rm ~mod~} n_k^+ =
\ell d {\rm ~mod~} n + (\chi-1) n \;,
\end{equation}
where ``${\rm mod~} n_k^+$'' is not needed on the right hand-side
by assuming $1 \le \chi \le \chi_{\rm max}$.
Thus by (\ref{eq:Gplusplus2}), the first $\left( \tilde{\ell} - 1 \right) d_k^+ {\rm ~mod~} n_k^+$
symbols of $\cG^+[k,\chi]$ are given by $\left( \cS^{\overline{\ell d}} \right)^{\chi-1} \cX$,
and so we can write
\begin{equation}
x^{\cG^+[k,\chi]}_{\left( \tilde{\ell} - 1 \right) d_k^+} =
f^{\left( \cS^{\overline{\ell d}} \right)^{\chi-1} \cX}
\left( x^{\cG^+[k,\chi]}_0 \right) \;.
\label{eq:main1proof11}
\end{equation}
Substituting $x^{\cG^+[k,\chi]}_0 = \varphi_{-d} + h \zeta_{-d} + q$ (\ref{eq:xGplusForm})
into (\ref{eq:main1proof11}) gives
\begin{equation}
x^{\cG^+[k,\chi]}_{\left( \tilde{\ell} - 1 \right) d_k^+} =
f^{\left( \cS^{\overline{\ell d}} \right)^{\chi-1} \cX}
\left( \varphi_{-d} \right) + M_{\cX} M_{\cS^{\overline{\ell d}}}^{\chi-1}
\left( h \zeta_{-d} + q \right) \;.
\label{eq:main1proof12}
\end{equation}
By then substituting
$h = s^{\cG^+[k,\chi]}_0 + \cO \left( \sigma^k \right)$,
and $q = \cO \left( \sigma^k \right)$ (refer to the proof of Lemma \ref{le:sGall2})
into (\ref{eq:main1proof12}),
and multiplying both sides of (\ref{eq:main1proof12}) by
$e_1^{\sf T} \det \left( I - \cG^+[k,\chi] \right)$ on the left and using (\ref{eq:sFormulaEarly}),
we produce
\begin{align}
\det \left( P_{\cG^+[k,\chi]^{\left( \left( \tilde{\ell} - 1 \right) d_k^+ \right)}}
\right) \varrho^{\sf T} B \mu &=
e_1^{\sf T} f^{\left( \cS^{\overline{\ell d}} \right)^{\chi-1} \cX} \left( \varphi_{-d} \right)
\det \left( I - M_{\cG^+[k,\chi]} \right) \nonumber \\
&\quad+ e_1^{\sf T} M_{\cX} M_{\cS^{\overline{\ell d}}}^{\chi-1} \zeta_{-d}
\det \left( P_{\cG^+[k,\chi]} \right) \varrho^{\sf T} B \mu +
\cO \left( \sigma^k \right) \;.
\label{eq:main1proof13}
\end{align}
The solution to 
$\det \left( P_{\cG^+[k,\chi]^{\left( \left( \tilde{\ell} - 1 \right) d_k^+ \right)}} \right) = 0$
is the same, to leading order, as the solution to
$\det \left( P_{\cG^+[k,\chi]} \right) = 0$,
because $f^{\left( \cS^{\overline{\ell d}} \right)^{\chi-1} \cX} \left( \varphi_{-d} \right) =
y_{\ell d} + \cO \left( \frac{1}{k} \right)$
and so the first term in (\ref{eq:main1proof13}) is higher order
than the term involving $\det \left( P_{\cG^+[k,\chi]} \right)$\removableFootnote{
Here I might need to use the fact that
$e_1^{\sf T} M_{\cX} M_{\cS^{\overline{\ell d}}}^{\chi-1} \zeta_{-d} \ne 0$.
This should be proved in my subsequent skew sawtooth paper.
}.
Therefore near $(\eta,\nu) = (0,0)$ there exists a unique $C^K$ curve satisfying
(\ref{eq:nearbyCurve}) and (\ref{eq:thetaSigns}) along which 
$\det \left( P_{\cG^+[k,\chi]^{\left( \left( \tilde{\ell} - 1 \right) d_k^+ \right)}} \right) = 0$.
\end{proof}

\begin{proof}[Proof of Theorem \ref{th:main3}]
For brevity we restrict our attention to $\cG^+[k,\chi]$ with $0 \le \chi \le \chi_{\rm max}$.

By Proposition \ref{pr:MSPSsingular}(i), $\{ \tilde{y}_i \}$ is a $\cG^+[k,\chi]$-cycle.
Thus $\tilde{y}_0$ maps to $\tilde{y}_{\tilde{\ell} d_k^+}$
under $f^L$ and $f^R$ in the order specified by the first
$\tilde{\ell} d_k^+ {\rm ~mod~} n_k^+$ symbols of $\cG^+[k,\chi]$.
Since $0 \le \chi \le \chi_{\rm max}$, by (\ref{eq:lkdkplusminus}),
$\tilde{\ell} d_k^+ {\rm ~mod~} n_k^+ = \ell d {\rm ~mod~} n + \chi n$.
Thus, by (\ref{eq:Gplus}), the first $\tilde{\ell} d_k^+ {\rm ~mod~} n_k^+$ symbols of $\cG^+[k,\chi]$
are $\left( \cS^{\overline{\ell d}} \right)^{\chi} \cX$, and therefore
\begin{equation}
\tilde{y}_{\tilde{\ell} d_k^+} =
f^{\left( \cS^{\overline{\ell d}} \right)^{\chi} \cX} \left( \tilde{y}_0 \right) \;.
\label{eq:main3proof2}
\end{equation}
Also, by Proposition \ref{pr:MSPSsingular}(i),
$\{ \tilde{y}_i \}$ is a $\cG^+[k,\chi]^{\left( -d_k^+ \right)}$-cycle.
Thus $\tilde{y}_{d_k^+}$ maps to $\tilde{y}_{\left( \tilde{\ell}+1 \right) d_k^+}$
following the first $\tilde{\ell} d_k^+ {\rm ~mod~} n_k^+$ symbols of $\cG^+[k,\chi]$.
That is,
\begin{equation}
\tilde{y}_{\left( \tilde{\ell}+1 \right) d_k^+} =
f^{\left( \cS^{\overline{\ell d}} \right)^{\chi} \cX} \left( \tilde{y}_{d_k^+} \right) \;.
\label{eq:main3proof3}
\end{equation}
In addition
\begin{equation}
\tilde{y}_{d_k^+} = f^{\cS^{(-d)}} \left( \tilde{y}_0 \right) \;,
\label{eq:main3proof4}
\end{equation}
because $d_k^+ = n$ (see Lemma \ref{le:mndkpm})
and the first $n$ symbols of $\cG^+[k,\chi]^{\overline{0}}$ are
$\cS^{\overline{\ell d} \,\overline{0}} = \cX^{\overline{0}} \cY^{\overline{0}} = \cS^{(-d)}$,
by (\ref{eq:Gplus}) and (\ref{eq:partitions3}).

In the form $\tilde{y}_0 = \varphi_{-d} + h \zeta_{-d} + q$ (\ref{eq:xEigCoords}),
we have $q = \cO \left( \sigma^k \right)$
(for the same reasons as for $x^{\cG^+[k,\chi]}_0$ in the proof of Lemma \ref{le:sGall2}), thus
\begin{equation}
\tilde{y}_0 = \varphi_{-d} + \cO \left( \sigma^k \right) \;,
\label{eq:main3proof5}
\end{equation}
because also $e_1^{\sf T} \tilde{y}_0 = 0$,
$e_1^{\sf T} \varphi_{-d} = 0$, and
$e_1^{\sf T} \zeta_{-d} = 1$.
By (\ref{eq:slowDynsGen}) and (\ref{eq:main3proof4}),
\begin{equation}
\tilde{y}_{d_k^+} = \varphi_{-d} + \gamma_{-d} \zeta_{-d} + \cO \left( \sigma^k \right) \;,
\label{eq:main3proof6}
\end{equation}
and by (\ref{eq:main3proof3}), (\ref{eq:main3proof5}) and (\ref{eq:main3proof6}),
\begin{align}
\tilde{y}_{\left( \tilde{\ell}+1 \right) d_k^+} &=
f^{\left( \cS^{\overline{\ell d}} \right)^{\chi} \cX}
\left( \tilde{y}_0 + \gamma_{-d} \zeta_{-d} + \cO \left( \sigma^k \right) \right) \nonumber \\
&= f^{\left( \cS^{\overline{\ell d}} \right)^{\chi} \cX} \left( \tilde{y}_0 \right) +
\gamma_{-d} M_{\cX} M_{\cS^{\overline{\ell d}}}^{\chi} \zeta_{-d} +
\cO \left( \sigma^k \right) \;.
\label{eq:main3proof7}
\end{align}
By then multiplying (\ref{eq:main3proof7}) by $e_1^{\sf T}$ on the left
and using (\ref{eq:main3proof2}) and $e_1^{\sf T} \tilde{y}_{\tilde{\ell} d_k^+} = 0$, we obtain
\begin{equation}
\tilde{t}_{\left( \tilde{\ell}+1 \right) d_k^+} =
\gamma_{-d} e_1^{\sf T} M_{\cX}(0,0) M_{\cS^{\overline{\ell d}}}^{\chi}(0,0) v_{-d} +
\cO \left( \frac{1}{k^2} \right) \;,
\label{eq:main3proof8}
\end{equation}
where we have also used $\zeta_{-d}(0,0) = v_{-d}$.

Our next step is to derive the following identity\removableFootnote{
More generally we have
\begin{align}
u_0^{\sf T} \left( I - M_{\cY^{\overline{0}}}(0,0) M_{\cX}(0,0) \right) &=
\frac{b t_d e_1^{\sf T} M_{\cX}(0,0)}{c t_{(\ell+1)d}} \;,  \\
u_{\ell d}^{\sf T} \left( I - M_{\cX^{\overline{0}}}(0,0) M_{\cY}(0,0) \right) &=
\frac{a t_{(\ell+1)d} e_1^{\sf T} M_{\cY}(0,0)}{c t_d} \;, \\
u_{-d}^{\sf T} \left( I -  M_{\cY}(0,0) M_{\cX^{\overline{0}}}(0,0) \right) &=
\frac{a t_{-d} e_1^{\sf T} M_{\cX^{\overline{0}}}(0,0)}{c t_{(\ell-1)d}} \;, \\
u_{(\ell-1)d}^{\sf T} \left( I - M_{\cX}(0,0) M_{\cY^{\overline{0}}}(0,0) \right) &=
\frac{b t_{(\ell-1)d} e_1^{\sf T} M_{\cY^{\overline{0}}}(0,0)}{c t_{-d}} \;.
\end{align}	
}
\begin{equation}
u_0^{\sf T} \left( I - M_{\cS^{\overline{\ell d}}}(0,0) \right) =
\frac{b t_d e_1^{\sf T} M_{\cX}(0,0)}{c t_{(\ell+1)d}} \;.
\label{eq:u0Identity}
\end{equation}
By substituting (\ref{eq:uvj}) for $u_{\ell d}^{\sf T}$
into (\ref{eq:vu0}) we obtain
\begin{equation}
u_0^{\sf T} = \frac{t_d e_1^{\sf T} {\rm adj} \left( I - M_{\cS^{(\ell d)}}(0,0) \right) M_{\cX}(0,0)}
{c t_{(\ell+1)d}} \;.
\end{equation}
We have $M_{\cS^{(\ell d)}} = M_{\cX} M_{\cY}$ (\ref{eq:partitions1}),
and in view of (\ref{eq:MS}) and (\ref{eq:adjIdentity2}) we can
substitute $M_{\cY}$ for $M_{\cY^{\overline{0}}}$ to obtain
\begin{equation}
u_0^{\sf T} = \frac{t_d e_1^{\sf T} {\rm adj}
\left( I - M_{\cX}(0,0) M_{\cY^{\overline{0}}}(0,0) \right) M_{\cX}(0,0)}
{c t_{(\ell+1)d}} \;.
\label{eq:u0IdentityProof1}
\end{equation}
Note, $\det \left( I - M_{\cX}(0,0) M_{\cY^{\overline{0}}}(0,0) \right) = b$, because
$M_{\cX} M_{\cY^{\overline{0}}} =
M_{\cS^{(\ell d) \overline{0}}} =
M_{\cS^{\overline{\ell d} (\ell d)}}$,
refer to (\ref{eq:ab}), (\ref{eq:partitions2}) and Lemma (\ref{le:eigMSindep}).
Thus by (\ref{eq:adjIdentity}) and (\ref{eq:u0IdentityProof1}),
\begin{equation}
u_0^{\sf T} = \frac{b t_d e_1^{\sf T}
\left( I - M_{\cX}(0,0) M_{\cY^{\overline{0}}}(0,0) \right)^{-1} M_{\cX}(0,0)}
{c t_{(\ell+1)d}} \;.
\label{eq:u0IdentityProof2}
\end{equation}
By substituting
$\left( I - M_{\cX} M_{\cY^{\overline{0}}} \right)^{-1} M_{\cX} =
M_{\cX} \left( I - M_{\cY^{\overline{0}}} M_{\cX} \right)^{-1}$
and multiply both sides of (\ref{eq:u0IdentityProof2})
by $I - M_{\cY^{\overline{0}}} M_{\cX}$ on the right we arrive at (\ref{eq:u0Identity}).

By substituting (\ref{eq:u0Identity}) into (\ref{eq:main3proof8}) we obtain
\begin{equation}
\tilde{t}_{\left( \tilde{\ell}+1 \right) d_k^+} =
\frac{\gamma_{-d} c t_{(\ell+1)d} u_0^{\sf T}
\left( I - M_{\cS^{\overline{\ell d}}}(0,0) \right)
M_{\cS^{\overline{\ell d}}}^{\chi}(0,0) v_{-d}}{b t_d} +
\cO \left( \frac{1}{k^2} \right) \;.
\label{eq:main3proof10}
\end{equation}
By using (\ref{eq:fourtIdentity}) and
$\gamma_{-d} = \frac{a t_{-d} \nu_{\cG^+[k,\chi]}}{c t_{(\ell-1)d}} +
\cO \left( \frac{1}{k^2} \right)$ (\ref{eq:gammamd}),
(\ref{eq:main3proof10}) simplifies to
\begin{equation}
\tilde{t}_{\left( \tilde{\ell}+1 \right) d_k^+} =
\nu_{\cG^+[k,\chi]} \left( \kappa^+_{\chi+1} - \kappa^+_{\chi} \right) +
\cO \left( \frac{1}{k^2} \right) \;,
\label{eq:main3proof11}
\end{equation}
where we have substituted (\ref{eq:kappaPlus}).

If $a > 0$, then $\theta^+_{\chi} \in \left( \frac{\pi}{2}, \pi \right)$
(see Table \ref{tb:thetaDomains}),
thus $\sin \left( \theta^+_{\chi} \right) > 0$,
and so by (\ref{eq:polarCoords})
$\nu_{\cG^+[k,\chi]} > 0$ for arbitrarily large values of $k$.
Since $\kappa^+_{\chi} > 0$ and $\tilde{t}_{\left( \tilde{\ell}+1 \right) d_k^+} > 0$
(by the assumption that $\left( \eta_{\cG^+[k,\chi]}, \nu_{\cG^+[k,\chi]} \right)$ is
a $\cG^+[k,\chi]$-shrinking point),
by (\ref{eq:main3proof11}) we must have $\kappa^+_{\chi+1} > 0$, as claimed.

If $\chi \ge 1$, we can similarly show that
\begin{equation}
\tilde{t}_{\left( \tilde{\ell}-1 \right) d_k^+} =
\nu_{\cG^+[k,\chi]} \left( \kappa^+_{\chi-1} - \kappa^+_{\chi} \right) +
\cO \left( \frac{1}{k^2} \right) \;,
\label{eq:main3proof12}
\end{equation}
based on the knowledge that $\tilde{y}_0$ and $\tilde{y}_{d_k^+}$
map to $\tilde{y}_{\left( \tilde{\ell}-1 \right) d_k^+}$ and $\tilde{y}_{\tilde{\ell} d_k^+}$,
respectively, under $f^{\left( \cS^{\overline{\ell d}} \right)^{\chi-1} \cX}$.
Then if $a < 0$, $\nu_{\cG^+[k,\chi]} < 0$, for arbitrarily large values of $k$,
and thus since $\kappa^+_{\chi} > 0$ and $\tilde{t}_{\left( \tilde{\ell}-1 \right) d_k^+} < 0$,
by (\ref{eq:main3proof12}) we must have $\kappa^+_{\chi-1} > 0$, as claimed.

The remaining cases can be proved in the same fashion.
\end{proof}

\begin{proof}[Proof of Theorem \ref{th:main4}]
Part (ii) of the theorem is an immediate consequence of Lemma \ref{le:eigenvalues}.
We prove part (i) for $\cG^+[k,\chi]$ with $0 \le \chi \le \chi_{\rm max}$.
Other cases may be proved in a similar fashion.

We first show that ${\rm sgn}(\tilde{a}) = {\rm sgn}(a)$.
At $(\eta,\nu) = \left( \eta_{\cT},\nu_{\cT} \right)$,
$\det \left( I - M_{\cG^+[k,\chi]} \right) = 0$ and
$\det \left( I - M_{\cG^+[k,\chi+1]} \right) = \tilde{b}$\removableFootnote{
We do computations for $\tilde{b}$ rather than $\tilde{a}$
(which would be more natural) because
(i) above we provided the full derivation of (\ref{eq:main3proof11}) (which relates to $\tilde{b}$)
but not of (\ref{eq:main3proof12}) (which relates to $\tilde{a}$),
(ii) then we do not need to consider $\chi = 0$ and $\chi \ge 1$ separately
when we apply Lemma \ref{le:detAll}.
}.
Thus, by (\ref{eq:detImMGplus}) with $\rho = 1$,
\begin{align}
1 - \left. \left( \lambda^{k-\chi} \omega_0^{\sf T} M_{\cS^{\overline{\ell d}}}^{\chi}
M_{\hat{\cX}} \zeta_0 \right) \right|_{\left( \eta_{\cT}, \nu_{\cT} \right)} +
\cO \left( \sigma^k \right) &= 0 \;, \label{eq:sgnatildeproof1} \\
1 - \left. \left( \lambda^{k-\chi-1} \omega_0^{\sf T} M_{\cS^{\overline{\ell d}}}^{\chi+1}
M_{\hat{\cX}} \zeta_0 \right) \right|_{\left( \eta_{\cT}, \nu_{\cT} \right)} +
\cO \left( \sigma^k \right) &= \tilde{b} \;. \label{eq:sgnatildeproof2}
\end{align}
By combining (\ref{eq:sgnatildeproof1}) and (\ref{eq:sgnatildeproof2})
and using (\ref{eq:vumd2}) we obtain
\begin{equation}
\tilde{b} = 1 - \frac{u_0^{\sf T} M_{\cS^{\overline{\ell d}}}^{\chi+1}(0,0) v_{-d}}
{u_0^{\sf T} M_{\cS^{\overline{\ell d}}}^{\chi}(0,0) v_{-d}} +
\cO \left( \frac{1}{k} \right) \;.
\end{equation}
Then by (\ref{eq:kappaPlus}),
\begin{equation}
\tilde{b} = 1 - \frac{\kappa^+_{\chi+1}}{\kappa^+_{\chi}} +
\cO \left( \frac{1}{k} \right) \;,
\end{equation}
and by (\ref{eq:main3proof11}),
\begin{equation}
\tilde{b} = \frac{-\tilde{t}_{\left( \tilde{\ell}+1 \right) d_k^+}}
{\kappa^+_{\chi} \nu_{\cG^+[k,\chi]}} +
\cO \left( \frac{1}{k} \right) \;.
\label{eq:sgnatildeproof5}
\end{equation}
Since $\kappa^+_{\chi} > 0$ and $\tilde{t}_{\left( \tilde{\ell}+1 \right) d_k^+} > 0$,
(\ref{eq:sgnatildeproof5}) tells us that
${\rm sgn}(\tilde{b}) = -{\rm sgn} \left( \nu_{\cG^+[k,\chi]} \right)$.
Hence by (\ref{eq:polarCoords}) and Table \ref{tb:thetaDomains},
${\rm sgn}(\tilde{b}) = -{\rm sgn}(a)$.
Since ${\rm sgn}(a) = -{\rm sgn}(b)$, for any shrinking point (\ref{eq:fourtIdentity}),
we have ${\rm sgn}(\tilde{a}) = -{\rm sgn}(\tilde{b})$, and therefore
\begin{equation}
{\rm sgn}(\tilde{a}) = {\rm sgn}(a) \;,
\label{eq:sgna}
\end{equation}
as required.

Next we derive an explicit expression for $p_1$ (a coefficient of a
leading order term in (\ref{eq:tildeeta})) from which we can ascertain the sign of $p_1$.
The desired result ($\det \left( \tilde{J} \right) > 0$)
then follows from Lemma \ref{le:detJtilde} and some additional identities.

By (\ref{eq:rss0}), $\cG^+[k,\chi]^{\overline{0}} = \cG^+[k,\chi-1]^{(-d_k^+)}$.
Therefore by (\ref{eq:Deltaetanu}) and (\ref{eq:tildeeta}) we can write
\begin{equation}
p_1 = \lim_{k \to \infty} \frac{1}{k} \frac{\partial s^{\cG^+[k,\chi - 1]}_{-d_k^+}}{\partial \eta}
\bigg|_{\left( \eta_{\cG^+[k,\chi]}, \nu_{\cG^+[k,\chi]} \right)} \;.
\label{eq:p12}
\end{equation}
Here we use Lemma \ref{le:sGall2} to evaluate $s^{\cG^+[k,\chi - 1]}_{-d_k^+}$.
This requires separate calculations for the cases $\chi = 0$ and $\chi \ge 1$\removableFootnote{
In view of the way we have defined $\tilde{\eta}$ and $\tilde{\nu}$,
and the range of $\chi$ values in the formulas of Lemma \ref{le:sGall2},
we have to treat the cases $\chi = 0$ and $\chi \ge 1$ separately,
whereas the case $\chi \le -1$ need not be separated.
}.
If $\chi \ge 1$, we express $s^{\cG^+[k,\chi-1]}_{-d_k^+}$ in terms of
$s^{\cG^+[k,\chi-1]}_0$ so that we can apply (\ref{eq:sGplusplus}).
If $\chi = 0$, we express $s^{\cG^+[k,-1]}_{-d_k^+}$ in terms of
$s^{\cG^+[k,-1]}_{\left( \ell_k^+-1 \right) d_k^+}$ so that we can apply (\ref{eq:sGplusminus}).
For brevity here we provide details only for the case $\chi \ge 1$\removableFootnote{
For the case $\chi = 0$, let us merely note that it may be shown that
\begin{equation}
s^{\cG^+[k,-1]}_{-d_k^+} = \frac{s^{\cG^+[k,-1]}_{\left( \ell_k^+-1 \right) d_k^+} -
e_1^{\sf T} f^{\cX^{\overline{0}}} \left( \varphi_{-d} \right)}
{e_1^{\sf T} M_{\cX^{\overline{0}}} \zeta_{-d}} + \cO \left( \sigma^k \right) \;,
\end{equation}
which, by (\ref{eq:vuellm1d}) and (\ref{eq:p12}), gives
\begin{equation}
p_1 = \lim_{k \to \infty} \frac{1}{k} \frac{t_{-d}}{t_{(\ell-1)d}}
\frac{\partial s^{\cG^+[k,-1]}_{\left( \ell_k^+-1 \right) d_k^+}}{\partial \eta}
\bigg|_{\left( \eta_{\cG^+[k,0]}, \nu_{\cG^+[k,0]} \right)} \;.
\end{equation}
By then using (\ref{eq:sGplusminus}) we are led to (\ref{eq:p1}),
with $\tilde{a} = \frac{a}{c \kappa^+_0}$.
}.

In the proof of Lemma \ref{le:sGall2}, it was shown that $x^{\cG^+[k,\chi-1]}_0$
lies within $\cO \left( \sigma^k \right)$ of the slow manifold with $j = -d$,
see (\ref{eq:xGplusForm}) and (\ref{eq:sGplusplusproof10}).
The same is true for $x^{\cG^+[k,\chi-1]}_{-d_k^+}$, because
$x^{\cG^+[k,\chi-1]}_0 = f^{\cS^{(-d)}} \left( x^{\cG^+[k,\chi-1]}_{-d_k^+} \right)$.
That is,
\begin{equation}
x^{\cG^+[k,\chi-1]}_i = \varphi_{-d} +
s^{\cG^+[k,\chi-1]}_i \zeta_{-d} + \cO \left( \sigma^k \right) \;,
\label{eq:p1proof2}
\end{equation}
for $i = 0$ and $i = -d_k^+$.
By (\ref{eq:slowDynsGen}), we obtain
\begin{equation}
s^{\cG^+[k,\chi-1]}_{-d_k^+} = \frac{s^{\cG^+[k,\chi-1]}_0 - \gamma_{-d}}
{\lambda} + \cO \left( \sigma^k \right) \;.
\label{eq:p1proof3}
\end{equation}
Since $\lambda = 1 + \cO \left( \frac{1}{k} \right)$,
and $\gamma_{-d}$ is independent of $k$, by (\ref{eq:p12})
\begin{equation}
p_1 = \lim_{k \to \infty} \frac{1}{k} \frac{\partial s^{\cG^+[k,\chi - 1]}_0}{\partial \eta}
\bigg|_{\left( \eta_{\cG^+[k,\chi]}, \nu_{\cG^+[k,\chi]} \right)} \;.
\label{eq:p13}
\end{equation}

To evaluate (\ref{eq:p13}) we use (\ref{eq:sFormula}) to write
\begin{equation}
s^{\cG^+[k,\chi-1]}_0 = \frac{\det \left( P_{\cG^+[k,\chi-1]} \right) \varrho^{\sf T} B \mu}
{\det \left( I - M_{\cG^+[k,\chi-1]} \right)} \;.
\label{eq:p1proof4}
\end{equation}
Here it is sufficient to write the denominator of (\ref{eq:p1proof4}) as
\begin{equation}
\det \left( I - M_{\cG^+[k,\chi-1]} \right) =
\tilde{a} + \cO \left( \Delta \eta, \Delta \nu \right) \;.
\label{eq:denominator}
\end{equation}
Since we assuming $\chi \ge 1$, by (\ref{eq:sGplusplus})
the numerator of (\ref{eq:p1proof4}) is
\begin{equation}
\det \left( P_{\cG^+[k,\chi-1]} \right) \varrho^{\sf T} B \mu =
\gamma_{-d} \sum_{j=0}^{k-\chi} \lambda^j +
\omega_{-d}^{\sf T} \left( f^{\left( \cS^{\overline{\ell d}} \right)^{\chi-1} \hat{\cX}}
\left( \varphi_{-d} \right) - \varphi_{-d} \right) \lambda^{k - \chi+1} +
\cO \left( \sigma^k \right) \;.
\label{eq:numerator}
\end{equation}
We now evaluate the components of (\ref{eq:numerator}).
We have $\varphi_{-d} = y_0 + \cO \left( \frac{1}{k} \right)$,
and so
$f^{\left( \cS^{\overline{\ell d}} \right)^{\chi-1} \hat{\cX}} \left( \varphi_{-d} \right) =
f^{\left( \cS^{\overline{\ell d}} \right)^{\chi-1} \hat{\cX}} \left( y_0 \right) +
\cO \left( \sigma^k \right) =
f^{\hat{\cX}} \left( y_0 \right) +
\cO \left( \sigma^k \right) =
y_{-d} + \cO \left( \frac{1}{k} \right)$.
Thus
\begin{equation}
\omega_{-d}^{\sf T} \left( f^{\left( \cS^{\overline{\ell d}} \right)^{\chi-1} \hat{\cX}}
\left( \varphi_{-d} \right) - \varphi_{-d} \right) = t_{-d} + \cO \left( \frac{1}{k} \right) \;.
\label{eq:numeratorTerm1}
\end{equation}
By (\ref{eq:gamma}), and the formula for the sum of a truncated geometric series,
when $\lambda \ne 1$,
\begin{equation}
\gamma_{-d} \sum_{j=0}^{k-\chi} \lambda^j =
e_1^{\sf T} x^{\cS}_{-d} \left( 1 - \lambda^{k-\chi+1} \right) \;.
\label{eq:numeratorTerm2}
\end{equation}
Since $\lambda = 1 + \cO \left( \frac{1}{k} \right)$,
and $\chi$ is independent of $k$, we can write
$\lambda^{k-\chi+1} = \lambda^k + \cO \left( \frac{1}{k} \right)$.
By (\ref{eq:detImMTnearbyproof1}) and (\ref{eq:pq12proof1}),
\begin{equation}
\lambda^k = \frac{-t_d \nu_{\cT}}{t_{(\ell-1)d} \eta_{\cT}}
\left( 1 - \left( \frac{a}{c t_d} k + \cO(1) \right) \Delta \eta -
\left( \frac{a}{c t_{(\ell-1)d}} k + \cO(1) \right) \Delta \nu +
\cO \left( \left( \Delta \eta, \Delta \nu \right)^2 \right) \right) \;,
\label{eq:lambdak3}
\end{equation}
Also by (\ref{eq:xSi}),
\begin{align}
e_1^{\sf T} x^{\cS}_{-d} &=
\frac{\frac{t_{-d}}{t_{(\ell-1)d}} \nu + \cO \left( \left( \eta, \nu \right)^2 \right)}
{\frac{1}{t_d} \eta + \frac{1}{t_{(\ell-1)d}} \nu + \cO \left( \left( \eta, \nu \right)^2 \right)} \nonumber \\
&= \frac{t_{-d} \nu_{\cT}}{t_{(\ell-1)d} \eta_{\cT}
\left( \frac{1}{t_d} + \frac{\nu_{\cT}}{t_{(\ell-1)d} \eta_{\cT}} \right)} + \cO \left( \frac{1}{k} \right) +
\left( -\frac{t_{-d} \nu_{\cT}}{t_d t_{(\ell-1)d} \eta_{\cT}^2
\left( \frac{1}{t_d} + \frac{\nu_{\cT}}{t_{(\ell-1)d} \eta_{\cT}} \right)^2} + \cO(1) \right) \Delta \eta \nonumber \\
&\quad+
\left( \frac{t_{-d}}{t_d t_{(\ell-1)d} \eta_{\cT}
\left( \frac{1}{t_d} + \frac{\nu_{\cT}}{t_{(\ell-1)d} \eta_{\cT}} \right)^2} + \cO(1) \right) \Delta \nu +
\cO \left( \left( \Delta \eta, \Delta \nu \right)^2 \right) \;.
\label{eq:numeratorTerm5}
\end{align}
Finally by (\ref{eq:polarCoords}) and (\ref{eq:thetaPlus}), since $\kappa^+_{\chi} > 0$,
\begin{equation}
\frac{\nu_{\cT}}{\eta_{\cT}} = \frac{t_{(\ell-1)d} \tan \left( \theta^+_{\chi} \right)}{t_d} +
\cO \left( \frac{1}{k} \right) \;.
\label{eq:p1proof5}
\end{equation}
By substituting (\ref{eq:numeratorTerm1})-(\ref{eq:p1proof5}) into (\ref{eq:numerator}) we obtain
\begin{align}
\det \left( P_{\cG^+[k,\chi-1]} \right) \varrho^{\sf T} B \mu &=
\left( \frac{\frac{a t_{-d} k}{c t_d} - \frac{t_{-d}}{\eta_{\cT}}}
{1 + \frac{1}{\tan \left( \theta^+_{\chi} \right)}} + \cO(1) \right) \Delta \eta \nonumber \\
&+\quad
\left( \frac{\frac{a t_{-d} k}{c t_{(\ell-1)d}} +
\frac{t_d t_{-d}}{t_{(\ell-1)d} \eta_{\cT} \tan \left( \theta^+_{\chi} \right)}}
{1 + \frac{1}{\tan \left( \theta^+_{\chi} \right)}} + \cO(1) \right) \Delta \nu +
\cO \left( \left( \Delta \eta, \Delta \nu \right)^2 \right) \;. 
\label{eq:numerator2}
\end{align}
By substituting (\ref{eq:denominator}) and (\ref{eq:numerator2}) into (\ref{eq:p1proof4}),
and then evaluating (\ref{eq:p13}), we arrive at
\begin{equation}
p_1 = \frac{a t_{-d} \left( 1 + \frac{{\rm sgn}(a)}
{\Gamma \left( \theta^+_{\chi} \right) \cos \left( \theta^+_{\chi} \right)} \right)}
{\tilde{a} c t_d \left( 1 + \frac{1}{\tan \left( \theta^+_{\chi} \right)} \right)} \;,
\label{eq:p1}
\end{equation}
where we have used
$\lim_{k \to \infty} k \eta_{\cT} = \left| \frac{c t_d}{a} \right|
\Gamma \left( \theta^+_{\chi} \right) \cos \left( \theta^+_{\chi} \right)$,
which is due to (\ref{eq:polarCoords}) and (\ref{eq:nearbyCurve}).

We now use (\ref{eq:p1}) to show that $p_1 > 0$.
From the definition of $\Gamma$ (\ref{eq:Gamma})-(\ref{eq:GammaExtended}),
and a bit of care with the different cases of the sign of $a$, see Table \ref{tb:thetaDomains},
we obtain\removableFootnote{
Let $\check{\theta} = \theta {\rm ~mod~} \frac{\pi}{2}$.
For either sign of $a$,
$\tan(\theta) = \frac{-1}{\tan(\check{\theta})}$
If $a < 0$ then $\cos(\theta) = \sin(\check{\theta})$,
whereas if $a > 0$ then $\cos(\theta) = -\sin(\check{\theta})$.
Then from the definition of $\Gamma$ we can obtain a formula
for the left hand-side (\ref{eq:tanQuotient}) in terms of $\tan(\check{\theta})$.
This formula is independent of the sign of $a$, and leads to the right hand-side of (\ref{eq:tanQuotient}).
}
\begin{equation}
\frac{1 + \frac{{\rm sgn}(a)}
{\Gamma \left( \theta^+_{\chi} \right) \cos \left( \theta^+_{\chi} \right)}}
{1 + \frac{1}{\tan \left( \theta^+_{\chi} \right)}} =
\frac{\tan \left( \theta^+_{\chi} \right)}{1 + \tan \left( \theta^+_{\chi} \right)} +
\frac{\tan \left( \theta^+_{\chi} \right)}{\ln \left( -\tan \left( \theta^+_{\chi} \right) \right)} \;,
\label{eq:tanQuotient}
\end{equation}
which has a negative value.
Since $c > 0$ (Lemma \ref{le:cPositive}),
$t_d < 0$, $t_{-d} > 0$ (\ref{eq:tSigns}),
and ${\rm sgn}(\tilde{a}) = {\rm sgn}(a)$ (\ref{eq:sgna}),
from (\ref{eq:p1}) and (\ref{eq:tanQuotient}) we conclude that $p_1 > 0$.

Finally, since $\tilde{t}_{d_k} < 0$ and $\tilde{t}_{\left( \tilde{\ell}-1 \right) d_k} < 0$,
by (\ref{eq:tildedetImMT2}) and (\ref{eq:tildedetImMT}),
\begin{equation}
{\rm sgn}(r_1) = {\rm sgn}(r_2) = -{\rm sgn}(\tilde{a}) = -{\rm sgn}(a) \;.
\label{eq:sgnr1sgna}
\end{equation}
Since $p_1 > 0$, by (\ref{eq:consistencyCondition}) and (\ref{eq:sgnr1sgna}), $q_1 < 0$.
From (\ref{eq:q1q2}) we obtain\removableFootnote{
If we wanted to we could write this as
\begin{equation}
q_2 = \frac{t_d}{t_{(\ell-1)d}}
\frac{1}{\tan \left( \theta^+_{\chi} \right)}
\left( \frac{1}{\frac{1}{\ln \left( -\tan \left( \theta^+_{\chi} \right) \right)} +
\frac{1}{\tan \left( \theta^+_{\chi} \right) + 1}} - 1 \right) q_1 \;,
\end{equation}
and then we could relate it to (\ref{eq:tanQuotient}).
}
\begin{equation}
q_2 = \frac{t_d}{t_{(\ell-1)d}}
\frac{1 - \frac{\tan \left( \theta^+_{\chi} \right) + 1}
{\tan \left( \theta^+_{\chi} \right) \ln \left( -\tan \left( \theta^+_{\chi} \right) \right)}}
{1 + \frac{\tan \left( \theta^+_{\chi} \right) + 1}
{\ln \left( -\tan \left( \theta^+_{\chi} \right) \right)}} q_1 \;.
\label{eq:q2}
\end{equation}
Since $t_d < 0$, $t_{(\ell-1)d} < 0$ and $q_1 < 0$, by (\ref{eq:q2}) we must have $q_2 > 0$.
Therefore $\frac{q_2}{t_d} - \frac{q_1}{t_{(\ell-1)d}} < 0$,
and so by (\ref{eq:detJtilde}) and (\ref{eq:sgnr1sgna}),
we have $\det \left( \tilde{J} \right) > 0$, as required.
\end{proof}

\end{document}